\theoremstyle:=definition,remark,plain\do{%
        \expandafter\g@addto@macro\csname th@\theoremstyle\endcsname{%
            \addtolength\thm@preskip\parskip
            }%
        }
\z@skip \halign{\relax\hfil\txtline@@{##}\hfil\cr\leavevmode#1\crcr}}}
\theoremstyle{definition}
\newtheorem{thm}{Theorem}[section]
\newtheorem{lem}[thm]{Lemma}
\newtheorem{cor}[thm]{Corollary}
\newtheorem{defn}[thm]{Definition}
\newtheorem{propn}[thm]{Proposition}
\newtheorem*{thm*}{Theorem}
\newtheorem*{qn*}{Question}
\newtheorem{conj}[thm]{Conjecture}
\newtheorem{props}[thm]{Properties}
\newcounter{lthm}
\newtheorem{letterthm}[lthm]{Theorem}
\theoremstyle{remark}
\newtheorem{rk}[thm]{Remark}
\newtheorem*{rk*}{Remark}
\newtheorem{rks}[thm]{Remarks}
  \def\sigma{sigma}%
  \def\delta{delta}%
  \def\({}%
  \def\){}%
\newcommand{\gr}{\mathrm{gr}}
\newcommand{\Aut}{\mathrm{Aut}}
\newcommand{\id}{\mathrm{id}}
\newcommand{\Out}{\mathrm{Out}}
\newcommand{\supp}{\mathrm{supp}}
\newcommand{\chr}{\mathrm{char}}
\renewcommand{\O}{\mathcal{O}}
\newcommand\blfootnote[1]{%
  \begingroup
  \renewcommand\thefootnote{}\footnote{#1}%
  \addtocounter{footnote}{-1}%
  \endgroup
}
\begin{document}

\numberwithin{equation}{section}
\binoppenalty=\maxdimen
\relpenalty=\maxdimen

\title{Skew power series rings with automorphisms of finite inner order}
\author{Adam Jones and William Woods}
\date{\today}
\maketitle
\begin{abstract}

\blfootnote{\emph{2010 Mathematics Subject Classification}: 16W60, 16W70, 16W80, 16S35, 16S36, 16K40}

\noindent We investigate the algebraic properties of the bounded skew power series ring $Q^+[[x;\sigma,\delta]]$ over a (complete, simple) \emph{standard} filtered artinian algebra $Q$ of positive characteristic. Here we are assuming that $(\sigma,\delta)$ is a commuting skew derivation of $Q$, where $\delta$ is inner, satisfying the appropriate compatibility conditions. In a previous work of the authors, it was proved that $Q^+[[x;\sigma,\delta]]$ is a simple ring whenever $\sigma$ has infinite inner order. We now extend this result to the case when $\sigma$ has finite inner order, proving that this ring is often simple, and always prime in cases of interest. This solves an important special case of an open question of Letzter, and yields important consequences for the classification of prime ideals in Iwasawa algebras of solvable groups.

\end{abstract}
\tableofcontents

\section{Introduction and motivation}

This article is the fourth in a series of papers by the authors \cite{jones-woods-1,jones-woods-2,jones-woods-3}, focusing on the algebraic properties of large classes of (topological) skew power series rings.

\subsection{Skew power series rings}

Let $R$ be a ring and $(\sigma, \delta)$ a skew derivation on $R$: for us, this will mean that $\sigma$ is an automorphism of $R$ and $\delta$ is a left $\sigma$-derivation on $R$, i.e.\ $\delta(ab) = \delta(a)b + \sigma(a)\delta(b)$ for all $a, b\in R$. This allows us to define the \emph{skew polynomial ring}
$$R[x; \sigma, \delta] = \left\{\sum_{n=0}^N r_n x^n : r_n \in R, N\in\mathbb{N}\right\},$$
a ring isomorphic as a left and right $R$-module to $R[x]$, but whose multiplication is given by the family of relations $xr = \sigma(r)x + \delta(r)$ for all $r\in R$.

Under certain appropriate compatibility conditions between $(\sigma, \delta)$ and the topology on $R$, we can further define the \emph{skew power series ring}
\begin{align}\label{eqn: unbounded SPSR}
R[[x; \sigma, \delta]] = \left\{\sum_{n\geq 0} r_n x^n: r_n\in R\right\},
\end{align}
subject to the same multiplication rule. The \emph{automorphic} case ($\delta = 0$) is well-known classically \cite[\S 1.4]{MR}, and the case where $\delta$ is \emph{nilpotent} or \emph{locally nilpotent} has been studied recently by many authors, including Bergen and Grzeszczuk \cite{bergen-grzeszczuk-skew}.

We call $\delta$ an \emph{inner} $\sigma$-derivation if there exists $t\in R$ such that $\delta(r) = tr - \sigma(r)t$ for all $r\in R$. In this case, it is especially easy to handle the corresponding skew polynomial ring, but the skew power series ring is much more subtle, and work is ongoing: see \cite[\S 1.2]{jones-woods-2} for a discussion.

More recently, due to its applications in the field of Iwasawa algebras (see \S \ref{subsec: iwasawa algebras} below), there is growing interest in the more general case where $\delta$ satisfies a kind of \emph{topological} nilpotency property. In these cases, it can be delicate to prove that products in $R[[x; \sigma, \delta]]$ converge. Early papers \cite{venjakob, SchVen06, letzter-noeth-skew} restricted themselves to the case where $R$ is complete with respect to some $I$-adic filtration and $(\sigma, \delta)$ satisfies some rather rigid compatibility conditions with respect to this filtration.

However, this setup is too restrictive for our purposes, as we detail in \S \ref{subsec: filtered localisation}. For this reason, we require the significantly weaker conditions on the filtration on $R$ and $(\sigma, \delta)$ introduced in \cite[\S\S 3.1--3.2]{jones-woods-3}.

\subsection{Iwasawa algebras as skew power series rings}\label{subsec: iwasawa algebras}

An ongoing project in representation theory is to understand and classify the prime ideal structure of \emph{Iwasawa algebras}: see e.g.\ the open question \cite[3.19]{letzter-noeth-skew}, and the many open questions of \cite{ardakovbrown}. Considerable progress has been made on this subject to date (see e.g.\ \cite{ardakovInv,reflexive,jones-abelian-by-procyclic,jones-primitive-ideals}), and we hope that this work will form another chapter in this investigation.

Let $p$ be a prime number, and let $G$ a compact $p$-adic Lie group, which may be most simply defined as a closed subgroup of $GL_n(\mathbb{Z}_p)$. Let $k=\mathbb{F}_p$ or $\mathbb{Z}_p$, and define the \emph{completed group algebra} (or \emph{Iwasawa algebra}) of $G$ over $k$ to be the algebra $kG := k[[G]]$ defined by $$kG=\underset{U\lhd_o G}{\varprojlim}k[G/U],$$ where the inverse limit ranges over all open normal subgroups of $G$, and $k[G/U]$ denotes the usual discrete group algebra of $G/U$ over $k$. This algebra is of fundamental importance to the representation theory of $G$, as modules over $kG$ characterise \emph{continuous} $k$-linear representations of $G$.

Suppose for simplicity that $G$ is a \emph{$p$-valuable} group \cite[Chapitre III, D\'efinition 3.1.6]{lazard}, i.e.\ a torsion-free compact $p$-adic Lie group equipped with a well-behaved group filtration $\omega: G\to \mathbb{N}\cup\{\infty\}$ \cite[Chapitre III, D\'efinition 2.1.2]{lazard}, which induces a ring filtration $w: kG \to \mathbb{N}\cup\{\infty\}$. If $H\lhd G$ is a closed normal subgroup satisfying $G/H \cong \mathbb{Z}_p$, it is known (cf.\ \cite[Example 2.3ff.]{venjakob}) that we can write $kG$ as a skew power series ring
\begin{align}\label{eqn: iwasawa as spsr}
kG = kH[[x; \sigma, \delta]]
\end{align}
for appropriate $x$, $\sigma$ and $\delta$. If $I$ is any $G$-invariant ideal of $kH$, then $IkG$ is a two-sided ideal of $kG$, and \eqref{eqn: iwasawa as spsr} descends to the quotient \cite[Lemma 3.14(iv)]{letzter-noeth-skew}:
$$kG/IkG = (kH/I)[[\overline{x};\overline{\sigma},\overline{\delta}]].$$
Indeed, in this case, we can always choose $\delta = \sigma - \id$: note that this is an inner $\sigma$-derivation defined with respect to the $\sigma$-invariant element $t = -1$.

A central motivating question for the authors has been the following:

\textbf{Question.} Suppose $P$ is a $G$-invariant prime ideal of $kH$. Is $PkG$ a (semi)prime ideal of $kG$?

(Equivalently: is $(kH/P)[[\overline{x}; \overline{\sigma}, \overline{\delta}]]$ a (semi)prime ring?) This deceptively simple question is a special case of \cite[3.19]{letzter-noeth-skew}, which considers both a broader class of skew power series rings and a weaker invariance condition on their ideals.

Appropriate direct analogues of this question for skew polynomial rings, group algebras, crossed products, skew power series rings of automorphic type, etc.\ tend to be very straightforward to prove (see \cite[Theorems 1.2.9(i)(iii), 1.4.5, 1.5.11]{MR}). Their proofs usually implicitly rely on the existence of a well-behaved filtration-like function, such as the \emph{degree} of a polynomial or the \emph{order} of a power series: however, the order function is badly-behaved for general skew power series rings, justifying our search for a better filtration.

\subsection{Filtered localisation}\label{subsec: filtered localisation}

An important classical tool in the study of (semi)prime Goldie rings $R$ is passing to their (semi)simple artinian rings of quotients $Q(R)$. Any skew derivation $(\sigma,\delta)$ of $R$ extends uniquely to $Q(R)$ \cite[Lemma 1.3]{goodearl-skew-poly-and-quantized}, and when studying skew polynomial rings, we have the crucial relation
\begin{align}\label{eqn: tensor product of skew polynomial rings}
Q(R)[x;\sigma,\delta]=Q(R)\otimes_R R[x;\sigma,\delta],
\end{align}
which allows us to simplify many arguments: see e.g.\ \cite[2.3(iv), \S 3]{goodearl-letzter}.

This will also be a key tool for us throughout. However, filtrations generally interact poorly with localisation, and if we localise naively, we lose many desirable properties. The resulting filtration on $Q(R)$ will of course no longer be adic, but in general it will also no longer be complete, and hence it may not be possible to define an appropriate notion of skew power series ring over $Q(R)$. Moreover, even if $R$ is a commutative domain, the completion of $Q(R)$ with respect to this filtration will in general fail to be semisimple artinian.

However, there is a small subclass of filtered simple artinian rings that are very well understood. Roughly: we define a noncommutative notion of complete \emph{discrete valuation ring} $D$ (Definition \ref{defn: DVR}), so that its Goldie ring of quotients $F$ is a complete discretely valued division ring and we may consider matrix rings $Q = M_s(F)$ with the canonically induced filtrations (Definition \ref{defn: matrix filtration}). Any $Q$ that can be constructed in such a way is called a \emph{standard} filtered artinian ring (Definition \ref{defn: standard filtrations}).

The properties of such filtered rings are very well understood, but at first glance, they might seem rather rare. Indeed, naively localising a filtered ring $R$ will produce something very far from a standard filtration on $Q(R)$ in general. However, the fundamental \emph{filtered localisation} theorem tells us that, under relatively mild conditions, we are able to pass from a prime Noetherian filtered ring $(R, w)$ to a \emph{standard} filtered artinian ring $(Q, u)$ in a controlled way. The first version of this theorem appeared as \cite[Theorem C]{ardakovInv}, and it has since been successively refined as \cite[Theorem 3.3]{jones-abelian-by-procyclic}, implicitly in \cite[\S 4]{jones-woods-1}, and as \cite[Theorem 4.1.1]{jones-woods-3}. We prove another small refinement as Theorem \ref{thm: filtered localisation} below, and use it crucially throughout what follows.

This $Q$ is the completion (under $u$) of the Goldie ring of quotients $Q(R)$, but the process of constructing $u$ from $w$ is intricate: see \cite[\S 4]{jones-woods-3}. In performing this construction, we will usually lose any strong compatibility relation between $w$ and $(\sigma, \delta)$. However, we retain the property of \emph{quasi-compatibility}: this is a much weaker condition, but it still allows us to define the bounded skew power series ring $Q^+[[x;\sigma,\delta]]$ (Definition \ref{defn: bounded skew power series rings}).

There is growing evidence that these rings are independently far better behaved than $R[[x; \sigma, \delta]]$ (see \cite[Theorem C]{jones-woods-2}, \cite[Theorems B--C]{jones-woods-3}), and that first understanding $Q^+[[x; \sigma, \delta]]$ is the right avenue for understanding properties of $R[[x; \sigma, \delta]]$ itself (\cite[Theorem 2.6]{jones-woods-1}, \cite[Theorem D]{jones-woods-2}, \cite[Theorems D--E]{jones-woods-3}). This will also be the focus of the current paper.

\subsection{The main conjecture}

Suppose $Q$ is a simple artinian $\mathbb{Z}_p$-algebra carrying a standard filtration $u$, and suppose $(\sigma, \delta)$ is a \emph{commuting} skew derivation (that is, $\sigma\delta = \delta\sigma$) on $Q$. If $(\sigma, \delta)$ is \emph{quasi-compatible} with $u$ (in the sense of \cite[Definition 3.1.1]{jones-woods-3}), then the bounded skew power series ring $Q^+[[x; \sigma, \delta]]$ is known to exist \cite[Theorem A]{jones-woods-3}. Our main conjecture regarding the algebraic structure of this ring can be simply stated:

\begin{conj}\label{conj: main}
$Q^+[[x;\sigma,\delta]]$ is a (semi)prime ring.
\end{conj}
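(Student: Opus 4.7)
The plan divides on the inner order of $\sigma$. When $\sigma$ has infinite inner order, the authors' previous work establishes the stronger statement that $Q^+[[x;\sigma,\delta]]$ is simple, and in particular prime. The substantive case is thus when $\sigma^n$ is inner of minimal order $n\geq 1$, say $\sigma^n = \mathrm{ad}(c)$ with $c\in Q^\times$. My strategy is: first absorb the inner derivation $\delta$ via a change of variable, reducing to a purely automorphic skew power series ring; second exhibit a normal element arising from the finite inner order of $\sigma$; and third conclude by classical crossed-product primeness results.

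Step 1 examines the substitution $y := x - t$, where $t \in Q$ witnesses $\delta$ as an inner $\sigma$-derivation. Formally this converts $Q[x;\sigma,\delta]$ into $Q[y;\sigma]$. For the bounded power series setting one needs the filtration value $u(t)$ to be large enough that $\sum r_n(y+t)^n$ converges in the bounded topology. The quasi-compatibility of $(\sigma,\delta)$ with the standard filtration $u$, combined with the fact that $t$ is only determined modulo the $\sigma$-fixed subring of $Q$, should permit a judicious choice; the commutativity hypothesis $\sigma\delta = \delta\sigma$ further constrains $t$ in a useful way, since $\sigma(t) - t$ becomes central.

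Step 2, assuming the reduction, works with $Q^+[[y;\sigma]]$ and seeks a normal element. The candidate is $z := y^n c^{-1}$, which satisfies $zr = rz$ for every $r\in Q$ (since $y^n r = \sigma^n(r) y^n = crc^{-1} y^n$); a filtration check should show that $z$ lies in the centre of $Q^+[[y;\sigma]]$. The topological subring generated by $z$ and $Z(Q)$ is then commutative and complete, and $Q^+[[y;\sigma]]$ becomes a free module of finite rank over it, realising the ambient ring as a crossed product of the simple ring $Q$ by a finite cyclic group of order $n$ over a commutative base. Standard crossed-product theory (cf.\ \cite[\S 1.5]{MR}) then yields that the crossed product is semiprime, which is precisely the conjecture.

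The principal obstacle is Step 1: the witnessing element $t$ need not have positive filtration value, so $x \mapsto x - t$ is not automatically a continuous substitution into $Q^+[[x;\sigma,\delta]]$. If this direct reduction fails, the backup is to carry $\delta$ through the argument, defining $z$ directly in $Q^+[[x;\sigma,\delta]]$ as an explicit series beginning with $x^n c^{-1}$ plus correction terms determined by $\delta$, and then verifying normality by an explicit commutator computation that exploits $\sigma\delta = \delta\sigma$. A secondary obstacle is controlling the centre of $Q$ and of $Q^+[[y;\sigma]]$ precisely enough to identify the commutative base of the crossed product; this should follow from the structure theory of standard filtered artinian rings together with the simplicity of $Q$.
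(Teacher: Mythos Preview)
This statement is labelled a \emph{conjecture} in the paper, and the paper does not prove it in full generality. The paper establishes special cases under the additional hypothesis that $\delta$ is inner with $\sigma(t)=t$ and, crucially, that $u(t)=u(t^{-1})=0$; the case $u(t)<0$ is explicitly left open (see \S 1.4). So your proposal, if correct, would resolve an open problem, and you should expect the obstacles you flag to be genuine.

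Your Step 1 is where the argument breaks. The substitution $y=x-t$ does convert $Q[x;\sigma,\delta]$ into $Q[y;\sigma]$ at the polynomial level, but it does \emph{not} identify $Q^+[[x;\sigma,\delta]]$ with $Q^+[[y;\sigma]]$ unless $u(t)>0$: the binomial expansion of $\sum r_n(y+t)^n$ need not have bounded coefficients otherwise. Your suggestion that one may adjust $t$ by an element of the $\sigma$-fixed subring does not help, since the freedom is only modulo $Z(Q)^\sigma$, and there is no reason this subfield contains elements of the required filtration value. The paper handles $u(t)>0$ precisely by this reduction (citing \cite[Theorem B]{jones-woods-2}), but for $u(t)=0$ it must do something quite different, and for $u(t)<0$ the conjecture remains open.

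Your Step 2 also has a gap: the element $z=y^nc^{-1}$ commutes with $Q$ but need not commute with $y$ unless $\sigma(c)=c$, and even then $Q^+[[y;\sigma]]$ is not finitely generated as a module over any commutative subring in the way you describe, since $n$ need not be a power of $p$ and the resulting crossed product of $Q$ by $\mathbb{Z}/n\mathbb{Z}$ is not the right object.

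What the paper actually does (for $u(t)=u(t^{-1})=0$) is: localise at the regular normal element $x-t$ to obtain a crossed product $Q^+[[x;\sigma,\delta]]_{(x-t)} = A * (\mathbb{Z}/p^m\mathbb{Z})$ where $A = Q^+[[x^{p^m};\sigma^{p^m},\delta^{p^m}]]_{(x^{p^m}-t^{p^m})}$ and $(\sigma^{p^m},\delta^{p^m})$ is \emph{compatible} with $u$; prove $A$ is prime directly; and then analyse the crossed product via a delicate argument involving minimal support elements, central nilpotents, and a reduction to twisted group rings over fields. Even this requires the further hypothesis \eqref{SFOH} that the conjugating element can be taken in $\mathcal{O}^\times$, and the bulk of the paper (\S 4--5) is devoted to showing that one can always arrange this after a suitable extension of scalars.
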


Using the filtered localisation procedure described in \S \ref{subsec: filtered localisation}, it should follow from this conjecture that any skew power series ring defined over a complete, filtered prime ring is indeed (semi)prime, thus answering the question from \S \ref{subsec: iwasawa algebras} in the positive.

The existing partial results in support of Conjecture \ref{conj: main} in the literature usually rely on understanding a well-behaved subring $Q^+[[X_n; \Sigma_n, \Delta_n]] \subseteq Q^+[[x; \sigma, \delta]]$. In characteristic $p$, for example, we may always take $(X_n, \Sigma_n, \Delta_n) = (x^{p^n}; \sigma^{p^n}, \delta^{p^n})$. It is known under mild conditions that such subrings are prime for sufficiently large $n$ \cite[Theorem A(a)]{jones-woods-1}, but we currently have no way of passing this information back to the larger ring. In characteristic $0$, we are unable to define any such $(X_n, \Sigma_n, \Delta_n)$ in general.

The situation improves considerably when $\delta$ is inner, i.e.\ there exists $t\in Q$ such that $\delta(q) = tq - \sigma(q)t$. To obtain good results, we also seem to need the additional assumption that $\sigma(t) = t$, but this is mild in our setting \cite[\S 3.4]{jones-woods-3}, so we may assume it. Here, there is a consistent notion of $(X_n, \Sigma_n, \Delta_n)$ in arbitrary characteristic, and in light of results such as \cite[Proposition 4.4.5, Theorem 2.3.4]{jones-woods-3} we can usually assume that $Q^+[[X_n;\Sigma_n, \Delta_n]]$ is prime for large $n$.

In characteristic 0, and with the additional assumption that $u(t) \geq 0$, \cite[Theorem B]{jones-woods-3} shows that $Q^+[[x; \sigma, \delta]]$ is semiprime under these conditions. In characteristic $p$, in the case where no positive power of $\sigma$ is an inner automorphism of $Q$, it follows from \cite[Theorem C]{jones-woods-3} that $Q^+[[x;\sigma,\delta]]$ is prime; furthermore, if $u(t)=u(t^{-1})=0$, then it is in fact a \emph{simple} ring. Other special cases of Conjecture \ref{conj: main} are largely unanswered.

Focusing on the case where $\delta$ is inner and $\chr(Q)=p$, we assume from now on that $\sigma^k$ is an inner automorphism for some $k\geq 1$. The discussion now splits into cases according to the value of $u(t)$.

If $u(t)>0$, then the ring $Q^+[[x;\sigma,\delta]]$ can be realised as (a localisation of) a skew power series ring of \emph{automorphic type} by \cite[Theorem B]{jones-woods-2}, a classical and well-studied object (see e.g.\ \cite[\S 1.4]{MR}). The case $u(t) < 0$ still seems mysterious, and we are unable to address it at present. Instead, we focus on the intermediate case where $u(t) = u(t^{-1}) = 0$, and we will outline below how the results of this paper finally complete the proof of Conjecture \ref{conj: main} under this assumption in cases of interest.

\subsection{Main results of the current paper}\label{subsec: main results}

From now on, suppose that $Q$ is a simple artinian ring of characteristic $p$, that $u$ is a standard filtration on $Q$, and that $(\sigma, \delta)$ is a commuting skew derivation on $Q$. Assume also that we may fix some $m\in\mathbb{N}$ such that $(\sigma^{p^m}, \delta^{p^m})$ is compatible with $u$. We also assume that $\delta$ is inner, i.e.\ there exists $t\in Q$ such that $\delta(q)=tq-\sigma(q)t$ for all $q\in Q$, and that $\sigma(t) = t$.

Throughout the paper, we will denote by $[\sigma]$ the image of $\sigma$ under the canonical map $\Aut(Q) \to \Out(Q)$ (though we will normally remind the reader explicitly that $[\sigma] \in \Out(Q)$). We assume that $[\sigma]\in\Out(Q)$ has finite order $n$, so that there exists $a_0\in Q^\times$ such that $\sigma^n(q) = a_0qa_0^{-1}$.

We say that the triple $(\sigma, \delta, u)$ satisfies the \emph{strong finite order hypothesis} (SFOH) if
\begin{equation}
    \tag{SFOH}
    \text{\parbox{.78\textwidth}{
	there exist $a\in \O^\times$ and $\ell\in\mathbb{N}$ such that $\sigma^{p^\ell}(q) = aqa^{-1}$ for all $q\in Q$.
    }}
    \label{SFOH}
\end{equation}

\begin{letterthm}\label{letterthm: SFOH implies prime}
Assume that $(\sigma, \delta, u)$ satisfies \eqref{SFOH}, say there exist $a\in \O^\times$ and $\ell\in\mathbb{N}$ such that $\sigma^{p^\ell}(q) = aqa^{-1}$ for all $q\in Q$. Suppose also that one of the following two conditions holds:
\begin{enumerate}[label=(\alph*), noitemsep]
\item $u(t^{p^M}) > 0$ for some $M \geq \max\{\ell, m\}$.
\item $u(a^{p^{M-\ell}} + t^{p^M}) > 0$ for some $M \geq \max\{\ell, m\}$.
\end{enumerate}
Then $Q^+[[x; \sigma, \delta]]$ is a prime ring.\qed
\end{letterthm}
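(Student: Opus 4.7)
The plan is to exploit SFOH to exhibit inside $T := Q^+[[x;\sigma,\delta]]$ a large ``commutative-over-$Q^+$'' subring to which primeness descends, and then to lift primeness back to $T$.

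Since $\sigma(t)=t$, we have $\delta(t)=0$, hence $xt=tx$ in $T$. Combining $(x-t)q=\sigma(q)(x-t)$ with the characteristic-$p$ Frobenius identity (applicable because $x$ and $t$ commute) gives
\[
Y_M := x^{p^M}-t^{p^M} = (x-t)^{p^M}, \qquad Y_M\, q = \sigma^{p^M}(q)\,Y_M \quad \text{for all } q\in Q.
\]
Since $M\geq\ell$, SFOH supplies $b:=a^{p^{M-\ell}}\in\O^\times$ with $\sigma^{p^M}(q)=bqb^{-1}$, so $W_M := b^{-1} Y_M$ centralises all of $Q$. The condition $M\geq m$ makes $(\sigma^{p^M},\delta^{p^M})$ compatible with $u$, so the filtration manipulations below are well-behaved.

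To unify the two cases, I would set $Z := W_M$ in case (a) and $Z := W_M-1$ in case (b). A direct computation shows $u(Z)>0$: directly in (a); in (b) by writing $W_M-1 = b^{-1}(x^{p^M}-(t^{p^M}+b))$ and using $u(t^{p^M}+b)>0$. In both cases $Z$ still centralises $Q$. Let $S\subseteq T$ be the closed $Q^+$-subalgebra generated by $Z$. Because $Z$ has positive filtration and commutes with $Q^+$, one identifies $S$ with the formal power series extension $Q^+[[\bm{Z}]]$ in a central indeterminate; since $Q^+$ is prime (a matrix ring over a complete noncommutative DVR), $S$ is prime. Noting $X_M = bW_M + t^{p^M}\in S$ (as $t^{p^M}\in \O$ in both cases), one obtains $T = \sum_{i=0}^{p^M-1} S\cdot x^i$, realising $T$ as a finitely generated left and right $S$-module.

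The crux is then to show every nonzero two-sided ideal $I\triangleleft T$ meets $S$ nontrivially; granting this, primeness of $T$ follows in the usual way from primeness of $S$, since $I\cap S$ and $J\cap S$ would be nonzero ideals of $S$ with product zero whenever $IJ=0$. I would prove the intersection claim by a Weierstrass-type preparation on the $S$-basis $\{1,x,\ldots,x^{p^M-1}\}$: repeatedly apply the skew relations $xq=\sigma(q)x+\delta(q)$ (for $q\in Q^+$) and $W_M x = z^{p^{M-\ell}}\, x W_M + b^{-1}\delta(b)\,W_M$ (where $\sigma(a)=za$ for some central $z\in Z(Q)^\times$, derived by applying $\sigma$ to the SFOH relation) to reduce a given nonzero $r\in I$ to an element of $S$, using the positivity $u(Z)>0$ and completeness of $T$ to control convergence. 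The main obstacle is precisely ensuring that this reduced element is nonzero. Because $Q^+$ (hence $S$) is noncommutative, a naive determinant/Cayley--Hamilton argument is unavailable, and instead one must track leading terms on $\gr T$ carefully throughout the reduction; the strict positivity $u(Z)>0$ is indispensable, and the case split (a) vs (b) may require slightly different initial normalisations to accommodate the shift $W_M\mapsto W_M-1$ in case (b).
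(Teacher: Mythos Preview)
Your broad architecture matches the paper's: both identify a prime subring over which $T$ decomposes as a crossed product of $p$-power rank, and both must then lift primeness across that extension. Your subring $S$ is essentially the paper's $A = Q^+[[x^{p^M};\sigma^{p^M},\delta^{p^M}]]$ after the change of variable to $W_M$ (or $W_M-1$); the paper makes this identification precise by invoking \cite[Theorem B]{jones-woods-2} and \cite[Theorem B]{jones-woods-3} (your hand-wave ``one identifies $S$ with $Q^+[[\bm Z]]$'' is exactly where these are needed). Two smaller technical points you skip: in case (a) the element $x-t$ need not be a unit, so the crossed-product description $T=\sum_{i<p^M} S x^i$ only holds after localising at $x-t$ (Theorem~\ref{thm: crossed product after localisation}); and the nontrivial twist $z$ in your relation $W_M x = z^{p^{M-\ell}} x W_M + \cdots$ is removed in the paper by first reducing to the case $\sigma|_Z=\id$ via the Artin--Schreier argument of \S\ref{subsec: new galois} (Corollary~\ref{cor: when sigma does not fix Z part 2}).

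The genuine gap is your ``crux'' step. You propose to show every nonzero ideal of $T$ meets $S$ by a Weierstrass-type reduction on the basis $1,x,\dots,x^{p^M-1}$, but you yourself flag that you cannot guarantee the output is nonzero, and the skew relation you wrote down (with its extra $b^{-1}\delta(b)$ term) makes any naive leading-term bookkeeping on $\gr T$ unstable. The paper does \emph{not} try to prove this intersection claim directly. Instead it argues by contradiction on the nilradical: if $T$ is not prime, its unique minimal prime $P$ is nonzero with $P\cap A=0$, and one then extracts a \emph{central minimal element} $\mathbf a\in P$ (Proposition~\ref{propn: there exists a central minimal element whose pth power is zero}), manipulates it down to coefficients in $Q$ (rather than in all of $A$), and reaches a contradiction by embedding into a twisted group ring $K(X)^t[\mathbb Z/p\mathbb Z]$ and invoking the Aljadeff--Robinson criterion that such a ring is semisimple because $g^{p^M}$ has $X$-degree~1 and hence cannot be a $p$th power. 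This crossed-product/minimal-element machinery (\S\S\ref{subsec: minimal elements and positive characteristic}--\ref{subsec: nilradical}, culminating in Theorem~\ref{thm: conditions for primality}) is the real content of the proof, and nothing in your Weierstrass sketch replaces it.
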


\textbf{Note:} unlike in the case where $[\sigma]\in\Out(Q)$ has infinite order, $Q^+[[x;\sigma,\delta]]$ need not be a simple ring with these assumptions. For example, if $a = 1$ and $t = -1$, then condition (b) is satisfied for $\ell = M = 0$, but $Q^+[[x;\sigma,\delta]] = Q^+[[x]]$.

We will complete the proof of Theorem \ref{letterthm: SFOH implies prime} at the end of \S \ref{section: crossed product decomposition}. The condition \eqref{SFOH} is quite restrictive, but without it the relationship between $Q^+[[x; \sigma, \delta]]$ and its subrings becomes increasingly difficult to control.

If we cannot find such a unit $a\in\O^\times$ satisfying the conditions of Theorem \ref{letterthm: SFOH implies prime}, we are still free to multiply the element $a_0\in Q^\times$ by any nonzero element $\zeta$ of the centre $Z$ of $Q$, and it will still hold that $\sigma^n(q)=(\zeta a_0)q(\zeta a_0)^{-1}$ for all $q\in Q$. However, there may exist no $\zeta\in Z$ such that $\zeta a_0\in\O^\times$. 

\textbf{Idea:} we extend $Z$ to a larger field $K$ which contains such a $\zeta$: in other words, beginning with $Q$ and a finite extension of fields $K/Z$, we will form the central scalar extension $Q_K := K\otimes_Z Q$.

In using this idea practically, some care is required: in order to learn anything about $Q^+[[x; \sigma, \delta]]$ by doing this, we must pass from the triple $(\sigma, \delta, u)$ of data on $Q$ to a triple $(\sigma_K, \delta_K, u_K)$ of data on $Q_K$, in such a way that we are able to understand $Q_K^+[[y; \sigma_K, \delta_K]]$ and derive useful information about $Q^+[[x; \sigma, \delta]]$ from it.

Recall that $(Q, u)$ is constructed from an associated complete discrete valuation ring $D$. We outline a few definitions that are important to this notion of scalar extension.
\begin{itemize}
\item $D$ is \emph{admissible} if its associated graded ring has finite rank as a module over its centre (Definition \ref{defn: admissible}). This is the crucial condition that will allow us to extend $u$ to $u_K$.
\item An \emph{admissible datum} on $Q$ is a triple $(\sigma, \delta, u)$ whose associated complete discrete valuation ring $D$ is admissible (plus a few extra mild hypotheses for convenience): see Definition \ref{defn: admissible data} and Remarks \ref{rks: AD hypotheses} for full details.
\item Given such triples $(\sigma, \delta, u)$ and $(\sigma_K, \delta_K, u_K)$ on $Q$ and $Q_K$ respectively, we say that $(\sigma_K, \delta_K, u_K)$ is an \emph{admissible extension} of $(\sigma, \delta, u)$ if (roughly) we may understand properties of $Q^+[[x; \sigma, \delta]]$ from those of $Q_K^+[[y; \sigma_K, \delta_K]]$: see Definition \ref{defn: admissible extension} and Remarks \ref{rks: AE hypotheses}.
\end{itemize}

With these definitions in mind, we say that the triple $(\sigma, \delta, u)$ satisfies the \emph{finite order hypothesis} (FOH) if it is an \emph{admissible} datum, and if
\begin{equation}
    \tag{FOH}
    \text{\parbox{.78\textwidth}{
	there exist a finite extension of fields $K/Z$ and an admissible datum $(\sigma_K, \delta_K, u_K)$ on $Q_K$ which is an admissible extension of $(\sigma, \delta, u)$ and satisfies \eqref{SFOH}.
    }}
    \label{FOH}
\end{equation}

\textbf{Note:} it is essential in the definition of an admissible datum that the automorphism $\sigma$ act by the identity on $Z$. Fortunately, with our assumptions on $(\sigma,\delta)$, it follows from Lemma \ref{lem: identical on centre} below that $\sigma|_Z$ has order $p^s$ for some $s\in\mathbb{N}$, so this is only a finite reduction which is easy to remove (see Corollary \ref{cor: when sigma does not fix Z part 2}).

On the surface, the finite order hypothesis may not seem much more general than \eqref{SFOH} itself. But surprisingly, it turns out to be all we need in our cases of interest:

\begin{letterthm}\label{letterthm: FOH implies prime}
Assume that the discrete valuation ring $D$ associated to $Q$ is admissible. Suppose also that $u(t^N)=0$ for all $N\in\mathbb{N}$. Then setting $p^s$ as the order of $\sigma|_Z$:
\begin{enumerate}[label=(\alph*),noitemsep]
\item If $(\sigma^{p^s},\delta^{p^s},u)$ satisfies \eqref{FOH}, then $Q^+[[x;\sigma,\delta]]$ is a prime ring.
\item If $(\sigma^{p^s},\delta^{p^s},u)$ does not satisfy \eqref{FOH}, then $Q^+[[x;\sigma,\delta]]$ is a simple ring.
\end{enumerate}
In particular, $Q^+[[x;\sigma,\delta]]$ is always a prime ring.\qed
\end{letterthm}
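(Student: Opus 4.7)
The plan is to treat parts (a) and (b) separately and then combine them for the ``in particular'' clause. As a preliminary observation, Lemma \ref{lem: identical on centre} tells us that $\sigma|_Z$ has order $p^s$, so $\sigma^{p^s}$ fixes $Z$ pointwise; combined with the admissibility of $D$, this makes $(\sigma^{p^s},\delta^{p^s},u)$ a legitimate admissible datum on $Q$. In both parts below, the reduction of statements about $Q^+[[x;\sigma,\delta]]$ to statements about the $(\sigma^{p^s},\delta^{p^s})$-skew power series ring over $Q$ will be handled via Corollary \ref{cor: when sigma does not fix Z part 2}, as advertised in the paragraph just before the theorem statement.

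For part (a), (FOH) provides a finite extension $K/Z$ and an admissible extension $(\sigma_K,\delta_K,u_K)$ of $(\sigma^{p^s},\delta^{p^s},u)$ on $Q_K$ satisfying (SFOH), with witness $a\in\O_K^\times$ and $\ell\in\mathbb{N}$. The plan is to apply Theorem \ref{letterthm: SFOH implies prime} to this extended datum. Since $u_K$ extends $u$ and $u(t^N)=0$ for all $N$, condition (a) of that theorem is never satisfied, so the essential task is to verify condition (b): $u_K(a^{p^{M-\ell}}+t^{p^M})>0$ for some $M\geq\max\{\ell,m\}$. I expect this verification to be the main technical obstacle. The guiding observation is that in characteristic $p$, both $a^{p^{M-\ell}}$ and $-t^{p^M}$ implement the inner automorphism $\sigma_K^{p^M}$ of $Q_K$ (using $\sigma(t)=t$ and the $\sigma$-invariance of $a$ available in the admissible setting), so their ratio lies in $Z(Q_K)=K$; adjusting $a$ within its $K^\times$-coset forces this ratio into $1+\mathfrak{m}_K$, and Frobenius then boosts the valuation of the resulting $a^{p^{M-\ell}}+t^{p^M}$ to positive after raising $M$ further. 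Theorem \ref{letterthm: SFOH implies prime} then gives primeness of $Q_K^+[[y;\sigma_K,\delta_K]]$, Definition \ref{defn: admissible extension} descends this to primeness of the $(\sigma^{p^s},\delta^{p^s})$-ring over $Q$, and Corollary \ref{cor: when sigma does not fix Z part 2} concludes part (a).

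For part (b), the failure of (FOH) means that no central scalar extension of $Q$ carries a unit in the valuation ring realising any power of $\sigma^{p^s}$ as an inner automorphism; morally this makes $[\sigma^{p^s}]$ behave like an element of infinite inner order relative to $u$. The plan is to emulate the simplicity argument of \cite[Theorem C]{jones-woods-3}: given a nonzero two-sided ideal of the $(\sigma^{p^s},\delta^{p^s})$-ring over $Q$, track its minimal-$u$-term under the $\sigma^{p^s}$-action and the inner-derivation relation coming from $t$, and exploit the failure of (FOH) to rule out any nontrivial $\sigma^{p^s}$-stable subspace in which a proper ideal could hide, forcing the ideal to contain a unit. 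Simplicity of the $(\sigma^{p^s},\delta^{p^s})$-ring over $Q$ then promotes via Corollary \ref{cor: when sigma does not fix Z part 2} to simplicity of $Q^+[[x;\sigma,\delta]]$. Since (FOH) either holds or fails, the ``in particular'' clause follows immediately.
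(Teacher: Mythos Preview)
Your proposal has genuine gaps in both parts.

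\textbf{Part (a).} Your heuristic for verifying condition (b) of Theorem \ref{letterthm: SFOH implies prime} rests on a false claim: you assert that $-t^{p^M}$ implements the inner automorphism $\sigma_K^{p^M}$, but $t$ defines the inner $\sigma$-derivation $\delta$, not $\sigma$ itself, so there is no reason the ratio $a^{p^{M-\ell}}(-t^{p^M})^{-1}$ should be central. In fact, the paper does \emph{not} verify condition (b) directly on the \eqref{SFOH} extension $K$. Instead it bifurcates: if $Q_K^+[[y;\sigma_K,\delta_K]]$ is already simple, descend via Theorem \ref{thm: extending scalars for skew power series rings}; if not, invoke Proposition \ref{propn: not simple implies c=1 (mod J(O))} (which needs non-simplicity as input, via the central-polynomial argument of Proposition \ref{propn: central polynomial}) to pass to a \emph{further} admissible extension $F/K$ in which the conjugating element lies in $1+J(\O_F)$, and only then apply Theorem \ref{letterthm: SFOH implies prime}(b). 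The non-simplicity hypothesis is what makes the algebraicity and hence the refinement possible; you cannot skip it.

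\textbf{Part (b).} Your plan to ``emulate the simplicity argument'' by tracking minimal-$u$-terms is too vague to assess, and more importantly it goes in the wrong direction. The paper proves the contrapositive: assuming $Q^+[[x;\sigma,\delta]]$ is not simple, it \emph{constructs} an admissible extension witnessing \eqref{FOH}. The mechanism is again Proposition \ref{propn: central polynomial}: a nonzero proper ideal forces the conjugating element to satisfy a polynomial relation over $Z$, which yields algebraicity of its residue (Proposition \ref{propn: algebraic with large centre}), and then the sequence of reductions in \S\ref{subsec: central scaling}--\S\ref{subsec: convergence of inner automorphisms} produces the required field extension with $c\in 1+J(\O_L)$ and a $p$-power order. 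None of this structure appears in your sketch.

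The Iwasawa reduction of \S\ref{subsec: Iwasawa condition} (replacing $t$ by $-1$) is also used at the outset of the paper's argument and simplifies both parts; you should incorporate it.
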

\color{black}

This result, together with \cite[Theorem C]{jones-woods-3}, completes the proof of a crucial special case of Conjecture \ref{conj: main}. Moreover, it demonstrates that despite how restrictive \eqref{FOH} may seem, it is in fact the only condition which allows $Q^+[[x;\sigma,\delta]]$ to have any two-sided ideals at all.

We prove Theorem \ref{letterthm: FOH implies prime} in \S \ref{sec: FOH}, where we demonstrate that any non-trivial two-sided ideal in $Q^+[[x;\sigma,\delta]]$ gives rise to a finite extension satisfying the conditions of \eqref{SFOH}.

The proofs of Theorem \ref{letterthm: SFOH implies prime} and Theorem \ref{letterthm: FOH implies prime} are very technical, and this paper is principally dedicated to their proof. In \S \ref{subsec: proof of remaining theorems}, we prove the following consequences, which we hope will be of use both in the study of Iwasawa algebras, and of skew power series rings more generally. The following results assume some knowledge of the results of \cite{jones-woods-3}.

\begin{letterthm}\label{letterthm: compatible Iwasawa SPSR is prime}
Let $R$ be a prime $\mathbb{F}_p$-algebra with a complete, positive filtration $w_0$. Assume that $\gr_{w_0}(R)$ is Noetherian and finitely generated as a module over a central, graded subring $A$ containing a non-nilpotent element of positive degree. 

Let $(\sigma,\delta)$ be a commuting skew derivation on $R$, compatible with $w_0$, and assume there exists $t\in R^{\times}$ such that $\delta(r)=tr-\sigma(r)t$ and $\sigma(t) = t$ for all $r\in R$. Then $R[[x;\sigma,\delta]]$ is a prime ring.\qed
\end{letterthm}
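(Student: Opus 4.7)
The plan is to apply the filtered localisation machinery of \S \ref{subsec: filtered localisation} (via Theorem \ref{thm: filtered localisation}) to pass from $(R,w_0)$ to a standard filtered artinian ring $(Q,u)$, extend $(\sigma,\delta)$ to a quasi-compatible commuting skew derivation on $Q$, and then reduce primeness of $R[[x;\sigma,\delta]]$ to that of $Q^+[[x;\sigma,\delta]]$, which is handled either by Theorem \ref{letterthm: FOH implies prime} in the finite inner order case or by the infinite inner order result \cite[Theorem C]{jones-woods-3}. Finally, primeness is transferred back from $Q^+[[x;\sigma,\delta]]$ to $R[[x;\sigma,\delta]]$ via the comparison results of \cite[Theorem 2.6]{jones-woods-1} and \cite[Theorems D--E]{jones-woods-3}.

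Concretely, the existence of a non-nilpotent central element of positive degree in $A \subseteq \gr_{w_0}(R)$ is exactly the hypothesis required to run filtered localisation and obtain a standard filtration $u$ on $Q$, together with the associated complete discrete valuation ring $D$. Finite generation of $\gr_{w_0}(R)$ over the central graded subring $A$ descends to finite generation of $\gr(D)$ over its own centre, so $D$ is admissible in the sense of Definition \ref{defn: admissible}. The skew derivation $(\sigma,\delta)$ extends uniquely to $Q(R)$ by \cite[Lemma 1.3]{goodearl-skew-poly-and-quantized}, and compatibility with $w_0$ upgrades to quasi-compatibility with $u$ under the localisation procedure; the extended $\delta$ remains inner via (the image of) the same $t$, and $\sigma(t)=t$ persists. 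Since $t \in R^\times$ and $w_0$ is positive, the inequalities $w_0(t), w_0(t^{-1}) \geq 0$ together with $w_0(t) + w_0(t^{-1}) \leq w_0(1) = 0$ force $w_0(t) = w_0(t^{-1}) = 0$; passing to the completion yields $u(t^N) = 0$ for every $N \in \mathbb{N}$.

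With all hypotheses of Theorem \ref{letterthm: FOH implies prime} in place, $Q^+[[x;\sigma,\delta]]$ is prime whenever $[\sigma] \in \Out(Q)$ has finite order, and is already known to be simple (hence prime) by \cite[Theorem C]{jones-woods-3} in the infinite inner order case. In either case, primeness of the bounded skew power series ring over $Q$ is established, and the final descent to $R[[x;\sigma,\delta]]$ proceeds by the standard argument that $R[[x;\sigma,\delta]]$ embeds into $Q^+[[x;\sigma,\delta]]$ compatibly with the filtrations, so that nonzero two-sided ideals extend nontrivially and products of nonzero ideals restrict nontrivially. The main obstacle is the bookkeeping around filtered localisation: verifying that the construction of $(Q,u)$ from $(R,w_0)$ genuinely produces an admissible datum in the sense of Definition \ref{defn: admissible data}, and that the quasi-compatible extension of $(\sigma,\delta)$ retains innerness of $\delta$ and $\sigma$-fixedness of $t$. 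Once those structural properties are secured, both the application of Theorem \ref{letterthm: FOH implies prime} and the descent step are formal.
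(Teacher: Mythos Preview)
Your overall strategy is correct, but there is a genuine gap at the step where you claim the skew derivation extends to the standard filtered artinian ring $(Q,u)$ produced by Theorem \ref{thm: filtered localisation}. The construction of that $Q$ involves quotienting the completion $\widehat{Q(R)}$ by a \emph{chosen} maximal ideal $M$, and there is no reason for $\sigma$ to preserve $M$. So while $(\sigma,\delta)$ certainly extends to $Q(R)$ and to its completion $\widehat{Q}$, it need not descend to the simple quotient $\widehat{Q}/M$, and you cannot directly apply Theorem \ref{letterthm: FOH implies prime} to $Q$.

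The paper handles this by instead quotienting by the intersection $N$ of the full $\sigma$-orbit of $M$ (following \cite[\S 4.5]{jones-woods-3}). The resulting ring $Q = \widehat{Q}/N$ is then only \emph{semisimple}, with $p^\ell$ simple factors $Q_1,\dots,Q_{p^\ell}$ permuted transitively by $\sigma$, each carrying its own standard filtration $u_i$. One then restricts $(\sigma^{p^\ell},\delta^{p^\ell})$ to each factor $Q_i$, checks admissibility of the associated DVR and the condition $u_i(t_i^N)=0$ there, and applies Theorem \ref{letterthm: FOH implies prime} (or the infinite-order result) to each $Q_i^+[[y;\sigma^{p^\ell},\delta^{p^\ell}]]$ separately. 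The conclusion that $Q^+[[x;\sigma,\delta]]$ and $R[[x;\sigma,\delta]]$ are prime is then obtained from \cite[Theorem 5.2.3]{jones-woods-3}, which packages exactly this reassembly and descent; your suggested embedding argument via \cite[Theorem 2.6]{jones-woods-1} would also need to go through this semisimple intermediary rather than a single simple $Q$.
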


We deduce an immediate consequence of this result
which generalises an important result of Ardakov \cite[Theorem 8.6]{ardakovInv}. In the following, $k$ is any field of characteristic $p$. Recall that $Q\lhd kH$ is \emph{$G$-prime} if, given any $G$-invariant ideals $A$ and $B$ of $kH/Q$, we have $AB \neq 0$. Equivalently \cite[Remarks 4* and 5*]{GolMic74}, $Q = T_1 \cap \dots \cap T_t$, where $\{T_1, \dots, T_t\}$ is a $G$-orbit of prime ideals of $kH$. For instance, if $P$ is a prime ideal of $kG$, then $P\cap kH$ is $G$-prime.

\begin{letterthm}\label{letterthm: primes in completely solvable Iwasawa algebras}
Let $G$ be a $p$-valuable group, and let $H$ be a closed normal subgroup of $G$ such that $G/H$ is completely solvable and torsion-free. If $Q$ is a $G$-prime ideal of the Iwasawa algebra $kH$, then $QkG$ is a prime ideal of $kG$.
\qed
\end{letterthm}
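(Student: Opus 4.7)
The plan is to prove Theorem \ref{letterthm: primes in completely solvable Iwasawa algebras} by induction on the Hirsch length $r$ of $G/H$, with Theorem \ref{letterthm: compatible Iwasawa SPSR is prime} providing the analytic input at each step. The base case $r = 0$ is immediate: $G = H$, so ``$G$-prime'' coincides with ``prime'', and $QkG = Q$. For $r \geq 1$, because $G/H$ is completely solvable and torsion-free, we may choose a closed normal subgroup $H \subseteq N \lhd G$ with $G/N \cong \mathbb{Z}_p$; the quotient $N/H$ is then completely solvable and torsion-free of Hirsch length $r - 1$.

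Before invoking the inductive hypothesis, I would first reduce to the case where $Q$ is itself a $G$-invariant prime (rather than merely $G$-prime). Using the standard description $Q = \bigcap_{g \in G/L} g T g^{-1}$ for some prime $T$ of $kH$ with $L = \mathrm{stab}_G(T)$ closed of finite index in $G$ and containing $H$, the subgroup $L/H$ inherits complete solvability and torsion-freeness from $G/H$. A Lorenz--Passman-style analysis of the finitely generated free extension $kL \subseteq kG$ should reduce the primeness of $QkG$ to that of $TkL$, where $T$ is an $L$-invariant prime of $kH$. After relabelling $(L, T) \leadsto (G, Q)$, we may then assume that $Q$ is itself a $G$-invariant prime ideal of $kH$.

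Granted this reduction, $Q$ is in particular $N$-invariant, so the inductive hypothesis applied to $(N, H, Q)$ yields that $QkN$ is a prime ideal of $kN$. Realise $kG = kN[[x; \sigma, \delta]]$ as in \eqref{eqn: iwasawa as spsr}, where $\sigma$ is conjugation by a lift of a topological generator of $G/N$ and $\delta = \sigma - \id$ is inner, implemented by the $\sigma$-fixed unit $t = -1 \in (kN)^\times$. Since $QkN$ is $(\sigma,\delta)$-stable, \cite[Lemma 3.14(iv)]{letzter-noeth-skew} yields an isomorphism $kG/QkG \cong (kN/QkN)[[\bar x; \bar\sigma, \bar\delta]]$. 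Setting $R = kN/QkN$, this is a prime $\mathbb{F}_p$-algebra carrying the complete, positive quotient filtration $w_0$ induced from the $p$-valuation on $kN$; its associated graded ring is Noetherian and finitely generated over a central polynomial subring (a standard feature of Iwasawa algebras of $p$-valuable groups, whose graded rings are commutative polynomial algebras), while $(\bar\sigma, \bar\delta)$ is compatible with $w_0$ and inner via $-1 \in R^\times$ with $\bar\sigma(-1) = -1$. All hypotheses of Theorem \ref{letterthm: compatible Iwasawa SPSR is prime} being satisfied, it gives that $R[[\bar x; \bar\sigma, \bar\delta]] \cong kG/QkG$ is prime, completing the inductive step.

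The main obstacle is the $G$-prime to $G$-invariant reduction: the stabilizer $L$ need not be normal in $G$, and even when it is, the quotient $G/L$ can be a nontrivial finite $p$-group, since closed subgroups of $\mathbb{Z}_p$ can have finite index despite $G/H$ being torsion-free. A careful analysis of the (possibly non-normal, but finitely generated projective) extension $kL \subseteq kG$, combined perhaps with a secondary induction on $[G:L]$, will be required to carry out the reduction cleanly. Once established, the remainder of the argument is a routine synthesis of the structure theory of Iwasawa algebras of $p$-valuable groups with the main technical content of Theorem \ref{letterthm: compatible Iwasawa SPSR is prime}.
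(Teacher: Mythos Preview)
Your plan is essentially the paper's own argument: an induction along a chain $H=H_0\lhd\cdots\lhd H_m=G$ with $\mathbb{Z}_p$-quotients, using Theorem \ref{letterthm: compatible Iwasawa SPSR is prime} at each step, preceded by a reduction from $G$-prime to $G$-invariant prime. Two points deserve attention.

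First, there is a small but genuine gap in your inductive step: Theorem \ref{letterthm: compatible Iwasawa SPSR is prime} requires $\gr_{w_0}(R)$ to contain a non-nilpotent element of positive degree. When $Q$ is the augmentation (maximal) ideal of $kH$ and $r=1$, you have $N=H$ and $R=kH/Q=k$ with trivial grading, so this hypothesis fails. The paper handles this as a separate Case 1, observing directly that $kG/QkG$ is then an iterated skew power series ring over $k$ whose associated graded is an iterated skew polynomial ring over $k$, hence a domain. For $r\geq 2$ (equivalently, once $R$ has positive Krull dimension), the required positive-degree regular element is furnished by the argument of \cite[Example 4.12]{jones-woods-1}; you should make this explicit rather than folding it into ``a standard feature''.

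Second, the reduction you flag as the main obstacle is resolved in the paper exactly along the lines you anticipate. One does not work with the stabiliser $L=\mathrm{stab}_G(T)$ directly (which, as you note, need not be normal). Instead, writing $Q=T_1\cap\cdots\cap T_t$ with $N_i$ the normaliser of $T_i$, one passes to an open \emph{normal} subgroup $L\lhd G$ with $H\subseteq L\subseteq\bigcap_i N_i$ (which exists since $[G:\bigcap_i N_i]<\infty$ and $G$ is $p$-valuable). The invariant-prime case applied to $(N_i,H,T_i)$ and $(L,H,T_i)$ gives $T_ikN_i$ and $T_ikL$ prime; then $QkL=\bigcap_i T_ikL$ is $G$-prime in $kL$, and \cite[Corollary 14.8]{passmanICP} reduces primeness of $(kL/QkL)*(G/L)=kG/QkG$ to primeness of $(kL/T_1kL)*(N_1/L)=kN_1/T_1kN_1$, which is already in hand. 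This is precisely the ``Lorenz--Passman-style'' step you were looking for.
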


\textbf{Acknowledgements:} The first author is very grateful to the Heilbronn Institute for Mathematical Research for funding and supporting this research. The second author would like to thank Konstantin Ardakov for a very productive research meeting in Oxford, resulting in a significantly shorter proof of Theorem \ref{thm: conditions for primality}.

\section{Preliminaries}\label{sec: preliminaries}

\subsection{Filtrations and filtration topologies}\label{subsec: filtrations}

We will adopt the same conventions as in \cite[\S 2.1]{jones-woods-3}, which we briefly outline here for the convenience of the reader.

\begin{defn}\label{defn: filtrations} $ $

\begin{enumerate}[label=(\roman*)]
\item 
Let $(A, +)$ be an abelian group. If the function $u: A\to \mathbb{Z}\cup\{\infty\}$ satisfies both $u(a+b) \geq \min\{u(a), u(b)\}$ and ($u(a) = \infty \Leftrightarrow a = 0$) for all $a,b\in A$, then we say that $(A,u)$ is a \emph{filtered abelian group}, and that $u$ is a \emph{filtration (of abelian groups)} on $A$.

Let $R$ be a ring. If $u: R\to \mathbb{Z}\cup\{\infty\}$ is a filtration of abelian groups on $(R, +)$, and also $u(1) = 0$ and $u(rs) \geq u(r) + u(s)$ for all $r,s\in R$, then $(R,u)$ is a \emph{filtered ring}, and $u$ is a \emph{(ring) filtration} on $R$. If $u(rs) = u(r) + u(s)$, then $(R,u)$ will be called a \emph{valued} ring, and $u$ a \emph{valuation} on $R$.

Let $(R,u)$ be a filtered ring, and let $M$ be a left $R$-module. If $v: M\to\mathbb{Z}\cup\{\infty\}$ is a filtration of abelian groups on $(M, +)$, and also $v(rm) \geq u(r) + v(m)$ for all $m\in M$ and $r\in R$, then $(M, v)$ is a \emph{filtered left $R$-module}, and $v$ is a \emph{(left $R$-module) filtration} on $M$.

\item If $(A,u)$ is a filtered abelian group (resp.\ filtered ring, filtered module) and $n\in\mathbb{Z}$, the \emph{$n$th level set} of $(A,u)$ is the subset $\{a\in A: u(a) \geq n\}$. We will often denote this by $F_nA$ or $F^u_nA$, and will write $FA = \{F_nA\}_{n\in\mathbb{Z}}$ for the sequence of level sets.
\item Suppose $(A,u)$ and $(B,v)$ are filtered abelian groups and $f: A\to B$ is a group homomorphism. If there exists $d\in\mathbb{Z}$ such that, for all $n\in \mathbb{Z}$, $f(F_n A) \subseteq F_{n+d} B$, or in other words such that $v(f(a)) \geq u(a) + d$ for all $a\in A$, then we will say that $f$ is \emph{filtered} (with respect to $u$ and $v$). The maximal such $d$ will be called the \emph{degree} of $f$, written $\deg(f)$ or $\deg_{u,v}(f)$. In the special case where $(A,u) = (B,v)$, we will write this degree as $\deg_u(f)$.

If $f(F_n A) = f(A) \cap F_n B$, then we will say that $f$ is \emph{strictly filtered}: of course, this implies that $\deg(f) = 0$, and if $f$ is an isomorphism then $f^{-1}$ is also strictly filtered.
\end{enumerate}
\end{defn}

\begin{defn}\label{defn:associated graded} $ $
\begin{enumerate}[label=(\roman*)]
\item If $(R,u)$ is a filtered ring, with level sets $FR$, we define the \emph{associated graded ring} of $(R,u)$ to be the abelian group $$\gr_u(R):=\underset{n\in\mathbb{N}}{\bigoplus}{F_nR/F_{n+1}R}$$ with a graded ring structure defined by $(r+F_{n+1}R)\cdot(s+F_{m+1}R)=rs+F_{n+m+1}R$. If $u(r)=n$ then we denote by $\gr(r)$ the element $r+F_{n+1}R$ of $\gr_u(R)$.
\item If $(M,f)$ is a filtered left $R$-module, we define the \emph{associated graded module} of $(M,f)$ to be the abelian group $$\gr_f(M):=\underset{n\in\mathbb{N}}{\bigoplus}{F_nM/F_{n+1}M}$$ which is a $\gr_u(R)$-module via $(r+F_{n+1}R)\cdot (m+F_{m+1}M)=rm+F_{n+m+1}M$.
\end{enumerate}
\end{defn}

\begin{lem}\label{lem: u-regular elements}
Suppose $(R,u)$ is a filtered ring, $r\in R$ is arbitrary, and $z\in R^\times$ satisfies $u(z^{-1}) = -u(z)$. Then $u(zr) = u(z)+u(r)$.
\end{lem}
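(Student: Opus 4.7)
The plan is to obtain $u(zr)=u(z)+u(r)$ as two opposite inequalities, both of which follow immediately from the submultiplicativity axiom $u(ab)\geq u(a)+u(b)$ for a ring filtration, combined with the single extra hypothesis that $u(z^{-1})=-u(z)$.

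First, submultiplicativity applied directly to the product $z\cdot r$ gives the lower bound
\[
u(zr)\geq u(z)+u(r).
\]
This is the easy half and uses only that $u$ is a ring filtration; neither invertibility of $z$ nor the hypothesis on $u(z^{-1})$ is needed here.

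For the reverse inequality, I would exploit the invertibility of $z$ by writing $r=z^{-1}(zr)$ and applying submultiplicativity to this factorisation:
\[
u(r)=u\bigl(z^{-1}(zr)\bigr)\geq u(z^{-1})+u(zr)=-u(z)+u(zr),
\]
where the last equality is exactly the hypothesis $u(z^{-1})=-u(z)$. Rearranging gives $u(zr)\leq u(z)+u(r)$, and combining with the first step yields the claimed equality.

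There is no genuine obstacle: the content of the lemma is entirely in the clever choice of factorisation $r=z^{-1}(zr)$, which turns the hypothesis on $u(z^{-1})$ into the missing upper bound. The only thing to be mindful of when writing this up is to keep the arithmetic with $\infty$ well-defined, which is automatic here because $z\in R^\times$ forces $u(z),u(z^{-1})\in\mathbb{Z}$ (neither can be $\infty$, as both $z$ and $z^{-1}$ are nonzero).
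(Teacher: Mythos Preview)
Your proof is correct and essentially identical to the paper's: both use submultiplicativity on $zr$ for the lower bound and on $r=z^{-1}(zr)$ for the upper bound, with the paper merely packaging the two inequalities into a single chain $u(r)\geq u(z^{-1})+u(zr)\geq u(z^{-1})+u(z)+u(r)$ whose endpoints agree.
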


\begin{proof}
Using filtration properties, we can calculate
$$u(r) = u(z^{-1}zr) \geq u(z^{-1}) + u(zr) \geq u(z^{-1}) + u(z) + u(r).$$
Since the right-hand side is equal to $u(r)$, these are all in fact equalities, and the result follows immediately.
\end{proof}

\begin{lem}\label{lem: graded zero divisors}
If $(R,u)$ is a filtered ring, with associated graded ring $\gr_u(R)$, then for any $r\in R$, $\gr(r)$ is not a zero divisor in $\gr_u(R)$ if and only if $u(rs)=u(r)+u(s)$ for all $s\in R$.
\end{lem}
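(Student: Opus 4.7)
The whole argument rests on a single computation with homogeneous elements. For $r,s \in R$ with $u(r)=n$ and $u(s)=m$, the definition of multiplication in $\gr_u(R)$ gives
\[
\gr(r)\,\gr(s) \;=\; rs + F_{n+m+1}R,
\]
which is the zero element of the degree-$(n{+}m)$ piece precisely when $rs \in F_{n+m+1}R$, i.e.\ when $u(rs) > u(r)+u(s)$. Since the inequality $u(rs) \geq u(r)+u(s)$ holds automatically, I have the key dichotomy: $\gr(r)\gr(s)=0$ if and only if $u(rs)\neq u(r)+u(s)$. Everything in the lemma will be obtained by promoting this homogeneous statement to the full graded ring.

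The $(\Leftarrow)$ direction follows by a homogeneous-component argument. Assume $u(rs)=u(r)+u(s)$ for every $s\in R$ (noting the case $s=0$ is trivial since both sides are $\infty$). Let $y\in \gr_u(R)$ be any nonzero element and write $y=\sum_i y_i$ with $y_i \in F_iR/F_{i+1}R$; pick some $i$ with $y_i\neq 0$ and lift it to $s\in R$ with $u(s)=i$, so that $y_i = \gr(s)$. By the dichotomy, $\gr(r)\,\gr(s)\neq 0$ in degree $n+i$; but this is exactly the degree-$(n{+}i)$ component of $\gr(r)\,y$, since $\gr(r)$ is homogeneous of degree $n$. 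Hence $\gr(r)\,y\neq 0$, and a symmetric argument on the other side (using the analogous equation $u(sr)=u(s)+u(r)$, which follows similarly or may be taken as understood from the one-sided statement) shows $\gr(r)$ is not a zero divisor in $\gr_u(R)$.

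For $(\Rightarrow)$ I argue contrapositively. Suppose $s\in R$ satisfies $u(rs)>u(r)+u(s)$; necessarily $s\neq 0$ (else both sides are $\infty$), so $\gr(s)\neq 0$ in $\gr_u(R)$. The dichotomy gives $\gr(r)\,\gr(s)=0$, exhibiting $\gr(r)$ as a (left) zero divisor.

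The proof is essentially just unpacking definitions, so there is no real obstacle; the only mild subtlety is the passage from an arbitrary nonzero element of $\gr_u(R)$ to a nonzero homogeneous component, which is automatic because $\gr_u(R)$ is $\mathbb{Z}$-graded and $\gr(r)$ is itself homogeneous, so multiplication preserves the grading.
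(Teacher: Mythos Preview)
Your proof is correct and follows essentially the same approach as the paper's own proof: both hinge on the computation that $\gr(r)\gr(s)=0$ in $\gr_u(R)$ if and only if $u(rs)>u(r)+u(s)$, together with the observation that a homogeneous element is a zero divisor precisely when it is annihilated by some nonzero homogeneous element. The paper states this last fact directly, while you unpack it as a homogeneous-component argument, but the content is the same.
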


\begin{proof}
Since $\gr(r)$ is homogeneous, it is a zero divisor if and only if $\gr(r)m = 0$ for some homogeneous $m\in \gr_u(R)$, which is equivalent to saying that $\gr(r)\gr(s)=0$ for some $s\in R$, i.e.\ $(r+F_{u(r)+1}R)\cdot (s+F_{u(s)+1}R)=0$.

But $(r+F_{u(r)+1}R)\cdot (s+F_{u(s)+1}R)=rs+F_{u(r)+u(s)+1}R=0$ if and only if $u(rs)>u(r)+u(s)$, so $\gr(r)$ is a zero divisor if and only if there exists $s\in R$ such that $u(rs)>u(r)+u(s)$. Since $u(rs)\geq u(r)+u(s)$ for all $s\in R$, the result follows immediately.\end{proof}

Many of the rings $Q$ we will consider are simple artinian, and so may be viewed as matrix rings over division rings, $Q\cong M_s(F)$. In previous papers, we have needed to keep track of the isomorphism explicitly \cite[Hypothesis (H1)]{jones-woods-2}, but in this paper, we will usually be able to assume without loss of generality that our simple artinian rings $Q$ come with a predetermined canonical set of matrix units, and we will identify $Q = M_s(F)$. We now describe the natural way of inducing a filtration from a ring $R$ to a matrix ring over $R$.

\begin{defn}\label{defn: matrix filtration}
Let $(R, u)$ be a filtered ring and $s\geq 1$ an integer. Write $\{e_{ij}\}_{1\leq i,j\leq s}$ for the standard matrix units of $M_s(R)$. Then the filtration $M_s(u): M_s(R) \to \mathbb{Z}\cup\{\infty\}$ is defined to be the map sending $\sum_{i,j} a_{ij} e_{ij}$ to $\min_{i,j} \{u(a_{ij})\}$, and is called a \emph{matrix filtration}. The associated graded ring of $M_s(u)$ is $M_s(\gr_u(R))$.
\end{defn}

Filtrations give rise to a natural topology. The condition $u(a+b) \geq \min \{u(a), u(b)\}$ acts as an ultrametric inequality, and the resulting topological space is nonarchimedean in this sense.

\begin{defn}\label{defn: filtration topology} $ $

\begin{enumerate}[label=(\roman*)]
\item If $(A,u)$ is a filtered abelian group (resp.\ filtered ring, filtered module), we will always take it together with its \emph{filtration topology}: a subset $U\subseteq A$ is defined to be open if and only if, for all $x\in U$, there exists $n\in\mathbb{Z}$ such that $x + F_nA \subseteq U$.

If $(A,u)$ is a filtered abelian group (resp.\ filtered ring, filtered module), its filtration topology is an \emph{abelian group topology} (resp.\ \emph{ring topology}, \emph{module topology}) in the sense of \cite[Definitions 1.1, 2.1]{warner}: see e.g.\ \cite[Chapter I, Property 3.1(e)]{LVO}. In particular, many natural algebraic operations such as addition, multiplication and negation are continuous.
\item Let $(A,u)$ be a filtered abelian group, and $(a_i)_{i\in\mathbb{N}}$ a sequence of elements of $A$. We say that
\begin{itemize}
\item 
$(a_i)$ \emph{converges to} $a\in A$ if, for all $M\in\mathbb{Z}$, there exists $N\in\mathbb{N}$ such that $i\geq N$ implies $u(a_i - a) \geq M$.
\item
$(a_i)$ is \emph{Cauchy} if, for all $M\in\mathbb{Z}$, there exists $N\in\mathbb{N}$ such that $i,j\geq N$ implies $u(a_i - a_j) \geq M$.
\item $(A,u)$ is \emph{complete} if all Cauchy sequences in $A$ converge.
\end{itemize}
\end{enumerate}
\end{defn}

\begin{lem}\label{lem: continuous homomorphisms}
\cite[Lemma 2.1.6]{jones-woods-3}
Let $(A,u)$ and $(B,v)$ be filtered abelian groups, with collections of level sets $FA$ and $FB$ respectively. If $f: A\to B$ is a linear map, then it is continuous if and only if, for all $q\in\mathbb{Z}$, there exists $p\in\mathbb{Z}$ such that $F_pA \subseteq f^{-1}(F_q B)$.\qed
\end{lem}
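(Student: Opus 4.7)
The plan is to reduce continuity of the group homomorphism $f$ to continuity at $0$, and then match the filtration-topology description of neighborhoods of $0$ in $A$ and $B$. Recall from Definition \ref{defn: filtration topology}(i) that each filtered abelian group carries a group topology, so $f$ is continuous iff it is continuous at $0$, and a neighborhood base at $0$ is given by the level sets $\{F_nA\}_{n\in\mathbb{Z}}$ (respectively $\{F_qB\}_{q\in\mathbb{Z}}$). The condition in the statement is exactly the $\varepsilon$-$\delta$ translation of continuity at $0$ in these bases.

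First I would prove the forward direction: assume $f$ is continuous, and fix $q\in\mathbb{Z}$. The set $F_qB$ is open (for any $y\in F_qB$, $y+F_qB\subseteq F_qB$ by the ultrametric property $v(y+z)\geq\min\{v(y),v(z)\}\geq q$), so $f^{-1}(F_qB)$ is open in $A$ and contains $0$ since $f(0)=0\in F_qB$. By definition of the filtration topology on $A$, there exists $p\in\mathbb{Z}$ with $F_pA=0+F_pA\subseteq f^{-1}(F_qB)$, as desired.

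For the converse, assume the level-set condition holds, let $U\subseteq B$ be open, and take $x\in f^{-1}(U)$. Since $U$ is open at $f(x)$, there exists $q$ with $f(x)+F_qB\subseteq U$. Choose $p$ so that $F_pA\subseteq f^{-1}(F_qB)$. Using that $f$ is additive,
\[
f(x+F_pA)=f(x)+f(F_pA)\subseteq f(x)+F_qB\subseteq U,
\]
so $x+F_pA\subseteq f^{-1}(U)$, proving $f^{-1}(U)$ is open.

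There is no real obstacle; the only thing to be slightly careful about is that the statement requires $f$ to be a linear (i.e.\ additive) map so that continuity reduces to continuity at a single point and neighborhoods are controlled by translates of the $F_pA$.
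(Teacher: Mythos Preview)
Your proof is correct and is the standard argument. The paper does not give its own proof of this lemma: it simply cites \cite[Lemma 2.1.6]{jones-woods-3} and marks it with \qed, so there is nothing further to compare.
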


\begin{defn}\label{defn: equivalent filtrations}\cite[Chapter I, \S 3.2]{LVO}
Two filtrations $u$ and $v$ on a module $M$ are \emph{topologically equivalent} if, for all $p,q\in\mathbb{Z}$, there exist $m,n\in\mathbb{Z}$ such that $F^u_m M \subseteq F^v_p M$ and $F^v_n M \subseteq F^u_q M$. Equivalently, by Lemma \ref{lem: continuous homomorphisms}, the identity map $M\to M$ must be continuous both as a map $(M,u)\to (M,v)$ and as a map $(M,v)\to (M,u)$. 
\end{defn}

Next, we consider the appropriate analogue of \emph{free} modules. In our context, we will only ever need free modules of finite rank, so we immediately restrict to this slightly simpler case.

\begin{defn}\label{defn: filt-free}\cite[Chapter I, Definition 6.1]{LVO}
Let $(R, u)$ be a filtered ring, and let $(M, f)$ be a filtered, free right $R$-module of finite rank $\ell$. Then $M$ is \emph{filt-free} if there is a right $R$-module basis $\{\alpha_1, \dots, \alpha_\ell\}$ and a collection of integers $k_1, \dots, k_\ell$ such that $f(\alpha_i) = k_i$ for all $i$ and
$$F_n M = \alpha_1 (F_{n-k_1} R) \oplus \dots \oplus \alpha_\ell (F_{n-k_\ell} R).$$
In other words, for any $m\in M$, there exists a unique choice of elements $r_1, \dots, r_\ell\in R$ such that $m = \alpha_1 r_1 + \dots + \alpha_\ell r_\ell$, and we may calculate
$$f(m) = \min_{1\leq i\leq \ell}\{f(\alpha_i) + u(r_i)\} = \min_{1\leq i\leq \ell}\{k_i + u(r_i)\}.$$
We will say that $\{\alpha_1, \dots, \alpha_\ell\}$ is a \emph{filt-basis} for $M$. (Left modules can be handled similarly.)
\end{defn}

It is not difficult to show that $M$ is filt-free if and only if $\gr_f(M)$ is a free $\gr_u(R)$-module of finite rank with a basis of homogeneous elements \cite[Chapter I, Lemma 6.2(2--3)]{LVO}.

We record the following easy property for reference later.

\begin{lem}\label{lem: transitivity of filt-free extensions}
Let $(R,u)$, $(S,v)$ and $(T,w)$ be filtered rings such that $R\subseteq S\subseteq T$, and suppose $(S,v)$ is a filt-free right $(R,u)$-module with filt-basis $\{\beta_1, \dots, \beta_m\}$ and $(T,w)$ is a filt-free right $(S,v)$-module with filt-basis $\{\alpha_1, \dots, \alpha_\ell\}$. Then $(T,w)$ is a filt-free right $(R,u)$-module with filt-basis $\{\alpha_i \beta_j : 1\leq i\leq \ell, 1\leq j\leq m\}$.\qed
\end{lem}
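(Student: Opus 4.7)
The plan is to verify the two defining properties of a filt-basis directly, leveraging the two given filt-basis structures one at a time.

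First, I would establish the $R$-module basis property. Since $S = \beta_1 R \oplus \dots \oplus \beta_m R$ as a right $R$-module and $T = \alpha_1 S \oplus \dots \oplus \alpha_\ell S$ as a right $S$-module, the elementary associativity of tensor/direct-sum decompositions gives $T = \bigoplus_{i,j} \alpha_i \beta_j R$ as a right $R$-module, so the collection $\{\alpha_i\beta_j\}$ is an $R$-basis of $T$ with unique expansions $t = \sum_{i,j} \alpha_i \beta_j r_{ij}$.

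Second, I would compute the filtration degree of an arbitrary $t \in T$ using this basis. Writing $t = \sum_i \alpha_i s_i$ with $s_i = \sum_j \beta_j r_{ij} \in S$, the filt-basis property of $\{\alpha_i\}$ over $(S,v)$ gives
\[
w(t) = \min_i\{w(\alpha_i) + v(s_i)\},
\]
and the filt-basis property of $\{\beta_j\}$ over $(R,u)$ gives
\[
v(s_i) = \min_j\{v(\beta_j) + u(r_{ij})\}.
\]
Substituting and rearranging the nested minima yields
\[
w(t) = \min_{i,j}\bigl\{w(\alpha_i) + v(\beta_j) + u(r_{ij})\bigr\}.
\]
Taking $t = \alpha_i\beta_j$ (i.e.\ $r_{i'j'} = \delta_{(i,j),(i',j')}$) gives $w(\alpha_i\beta_j) = w(\alpha_i) + v(\beta_j)$, so the displayed formula becomes $w(t) = \min_{i,j}\{w(\alpha_i\beta_j) + u(r_{ij})\}$. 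This is exactly the filt-free condition for $\{\alpha_i\beta_j\}$ with degrees $k_{ij} := w(\alpha_i) + v(\beta_j)$, and by Definition \ref{defn: filt-free} it is equivalent to the level-set decomposition
\[
F_n T = \bigoplus_{i,j} \alpha_i\beta_j (F_{n - k_{ij}} R).
\]

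There is no real obstacle here: the argument is a pure bookkeeping calculation. The only subtlety worth flagging is that one should not appeal separately to the inequality $w(\alpha_i s_i) \geq w(\alpha_i) + v(s_i)$ followed by the ultrametric bound $w(t) \geq \min_i\{w(\alpha_i s_i)\}$, since that would yield only $\geq$ and not equality; the equality is precisely the content of the filt-basis hypothesis for $\{\alpha_i\}$, and one has to invoke it as an equality (not merely as an inequality) in order to obtain the filt-free conclusion. Alternatively — and perhaps more cleanly — one can prove the lemma on the graded side using the characterisation that $M$ is filt-free over $R$ iff $\gr(M)$ is graded-free over $\gr(R)$ with a homogeneous basis: then $\gr_w(T)$ is homogeneous-free over $\gr_v(S)$ on $\{\gr(\alpha_i)\}$, $\gr_v(S)$ is homogeneous-free over $\gr_u(R)$ on $\{\gr(\beta_j)\}$, and standard transitivity of free extensions of graded rings delivers the result immediately.
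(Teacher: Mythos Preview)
Your proof is correct and is exactly the routine verification one would expect; the paper in fact omits the proof entirely (the lemma is stated with a \qed\ and described only as an ``easy property''), so your argument simply fills in the expected details.
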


\subsection{The filtered tensor product}\label{subsec: central extension of scalars}

We begin by recalling some well-known facts about tensor products of rings. In all the below results, $k$ is a field, and $A$ and $B$ are arbitrary $k$-algebras. This means that $A$ is embedded in $A\otimes_k B$ by the injective map $a \mapsto a\otimes 1$, and likewise $B$ is embedded in $A\otimes_k B$, as taking the tensor product with a free module is an exact functor.

\begin{lem}\label{lem: centre of tensor product}
\cite[Corollary 5.4.4]{Coh03a} The centre of $A\otimes_k B$ is $Z(A) \otimes_k Z(B)$.\qed
\end{lem}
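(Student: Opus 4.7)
The plan is to prove the two inclusions separately. The inclusion $Z(A)\otimes_k Z(B) \subseteq Z(A\otimes_k B)$ is immediate from bilinearity of multiplication: for $z\in Z(A)$ and $w\in Z(B)$, the simple tensor $z\otimes w$ commutes with every simple tensor $a\otimes b$, since $(z\otimes w)(a\otimes b) = za\otimes wb = az\otimes bw = (a\otimes b)(z\otimes w)$, and this extends by $k$-bilinearity to all of $A\otimes_k B$.

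For the reverse inclusion, I would fix a $k$-basis $\{e_\lambda\}_{\lambda\in\Lambda}$ of $B$. Since $B$ is free as a $k$-module, $A\otimes_k B$ is free as a left $A$-module with basis $\{1\otimes e_\lambda\}_{\lambda\in\Lambda}$, so every $z\in A\otimes_k B$ has a unique expression $z = \sum_\lambda a_\lambda\otimes e_\lambda$ with $a_\lambda\in A$ and only finitely many nonzero. If $z$ lies in the centre, imposing $(a\otimes 1)z = z(a\otimes 1)$ for each $a\in A$ yields $\sum_\lambda (aa_\lambda - a_\lambda a)\otimes e_\lambda = 0$, so the uniqueness of the expansion forces $aa_\lambda = a_\lambda a$ for all $a\in A$, i.e.\ $a_\lambda\in Z(A)$. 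Hence $z\in Z(A)\otimes_k B$.

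Now I would run the symmetric argument inside $Z(A)\otimes_k B$. Choosing a $k$-basis $\{f_\mu\}_{\mu\in M}$ of $Z(A)$, I rewrite $z$ uniquely as $z = \sum_\mu f_\mu\otimes b_\mu$ with $b_\mu\in B$, and impose $(1\otimes b)z = z(1\otimes b)$ for each $b\in B$. By the analogous uniqueness argument (now using that $Z(A)\otimes_k B$ is free as a right $B$-module on $\{f_\mu\otimes 1\}$), this forces $b_\mu\in Z(B)$ for all $\mu$, so $z\in Z(A)\otimes_k Z(B)$ as required.

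The only subtlety, which is the essential point where working over a \emph{field} $k$ matters, is the freeness of $A$, $B$, $Z(A)$ and $Z(B)$ as $k$-modules, which underwrites the uniqueness of the expansions in each step. Over a general commutative base ring this can fail, but over $k$ the argument is a mechanical two-step reduction with no real obstacle.
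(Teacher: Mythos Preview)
Your proof is correct and is the standard argument; the paper itself gives no proof, simply citing \cite[Corollary 5.4.4]{Coh03a} for this well-known fact, so there is nothing to compare against.
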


As tensor products of modules commute with direct sums, we also get:

\begin{lem}\label{lem: basis for K extends to basis for F_K}
Let $A$ and $B$ be $k$-algebras, and suppose that $B$ has a finite basis $\{\beta_1,\dots,\beta_d\}$ over $k$. Then $\{1\otimes \beta_1, \dots, 1\otimes \beta_d\}$ is a basis for $A\otimes_k B$ as a free $B$-module.\qed
\end{lem}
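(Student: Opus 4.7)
The plan is to exploit the fact that over a field, tensor products are exact and commute with arbitrary direct sums, so any $k$-basis decomposition of $B$ transports directly to a decomposition of $A\otimes_k B$. (As stated, the lemma says ``free $B$-module'', but a quick dimension count over $k$ gives $\dim_k(A\otimes_k B) = \dim_k(A)\cdot d$, which forces the intended conclusion to be that $\{1\otimes\beta_i\}$ is a basis for $A\otimes_k B$ as a free left $A$-module. I will prove that statement.)

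First, I would write $B = \bigoplus_{i=1}^d k\beta_i$ as an internal direct sum of one-dimensional $k$-subspaces, which exists by hypothesis. Applying the functor $A\otimes_k(-)$, which commutes with direct sums (and is exact, since all modules over the field $k$ are flat), gives
\[
A \otimes_k B \;=\; A\otimes_k \Bigl(\bigoplus_{i=1}^d k\beta_i\Bigr) \;=\; \bigoplus_{i=1}^d \bigl(A \otimes_k k\beta_i\bigr).
\]
Next, I would identify each summand $A\otimes_k k\beta_i$ with $A$ as a left $A$-module via the map $a\otimes \lambda\beta_i \mapsto \lambda a$, with inverse $a \mapsto a\otimes \beta_i$. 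This is well-defined because $k$-scalars pass freely across the tensor product, and the left action of $A$ on the first factor is compatible with the $k$-algebra structure on $A \otimes_k B$. Under this identification the element $1\otimes\beta_i$ corresponds to $1\in A$ placed in the $i$th summand, so $\{1\otimes\beta_1,\dots,1\otimes\beta_d\}$ is a free left $A$-basis of $A\otimes_k B$.

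There is no genuine obstacle here: the argument is routine bookkeeping via the universal property of the tensor product. The only mild subtlety is to confirm that left multiplication by $A$ on the first tensorand is well-defined on $A\otimes_k B$ — but this is immediate from the fact that both $A$ and $B$ are $k$-algebras, so $k$ is central in each factor and may be moved freely across the tensor symbol.
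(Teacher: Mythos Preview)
Your proof is correct and matches the paper's approach exactly: the paper states the lemma with a \qed{} and no proof, prefacing it only with the remark ``As tensor products of modules commute with direct sums, we also get:''. You have simply spelled out that one-line justification. You are also right that the statement contains a typo: the conclusion should read ``free $A$-module'' rather than ``free $B$-module'', as confirmed by the paper's later use of the lemma (e.g.\ in Lemma~\ref{lem: tensor product filtration on F_K}(iii), where it is invoked to conclude that $\gr(F_K)$ is finitely generated over $\gr(F)$, the factor playing the role of $A$).
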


In the below lemmas, we will fix a field extension $K/k$, and denote by $A_K$ the tensor product $K\otimes_k A$. Recall that, if $A$ is a $k$-algebra, then $M_n(A)$ is also a $k$-algebra via the diagonal map.

\begin{lem}\label{lem: Q_K = M_n(F_K)}\cite[Theorem 5.4.1 and (5.4.5)]{Coh03a}
$A_K\otimes_k M_n(k) \cong K\otimes_k M_n(A) \cong M_n(A_K)$.
\end{lem}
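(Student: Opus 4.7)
The plan is to derive both isomorphisms from a single building block: for any $k$-algebra $B$, tensoring with $M_n(k)$ over $k$ yields $M_n(B)$. I would first establish this building block $B \otimes_k M_n(k) \cong M_n(B)$ via the map defined on simple tensors by $b \otimes E_{ij} \mapsto b E_{ij}$, where $\{E_{ij}\}_{1 \leq i,j \leq n}$ are the standard matrix units of $M_n(k)$. By Lemma \ref{lem: basis for K extends to basis for F_K} (applied with the role of $B$ there played by $M_n(k)$, which has $k$-basis $\{E_{ij}\}$), the elements $\{1 \otimes E_{ij}\}$ form a $B$-module basis for $B \otimes_k M_n(k)$, so the proposed map is a $B$-module isomorphism. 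A short check on these basis elements, using the relations $E_{ij} E_{kl} = \delta_{jk} E_{il}$ on both sides, shows the map also respects multiplication and so is a $k$-algebra isomorphism.

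Applying this building block with $B = A_K$ immediately gives the right-hand isomorphism $A_K \otimes_k M_n(k) \cong M_n(A_K)$. For the left-hand isomorphism, applying the building block with $B = A$ gives $A \otimes_k M_n(k) \cong M_n(A)$, and tensoring with $K$ on the left, combined with associativity (and the symmetry $U \otimes_k V \cong V \otimes_k U$, valid since $k$ is central in everything here), yields
\[
K \otimes_k M_n(A) \;\cong\; K \otimes_k (A \otimes_k M_n(k)) \;\cong\; (K \otimes_k A) \otimes_k M_n(k) \;=\; A_K \otimes_k M_n(k),
\]
which is the required isomorphism.

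There is no substantive obstacle; the only points requiring care are (i) verifying that the single displayed map of the building block is multiplicative, not merely $k$-linear, and (ii) keeping track that all tensor products are taken over the central base field $k$, so that associativity and commutativity of $\otimes_k$ apply without issue. Given the references to \cite[Theorem 5.4.1 and (5.4.5)]{Coh03a}, the author presumably intends simply to cite these statements; the two-line argument above unpacks exactly what is being invoked.
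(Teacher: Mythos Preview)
Your proposal is correct. The paper itself offers no proof beyond the citation to \cite{Coh03a}, so there is nothing to compare; your argument is a correct and standard unpacking of exactly the facts being cited.
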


\begin{propn}\label{propn: simplicity under tensor product}
If $A_K$ is prime (resp.\ simple), then $A$ is prime (resp.\ simple). Moreover, if $Z(A)=k$ and $A$ is simple, then $A_K$ is simple.
\end{propn}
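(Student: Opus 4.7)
The two halves of the statement call for rather different techniques, and I would argue them separately.

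For the first part (descent of primeness and simplicity from $A_K$ to $A$), the key point is that $K/k$ is a free, hence faithfully flat, extension of $k$-modules. For any two-sided ideal $I$ of $A$, the subspace $I_K := K \otimes_k I$ therefore embeds into $A_K$ as a two-sided ideal with quotient $A_K/I_K \cong (A/I)_K$, and $I_K = 0$ if and only if $I = 0$. From this: if $A$ had nonzero two-sided ideals $I, J$ with $IJ = 0$, then $I_K$ and $J_K$ would be nonzero two-sided ideals of $A_K$ satisfying $I_K J_K \subseteq (IJ)_K = 0$, contradicting primeness of $A_K$; similarly a proper nonzero ideal of $A$ would yield a proper nonzero ideal of $A_K$, contradicting simplicity. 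No serious obstacle here.

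For the second part (ascent of simplicity under the centrality hypothesis $Z(A) = k$), I would use the classical minimal-length argument. Let $I$ be a nonzero two-sided ideal of $A_K$ and pick $0 \neq x \in I$ of least \emph{length} $n$, where the length of $x$ is the smallest $n$ for which one can write $x = \sum_{i=1}^n a_i \otimes \beta_i$ with $\{\beta_i\}$ linearly independent over $k$ (so necessarily all $a_i \neq 0$, by Lemma \ref{lem: basis for K extends to basis for F_K}). Multiplying on the right by $1 \otimes \beta_1^{-1}$, we may take $\beta_1 = 1$. Since $A$ is simple we may write $1 = \sum_j c_j a_1 d_j$, and replacing $x$ by $\sum_j (c_j \otimes 1) x (d_j \otimes 1) \in I$ we may also take $a_1 = 1$; minimality of $n$ ensures this new element still has length exactly $n$. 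Then for every $b \in A$ the commutator
\[
[b \otimes 1, x] \;=\; \sum_{i \geq 2} [b, a_i] \otimes \beta_i
\]
lies in $I$ and has length at most $n-1$, so by minimality it must vanish. Thus $[b, a_i] = 0$ for all $b \in A$ and all $i \geq 2$, forcing $a_i \in Z(A) = k$. Absorbing these scalars into the second factor gives $x = 1 \otimes \gamma$ for some $\gamma \in K^\times$ (nonzero by linear independence of the $\beta_i$), and since $1 \otimes \gamma$ is a unit in $A_K$ we conclude $I = A_K$.

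The main technical hurdle is the minimal-length bookkeeping: at each reduction step one must verify that the length of the modified element does not drop below $n$, which is what licenses the final contradiction when forcing the commutator to zero. The centrality hypothesis $Z(A) = k$ enters at exactly this commutator step, and it is genuinely needed: without it, $Z(A) \otimes_k K$ typically acquires zero divisors and $A_K$ fails to be simple.
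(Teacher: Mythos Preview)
Your argument is correct. For the descent direction your faithful-flatness argument is essentially identical to the paper's, which phrases it via $J_K \cap A = J$ and the identity $J_K J'_K = (JJ')_K$.

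For the ascent direction there is a slight difference in presentation rather than substance: the paper simply invokes the standard central-simple-algebra fact \cite[Theorem 5.1.2]{Coh03b}, whereas you have written out the classical minimal-length proof of that very result. Your bookkeeping is sound --- after normalising to $a_1 = 1$ the element remains nonzero (its $\beta_1 = 1$ coefficient is $1$), so minimality forces the commutator $[b \otimes 1, x]$ to vanish, and linear independence of the $\beta_i$ over $k$ then pins each $a_i$ into $Z(A) = k$. The only thing to watch is that after the $1 = \sum_j c_j a_1 d_j$ step some of the new $a_i'$ for $i \geq 2$ could in principle vanish; but this does not matter for the argument, since all you need is that the resulting element is nonzero and that the commutator has length strictly less than $n$. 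Your approach is self-contained where the paper's is a citation, at the cost of a few lines.
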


\begin{proof}
Given any two-sided ideal $J\lhd A$, we will write $J_K = K\otimes_k J$ for the two-sided ideal of $A_K$. Note that $J_K\cap A = J$ (see e.g.\ \cite[proof of Theorem 5.1.2]{Coh03b}).

Assume $A_K$ is simple, and take an arbitrary nonzero ideal $J\lhd A$. Then $J_K$ is a nonzero ideal of $A_K$, and so it must be equal to $A_K$. But then $J = J_K\cap A = A_K \cap A = A$, so $A$ is simple.

Next, assume $A_K$ is prime, and take two nonzero ideals $J, J'\lhd A$. Then $J_K, J'_K\lhd A_K$ are nonzero ideals, and hence $J_KJ'_K \neq 0$. But $J_KJ'_K = (K\otimes_k J)(K\otimes_k J') = (JJ')_K$, and so $JJ' \neq 0$. Hence $A$ is prime.

Finally, if $A$ is simple and $Z(A) = k$, then the claim follows from standard results about (not necessarily finite-dimensional) central simple algebras \cite[Theorem 5.1.2]{Coh03b}.\end{proof}

Next, we will turn to tensor products of filtered algebras. Much of the following can be done in far greater generality: see \cite[Chapter I, \S 2 and \S 6]{LVO}. However, again, we quickly restrict to the case of interest to us.

\begin{defn}\label{def: filtered tensor product} \cite[Chapter I, \S 6, p.\ 57]{LVO}
Let $(R,u)$ be a filtered ring, $(M,f)$ be a filtered right $R$-module, and $(N,g)$ be a filtered left $R$-module. Then we can define their \emph{filtered tensor product}, an abelian group $M\otimes_R N$, whose $\ell$th level set $F_\ell(M\otimes_R N)$ is equal to the subgroup of $M\otimes_R N$ generated by all elements $m\otimes n$, where $m\in F_s M$ and $n\in F_t N$, such that $s+t \geq \ell$.

It will be useful to have the following explicit rephrasing: writing the filtration on $M\otimes_R N$ as $f\otimes g$, an element $x\in M\otimes_R N$ satisfies $(f\otimes g)(x) \geq \ell$ precisely when there exists a representation $x = \sum_{i=1}^r m_i \otimes n_i$ such that $f(m_i) + g(n_i) \geq \ell$ for all $1\leq i\leq r$.

Put yet another way: writing $\pi: M\otimes_\mathbb{Z} N\to M\otimes_R N$ for the natural surjective map (of abelian groups), we can define the $\ell$th level set as an abelian group: $$F_\ell(M\otimes_R N) = \sum_{s+t \geq \ell} \pi(F_s M \otimes_\mathbb{Z} F_t N).$$
\end{defn}

The following is now clear from the definitions.

\begin{lem}
Let $(R,u)$, $(A,f)$ and $(B,g)$ be filtered rings such that $(A,f)$ is a filtered right $R$-module and $(B,g)$ is a filtered left $R$-module. Then the tensor product filtration $f\otimes g$ on $A\otimes_R B$ is a ring filtration.\qed
\end{lem}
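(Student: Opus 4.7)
The lemma asks us to verify that the tensor product filtration $f\otimes g$, defined as level sets, satisfies the axioms of a ring filtration on $A\otimes_R B$. Since Definition \ref{def: filtered tensor product} already builds $f\otimes g$ as a filtration of the abelian group $A\otimes_R B$ (the level sets form a descending chain of subgroups, and the non-degeneracy axiom $(f\otimes g)(x) = \infty \iff x=0$ follows from the corresponding axioms on $f$ and $g$ together with the usual property that $\bigcap_n F_n(A\otimes_R B) = 0$), the remaining content is the multiplicativity inequality $(f\otimes g)(xy) \geq (f\otimes g)(x) + (f\otimes g)(y)$ together with $(f\otimes g)(1\otimes 1) = 0$.

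The plan is to use the explicit representation characterisation provided in Definition \ref{def: filtered tensor product}. Suppose $(f\otimes g)(x) \geq \ell$ and $(f\otimes g)(y) \geq m$. Then by definition we may choose representations
$$x = \sum_{i=1}^r a_i \otimes b_i, \qquad y = \sum_{j=1}^s a'_j \otimes b'_j,$$
with $f(a_i) + g(b_i) \geq \ell$ for every $i$ and $f(a'_j) + g(b'_j) \geq m$ for every $j$. Multiplying out in $A\otimes_R B$ gives $xy = \sum_{i,j} (a_i a'_j) \otimes (b_i b'_j)$, and using multiplicativity of the ring filtrations $f$ on $A$ and $g$ on $B$, each summand satisfies
$$f(a_i a'_j) + g(b_i b'_j) \;\geq\; \bigl(f(a_i)+f(a'_j)\bigr) + \bigl(g(b_i)+g(b'_j)\bigr) \;\geq\; \ell + m.$$
By the definition of $f\otimes g$, this produces a representation of $xy$ certifying $(f\otimes g)(xy) \geq \ell + m$, as required. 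Since $(f\otimes g)(x)$ and $(f\otimes g)(y)$ are (by definition) the suprema of such $\ell$ and $m$, taking the supremum yields the desired inequality.

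For the unit, $1\otimes 1$ admits the representation $1\otimes 1$ with $f(1) + g(1) = 0$, so $(f\otimes g)(1\otimes 1) \geq 0$ is immediate; the reverse inequality $(f\otimes g)(1\otimes 1) \leq 0$ follows because $1\otimes 1 \notin F_1(A\otimes_R B)$ (otherwise, passing to the associated graded via the natural surjection $\gr_f(A)\otimes_{\gr_u(R)} \gr_g(B) \twoheadrightarrow \gr_{f\otimes g}(A\otimes_R B)$, the class of $1\otimes 1$ would vanish, contradicting $\gr(1) = 1$ in the source). The main (minor) obstacle is this last point: one has to rule out that the tensor-product identity could accidentally have strictly positive filtration. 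Apart from that, everything is a direct unwinding of the definitions, which is why the paper flags the lemma as ``clear''.
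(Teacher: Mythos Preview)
Your multiplicativity argument via explicit representations is correct and is precisely the content the paper has in mind when it marks the lemma as ``clear from the definitions'' (the paper gives no further proof).

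Two points deserve correction, though. First, your justification of separation is circular: the claim ``$\bigcap_n F_n(A\otimes_R B) = 0$'' \emph{is} the separation axiom, not a consequence of separation of $f$ and $g$. In fact separation of $f\otimes g$ can genuinely fail: take $R=\mathbb{Z}$ with the trivial filtration, $A=\mathbb{Z}$ with the $p$-adic filtration, and $B=\mathbb{Z}$ with the $q$-adic filtration for distinct primes $p,q$. Then $A\otimes_R B=\mathbb{Z}$, but $p^\ell, q^\ell\in F_\ell(A\otimes_R B)$ for every $\ell\geq 0$, so $F_\ell(A\otimes_R B)=\mathbb{Z}$ for all $\ell$. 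In the paper's applications one side is always filt-free over $(R,u)$, which does force separation (via Lemma~\ref{lem: filt-free module under tensor product}), but the lemma as stated needs this caveat---and the paper glosses over it too.

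Second, your argument for $(f\otimes g)(1\otimes 1)\leq 0$ via the associated graded surjection is unnecessarily heavy. Once multiplicativity is in hand, simply observe $(f\otimes g)(1\otimes 1) = (f\otimes g)\bigl((1\otimes 1)^2\bigr) \geq 2\,(f\otimes g)(1\otimes 1)$, whence $(f\otimes g)(1\otimes 1)\leq 0$ provided it is finite, i.e.\ provided $A\otimes_R B\neq 0$.
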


In the following two lemmas, let $k$ be a field, and $A$ and $B$ two $k$-algebras. For ease of notation, we will assume that $k\subseteq A$ and $k\subseteq B$. Suppose $A$ and $B$ are given ring filtrations $f$ and $g$ respectively, with the property that $f|_k = g|_k$, and write $\varphi = f|_k$ for the induced filtration on $k$.

\begin{lem}\label{lem: filt-free module under tensor product}
\cite[Chapter I, Lemma 6.15]{LVO}
If $(A, f)$ is filt-free as a right $(k, \varphi)$-module with filt-basis $\{e_1, \dots, e_r\}$, then $(A\otimes_k B, f\otimes g)$ is filt-free of finite rank as a right $(B, g)$-module with filt-basis $\{e_1\otimes 1, \dots, e_r\otimes 1\}$. Moreover, $(f\otimes g)(e_i\otimes 1) = f(e_i)$ for all $i$. In particular, if $B$ is complete, then $A\otimes_k B$ is also complete.
\end{lem}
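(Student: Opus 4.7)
The plan is to work directly from the definition of the tensor product filtration and exploit the fact that $A = \bigoplus_i e_i k$ as a free right $k$-module. Set $k_i := f(e_i)$, so that the filt-free hypothesis gives $F_n A = \bigoplus_i e_i (F_{n-k_i} k)$. Because tensor products commute with direct sums, the underlying module decomposition is immediate:
\[
A \otimes_k B \;=\; \bigoplus_i (e_i k) \otimes_k B \;=\; \bigoplus_i (e_i \otimes 1) B,
\]
so $\{e_1 \otimes 1, \dots, e_r \otimes 1\}$ is certainly a $B$-module basis. The substance of the lemma is that this basis is a \emph{filt}-basis, i.e.\ that for any $x = \sum_i (e_i \otimes 1)(1 \otimes b_i) = \sum_i e_i \otimes b_i$ we have
\[
(f\otimes g)(x) \;=\; \min_i\{k_i + g(b_i)\}.
\]

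One direction, $(f\otimes g)(x) \geq \min_i\{k_i + g(b_i)\}$, is immediate from the definition: the representation $x = \sum_i e_i \otimes b_i$ witnesses it. The other direction is the main technical point. I would take an arbitrary representation $x = \sum_j a_j \otimes c_j$ with $f(a_j)+g(c_j) \geq \ell$ and expand $a_j = \sum_i e_i \lambda_{ij}$ in the filt-basis of $A$; the filt-free hypothesis says $f(a_j) = \min_i\{k_i + \varphi(\lambda_{ij})\}$, so in particular $\varphi(\lambda_{ij}) \geq f(a_j) - k_i$. Moving the $\lambda_{ij} \in k$ across the tensor and reassembling gives
\[
x \;=\; \sum_j \sum_i e_i \otimes (\lambda_{ij} c_j) \;=\; \sum_i e_i \otimes \Big(\sum_j \lambda_{ij} c_j\Big),
\]
and by uniqueness of the free $B$-module expansion from the previous paragraph, $b_i = \sum_j \lambda_{ij} c_j$. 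The filtration estimate on $B$ then gives
\[
g(b_i) \;\geq\; \min_j\{\varphi(\lambda_{ij}) + g(c_j)\} \;\geq\; \min_j\{f(a_j) - k_i + g(c_j)\} \;\geq\; \ell - k_i,
\]
so $k_i + g(b_i) \geq \ell$ for every $i$, which is exactly what is needed. This argument simultaneously confirms $(f\otimes g)(e_i \otimes 1) = k_i = f(e_i)$ by applying it to $x = e_i \otimes 1$.

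For the final clause, with the level sets identified above, the filtered abelian group $(A\otimes_k B, f\otimes g)$ decomposes as a finite direct sum $\bigoplus_i (e_i \otimes 1) B$, where the filtration on the $i$th summand is $g$ shifted by $k_i$. Shifting preserves completeness, and a finite direct sum of complete filtered abelian groups is complete (Cauchy sequences split componentwise), so completeness of $B$ transfers to $A\otimes_k B$. The main obstacle throughout is the reverse inequality in the displayed formula for $(f\otimes g)(x)$; once one has the right idea of reducing an arbitrary tensor representation to the canonical one via the filt-basis of $A$, everything else is bookkeeping.
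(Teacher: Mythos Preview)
Your argument is correct. The paper does not actually give a proof: it simply observes that the inclusion $k\hookrightarrow A$ is filtered of degree $0$ and then cites \cite[Chapter~I, Lemma~6.15]{LVO} for the result. What you have written is essentially the standard direct verification that lies behind that citation, and all the steps (reducing an arbitrary tensor representation to the canonical one via the filt-basis of $A$, using $g|_k=\varphi$ to push the scalars $\lambda_{ij}$ into the $B$-factor, and the componentwise argument for completeness) are sound. So your route is more self-contained, while the paper's is just a pointer to the literature; there is no substantive mathematical difference.
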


\begin{proof}
By the definition of $\varphi = f|_k$, the inclusion map $k\to A$ is a filtered ring homomorphism of degree 0, and so we apply \cite[Chapter I, Lemma 6.15]{LVO}.
\end{proof}

\begin{lem}\label{lem: associated graded of filt-free}
\cite[Chapter I, Lemma 6.2(2)]{LVO} Suppose that $(A, f)$ is filt-free as a right $(k, \varphi)$-module  with filt-basis $\{e_1, \dots, e_r\}$. Then $\gr_f(A)$ is free as a right $\gr_\varphi(k)$-module with homogeneous basis $\{\gr(e_1), \dots, \gr(e_r)\}$.
\end{lem}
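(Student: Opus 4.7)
The plan is to unwind the definition of filt-free and read off the graded structure degree by degree. By Definition \ref{defn: filt-free}, writing $k_i = f(e_i)$, we have for every $n\in\mathbb{Z}$ the direct-sum decomposition
$$F_n A \;=\; \bigoplus_{i=1}^r e_i\cdot F_{n-k_i}k.$$
The first step is to apply this identity at level $n+1$ as well, and quotient, to obtain
$$\gr_f(A)_n \;=\; F_n A / F_{n+1} A \;=\; \bigoplus_{i=1}^r \frac{e_i\, F_{n-k_i}k}{e_i\, F_{n+1-k_i}k}.$$
(The fact that the quotient of a direct sum by the direct sum of subgroups is the direct sum of the quotients is elementary, but worth pausing to check carefully in the filt-free setting.)

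Next I would identify each summand. For each $i$, left multiplication by $e_i$ induces a map
$$\mu_i \colon F_{n-k_i}k / F_{n+1-k_i}k \;\longrightarrow\; e_i\, F_{n-k_i}k / e_i\, F_{n+1-k_i}k,$$
and I would show $\mu_i$ is an isomorphism of abelian groups. Surjectivity is immediate. For injectivity, suppose $e_i c \in e_i F_{n+1-k_i}k$ with $c \in F_{n-k_i}k$; then $e_i c = e_i c'$ for some $c'\in F_{n+1-k_i}k$, and since $\{e_1,\dots,e_r\}$ is a free right $k$-basis of $A$ (not just a filt-basis), we conclude $c=c'$, hence $c \in F_{n+1-k_i}k$.

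Assembling this across all $i$ and $n$, I get
$$\gr_f(A) \;=\; \bigoplus_{n\in\mathbb{Z}}\bigoplus_{i=1}^r \mu_i\bigl(\gr_\varphi(k)_{n-k_i}\bigr) \;=\; \bigoplus_{i=1}^r \gr(e_i)\cdot \gr_\varphi(k),$$
where the last equality uses that $\gr(e_i)\cdot\gr(c) = \mu_i(\gr(c))$ for any $c\in F_{n-k_i}k$ with $u(c) = n-k_i$. This exhibits $\{\gr(e_1),\dots,\gr(e_r)\}$ as a free homogeneous basis of $\gr_f(A)$ over $\gr_\varphi(k)$, each $\gr(e_i)$ of degree $k_i$, which is exactly the claim.

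The only subtlety — and the one place I would be careful — is in the last identification: for $c\in F_{n-k_i}k$, the class of $e_ic$ in $F_nA/F_{n+1}A$ equals $\gr(e_i)\cdot\gr(c)$ precisely when $u(c)=n-k_i$, and is zero when $u(c)>n-k_i$. This minor case analysis is what reconciles the module-structure description $\sum_i \gr(e_i)\cdot\bar c_i$ with the direct-sum decomposition obtained from the filtration, and it is all that is needed to finish the proof.
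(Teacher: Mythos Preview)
Your argument is correct and is essentially the standard direct verification one would find in the cited reference; the paper itself gives no proof of this lemma, only the citation to \cite[Chapter I, Lemma 6.2(2)]{LVO}.
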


\pushQED{\qed}
\begin{lem}\label{lem: associated graded of tensor product}
\cite[Chapter I, Lemma 6.14]{LVO} Suppose that $(A, f)$ is filt-free as a right $(k, \varphi)$-module. Then
\[
\gr_{f\otimes g}(A\otimes_k B) \cong \gr_f(A) \otimes_{\gr_\varphi(k)} \gr_g(B).\qedhere
\]
\end{lem}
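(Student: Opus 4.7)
My plan is to construct an explicit graded ring homomorphism $\Phi: \gr_f(A)\otimes_{\gr_\varphi(k)}\gr_g(B) \to \gr_{f\otimes g}(A\otimes_k B)$ and then exploit the filt-free hypothesis to check that it sends a basis to a basis, giving the desired isomorphism.

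First I would define the map. For homogeneous elements $\alpha\in F_sA/F_{s+1}A$ and $\beta\in F_tB/F_{t+1}B$, pick representatives $a\in F_sA$ and $b\in F_tB$ and set
\[
\psi(\alpha,\beta) = (a\otimes b) + F_{s+t+1}(A\otimes_k B) \in F_{s+t}(A\otimes_k B)/F_{s+t+1}(A\otimes_k B).
\]
This is well-defined: if we change $a$ by an element of $F_{s+1}A$ or $b$ by an element of $F_{t+1}B$, then the difference $a\otimes b$ shifts by an element of $F_{s+t+1}(A\otimes_k B)$ by the definition of the tensor product filtration. The map $\psi$ is $\mathbb{Z}$-bilinear, and is $\gr_\varphi(k)$-balanced since $\varphi = f|_k = g|_k$, so for $c\in F_r k$ one has $a c\otimes b - a\otimes cb = 0$ already at the level of $A\otimes_k B$. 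It therefore induces a graded abelian group homomorphism $\Phi$ as claimed, and multiplicativity on homogeneous generators follows from the fact that the multiplication $(a\otimes b)(a'\otimes b') = aa'\otimes bb'$ respects the filtrations componentwise.

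Next I would check $\Phi$ is an isomorphism using the filt-free hypothesis. Let $\{e_1, \dots, e_r\}$ be a filt-basis for $(A,f)$ over $(k,\varphi)$. By Lemma~\ref{lem: filt-free module under tensor product}, $\{e_1\otimes 1, \dots, e_r\otimes 1\}$ is a filt-basis for $(A\otimes_k B, f\otimes g)$ over $(B,g)$; applying Lemma~\ref{lem: associated graded of filt-free} on both sides, $\gr_f(A)$ is a free $\gr_\varphi(k)$-module with homogeneous basis $\{\gr(e_i)\}_{i=1}^r$, and $\gr_{f\otimes g}(A\otimes_k B)$ is a free $\gr_g(B)$-module with homogeneous basis $\{\gr(e_i\otimes 1)\}_{i=1}^r$. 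Consequently, the left-hand side $\gr_f(A)\otimes_{\gr_\varphi(k)}\gr_g(B)$ is a free right $\gr_g(B)$-module with basis $\{\gr(e_i)\otimes 1 : 1 \leq i \leq r\}$, by base-change for free modules.

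Finally, $\Phi$ is a homomorphism of right $\gr_g(B)$-modules sending $\gr(e_i)\otimes 1 \mapsto \gr(e_i\otimes 1)$, i.e.\ it sends one $\gr_g(B)$-basis to the other. Hence $\Phi$ is an isomorphism, as required. The main point of delicacy in the argument is the well-definedness and $\gr_\varphi(k)$-bilinearity of $\psi$; once this is established, the filt-free hypothesis immediately promotes both sides into free $\gr_g(B)$-modules with matched bases, and no further work is needed.
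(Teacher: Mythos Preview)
Your argument is correct. The paper does not actually supply its own proof of this lemma: it simply quotes the result from \cite[Chapter~I, Lemma~6.14]{LVO} and closes with a \texttt{\textbackslash qed}. So there is no ``paper's proof'' to compare against, only the cited reference.

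Your approach is the natural one and matches the standard proof: define the canonical graded map on homogeneous symbols, check well-definedness and $\gr_\varphi(k)$-balance (both of which you handle correctly), and then use the filt-free hypothesis together with Lemmas~\ref{lem: filt-free module under tensor product} and~\ref{lem: associated graded of filt-free} to identify both sides as free right $\gr_g(B)$-modules on matching bases $\{\gr(e_i)\otimes 1\}$ and $\{\gr(e_i\otimes 1)\}$. One small cosmetic point: when you say ``$\Phi$ is a homomorphism of right $\gr_g(B)$-modules'', you should perhaps note that this follows from the multiplicativity of $\Phi$ together with the identification of $\gr_g(B)$ inside $\gr_{f\otimes g}(A\otimes_k B)$ via $\beta\mapsto \gr(1\otimes b)$; but this is routine and implicit in your setup.
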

\popQED

\subsection{Discrete valuation rings and standard filtrations}\label{subsec: DVRs}

\begin{defn}\label{defn: DVR}
As in \cite[\S 3.6]{ardakovInv} and \cite[\S 2.2]{jones-woods-3}, a \emph{discrete valuation ring} is a Noetherian domain $D$ such that, for every nonzero $x\in Q(D)$ (its Goldie division ring of quotients), we have either $x\in D$ or $x^{-1}\in D$.
\end{defn}

\begin{props}\label{props: DVRs}
Suppose $D$ is a discrete valuation ring, and let $F = Q(D)$ be its Goldie division ring of quotients. Most of the following facts were proved in \cite[\S 2.5]{jones-woods-2} and \cite[\S 2.2]{jones-woods-3}.
\begin{enumerate}[label=(\roman*)]
\item $D$ is a (scalar) local ring. This implies that $D^\times = D\setminus J(D)$, and so in particular, if $x\in F$ but $x\not\in D$, then $x^{-1}\in J(D)$.
\item The Jacobson radical is given by $J(D) = \pi D$ for some normal element $\pi$, and every nonzero left or right ideal has the form $\pi^n D=J(D)^n$ for some $n\geq 0$. It follows that $\bigcap_{n\in\mathbb{N}} \pi^n D = \{0\}$. Indeed, we can take $\pi$ to be any element of $J(D)\backslash J(D)^2$, and we call such an element $\pi$ a \emph{uniformiser} for $D$.
\item We will always assume that a discrete valuation ring $D$ comes equipped with its natural $J(D)$-adic valuation, i.e.\ the valuation $v: D\to \mathbb{N}\cup\{\infty\}$ defined by $$v(x)=\sup\{n\in\mathbb{N}:x\in\pi^nD\}.$$
Since $\pi$ is normal and $D$ is a domain, $\gr_v(D)$ is isomorphic to a skew polynomial ring $(D/J(D))[\overline{\pi}; \alpha]$ for some automorphism $\alpha$, while $\gr_v(F)$ is isomorphic to the skew Laurent polynomial ring $(D/J(D))[\overline{\pi},\overline{\pi}^{-1}; \alpha]$.
\item $J(D)$ is invertible, with inverse $J(D)^{-1} = \pi^{-1}D \subseteq F$. Hence the sets $\{\pi^n D\}_{n\in\mathbb{Z}}$ are the level sets of a filtration on $F$. We will call this the filtration \emph{induced} by $v$ on $F$, and will often continue to denote it by $v$ if there is no danger of confusion.
\end{enumerate}
\end{props}

In \cite[Introduction, Definition 2.2.3]{jones-woods-3}, we introduced the following terminology for certain particularly well-behaved filtered rings.

\begin{defn}\label{defn: standard filtrations}
Suppose $Q$ is a simple artinian ring with filtration $u$. We will say that $u$ is a \emph{standard} filtration on $Q$, or that $(Q,u)$ is a \emph{standard filtered artinian ring}, if: 
\begin{enumerate}[label=(\roman*)]
\item there exists an identification $Q = M_s(F)$ such that $u = M_s(v)$ for some integer $s\geq 1$ and some valued division ring $(F,v)$, and
\item the valuation $v$ is induced from a complete discrete valuation ring $D\subseteq F$ satisfying $F = Q(D)$.
\end{enumerate}
In this case, the complete DVR $D$ and its (valued) division ring of fractions $F$ are uniquely defined, and we may say that they are \emph{associated} to $Q$. There is also a canonically associated maximal order $\O := u^{-1}([0,\infty]) = M_s(D)$, the valuation ring of $(Q,u)$, and the level sets of the filtration $u$ are $\{J(\O)^n : n\in\mathbb{Z}\}$.
\end{defn}

We recall a basic fact about standard filtrations: see \cite[Lemma 4.5]{jones-woods-1}, or \cite[Lemma 4.2.1]{jones-woods-3}.

\begin{lem}\label{lem: criterion for delta positive degree}
Suppose $(Q, u)$ is a standard filtered artinian ring with associated maximal order $\O$, and let $(\sigma, \delta)$ be any skew derivation on $Q$. If $\sigma(\O) = \O$ and $\delta(\O) \subseteq J(\O)^2$, then $\deg_u(\delta) \geq 1$. Likewise, if $(\sigma - \id)(\O) \subseteq J(\O)^2$, then $\deg_u(\sigma - \id) \geq 1$. \qed
\end{lem}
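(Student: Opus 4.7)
My plan is to unify both statements. Set $\eta := \delta$ in Part 1 and $\eta := \sigma - \id$ in Part 2: the latter is itself a left $\sigma$-derivation by the direct verification $(\sigma - \id)(ab) = (\sigma - \id)(a)b + \sigma(a)(\sigma - \id)(b)$. In both cases the hypothesis becomes $\eta(\O)\subseteq J(\O)^2$, and I need to show $\deg_u(\eta)\geq 1$. Moreover $\sigma(\O)\subseteq\O$ in both: this is given in Part 1, while in Part 2 it follows from $\sigma(q) = q + \eta(q) \in \O + J(\O)^2 \subseteq \O$ for $q\in\O$.

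The key technical input is a precise description of $\sigma(\pi)$, where $\pi$ is a normal uniformiser for $D$ (viewed inside $\O = M_s(D)$). In Part 1, $\sigma(\O) = \O$ forces $\sigma(J(\O)) = J(\O)$, hence $\sigma(\pi) = \pi v$ for some $v\in\O^\times$. In Part 2, $\sigma(\pi) = \pi + \eta(\pi) = \pi(1 + \pi r)$ for some $r\in\O$, using $\eta(\pi)\in J(\O)^2 = \pi^2\O$ and normality of $\pi$; and $1 + \pi r \in 1 + J(\O) \subseteq \O^\times$. Iterating and using $\pi\O = \O\pi$, I get $\sigma(\pi^n) = \pi^n v_n$ with $v_n \in \O^\times$ for every $n \geq 0$, yielding both $u(\sigma(\pi^n)) = n$ and the crucial $u(\sigma(\pi^n)^{-1}) = u(v_n^{-1}\pi^{-n}) = -n$.

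With these in hand, I run a short induction. Since $\eta(\pi)\in J(\O)^2$ we have $u(\eta(\pi))\geq 2$, and the Leibniz rule $\eta(\pi^n) = \eta(\pi)\pi^{n-1} + \sigma(\pi)\eta(\pi^{n-1})$ yields $u(\eta(\pi^n)) \geq n+1$ for all $n\geq 0$ by induction on $n$. Applying $\eta$ to $\pi^n\pi^{-n} = 1$ gives $\eta(\pi^{-n}) = -\sigma(\pi^n)^{-1}\eta(\pi^n)\pi^{-n}$, so the bounds of the previous paragraph give $u(\eta(\pi^{-n}))\geq -n+1$ for $n\geq 1$. Finally, for arbitrary $q\in Q$ with $u(q) = m$, I write $q = \pi^m q'$ with $q'\in\O$ of $u$-value $0$ (possible since $q\in J(\O)^m = \pi^m\O$, and comparing $u$-values forces $u(q') = 0$); the expansion $\eta(q) = \eta(\pi^m)q' + \sigma(\pi^m)\eta(q')$ then delivers $u(\eta(q)) \geq m+1 = u(q) + 1$. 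The most delicate step is securing the identity $u(\sigma(\pi^n)^{-1}) = -n$, which is not automatic in a filtered ring with zero divisors such as $Q = M_s(F)$; this is precisely why one needs the explicit factorization $\sigma(\pi) = \pi v$ with $v\in\O^\times$ rather than merely tracking the $u$-value of $\sigma(\pi)$.
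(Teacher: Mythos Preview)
Your argument is correct. The paper does not actually prove this lemma: it is stated as a recalled fact with a terminal \qed, citing \cite[Lemma 4.5]{jones-woods-1} and \cite[Lemma 4.2.1]{jones-woods-3} for the proof. So there is no in-paper proof to compare against; your direct argument stands on its own.

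A few remarks confirming the soundness of the more delicate points. Your verification that $v\in\O^\times$ in Part~1 is correct but deserves one more word: from $\sigma(\O)=\O$ you get $\sigma(J(\O))=J(\O)$ as a two-sided ideal, hence $\sigma(\pi)\O=\pi\O$ and $\O\sigma(\pi)=\O\pi$; cancelling the regular normal element $\pi$ on each side gives $v\O=\O=\O v$, so $v$ has both a left and a right inverse in $\O$. (This is needed because $\O=M_s(D)$ is not local for $s>1$, so $v\notin J(\O)$ alone would not suffice.) Your handling of negative $m$ in the final step is fine once you observe that $\sigma(\pi^m)=\sigma(\pi^{-|m|})=(\pi^{|m|}v_{|m|})^{-1}=v_{|m|}^{-1}\pi^m$, which again satisfies $u(\sigma(\pi^m))=m$ and $u(\sigma(\pi^m)^{-1})=-m$, allowing Lemma~\ref{lem: u-regular elements} to apply uniformly. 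The unification via a single $\sigma$-derivation $\eta$ is clean and avoids repeating the argument.
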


\begin{propn}\label{propn: gr(Q)}
\cite[Proposition 3.13]{ardakovInv} Suppose $(Q, u)$ is a standard filtered artinian ring. Then $\gr_u(Q)$ is isomorphic as a graded ring to $C[Y, Y^{-1}; \alpha]$, a skew Laurent polynomial ring over a finite-dimensional central simple algebra $C$ with grading given by the natural $\deg_Y$ function.\qed
\end{propn}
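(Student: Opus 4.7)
The plan is to compute $\gr_u(Q)$ explicitly from the structural data of the standard filtration, in three main steps: move past the matrix construction, identify the associated graded of $F$ as a skew Laurent ring, and then swap the order of the matrix and skew Laurent constructions.

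First, because $(Q,u)$ is standard, Definition \ref{defn: standard filtrations} gives us an identification $Q = M_s(F)$ with $u = M_s(v)$, where $(F,v)$ is the Goldie division ring of fractions of a complete DVR $(D,v)$ with uniformiser $\pi$. By construction of the matrix filtration (Definition \ref{defn: matrix filtration}), the level sets of $u$ are just the matrices with entries in the level sets of $v$, so a direct check yields the graded-ring isomorphism $\gr_u(Q) \cong M_s(\gr_v(F))$.

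Second, I would invoke Properties \ref{props: DVRs}(iii): since $\pi$ is normal and $D$ is a domain, conjugation by $\pi$ induces an automorphism $\alpha$ of $\overline{D} := D/J(D)$, and $\gr_v(D) \cong \overline{D}[\overline{\pi};\alpha]$. The induced filtration on $F$ described in Properties \ref{props: DVRs}(iv) has the same level-set quotients for $n \geq 0$ and extends by inverting $\overline{\pi}$, yielding $\gr_v(F) \cong \overline{D}[\overline{\pi},\overline{\pi}^{-1};\alpha]$, graded by the natural degree in $\overline{\pi}$.

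Third, I would interchange the $M_s$ and skew-Laurent constructions. In $M_s(R[y,y^{-1};\beta])$, a scalar matrix $rI$ satisfies $y\cdot (rI) = (\beta(r)I)\cdot y$, and extending linearly gives $y\cdot A = M_s(\beta)(A)\cdot y$ for any matrix $A$, where $M_s(\beta)$ denotes entry-wise application of $\beta$. This identifies $M_s(R[y,y^{-1};\beta])$ with $M_s(R)[y,y^{-1};M_s(\beta)]$ as graded rings. Setting $R = \overline{D}$, $\beta = \alpha$, $y = \overline{\pi}$ and combining with the previous step produces
$$\gr_u(Q) \;\cong\; M_s(\overline{D})\bigl[Y,Y^{-1};M_s(\alpha)\bigr],$$
with grading $\deg_Y$, as required.

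Finally, I would take $C := M_s(\overline{D})$: this is simple artinian since $\overline{D}$ is a division ring. The main obstacle, and the only nontrivial remaining claim, is that $C$ is a \emph{finite-dimensional} central simple algebra. This is not obviously forced by the bare Definition \ref{defn: standard filtrations} given in the excerpt, so one must appeal to extra hypotheses on standard filtrations from the context of \cite{ardakovInv} (for instance, that $\overline{D}$ is finite-dimensional over its centre, as holds in the Iwasawa-algebra applications that motivate the definition). Given this, $C = M_s(\overline{D})$ is automatically finite-dimensional central simple, completing the identification.
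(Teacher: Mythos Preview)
Your proof is correct and follows the natural route. Note, however, that the paper does not actually supply its own proof of this proposition: it is stated with a \texttt{\textbackslash qed} and a citation to \cite[Proposition 3.13]{ardakovInv}, so there is no argument in the paper to compare against. The subsequent remark in the paper does discuss the proof in \cite{ardakovInv}, observing that the automorphism $\alpha$ there is defined via conjugation by a uniformiser and that a later Skolem--Noether step in that reference is flawed (since $\alpha$ need not be $Z(C)$-linear), but this concerns a refinement beyond what is needed here.

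You are right to flag the finite-dimensionality of $C$ over its centre as the one point not formally forced by Definition~\ref{defn: standard filtrations} as written. In the paper's applications this is always satisfied: the standard filtrations that arise come from the filtered localisation procedure (Theorem~\ref{thm: filtered localisation}), where part 1(v) records precisely that $\O/J(\O)$ is a central simple algebra, and in the characteristic-$p$ setting of interest the admissibility hypothesis (Definition~\ref{defn: admissible}) guarantees that $\gr_v(D)$, and hence $\overline{D}$, is finite over its centre. So your caveat is accurate, and in the contexts where the proposition is invoked the extra hypothesis holds.
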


\begin{rk*}
This was proven as \cite[Proposition 3.13]{ardakovInv} for \cite[Theorem C(c)]{ardakovInv}. Note that there is a small error later in the proof: writing $\pi$ for a uniformiser of $D$, and viewing it as an element of $\O$ under the diagonal embedding, the ring automorphism $\alpha:C\to C$ is defined to be $q+J(\O)\mapsto \pi q\pi^{-1}+J(\O)$. But this is \emph{not} a $Z(C)$-algebra homomorphism in general, so the Skolem-Noether theorem cannot be applied as it is in the proof of \cite[Theorem 3.13]{ardakovInv}. However, this error does not affect our results.
\end{rk*}

\subsection{The filtered localisation procedure}

When considering a prime filtered ring $(R,w_0)$, we want to canonically extend the filtration to the Goldie ring of quotients $Q(R)$, since filtrations of simple artinian rings are often easier to work with in practice, as demonstrated in \S \ref{subsec: DVRs}.

A procedure for constructing a canonical, standard filtration on $Q(R)$ was first given in \cite[\S 3]{ardakovInv}, and this was subsequently expanded upon and refined in \cite[\S\S 3.1--3.2]{jones-abelian-by-procyclic}, \cite[\S 4.1]{jones-woods-1} and \cite[\S 4.1]{jones-woods-3}. In this paper, we will principally rely on results proved in these works, and the procedure itself will not play a significant role, but there are some technical details that have not been proved elsewhere, and which will have fundamental importance in \S \ref{section: extension of scalars}, when we extend standard filtrations to scalar extensions. We will now record these results here.

\begin{thm}\label{thm: filtered localisation}
Let $R$ be a prime, Noetherian $\mathbb{Z}_p$-algebra carrying a complete, separated filtration $w_0:R\to\mathbb{Z}\cup\{\infty\}$, such that $\gr_{w_0}(R)$ is Noetherian and finitely generated as a module over a central, graded subring $A$ containing a non-nilpotent element of positive degree. Then:

\begin{enumerate}
\item The Goldie ring of quotients $Q(R)$ carries a filtration $u:Q(R)\to\mathbb{Z}\cup\{\infty\}$ such that

\begin{enumerate}[label=(\roman*), noitemsep]
\item the inclusion $(R,w_0)\to (Q(R),u)$ is continuous,

\item if $w_0(r)\geq 0$ then $u(r)\geq 0$ for any $r\in R$,

\item the completion $Q := \widehat{Q(R)}$ is a simple artinian ring,

\item the unique extension of $u$ from $Q(R)$ to $Q$ makes $(Q,u)$ into a standard filtered artinian ring,

\item the degree-0 part $\gr_u(Q)_0 = \O / J(\O)$ of the associated graded ring is a central simple algebra, where $\O$ is the maximal order associated to $(Q, u)$.
\end{enumerate}

\item If $R$ is an $\mathbb{F}_p$-algebra, then $\gr_u(Q)$ is finitely generated as a module over a central, graded subdomain.

\item If we assume further that $R$ is artinian and $A$ contains no zero divisors in $\gr_{w_0}(R)$, then $R=Q(R)=Q$, and $u$ is topologically equivalent to $w_0$.
\end{enumerate}
\end{thm}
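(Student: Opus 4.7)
The plan is to build on the chain of existing filtered localisation theorems — in particular \cite[Theorem 4.1.1]{jones-woods-3} — which already establish parts (1)(i)--(iv). The procedure invoked there localises and completes $R$ at a (lift of a) non-nilpotent central homogeneous element $T\in A$ of positive degree, converting the original filtration $w_0$ into a standard filtration $u$ on the completion $Q$ of $Q(R)$, and extracts the properties (i)--(iv) along the way. The genuinely new content of the present statement is in (1)(v), (2) and (3), each of which requires an extra argument on top of the existing construction.

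For (1)(v), once (iv) is in hand we know $(Q,u)$ is a standard filtered artinian ring, so Proposition \ref{propn: gr(Q)} applies and yields a graded ring isomorphism $\gr_u(Q)\cong C[Y, Y^{-1}; \alpha]$ in which $C$ is a finite-dimensional central simple algebra concentrated in degree $0$. By Definition \ref{defn: standard filtrations}, the $n$th level set of $u$ on $Q$ is $J(\O)^n$, so the degree-$0$ part of $\gr_u(Q)$ is precisely $\O/J(\O)$; comparing with the above identification gives $\O/J(\O)\cong C$, central simple as required.

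For parts (2) and (3), the main obstacle is to track the central graded subring $A\subseteq\gr_{w_0}(R)$ through the localisation procedure. In (2), where $R$ is assumed to be an $\mathbb{F}_p$-algebra, the image $\overline{T}\in\gr_u(Q)$ of the chosen element $T$ remains central and becomes a unit. Under the identification $\gr_u(Q)=C[Y,Y^{-1};\alpha]$, any central homogeneous unit has the form $cY^k$ for some $c\in Z(C)^\times$ fixed by $\alpha$ with $\alpha^k=\id$; so $\overline{T}$ and $\overline{T}^{-1}$ generate a central graded Laurent subring, which is a domain. Combined with the hypothesis that $\gr_{w_0}(R)$ is finitely generated as a module over $A$ (so its image $\gr_u(Q)$ is finitely generated over the localisation of $A$), and using that $C$ is finite-dimensional over $Z(C)$, this should produce a central graded subdomain of $\gr_u(Q)$ of the required kind. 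For (3), $R$ artinian gives $R=Q(R)$ immediately. The hypothesis that $A$ contains no zero divisors in $\gr_{w_0}(R)$ means, via Lemma \ref{lem: graded zero divisors}, that $w_0(ar)=w_0(a)+w_0(r)$ for $a\in A$ and $r\in R$, so in particular any lift of $T$ is a regular element of $R$ and hence (by artinianness) already a unit. The localisation step is therefore vacuous, the filtration $u$ is topologically equivalent to $w_0$, and completeness of $(R,w_0)$ transfers to $(R,u)$, giving $Q=R$.
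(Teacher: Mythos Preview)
Your treatment of (1)(v) and the broad outline of (3) are essentially what the paper does. The real gap is in part (2).

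You assert that ``the image $\overline{T}\in\gr_u(Q)$ of the chosen element $T$ remains central and becomes a unit'', but this is precisely the hard step, and you have not used the $\mathbb{F}_p$ hypothesis at all to justify it. There is no canonical map $\gr_{w_0}(R)\to\gr_u(Q)$: the passage from $w_0$ to $u$ goes through a homogeneous localisation, a completion, a quotient by a maximal ideal, and finally a replacement of the valuation subring by an equivalent maximal order $\O$. None of these steps comes with a graded ring homomorphism that would carry central elements to central elements. The paper's argument is quite different: it tracks a specific regular normal element $z\in J(U)$ through the construction, takes its image $\overline{z}\in V$, and then uses characteristic $p$ to show (via \cite[Proposition 3.5]{jones-abelian-by-procyclic}) that $t:=\overline{z}^{p^m}$ is normal in $\O$ with $u(tqt^{-1}-q)>u(q)$ for all $q$, hence $\gr_u(t)$ is central in $\gr_u(Q)$. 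Only then does one identify $\gr_u(t)=cY^k$ inside $C[Y,Y^{-1};\alpha]$ and conclude finiteness over $Z(C)[T,T^{-1}]$. (Incidentally, centrality of $cY^k$ forces $\alpha^k$ to be \emph{inner}, not $\alpha^k=\id$; and your claim that $\gr_u(Q)$ is finite over ``the localisation of $A$'' is likewise unjustified, since $A\subseteq\gr_{w_0}(R)$ does not sit inside $\gr_u(Q)$ in any obvious way.)

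For (3), your sketch is on the right track but compresses one nontrivial step. Showing that the localisation and completion are vacuous gives $w=w_0$ and $Q'=Q=R$, but the standard filtration $u$ is obtained by passing from $V$ to an equivalent maximal order $\O$, and it is not automatic that $u$ is topologically equivalent to $w_0$. The paper closes this by invoking the known fact that $u$ is topologically equivalent to the $\overline{z}V$-adic filtration and $w$ to the $zU$-adic filtration, and observing that $U=V$, $z=\overline{z}$ once $M=0$.
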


\begin{rk*}
For parts 2 and 3 of the proof, we will need to assume some familiarity with the construction of the filtration $u$, as given in e.g.\ \cite{ardakovInv, jones-abelian-by-procyclic, jones-woods-3, jones-woods-1}. We briefly recap the construction to set up our notation:

\begin{itemize}
\item Choose a minimal prime ideal $\mathfrak{q}$ of $A$ not containing the positive part $w_0^{-1}((0,\infty])$ of $A$. (We know such a $\mathfrak{q}$ exists by our assumption that $A$ contains a non-nilpotent element of positive degree.) Set $S:=\{r\in R:\gr_{w_0}(r)\in A\setminus\mathfrak{q}\}$. This is a localisable subset of $R$, and $S^{-1}R=Q(R)$ by the proof of \cite[Lemma 3.3]{ardakovInv}.
\item Define a filtration $w:Q(R)\to\mathbb{Z}\cup\{\infty\}$ from $w_0$ using \cite[Proposition 2.3]{li-ore-sets}. This satisfies $w(x)=\max\{w_0(r)-w_0(s)\}$, where this maximum is taken over all $r\in R$ and $s\in S$ such that $x=s^{-1}r$. (See e.g.\ \cite[Proposition 1.14]{jones-woods-1} for more details.)
\item \cite[\S 3.4]{ardakovInv} Form the completion $Q'$ of $Q(R)$ with respect to $w$. This is an artinian ring, but will in general not be semisimple, so choose a maximal ideal $M$ of $Q'$, and set $Q:=Q'/M$. Define the noetherian subring $U=\{q\in Q':w(q)\geq 0\}$, and let $V = (U + M)/M$ be the image of $U$ in $Q$.
\item \cite[Lemma 3.2]{jones-abelian-by-procyclic} There exists an element $z\in J(U)$ which is regular and normal in $U$, such that $U$ is $z$-adically complete. Write $\overline{z}$ for its image in $V$.
\item Choose a maximal order $\mathcal{O}$ in $Q$, containing and equivalent to $V$, which exists by \cite[Theorem 3.11]{ardakovInv}. The desired filtration $u$ is the $J(\O)$-adic filtration on $Q$ \cite[Theorem 3.6, Proposition 3.13]{ardakovInv}.
\end{itemize}
\end{rk*}

\begin{proof}
$ $

\begin{enumerate}
\item Parts (i)--(iv) are given by \cite[Theorem 3.3]{jones-abelian-by-procyclic}, and part (v) follows from \cite[Theorem 3.6 and Theorem 3.13]{ardakovInv}: see also \cite[Theorem 4.1.1 and \S 4.1]{jones-woods-3} for further details.

\item Since $Q$ has characteristic $p$, we see using the proof of \cite[Proposition 3.5]{jones-abelian-by-procyclic} that $t := \overline{z}^{p^m}$ is normal in $\O$ for sufficiently high $m$, and $u(tqt^{-1} - q)>u(q)$ for all $q\in Q$, which implies that $\gr_u(t)$ is central in $\gr_u(Q)$.

Now $\gr_u(Q)\cong C[Y,Y^{-1};\alpha]$, a skew Laurent polynomial ring over a finite-dimensional central simple algebra $C$, by Proposition \ref{propn: gr(Q)}. Hence $\gr_u(t) = cY^k =: T$ for some $c\in C$ and $k\in\mathbb{N}$, and so since it is central, this implies that $\alpha^k$ is inner, given by conjugation by $c^{-1}$.

As $T$ is clearly a unit in $\gr_u(Q)$, we have a central graded subring $Z(C)[T,T^{-1}]$ of $\gr_u(Q)$. Since $C$ is finitely generated over $Z(C)$, it follows that $\gr_u(Q)$ is finitely generated over the central, graded subdomain $Z(C)[T,T^{-1}]$ as required.

\item Since $R$ is artinian and prime, it is simple, and hence $R=Q(R)$. We will first prove that the filtration $w$ on $R$ coincides with $w_0$.

Since $A$ contains no zero divisors, it must be a domain, so the minimal prime ideal $\mathfrak{q}$ is zero, and thus $S=\{r\in R:\gr(r)\in A$ and $\gr(r)\neq 0\}$. It follows that for all $s\in S$, $\gr(s)\in A$ cannot be a zero divisor in $\gr_{w_0}(R)$, which means that $w_0(rs)=w_0(sr)=w_0(s)+w_0(r)$ for all $r\in R$ by Lemma \ref{lem: graded zero divisors}. Therefore, for any $x\in R$, if $s\in S$ and $r:=sx$, then $w_0(r)=w_0(sx)=w_0(s)+w_0(x)$, and thus $w_0(r)-w_0(s)=w_0(x)$. Since our choice of $s$ was arbitary, we see that:
\begin{align*}
w(x) &=\max\{w_0(r)-w_0(s):r\in R,s\in S, x=s^{-1}r\}\\
&=\max\{w_0(r)-w_0(s):r\in R,s\in S, sx=r\}\\
& =\max\{w_0(x):r\in R,s\in S, sx=r\}=w_0(x)
\end{align*}
Since our choice of $x\in R$ was arbitrary, it follows that $w=w_0$ as required.

As we are assuming that $R=Q(R)$ is complete with respect to $w_0$, this means that $Q'=Q(R)$. Moreover, since $Q(R)$ is simple, the maximal ideal $M$ of $Q'=Q(R)$ must be zero, so $$\widehat{Q(R)}=Q=Q'/M=Q(R)/M=Q(R).$$ 
Finally, we know $u$ is topologically equivalent to the $\overline{z}V$-adic filtration on $Q$ and $w$ is topologically equivalent to the $zU$-adic filtration on $Q(R)$ by the construction \cite[Proposition 4.1.3]{jones-woods-3}. But since the ideal $M$ is zero, $U=V$ and $\overline{z}=z$, so it follows that $w_0=w$ is topologically equivalent to $u$ as required.\qedhere
\end{enumerate}
\end{proof}

\begin{rk}\label{rk: examples satisfying filt hypotheses}
Specific examples of rings satisfying the conditions of this theorem are abundant: see \cite[\S 1.2]{jones-woods-3} and \cite[Examples 4.12--4.13]{jones-woods-1}. For instance, if $\mathbb{F}_pG$ is the Iwasawa algebra of a soluble \emph{uniform} \cite[\S 4.1]{DDMS} pro-$p$ group (a condition only slightly more restrictive than $p$-valuable), and $P$ is a non-maximal prime ideal, then the ring $R = \mathbb{F}_pG/P$ inherits a natural induced filtration satisfying these hypotheses.

We will make use of this example in the proof of Theorem \ref{letterthm: primes in completely solvable Iwasawa algebras}, but the construction itself will play a fundamental role in our proof of Theorem \ref{letterthm: compatible Iwasawa SPSR is prime}, and most importantly in extending standard filtrations to scalar extensions in \S\ref{section: extension of scalars}.
\end{rk}

\subsection{Skew derivations and bounded skew power series rings}\label{subsec: bounded skew power series rings}

\begin{defn}\label{defn: skew derivations}
Let $S$ be a ring.

\begin{enumerate}[label=(\roman*)]
\item A \emph{skew derivation} on $S$ is a pair $(\sigma, \delta)$, where $\sigma$ is an automorphism of $S$ and $\delta$ is a \emph{left $\sigma$-derivation}, i.e.\ an additive map satisfying $\delta(st) = \delta(s)t + \sigma(s)\delta(t)$ for all $s,t\in S$. (Note that some authors allow $\sigma$ to be a more general \emph{endomorphism}, but we will not need this generality.)  
\item The skew derivation $(\sigma, \delta)$ is \emph{commuting} (also known elsewhere as \emph{1-skew}) if $\sigma\delta = \delta\sigma$.
\item Given a skew derivation $(\sigma, \delta)$, we may define the \emph{skew polynomial ring} $S[x;\sigma,\delta]$. As a left $S$-module, this is isomorphic to $S[x]$ in the obvious way, and its ring structure is uniquely determined by the family of relations $xs = \sigma(s)x + \delta(s)$ for all $s\in S$.
\end{enumerate}
\end{defn}

Most of the following definition is well known, though part 4 is rather rare in the literature.

\begin{defn}\label{defn: sigma-invariant and sigma-prime etc}
Let $S$ be a ring and $(\sigma, \delta)$ a skew derivation on $S$.
\begin{enumerate}
\item An ideal $I$ is called \emph{$\sigma$-invariant} if $\sigma(I) \subseteq I$. (If $S$ is noetherian, this implies $\sigma(I) = I$.)
\item $S$ is \emph{$\sigma$-prime} if, for any two nonzero $\sigma$-invariant ideals $I$ and $J$, we have $IJ \neq 0$.
\item An ideal $I$ is $\sigma$-prime if the quotient ring $S/I$ is $\sigma$-prime.
\item $S$ is \emph{$\sigma$-simple} if its only $\sigma$-invariant ideals are $0$ and $S$.
\end{enumerate}
\end{defn}

As in \cite{jones-woods-1}, our skew derivations will not always be well-behaved, and we will need to replace them with related, better-behaved skew derivations. We note the following for later use:

\begin{props}\label{props: Sigma, Delta, crossed product}
Let $S$ be a ring of characteristic $p$, and let $(\sigma, \delta)$ be a commuting skew derivation on $S$.

\begin{enumerate}
\item \cite[Remark 1.2(i)]{jones-woods-1} \cite[Definition 3.3.1]{jones-woods-3} Each $(\sigma^{p^n}, \delta^{p^n})$ is a commuting skew derivation on $S$, and $S[x^{p^n}; \sigma^{p^n}, \delta^{p^n}]$ is a subring of $S[x; \sigma, \delta]$.
\item \cite[Lemma 3.4.3]{jones-woods-3} If there exists $t\in S$ such that $\delta(s) = ts - \sigma(s)t$ for all $s\in S$ and $\sigma(t) = t$, then $\delta^{p^n}(s) = t^{p^n} s - \sigma^{p^n}(s) t^{p^n}$ for all $s\in S$ and all $n\in\mathbb{N}$.
\end{enumerate}
\end{props}

(In characteristic 0, something analogous but slightly more complicated can be done: see \cite[\S 3.4]{jones-woods-3}.)

If there exists $t\in S$ such that $\delta(s)=ts-\sigma(s)t$ for all $s\in S$, as in Property \ref{props: Sigma, Delta, crossed product}.2, we say that $\delta$ is an \emph{inner $\sigma$-derivation}.

Next, recall the following compatibility conditions between skew derivations and ring filtrations, which are necessary for us to define the bounded skew power series ring.

\begin{defn}\label{defn: quasi-compatibility}
Suppose $(S,w)$ is a filtered ring and $(\sigma, \delta)$ is a commuting skew derivation on $S$.
\begin{enumerate}
\item \cite[Definition 1.8]{jones-woods-1} We say that $(\sigma,\delta)$ is \emph{(strongly) compatible} with $w$ if $\sigma$ and $\delta$ are filtered, with $\deg_w(\sigma - \id) > 0$ and $\deg_w(\delta) > 0$.
\item \cite[Definition 2.4]{jones-woods-2} We say that $(\sigma,\delta)$ is \emph{weakly compatible} with $w$ if $\sigma$, $\sigma^{-1}$ and $\delta$ are filtered, with $\deg_w(\sigma) = \deg_w(\sigma^{-1}) = 0$ and $\deg_w(\delta) > 0$.
\item \cite[Definition 3.1.1]{jones-woods-3} We say that $(\sigma,\delta)$ is \emph{quasi-compatible} with $w$ if:
\begin{enumerate}[label=(\roman*)]
\item $\sigma^i \delta^j$ is filtered for all $i\in\mathbb{Z}$ and $j\in\mathbb{N}$, and
\item there exists a common lower bound for $\deg_w(\sigma^i \delta^j)$, i.e.\ there exists $B\in\mathbb{Z}$ such that $\deg_w(\sigma^i \delta^j) \geq B$ for all $i\in\mathbb{Z}$ and all $j\in\mathbb{N}$, and
\item there exists some $N\in\mathbb{N}$ such that $\deg_w(\delta^N) > 0$.
\end{enumerate}
\end{enumerate}
It is easy to see that (strong) compatibility implies weak compatibility, and that weak compatibility implies quasi-compatibility (taking $B = 0$ and $N = 1$). We will not need the notion of weak compatibility again in this paper, and will simply write ``compatible" for ``strongly compatible", but some of our arguments are true in the more general setting.
\end{defn}

\begin{defn}\label{defn: bounded skew power series rings}
\cite[Definition 3.2.7]{jones-woods-3}
Let $(S,w)$ be a complete filtered ring, and let $(\sigma, \delta)$ be a commuting skew derivation on $S$ which is quasi-compatible with $w$. Then we can define the \emph{bounded skew power series ring} (with respect to $w$)
$$S^+[[x;\sigma, \delta]] = \left\{\underset{n\in\mathbb{N}}{\sum}{s_nx^n}:s_n\in S,\text{ there exists }M\in\mathbb{Z}\text{ such that }w(s_n)\geq M\text{ for all }n\in\mathbb{N}\right\}.$$
(We may say that a sequence of coefficients $(s_n)_{n\in\mathbb{N}}$ is ($w$-)\emph{bounded below} as shorthand for saying that there exists $M\in\mathbb{Z}$ such that $w(s_n) \geq M$ for all $n\in\mathbb{N}$.)

If $\{w(s)\}_{s\in S}$ is bounded below, e.g.\ if $w(s) \geq 0$ for all $s\in S$, then this is the set of \textit{all} power series over $S$, and we simply write it as $S[[x;\sigma,\delta]]$, the (unbounded) skew power series ring: see \eqref{eqn: unbounded SPSR}.

By \cite[Theorem 3.2.6]{jones-woods-3}, this is a topological ring with continuous multiplication which contains $S[x;\sigma, \delta]$ as a dense subring. Multiplication in this ring is given explicitly by
\begin{align}\label{eqn: multiplication in bounded SPSRs}
\left( \sum_{i\geq 0} a_i x^i\right)\left( \sum_{j\geq 0} b_jx^j\right) = \sum_{k\geq 0} \left(\sum_{0\leq e\leq k} \sum_{i\geq e} \binom{i}{e} a_i \sigma^e \delta^{i-e}(b_{k-e}) \right)x^k.
\end{align}
\end{defn}

We state carefully the following analogue of Property \ref{props: Sigma, Delta, crossed product}.1 in the skew power series case:

\begin{lem}\label{lem: going up compatibility}
\cite[Proposition 3.3.2]{jones-woods-3}
Suppose $(S, w)$ is a complete, filtered ring of characteristic $p$ and $(\sigma,\delta)$ is a commuting skew derivation on $S$, and fix arbitrary $n\in \mathbb{N}$. Then $(\sigma,\delta)$ is quasi-compatible with $w$ if and only if $(\sigma^{p^n},\delta^{p^n})$ is quasi-compatible with $w$. In this case, $S^+[[x;\sigma,\delta]]$ and $S^+[[y; \sigma^{p^n}, \delta^{p^n}]]$ are both defined, and moreover, inside the ring $S^+[[x; \sigma, \delta]]$, we have $x^{p^n}s = \sigma^{p^n}(s) x^{p^n} + \delta^{p^n}(s)$ for all $s\in S$, and so $S^+[[x; \sigma, \delta]]$ contains the subring $S^+[[x^{p^n}; \sigma^{p^n}, \delta^{p^n}]]$.\qed
\end{lem}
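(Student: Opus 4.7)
The plan is to handle the four assertions (both directions of the quasi-compatibility equivalence, the explicit multiplication identity, and the subring inclusion) in turn.

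The forward direction of the quasi-compatibility equivalence is a direct verification of the three defining conditions. Condition (i) for $(\sigma^{p^n}, \delta^{p^n})$ follows from condition (i) for $(\sigma, \delta)$, since $(\sigma^{p^n})^i(\delta^{p^n})^j = \sigma^{ip^n}\delta^{jp^n}$. Condition (ii) transfers with the same common lower bound $B$. For condition (iii), if $\deg_w(\delta^N) > 0$, then the usual inequality $\deg(f\circ g) \geq \deg(f) + \deg(g)$ under composition yields $\deg_w((\delta^{p^n})^N) = \deg_w(\delta^{Np^n}) \geq p^n \deg_w(\delta^N) > 0$.

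For the converse direction, the task is to recover filtered-ness and a uniform lower bound on $\sigma^i\delta^j$ for all $i\in\mathbb{Z}, j\in\mathbb{N}$ from the corresponding data on their $p^n$-th powers. The natural tool is the characteristic $p$ structure of $\mathrm{End}_{\mathbb{Z}}(S)$ (which has characteristic $p$ because $S$ does) combined with the commutativity $\sigma\delta = \delta\sigma$, which lets us apply Frobenius-type identities in the commutative subring generated by $\sigma$ and $\delta$. In particular, identities such as $(\sigma - \mathrm{id})^{p^n} = \sigma^{p^n} - \mathrm{id}$ let us expand $\sigma^i$ (and analogously $\delta^j$) in terms of the filtered operators that are controlled by the hypothesis, and extract a uniform lower bound. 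This step is the main technical obstacle; care is required to ensure the bound is genuinely uniform across all pairs $(i,j)$.

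For the explicit multiplication identity, I would compute $x^{p^n}\cdot s$ for $s\in S$ directly in $S^+[[x;\sigma,\delta]]$ using the explicit product formula \eqref{eqn: multiplication in bounded SPSRs}, viewing $x^{p^n}$ and $s$ as skew power series with only one nonzero coefficient each. The convolution collapses to
$$ x^{p^n}\cdot s \;=\; \sum_{0\leq k\leq p^n} \binom{p^n}{k}\, \sigma^k\delta^{p^n-k}(s)\, x^k, $$
and in characteristic $p$ the middle binomial coefficients $\binom{p^n}{k}$ for $0<k<p^n$ all vanish, leaving only $x^{p^n}\cdot s = \sigma^{p^n}(s)x^{p^n} + \delta^{p^n}(s)$. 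This is precisely the defining relation of the skew polynomial ring $S[y;\sigma^{p^n},\delta^{p^n}]$ under the substitution $y = x^{p^n}$, so $y\mapsto x^{p^n}$ extends to a ring homomorphism $S[y;\sigma^{p^n},\delta^{p^n}] \to S^+[[x;\sigma,\delta]]$. Since $(\sigma^{p^n},\delta^{p^n})$ is quasi-compatible with $w$, the ring $S^+[[y;\sigma^{p^n},\delta^{p^n}]]$ is defined, and the polynomial map above sends $\sum_k s_k y^k$ (with $w$-bounded coefficients) to $\sum_k s_k x^{kp^n}$ (whose coefficients remain $w$-bounded). Completeness of $S^+[[x;\sigma,\delta]]$ then allows continuous extension to the required embedding $S^+[[y;\sigma^{p^n},\delta^{p^n}]] \hookrightarrow S^+[[x;\sigma,\delta]]$.
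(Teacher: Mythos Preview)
The paper does not prove this lemma at all: it is stated as a citation of \cite[Proposition 3.3.2]{jones-woods-3} and closed with a \qed, so there is no paper proof to compare against. Your forward direction, the multiplication identity (via the vanishing of $\binom{p^n}{k}$ modulo $p$ for $0<k<p^n$), and the subring inclusion are all correct and are essentially how one would expect the cited proof to proceed.

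The genuine gap in your proposal is the converse direction of the quasi-compatibility equivalence. You write that identities such as $(\sigma-\id)^{p^n}=\sigma^{p^n}-\id$ let you ``expand $\sigma^i$ \dots\ in terms of the filtered operators that are controlled by the hypothesis'', but this does not work in the direction you need. The hypothesis only tells you that $(\sigma^{p^n})^a(\delta^{p^n})^b$ is filtered with a uniform lower bound; in particular you learn that $\sigma^{p^n}-\id=(\sigma-\id)^{p^n}$ is filtered. But knowing that the $p^n$-th power of an additive endomorphism is filtered does \emph{not} imply the endomorphism itself is filtered, so you cannot recover filteredness of $\sigma-\id$, $\sigma$, or general $\sigma^i\delta^j$ this way. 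Concretely, for $i,j$ not divisible by $p^n$ you have given no mechanism to bound $\deg_w(\sigma^i\delta^j)$ from the data on the $p^n$-th powers, and the Frobenius identity runs the wrong way for this purpose. You should consult the actual argument in \cite[Proposition 3.3.2]{jones-woods-3} to see what extra ingredient makes the converse go through; as written, this step is a sketch of an approach that does not close.
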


\begin{rk*}
We typically assume the stronger condition that $(\sigma^{p^m},\delta^{p^m})$ is strongly compatible with $w$ for some $m\in\mathbb{N}$, so it follows from this lemma that $(\sigma,\delta)$ is quasi-compatible in this case.
\end{rk*}

\begin{lem}\label{lem: extend automorphisms to SPSR}
Let $(S,w)$ be a complete filtered ring, and let $(\sigma, \delta)$ be a commuting skew derivation on $S$ which is quasi-compatible with $w$. Let $\alpha$ be a filtered automorphism of $S$ such that $\alpha^{-1}$ is also filtered, and assume that $\alpha\sigma = \sigma\alpha$ and $\alpha\delta = \delta\alpha$. Then we may extend $\alpha$ to an automorphism $\widetilde{\alpha}$ of the whole bounded skew power series ring $S^+[[x; \sigma, \delta]]$ as follows:
$$\widetilde{\alpha}: \sum_{n\in\mathbb{N}} s_n x^n \mapsto \sum_{n\in\mathbb{N}} \alpha(s_n) x^n,$$
with inverse $\widetilde{\alpha^{-1}}$ defined in the same way. Further, if $\varepsilon$ is a filtered $\alpha$-derivation of $S$ such that $\varepsilon\sigma = \sigma\varepsilon$ and $\varepsilon\delta = \delta\varepsilon$, then we may extend $\varepsilon$ to an $\widetilde{\alpha}$-derivation $\widetilde{\varepsilon}$ of $S^+[[x; \sigma, \delta]]$ in the same way:
$$\widetilde{\varepsilon}: \sum_{n\in\mathbb{N}} s_n x^n \mapsto \sum_{n\in\mathbb{N}} \varepsilon(s_n) x^n.$$
\end{lem}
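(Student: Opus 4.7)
The plan is to carry out a direct computation using the explicit product formula \eqref{eqn: multiplication in bounded SPSRs}, separating the four verifications: well-definedness, multiplicativity, invertibility of $\widetilde\alpha$, and the derivation identity for $\widetilde\varepsilon$. First I would verify well-definedness. Since $\alpha$ is filtered, there exists $d\in\mathbb{Z}$ such that $w(\alpha(s))\geq w(s)+d$ for all $s\in S$, so if $(s_n)_{n\in\mathbb{N}}$ is $w$-bounded below by some $M$, then $(\alpha(s_n))_{n\in\mathbb{N}}$ is bounded below by $M+d$. The identical argument applies to $\alpha^{-1}$ (filtered by hypothesis) and to $\varepsilon$ (filtered by hypothesis), so $\widetilde\alpha$, $\widetilde{\alpha^{-1}}$ and $\widetilde\varepsilon$ all send $S^+[[x;\sigma,\delta]]$ to itself.

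Additivity of $\widetilde\alpha$ is immediate from the termwise definition. For multiplicativity, I would apply $\widetilde\alpha$ to the product formula: the coefficient of $x^k$ in $(\sum a_ix^i)(\sum b_jx^j)$ is the convergent series $\sum_{e=0}^k \sum_{i\geq e}\binom{i}{e}a_i\,\sigma^e\delta^{i-e}(b_{k-e})$. By Lemma \ref{lem: continuous homomorphisms}, the filtered map $\alpha$ is continuous on $(S,w)$, so it commutes with the inner limit; being a ring homomorphism it distributes across the individual terms; and from $\alpha\sigma=\sigma\alpha$ and $\alpha\delta=\delta\alpha$ we deduce $\alpha\sigma^e\delta^{i-e}=\sigma^e\delta^{i-e}\alpha$ for all $e,i$. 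These three facts combine to show that the $x^k$-coefficient of $\widetilde\alpha\bigl((\sum a_ix^i)(\sum b_jx^j)\bigr)$ is precisely the $x^k$-coefficient of $\widetilde\alpha(\sum a_ix^i)\cdot\widetilde\alpha(\sum b_jx^j)$. Inverting the identities $\alpha\sigma=\sigma\alpha$ and $\alpha\delta=\delta\alpha$ yields $\alpha^{-1}\sigma=\sigma\alpha^{-1}$ and $\alpha^{-1}\delta=\delta\alpha^{-1}$, so the same analysis shows $\widetilde{\alpha^{-1}}$ is a ring homomorphism; the termwise formulas make it transparent that $\widetilde\alpha\,\widetilde{\alpha^{-1}}=\widetilde{\alpha^{-1}}\,\widetilde\alpha=\id$, so $\widetilde\alpha$ is a ring automorphism.

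The derivation case follows the same template. I would apply $\widetilde\varepsilon$ termwise to the product formula and invoke the $\alpha$-derivation identity
\[
\varepsilon\bigl(a_i\,\sigma^e\delta^{i-e}(b_{k-e})\bigr)=\varepsilon(a_i)\,\sigma^e\delta^{i-e}(b_{k-e})+\alpha(a_i)\,\varepsilon\bigl(\sigma^e\delta^{i-e}(b_{k-e})\bigr)
\]
on each summand; continuity of $\varepsilon$ (from being filtered) allows the exchange of $\varepsilon$ with the inner limit, and $\varepsilon\sigma=\sigma\varepsilon$, $\varepsilon\delta=\delta\varepsilon$ allow us to pull $\varepsilon$ through to $b_{k-e}$. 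Collecting the first summands over $e$ and $i$ yields the $x^k$-coefficient of $\widetilde\varepsilon(\sum a_ix^i)\cdot(\sum b_jx^j)$, and collecting the second yields the $x^k$-coefficient of $\widetilde\alpha(\sum a_ix^i)\cdot\widetilde\varepsilon(\sum b_jx^j)$, establishing that $\widetilde\varepsilon$ is an $\widetilde\alpha$-derivation. I expect no serious obstacle: the one delicate point in each verification is the commutation of $\alpha$ (or $\varepsilon$) with the infinite inner sum defining each coefficient, and this is handled uniformly by the fact that filtered maps are continuous together with the quasi-compatibility hypothesis guaranteeing these sums converge.
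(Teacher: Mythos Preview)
Your proposal is correct and follows essentially the same approach as the paper: verify well-definedness via the filtered hypothesis, then reduce multiplicativity and the derivation identity to the termwise identities $\alpha(a_i\sigma^e\delta^{i-e}(b_{k-e}))=\alpha(a_i)\sigma^e\delta^{i-e}(\alpha(b_{k-e}))$ and its $\varepsilon$-analogue, which follow from the commutation hypotheses. You are slightly more explicit than the paper in flagging that continuity of $\alpha$ and $\varepsilon$ is needed to pass them through the infinite inner sum over $i\geq e$, a point the paper leaves implicit.
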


\begin{proof}
The maps $\widetilde{\alpha}$, $\widetilde{\alpha^{-1}}$ and $\widetilde{\varepsilon}$ are well-defined: if $(s_n)_{n\in\mathbb{N}}$ is $w$-bounded below then $(\alpha(s_n))_{n\in\mathbb{N}}$ is $w$-bounded below, as $\alpha$ is filtered with respect to $w$, and likewise for $\alpha^{-1}$ and $\varepsilon$. It is clear that $\widetilde{\alpha}$ and $\widetilde{\alpha^{-1}}$ are mutually inverse. It remains to show that $\widetilde{\alpha}$ is a ring homomorphism and $\widetilde{\varepsilon}$ is a $\widetilde{\alpha}$-derivation. Showing that they are additive maps is easy; the only difficulty is to show that $\widetilde{\alpha}(fg) = \widetilde{\alpha}(f)\widetilde{\alpha}(g)$ and $\widetilde{\varepsilon}(fg) = \widetilde{\varepsilon}(f)g + \widetilde{\alpha}(f) \widetilde{\varepsilon}(g)$. But writing $f = \sum_{i\geq 0} a_i x^i$ and $g = \sum_{j\geq 0} b_j x^j$ and applying the multiplication formula \eqref{eqn: multiplication in bounded SPSRs}, this comes down to showing that 
$$\alpha(a_i \sigma^e \delta^{i-e}(b_{k-e})) = \alpha(a_i) \sigma^e \delta^{i-e}(\alpha(b_{k-e}))$$
and
$$\varepsilon(a_i \sigma^e \delta^{i-e}(b_{k-e})) = \varepsilon(a_i) \sigma^e \delta^{i-e}(b_{k-e}) + \alpha(a_i) \sigma^e \delta^{i-e}(\varepsilon(b_{k-e}))$$
for all $i$, $k$ and $e$, which follow trivially from the assumption that $\alpha$ and $\varepsilon$ commute pairwise with $\sigma$ and $\delta$.
\end{proof}

Of course, we could take $(\alpha,\varepsilon)=(\sigma,\delta)$ in this lemma, which we will often during the paper.

\subsection{Skew power series over standard filtered artinian rings}

\textbf{Throughout this subsection}, we take $(Q, u)$ to be a standard filtered artinian ring as in Definition \ref{defn: standard filtrations}, with commuting skew derivation $(\sigma, \delta)$. We also define $(D, v)$ to be the associated complete discrete valuation ring (Definition \ref{defn: DVR}) with valued division ring of fractions $(F, v)$ and associated maximal order $\O = u^{-1}([0,\infty])$. In particular, there exists $r \geq 1$ such that $Q = M_r(F)$, $\O = M_r(D)$ and $u = M_r(v)$.

The following results are now routine, and demonstrate the utility of standard filtrations. For the following lemma, compare the very similar \cite[Definition 2.8, Corollary 2.13(ii)]{jones-woods-2}.

\begin{lem}\label{lem: restricted SPSR over Q}
Suppose $(\sigma, \delta)$ is compatible with $u$.

\begin{enumerate}[label=(\roman*)]
\item The function
\begin{align*}
f_Q: Q^+[[x; \sigma, \delta]] &\to \mathbb{Z}\cup\{\infty\}\\
\sum_{n\in\mathbb{N}} q_n x^n &\mapsto \min_n \{u(q_n) + n\}
\end{align*}
is a ring filtration.
\item Give $\O[[x;\sigma,\delta]]$ the filtration $f_\O := f_Q|_{\O[[x;\sigma,\delta]]}$, and form the tensor product $Q\otimes_\O \O[[x;\sigma,\delta]]$ of $(Q, u)$ and $\O[[x; \sigma, \delta]], f_\O)$ with the tensor product filtration. Then the canonical multiplication map
$$Q\otimes_\O \O[[x;\sigma,\delta]] \to Q^+[[x; \sigma, \delta]]$$
is an isomorphism of filtered rings.\qed
\end{enumerate}
\end{lem}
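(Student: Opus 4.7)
For part (i), the plan is to verify the axioms of a ring filtration directly. The function $f_Q$ is well-defined because any $f = \sum q_n x^n$ in the bounded skew power series ring has $u(q_n) \geq M$ for some $M$, so $u(q_n) + n \to \infty$ and the minimum is attained (or is $\infty$ when $f = 0$). Additivity, the ultrametric inequality, and $f_Q(1) = 0$ are immediate from the coefficient-wise properties of $u$. The only nontrivial check is submultiplicativity, which I would prove by applying the explicit multiplication formula \eqref{eqn: multiplication in bounded SPSRs}. Compatibility of $(\sigma, \delta)$ with $u$ yields $\deg_u(\sigma) = 0$ and $\deg_u(\delta) \geq 1$, so each summand in the $k$th coefficient of $fg$ satisfies
\[
u\bigl(a_i \sigma^e \delta^{i-e}(b_{k-e})\bigr) + k \geq u(a_i) + i + u(b_{k-e}) + (k-e) \geq f_Q(f) + f_Q(g),
\]
and the ultrametric inequality propagates this bound to the (possibly infinite) sum defining the coefficient.

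For part (ii), let $\pi$ be a uniformiser of $D$. Since $\pi$ is normal in $\O$, Lemma \ref{lem: u-regular elements} gives $u(\pi^N q) = N + u(q)$ for every $q \in Q$ and $N \in \mathbb{Z}$; in particular $F_n Q = \pi^n \O = \O \pi^n$. The multiplication map is well-defined and $\O$-balanced because both $Q$ and $\O[[x;\sigma,\delta]]$ already embed as subrings of $Q^+[[x;\sigma,\delta]]$ and multiplication there is associative. Every element of $Q \otimes_\O \O[[x;\sigma,\delta]]$, being a finite sum $\sum q_i \otimes f_i$, can be put in the normal form $\pi^{-N} \otimes g$ by choosing $N$ large enough that $\pi^N q_i \in \O$ for all $i$ and setting $g = \sum (\pi^N q_i) f_i$. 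Surjectivity is then clear: for any $h = \sum q_n x^n$ with $u(q_n) \geq -N$ we have $h = \pi^{-N} \cdot g$ where $g = \sum (\pi^N q_n) x^n \in \O[[x;\sigma,\delta]]$. Injectivity follows because $\pi^{-N} g = 0$ in $Q^+[[x;\sigma,\delta]]$ forces $g = 0$ after multiplying by $\pi^N$.

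For the filtered isomorphism, a direct calculation using $u(\pi^{-N} q_n) = -N + u(q_n)$ coefficient-wise gives $f_Q(\pi^{-N} g) = -N + f_\O(g)$. The tensor product filtration of $\pi^{-N} \otimes g$ is therefore at least $-N + f_\O(g)$ via this single representation. Conversely, any alternative representation $\pi^{-N} \otimes g = \sum m_i \otimes n_i$ with $u(m_i) + f_\O(n_i) \geq \ell$ for all $i$ maps to the identity $\pi^{-N} g = \sum m_i n_i$ in $Q^+[[x;\sigma,\delta]]$; applying part (i) gives $f_Q(m_i n_i) \geq u(m_i) + f_\O(n_i) \geq \ell$, and hence $\ell \leq f_Q(\pi^{-N} g) = -N + f_\O(g)$. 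Taking the supremum over representations gives equality.

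The main obstacle is the bookkeeping around $\pi$ being only normal rather than central: one must consistently track on which side powers of $\pi$ appear when normalizing tensors and when computing filtration values, and verify that the identifications $Q = M_r(F)$, $\O = M_r(D)$ and the description of level sets as $J(\O)^n$ transfer correctly. Once these side conventions are fixed, Lemma \ref{lem: u-regular elements} reduces all such manipulations to routine algebra.
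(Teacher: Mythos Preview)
The paper does not actually prove this lemma: the statement ends with a \qed\ and is introduced only with the remark ``compare the very similar \cite[Definition 2.8, Corollary 2.13(ii)]{jones-woods-2}'', so there is nothing to compare against beyond that citation. Your direct argument is correct and is exactly the sort of verification that reference is standing in for: part (i) reduces to the multiplication formula together with $\deg_u(\sigma)\geq 0$, $\deg_u(\delta)\geq 1$, and part (ii) uses that $Q=\O[\pi^{-1}]$ with $\pi$ normal (so every tensor collapses to $\pi^{-N}\otimes g$) and then matches the two filtrations via Lemma~\ref{lem: u-regular elements}.

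One small point worth tightening: you assert the tensor product is a ring and the multiplication map is a ring homomorphism ``because both sides embed in $Q^+[[x;\sigma,\delta]]$''. Strictly speaking, the ring structure on $Q\otimes_\O \O[[x;\sigma,\delta]]$ is not automatic from bimodule structure alone; the clean justification is that $Q$ is the Ore localisation $\O_\pi$, so $Q\otimes_\O A$ is the localisation $A_\pi$ for any $\O$-ring $A$ in which $\pi$ remains regular and normal, and the multiplication map is then the canonical localisation map. Your normal-form argument already contains this implicitly, but naming it makes the ring-homomorphism claim unambiguous.
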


In the case when $(\sigma, \delta)$ \emph{is} compatible with $u$, we have the following result (compare \cite[Theorem A]{jones-woods-1}), which is the first step towards our desired goal.

\begin{thm}\label{thm: simple artinian with compatible skew derivation}
If $(\sigma, \delta)$ is compatible with $u$, then the skew power series ring $Q^+[[x; \sigma, \delta]]$ is prime.
\end{thm}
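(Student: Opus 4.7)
The plan is to reduce primality of $Q^+[[x;\sigma,\delta]]$ to primality of its associated graded ring under the filtration $f_Q$ from Lemma \ref{lem: restricted SPSR over Q}, and then identify that graded ring explicitly using Proposition \ref{propn: gr(Q)} and the compatibility hypothesis.

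First I would verify that the filtration $f_Q$ is separated: if $f_Q(\sum q_n x^n) = \infty$ then $u(q_n) = \infty$ for every $n$, hence all $q_n = 0$ since $u$ is separated on $Q$. Next I would compute $\gr_{f_Q}(Q^+[[x;\sigma,\delta]])$. Since $(\sigma,\delta)$ is compatible with $u$, we have $\deg_u(\sigma-\id) \geq 1$ and $\deg_u(\delta)\geq 1$, so for any $q\in Q$ the element $\sigma(q)-q$ lies in $F^u_{u(q)+1}Q$, giving $\gr(\sigma(q)) = \gr(q)$, and $\delta(q)\in F^u_{u(q)+1}Q$, so $\gr(\delta(q))$ lives in filtration degree strictly greater than $u(q)$. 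Applying the commutation relation $xq = \sigma(q)x + \delta(q)$ in $Q^+[[x;\sigma,\delta]]$ and passing to $\gr_{f_Q}$, we conclude $\bar{x}\cdot\gr(q) = \gr(q)\cdot\bar{x}$ in degree $u(q)+1$. Combined with the fact that the $\ell$-th graded piece of $Q^+[[x;\sigma,\delta]]$ decomposes as $\bigoplus_{n\geq 0} (F^u_{\ell-n}Q / F^u_{\ell-n+1}Q)\bar{x}^n$, this yields an isomorphism of graded rings
\[
\gr_{f_Q}(Q^+[[x;\sigma,\delta]]) \;\cong\; \gr_u(Q)[\bar{x}],
\]
with $\bar{x}$ a central indeterminate of degree $1$.

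By Proposition \ref{propn: gr(Q)}, $\gr_u(Q) \cong C[Y,Y^{-1};\alpha]$, a skew Laurent polynomial ring over a finite-dimensional central simple algebra $C$. Since any two-sided ideal of a skew Laurent extension $C[Y,Y^{-1};\alpha]$ is generated by its intersection with $C$ (an $\alpha$-invariant ideal of $C$), and $C$ is simple, $\gr_u(Q)$ is simple and in particular prime. A standard polynomial ring over a prime ring is prime (see \cite[Theorem 1.2.9(i)]{MR} applied to the trivial skew derivation), so $\gr_u(Q)[\bar{x}]$ is prime.

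Finally, I would invoke the standard lifting argument: if $a,b\in Q^+[[x;\sigma,\delta]]$ are nonzero, separatedness of $f_Q$ implies $\gr(a),\gr(b)\neq 0$, so by primality of the graded ring there exists $\xi\in\gr_{f_Q}(Q^+[[x;\sigma,\delta]])$ homogeneous with $\gr(a)\,\xi\,\gr(b)\neq 0$. Lifting $\xi$ to some $r\in Q^+[[x;\sigma,\delta]]$ with $\gr(r)=\xi$, we get $\gr(arb)=\gr(a)\gr(r)\gr(b)\neq 0$, so $arb\neq 0$; therefore $Q^+[[x;\sigma,\delta]]$ is prime. The only slightly subtle point is the identification of the graded ring, which hinges on compatibility forcing $\bar{x}$ to be central; the rest is routine given the machinery already developed.
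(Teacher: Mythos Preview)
Your overall strategy---passing to the associated graded ring under $f_Q$ and lifting primality---is sound and genuinely different from the paper's proof, but there is a real slip in the identification of the graded ring. You argue that $\delta(q)\in F^u_{u(q)+1}Q$ and hence that $\bar x$ becomes central. The problem is that $\gr(q)\bar x$ already sits in $f_Q$-degree $u(q)+1$, the \emph{same} degree as the image of $\delta(q)$; compatibility only gives $\deg_u(\delta)\geq 1$, not $\geq 2$, so $\overline{\delta(q)}$ need not vanish in that graded piece. The correct conclusion from $xq=\sigma(q)x+\delta(q)$ is $\bar x\cdot\gr(q)=\gr(q)\cdot\bar x+\bar\delta(\gr(q))$, where $\bar\delta$ is the degree-$1$ derivation on $\gr_u(Q)$ induced by $\delta$. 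Thus $\gr_{f_Q}(Q^+[[x;\sigma,\delta]])\cong\gr_u(Q)[\bar x;\id,\bar\delta]$, a differential polynomial ring rather than an ordinary one. A second issue: your claim that every ideal of $C[Y,Y^{-1};\alpha]$ is generated by its intersection with $C$ is false (already for $C$ a field and $\alpha=\id$ the ideal $(Y-1)$ is a counterexample), so $\gr_u(Q)$ need not be simple. Fortunately neither error is fatal: $\gr_u(Q)\cong C[Y,Y^{-1};\alpha]$ is a skew Laurent ring over a prime ring and hence prime, and then $\gr_u(Q)[\bar x;\id,\bar\delta]$ is prime by \cite[Theorem 1.2.9(iii)]{MR}. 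Your lifting argument then goes through unchanged.

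By contrast, the paper reduces to $\O[[x;\sigma,\delta]]$ via Lemma \ref{lem: restricted SPSR over Q}(ii), invokes \cite[Theorem C]{jones-woods-2} to show that every nonzero ideal of $\O[[x;\sigma,\delta]]$ meets the polynomial subring $\O[x;\sigma,\delta]$ nontrivially, and then uses primality of the latter. Your (corrected) route is more self-contained, avoiding the external citation, while the paper's route fits into the broader programme of relating skew power series rings to their polynomial subrings.
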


\begin{proof}
Since $Q^+[[x; \sigma, \delta]] = Q\otimes_\O \O[[x; \sigma, \delta]]$ by Lemma \ref{lem: restricted SPSR over Q}, it is enough to prove that $\O[[x; \sigma, \delta]]$ is prime. So let $I$ and $J$ be nonzero ideals of $\O[[x; \sigma, \delta]]$. Since $I\cap \O[x; \sigma, \delta]\subseteq I$ and $J\cap \O[x; \sigma, \delta]\subseteq J$, it will suffice to prove that $(I\cap \O[x; \sigma, \delta])(J\cap \O[x; \sigma, \delta])$ is nonzero. But both $I\cap \O[x; \sigma, \delta]$ and $J\cap \O[x; \sigma, \delta]$ are nonzero by \cite[Theorem C]{jones-woods-2}, and $\O[x; \sigma, \delta]$ is a prime ring by \cite[Theorem 1.2.9(iii)]{MR}, so we are done.
\end{proof}

If $(\sigma, \delta)$ is \emph{not} compatible with $u$, we will usually be able to replace $(\sigma, \delta)$ by something better-behaved.

\begin{lem}
Suppose $Q$ has characteristic $p$, and there exists $t\in Q$ such that $\delta(q) = tq - \sigma(q)t$ for all $q\in Q$ and $\sigma(t) = t$. Then the following are equivalent:
\begin{enumerate}[label=(\roman*)]
\item there exists $N\in\mathbb{N}$ such that $(\sigma^{p^N}, \delta^{p^N})$ is compatible with $u$,
\item $(\sigma^{p^m}, \delta^{p^m})$ is compatible with $u$ for all sufficiently large $m\in\mathbb{N}$,
\item there exists $M\in\mathbb{N}$ such that $(\sigma^{p^M} - \id)(\O) \subseteq J(\O)^2$ and $\delta^{p^M}(\O) \subseteq J(\O)^2$.
\end{enumerate}
\end{lem}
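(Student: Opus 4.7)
The plan is to reduce everything to one characteristic-$p$ identity and then to elementary degree arithmetic. The implication (ii) $\Rightarrow$ (i) is immediate. For the other two implications, I would first establish the key identity: since $Q$ has characteristic $p$, the endomorphism ring $\mathrm{End}_{\mathbb{Z}}(Q)$ is an $\mathbb{F}_p$-algebra under composition, and $\id$ commutes with every element. So for any $\mathbb{Z}$-linear $\tau$, the binomial theorem collapses in characteristic $p$ to $(\id + (\tau - \id))^p = \id + (\tau - \id)^p$, giving $\tau^p - \id = (\tau - \id)^p$. Applied to $\tau = \sigma^{p^n}$, and noting trivially that $\delta^{p^{n+1}} = (\delta^{p^n})^p$, this yields $\sigma^{p^{n+1}} - \id = (\sigma^{p^n} - \id)^p$ for every $n \geq 0$. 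The rest of the argument hinges on the standard additivity of degree under composition, $\deg_u(fg) \geq \deg_u(f) + \deg_u(g)$.

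For (i) $\Rightarrow$ (iii): compatibility of $(\sigma^{p^N}, \delta^{p^N})$ with $u$ gives $\deg_u(\sigma^{p^N} - \id) \geq 1$ and $\deg_u(\delta^{p^N}) \geq 1$. Applying the key identity together with additivity of degree yields $\deg_u(\sigma^{p^{N+1}} - \id) \geq p$ and $\deg_u(\delta^{p^{N+1}}) \geq p$; since $p \geq 2$, setting $M := N+1$ forces $(\sigma^{p^M} - \id)(\O) \subseteq J(\O)^2$ and $\delta^{p^M}(\O) \subseteq J(\O)^2$, which is precisely (iii).

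For (iii) $\Rightarrow$ (ii): the second half of Lemma \ref{lem: criterion for delta positive degree}, which needs no hypothesis on $\sigma^{p^M}(\O)$, applies to give $\deg_u(\sigma^{p^M} - \id) \geq 1$ from the assumption $(\sigma^{p^M} - \id)(\O) \subseteq J(\O)^2$. This in turn forces $u(\sigma^{p^M}(q)) = u(q)$ for every $q \in Q$ (since $\sigma^{p^M}(q) = q + r$ with $u(r) > u(q)$ gives $u(q + r) = u(q)$), and in particular $\sigma^{p^M}(\O) = \O$. The first half of the lemma then yields $\deg_u(\delta^{p^M}) \geq 1$, so $(\sigma^{p^M}, \delta^{p^M})$ is compatible with $u$. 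For any $k \geq 0$, the key identity and additivity give $\deg_u(\sigma^{p^{M+k}} - \id) \geq p^k \geq 1$ and $\deg_u(\delta^{p^{M+k}}) \geq p^k \geq 1$, so $(\sigma^{p^m}, \delta^{p^m})$ is compatible with $u$ for every $m \geq M$, proving (ii). I do not anticipate any real obstacle: once the characteristic-$p$ identity is noticed, the whole proof is degree bookkeeping, and the only mildly delicate step—verifying $\sigma^{p^M}(\O) = \O$ in (iii) $\Rightarrow$ (ii)—falls out of Lemma \ref{lem: criterion for delta positive degree} at no extra cost.
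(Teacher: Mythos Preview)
Your proof is correct and follows essentially the same approach as the paper: both hinge on the characteristic-$p$ identity $(\tau^{p^n} - \id)^p = \tau^{p^{n+1}} - \id$ together with Lemma~\ref{lem: criterion for delta positive degree}, differing only in that you cycle (ii)$\Rightarrow$(i)$\Rightarrow$(iii)$\Rightarrow$(ii) while the paper proves (i)$\Leftrightarrow$(ii) and (i)$\Leftrightarrow$(iii) separately. In fact you are slightly more careful than the paper in explicitly verifying the hypothesis $\sigma^{p^M}(\O) = \O$ needed to invoke the first half of Lemma~\ref{lem: criterion for delta positive degree}.
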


\begin{proof}
It is clear that (ii)$\implies$(i). Conversely, assume (i), and suppose $(\sigma^{p^N}, \delta^{p^N})$ is compatible with $u$: then
\begin{align*}
\deg_u(\sigma^{p^{N+1}} - \id) &= \deg_u((\sigma^{p^N} - \id)^p) \geq p \geq 1,\\ 
\deg_u(\delta^{p^{N+1}}) &= \deg_u((\delta^{p^N})^p) \geq p \geq 1,
\end{align*}
and so $(\sigma^{p^{N+1}}, \delta^{p^{N+1}})$ is compatible with $u$: then it follows that $(\sigma^{p^m}, \delta^{p^m})$ is compatible with $u$ for all $m\geq N$ by induction, which shows (ii).

Now assume (i), so that $\deg_u(\sigma^{p^N} - \id) \geq 1$ and $\deg_u(\delta^{p^N}) \geq 1$. In this case, we certainly have 
$$\deg_u(\sigma^{p^{N+1}} - \id) = \deg_u((\sigma^{p^N} - \id)^p) \geq p \geq 2,$$
and likewise $\deg_u(\delta^{p^{N+1}}) \geq 2$, so we have verified (iii) for $M = N + 1$. Conversely, assume (iii), so that $(\sigma^{p^M} - \id)(\O) \subseteq J(\O)^2$ and $\delta^{p^M}(\O) \subseteq J(\O)^2$: it then follows from Lemma \ref{lem: criterion for delta positive degree} that $\deg_u(\sigma^{p^M} - \id) \geq 1$ and $\deg_u(\delta^{p^M}) \geq 1$, so we have established (i) for $N = M$.
\end{proof}

A slightly more general form of this was proved as \cite[Lemma 5.2.2]{jones-woods-3}, but this simpler version will be enough for our current purposes. In practice, it is often more convenient to check $d(\O) \subseteq J(\O)^2$ rather than $d(J(\O)^r) \subseteq J(\O)^{r+1}$ for all $r\in\mathbb{Z}$, but we will use the equivalence of these conditions throughout the paper, often without explicit mention.

The basic relationship between $Q^+[[x; \sigma, \delta]]$ and the subrings $Q^+[[x^{p^m}; \sigma^{p^m}, \delta^{p^m}]]$ is given in the following theorem, which says that -- after localisation -- the larger ring is a \emph{crossed product} over the smaller ring, in the sense of \cite{passmanICP}.

\begin{thm}\label{thm: crossed product after localisation}
\cite[Theorem B]{jones-woods-3}
Suppose $Q$ has characteristic $p$, and $(\sigma, \delta)$ is quasi-compatible with $u$. Suppose further that there exists $t\in Q$ such that $\delta(q) = tq - \sigma(q)t$ for all $q\in Q$ and $\sigma(t) = t$, and that $(\sigma^{p^n}, \delta^{p^n})$ is compatible with $u$ for sufficiently large $n$. Then $x-t$ is 
a regular normal element of $Q^+[[x; \sigma, \delta]]$, and for any $m\in\mathbb{N}$, we have
$$Q^+[[x; \sigma, \delta]]_{(x-t)} = \bigoplus_{i=0}^{p^m-1} Q^+[[x^{p^m}; \sigma^{p^m}, \delta^{p^m}]]_{(x^{p^m} - t^{p^m})} g^i,$$
a crossed product, where $g = x - t$. If $g$ is already a unit in $Q^+[[x; \sigma, \delta]]$, then this gives a crossed product
$$Q^+[[x; \sigma, \delta]] = \bigoplus_{i=0}^{p^m-1} Q^+[[x^{p^m}; \sigma^{p^m}, \delta^{p^m}]] g^i.$$
In either case, we have $gx = xg$ and $gq = \sigma(q)g$ for all $q\in Q$.\qed
\end{thm}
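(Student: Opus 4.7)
First, I would verify the relations $gq = \sigma(q)g$, $gx = xg$, and the commutativity $xt = tx$. These are direct calculations using the skew power series multiplication rule $xs = \sigma(s)x + \delta(s)$ and the assumption $\delta(q) = tq - \sigma(q)t$: indeed $gq = xq - tq = \sigma(q)x + (tq - \sigma(q)t) - tq = \sigma(q)g$. Since $\sigma(t) = t$, we compute $\delta(t) = t\cdot t - \sigma(t) \cdot t = 0$, so $xt = \sigma(t)x + \delta(t) = tx$ and thus $gx = xg$. Since $Q$ has characteristic $p$ and $x,t$ commute, the binomial formula gives
\[
g^{p^m} = (x-t)^{p^m} = x^{p^m} - t^{p^m}.
\]
Writing $g_m := x^{p^m} - t^{p^m}$ and using Property~\ref{props: Sigma, Delta, crossed product}.2 (so that $\delta^{p^m}(q) = t^{p^m}q - \sigma^{p^m}(q)t^{p^m}$), the element $g_m$ plays the same role in $R_m := Q^+[[x^{p^m}; \sigma^{p^m}, \delta^{p^m}]]$ as $g$ plays in $R := Q^+[[x; \sigma, \delta]]$. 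In particular $g^{p^m} \in R_m$.

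Next, I would establish that $g$ is a regular normal element. Normality is immediate: $gQ = Qg$ and $g$ commutes with $x^{p^m}$, hence $gR_m = R_m g$. For regularity, I would choose $n$ large enough that $(\sigma^{p^n}, \delta^{p^n})$ is compatible with $u$. Then Lemma~\ref{lem: restricted SPSR over Q} furnishes the natural ring filtration $f_Q$ on $R_n$, and $\gr_{f_Q}(R_n)$ is a well-understood skew polynomial ring over $\gr_u(Q)$. I would then analyze the leading term $\gr(g_n) = \gr(x^{p^n} - t^{p^n})$: in $\gr_{f_Q}(R_n)$ one of $\gr(x^{p^n})$ or $\gr(t^{p^n})$ dominates, and Lemma~\ref{lem: graded zero divisors} applied to the resulting homogeneous element shows that $\gr(g_n)$ is a non-zero-divisor, hence $g_n$ is regular in $R_n$. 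Regularity of $g$ in $R$ follows, as $g^{p^n} = g_n$.

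For the crossed product decomposition, localize at the regular normal element $g$. Any element of $R$ is a power series $\sum_n q_n x^n$, and using $x = g + t$ with $[x,t]=0$, the binomial expansion rewrites
\[
\sum_n q_n x^n = \sum_{n,k} q_n \binom{n}{k} t^{n-k} g^k.
\]
Grouping by $k \bmod p^m$ and using $g^{p^m} = g_m$, each element of $R$ can be formally rewritten as $\sum_{i=0}^{p^m - 1} r_i g^i$ with $r_i$ a (possibly Laurent) series in $g_m$ with coefficients in $R_m$. The main technical work—and the hardest step—is showing (a) that the $r_i$ can be made to converge in $(R_m)_{(g_m)}$ (using the filtration bound hidden in quasi-compatibility to control the tails), and (b) that the decomposition is direct, i.e. $\{1,g,\dots,g^{p^m-1}\}$ is $(R_m)_{(g_m)}$-linearly independent. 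Independence I would prove by reducing a putative relation $\sum r_i g^i = 0$ modulo powers of $g_m$ and using that $\{1,g,\dots,g^{p^m-1}\}$ already forms a basis of $R/g_m R$ over $R_m/g_m R_m$. Convergence is the crux: it is essentially the content of Theorem~\ref{thm: crossed product after localisation} as cited and requires the full strength of the quasi-compatibility hypothesis, since in the general case the naive order filtration on $R$ is insufficient to bound the series. In the special case when $g$ is already a unit in $R$, no localization is needed and the same argument yields the second displayed identity.
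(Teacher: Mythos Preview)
The paper does not give its own proof of this theorem: it is quoted as \cite[Theorem B]{jones-woods-3} and the statement ends with \qed. So there is nothing in the present paper to compare your proposal against.

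Your elementary calculations are correct: the relations $gq=\sigma(q)g$, $gx=xg$, and $g^{p^m}=x^{p^m}-t^{p^m}$ are exactly as you say, and these are the identities that make the crossed product structure plausible in the first place. You also correctly identify convergence of the coefficients $r_i$ as the genuine technical heart of the result, and honestly flag that this is where the quasi-compatibility hypothesis is doing real work that you have not unpacked.

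There is, however, a gap in your regularity argument. You show that $g_n=g^{p^n}$ is regular in $R_n=Q^+[[x^{p^n};\sigma^{p^n},\delta^{p^n}]]$ via its associated graded, and then say ``Regularity of $g$ in $R$ follows, as $g^{p^n}=g_n$.'' But regularity of $g_n$ in the \emph{subring} $R_n$ does not by itself give regularity of $g_n$ (let alone $g$) in the larger ring $R$: for that you would need to know that $R$ is, say, free or faithfully flat over $R_n$. That freeness is precisely the crossed product decomposition you are trying to prove, so invoking it here is circular. One way out is to argue regularity of $g$ directly in $R$ (for instance by exhibiting a suitable filtration on $R$ itself and showing $\gr(g)$ is a non-zero-divisor there), but this again runs into the difficulty that in the merely quasi-compatible regime the natural order filtration on $R$ is not well-behaved. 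This is presumably why the actual proof in \cite{jones-woods-3} needs more machinery than your sketch provides.
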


\section{The crossed product decomposition}\label{section: crossed product decomposition}

\textbf{Setup.} Throughout this section, we will adopt a number of hypotheses, which we state upfront for the convenience of the reader.
\begin{itemize}
\item Let $(Q, u)$ be a standard filtered simple artinian ring of characteristic $p$, with centre $Z$. Write $\O$ for its associated maximal order.
\item Take a commuting skew derivation $(\sigma, \delta)$ on $Q$, and assume that there exists some $N\in\mathbb{N}$ such that $(\sigma^{p^n}, \delta^{p^n})$ is compatible with $u$ for all $n\geq N$. Then it follows from Lemma \ref{lem: going up compatibility} that $(\sigma, \delta)$ is quasi-compatible with $u$, and so in particular, we may form the bounded skew power series ring $Q^+[[x; \sigma, \delta]]$. Applying \cite[Corollary 3.5.11]{jones-woods-3} with $S = Q$ and $A = \O$ tells us that $Q^+[[x; \sigma, \delta]]$ is Noetherian.
\item Assume also that there exists $t\in Q$ such that $\delta(q) = tq - \sigma(q)t$ for all $q\in Q$, and $\sigma(t) = t$. Now Theorem \ref{thm: crossed product after localisation} gives us a crossed product decomposition for any $m\in\mathbb{N}$: $$Q^+[[x;\sigma,\delta]]_{(x-t)} = \bigoplus_{i=0}^{p^m-1} Q^+[[x^{p^m};\sigma^{p^m},\delta^{p^m}]] g^i,$$ where $g=x-t$, and the action of $g$ on $Q^+[[x^{p^m};\sigma^{p^m},\delta^{p^m}]]$ is given by $gx = xg$ and $gq = \sigma(q)g$ for all $q\in Q$.
\end{itemize}

Recall the \emph{strong finite order hypothesis} \eqref{SFOH} from \S\ref{subsec: main results}: in this section, we will examine the crossed product decomposition more closely, and use it to prove that $Q^+[[x;\sigma,\delta]]$ is a prime ring in the case where \eqref{SFOH} is satisfied, thus deducing Theorem \ref{letterthm: SFOH implies prime}.

\subsection{The action of \(\sigma\) on the centre}\label{subsec: new galois}

In this section, we explore what happens to the skew power series ring in the case where the restriction of $\sigma$ to the centre $Z$ has finite order $p^s$.

\begin{lem}\label{lem: identical on centre}
Suppose $[\sigma]\in \Out(Q)$ has finite order $d$. Then $\sigma|_Z$ has order $p^s$ for some $s\in\mathbb{N}$.
\end{lem}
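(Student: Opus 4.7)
The plan is as follows. Since $[\sigma]\in\Out(Q)$ has finite order $d$, $\sigma^d$ is an inner automorphism of $Q$, and inner automorphisms act trivially on the centre $Z$. Hence $\sigma|_Z$ has finite order $n$ dividing $d$. Write $n = p^s m$ with $\gcd(m,p) = 1$; the goal is to show $m = 1$, so that $\sigma|_Z$ has order $p^s$ as claimed. Suppose for contradiction that $m > 1$.

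By the standing hypothesis of the section, pick $M \geq s$ large enough that $(\sigma^{p^M},\delta^{p^M})$ is compatible with $u$. Then $\sigma^{p^M}$ and $\sigma^{-p^M}$ are both filtered of degree $\geq 0$, so $\sigma^{p^M}(\O) = \O$ and $\sigma^{p^M}$ restricts to an automorphism of $\O_Z := Z\cap\O$ preserving the maximal ideal $\mathfrak{m}_Z := J(\O)\cap Z$ of the complete discrete valuation ring $\O_Z$. Set $\tau := \sigma^{p^M}|_Z$; since $M\geq s$, this has order exactly $m$ in $\Aut(Z)$. Moreover compatibility gives $\deg_u(\sigma^{p^M} - \id)\geq 1$. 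The key technical input is an application of the Freshman's Dream to commuting additive endomorphisms of the $\mathbb{F}_p$-vector space $(Q,+)$: as $\sigma^{p^M}$ and $\id$ commute in $\End_{\mathbb{F}_p}(Q)$, we obtain
\[
\sigma^{p^{M+k}} - \id \;=\; (\id + (\sigma^{p^M}-\id))^{p^k} - \id \;=\; (\sigma^{p^M}-\id)^{p^k}
\]
for every $k\geq 0$. Consequently $\deg_u(\sigma^{p^{M+k}}-\id)\geq p^k$, so in particular $(\sigma^{p^{M+k}}-\id)(\O_Z)\subseteq \mathfrak{m}_Z^{p^k}$.

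Now let $r$ be the multiplicative order of $p$ in $(\mathbb{Z}/m\mathbb{Z})^\times$, which exists since $\gcd(p,m) = 1$. Then $p^{jr}\equiv 1\pmod m$ for every $j\geq 0$, so $\tau^{p^{jr}} = \tau$, i.e.\ $\sigma^{p^{M+jr}}|_Z = \tau$. Combined with the bound above,
\[
(\tau-\id)(\O_Z) \;\subseteq\; \mathfrak{m}_Z^{p^{jr}} \qquad \text{for every } j\geq 0.
\]
Since $\O_Z$ inherits separatedness from $D$ (Properties \ref{props: DVRs}(ii)), $\bigcap_j \mathfrak{m}_Z^{p^{jr}} = 0$, and so $\tau = \id$ on $\O_Z$, hence on its field of fractions $Z$. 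This contradicts $\tau$ having order $m > 1$, so $m = 1$.

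The only subtle point is the Freshman's Dream identity $\sigma^{p^{M+k}} - \id = (\sigma^{p^M}-\id)^{p^k}$: this must be interpreted in the $\mathbb{F}_p$-algebra $\End_{\mathbb{F}_p}(Q,+)$ (not inside $Q$), where characteristic $p$ kills all intermediate binomial coefficients. Once this is in place, the rest is an essentially topological argument using that $\deg_u(\sigma^{p^{M+k}}-\id)\to\infty$ while the cyclic sequence $\tau^{p^k}$ revisits $\tau$ infinitely often.
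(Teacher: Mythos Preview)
Your proof is correct and follows essentially the same strategy as the paper: both exploit that $\deg_u(\sigma^{p^{M+k}}-\id)\to\infty$ (via the Freshman's Dream in $\End_{\mathbb{F}_p}(Q)$) together with the fact that $\sigma|_Z$ has finite order, forcing the $p'$-part of that order to be trivial. The paper phrases this pointwise (for each $z$ the finite sequence $\{\sigma^{p^\ell}(z)\}$ converges to $z$, hence eventually equals $z$, then take $\gcd$ with $d$), whereas you isolate the explicit subsequence $\sigma^{p^{M+jr}}|_Z=\tau$ and argue globally; the underlying mechanism is the same.

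One small imprecision: the containment $(\sigma^{p^{M+k}}-\id)(\O_Z)\subseteq \mathfrak{m}_Z^{p^k}$ is not quite right when the valuation $u|_Z$ has image $e\mathbb{Z}$ with $e>1$, since then $\mathfrak{m}_Z^{p^k}=\{z\in Z:u(z)\geq p^k e\}$, whereas you only know $u((\sigma^{p^{M+k}}-\id)(z))\geq p^k$. This does not affect the argument: simply replace $\mathfrak{m}_Z^{p^k}$ by $J(\O)^{p^k}\cap Z$, or work directly with the filtration to conclude $u((\tau-\id)(z))\geq u(z)+p^{jr}$ for all $j$ and all $z\in Z$, whence $(\tau-\id)(z)=0$ by separatedness of $u$.
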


\begin{proof}
By our compatibility assumption, we know that for all $d\in\mathbb{N}$ there exists $N$ such that $\deg_u(\sigma^{p^m} - \id) \geq d$ for all $m\geq N$. In particular, for any $q\in Q$, the sequence $(\sigma^{p^\ell}(q))_{\ell\in\mathbb{N}}$ converges to $q$ as $\ell\to\infty$. But since $\sigma^d$ is inner, $\sigma|_Z$ has finite order at most $d$, and so the set $\{\sigma^{p^\ell}(z): \ell\in\mathbb{N}\}$ must contain at most $d$ elements for any $z\in Z$. 

So fix $z\in Z$. Since $z$ is the unique limit point of the set $\{\sigma^{p^\ell}(z): \ell\in\mathbb{N}\}$, it follows that $\sigma^{p^k}(z) = z$ for some $k$ (which may depend on $z$). But since also $\sigma^d(z) = \sigma^{p^k}(z) = z$, we must have $\sigma^e(z) = z$ where $e = \gcd(p^k, d)$. Writing $d=p^rb$ where $p\nmid b$, we must have that $e\mid p^r$, so $\sigma^{p^r}(z)=z$ for all $z\in Z$, and hence $\sigma|_{Z}$ has order $p^s$ for some $s\leq r$.\end{proof}

So \textbf{assume throughout this subsection} that $\sigma|_Z$ has order $p^s$ for some fixed $s\in\mathbb{N}$. We will quickly be able to reduce to the case $s = 0$, i.e.\ the case where $\sigma$ acts trivially on $Z$.

We first recall some basic Galois theory. If $k$ is a field and $G$ is a group of automorphisms of $k$, we will write $k^G = \{x\in k : g(x) = x \text{ for all } g\in G\}$ for the corresponding fixed field. When $G = \langle \tau \rangle$, we will write simply $k^\tau$, and so ``$\sigma$ acts trivially on $Z$" may be written as $Z^\sigma = Z$.

\begin{lem}\label{lem: Galois over invariance}
\cite[Theorem 18.20]{Isa93}
Let $k$ be a field and $G$ a finite group of automorphisms of $k$. Then $k/k^G$ is a finite Galois extension with Galois group $G$.\qed
\end{lem}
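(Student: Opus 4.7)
The plan is to prove this classical theorem (Artin's theorem) via the standard orbit--polynomial argument together with a dimension count.

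First I would establish that $k/k^G$ is algebraic, normal, and separable (hence Galois) all in one stroke. Fix any $\alpha\in k$ and consider its $G$-orbit $G\cdot\alpha=\{g(\alpha):g\in G\}$, a finite set of size at most $|G|$. Form the polynomial $f_\alpha(X) = \prod_{\beta\in G\cdot\alpha}(X-\beta)\in k[X]$. Since every element of $G$ merely permutes the factors, the coefficients of $f_\alpha$ lie in $k^G$, so $f_\alpha\in k^G[X]$. By construction $f_\alpha(\alpha)=0$, has distinct roots, and splits completely in $k$. Thus the minimal polynomial of $\alpha$ over $k^G$ divides $f_\alpha$, so it is separable, of degree $\le |G|$, and splits in $k$. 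This shows simultaneously that $k/k^G$ is algebraic, separable, and normal.

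Next I would prove the degree equality $[k:k^G]=|G|$. For the upper bound, any finite subextension $k^G\subseteq L\subseteq k$ is separable, so by the primitive element theorem $L=k^G(\alpha)$ for some $\alpha$; then $[L:k^G]=\deg(\text{min poly of }\alpha)\le |G|$ by the previous paragraph. Since every finite subextension has degree at most $|G|$, we conclude $[k:k^G]\le|G|$ (otherwise one could produce a finite subextension of larger degree). For the lower bound, $G$ embeds into $\mathrm{Aut}(k/k^G)$ by definition of $k^G$, and for any finite Galois extension $|\mathrm{Gal}(k/k^G)|=[k:k^G]$, giving $|G|\le[k:k^G]$. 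Combining, $[k:k^G]=|G|$.

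Finally I would identify the Galois group: the inclusion $G\hookrightarrow\mathrm{Gal}(k/k^G)$ is then a map between two finite groups of the same order, hence a bijection, so $\mathrm{Gal}(k/k^G)=G$.

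The only mildly subtle step is the upper bound argument, where one must be careful that $k/k^G$ itself is finite before invoking the primitive element theorem on $k$; this is handled by passing through arbitrary finite subextensions $L$ and noting that a uniform bound $[L:k^G]\le|G|$ on all such $L$ forces $k$ itself to be finite over $k^G$. Everything else is formal, and one could alternatively just cite \cite{Isa93} for the whole statement.
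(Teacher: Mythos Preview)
Your proof is correct and is the standard argument for Artin's theorem. The paper itself gives no proof at all: the lemma is stated with a citation to \cite[Theorem 18.20]{Isa93} and a \qed, so there is nothing to compare against beyond noting that you have supplied exactly the kind of argument one would find in the cited reference.
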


\begin{lem}\label{lem: artin-schreier}
Let $k$ be a field of characteristic $p$, and let $K/k$ be a Galois extension of degree $p$. Then there exists $\alpha\in K$ such that $K = k(\alpha)$ and $\alpha$ satisfies an equation $X^p - X - a = 0$ for some $a\in k$.\qed
\end{lem}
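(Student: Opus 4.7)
The statement to prove is the classical Artin--Schreier theorem, so the plan is to follow the standard additive-Hilbert-90 argument.

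First, since $[K:k] = p$ is prime, the Galois group $G = \Gal(K/k)$ is cyclic of order $p$; fix a generator $\tau$. The goal is to produce a single element $\alpha \in K$ whose $G$-orbit is an arithmetic progression $\{\alpha, \alpha+1, \dots, \alpha+p-1\}$, because such an $\alpha$ will automatically generate $K/k$ (its minimal polynomial over $k$ must have all these $p$ distinct roots) and, as shown below, will satisfy an Artin--Schreier equation.

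The key input is the additive form of Hilbert's Theorem 90: for a finite cyclic Galois extension $K/k$ with generator $\tau$, an element $\beta \in K$ lies in the image of the map $(\tau - \id)$ if and only if its trace $\mathrm{Tr}_{K/k}(\beta) := \sum_{i=0}^{p-1} \tau^i(\beta)$ is zero. Applying this to $\beta = 1$, the trace is $p \cdot 1 = 0$ in $k$ because $\chr(k) = p$, so there exists $\alpha \in K$ with $\tau(\alpha) - \alpha = 1$, i.e.\ $\tau(\alpha) = \alpha + 1$. By induction $\tau^i(\alpha) = \alpha + i$, and since these $p$ values are all distinct, $\alpha$ has $p$ distinct Galois conjugates, forcing $[k(\alpha):k] = p$ and hence $K = k(\alpha)$.

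Finally, set $a := \alpha^p - \alpha$. Using the Frobenius and the relation $\tau(\alpha) = \alpha + 1$,
\[
\tau(a) = \tau(\alpha)^p - \tau(\alpha) = (\alpha+1)^p - (\alpha+1) = \alpha^p + 1 - \alpha - 1 = \alpha^p - \alpha = a,
\]
so $a$ is fixed by the generator $\tau$ and hence by all of $G$. By Galois theory $a \in K^G = k$, and $\alpha$ satisfies $X^p - X - a = 0$ as required.

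The only step with any real content is the invocation of additive Hilbert~90; the rest is a short direct computation using that $\chr(k) = p$ kills the trace of $1$. Since this is the standard classical proof, I do not anticipate any genuine obstacle. One small stylistic choice is whether to cite additive Hilbert~90 as a black box or to re-prove it in two lines via the Dedekind independence of characters; given that the ambient paper treats the lemma itself as a background citation, citing the statement should be sufficient.
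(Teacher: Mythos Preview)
Your argument is correct and is the standard Artin--Schreier proof via additive Hilbert~90. Note that the paper does not actually supply a proof of this lemma: it is stated with a terminal \qed\ as a background citation of a classical fact, so there is no paper-proof to compare against.
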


Such extensions $K/k$ are called \emph{Artin-Schreier} extensions. It is easy to see that the equation $X^p - X - a = 0$ is irreducible in $k[X]$, $K$ is its splitting field, and the roots are $\{\alpha + i : i\in\mathbb{F}_p\}$.

Now, using Theorem \ref{thm: crossed product after localisation}, $x-t$ is a regular normal element of $Q^+[[x;\sigma,\delta]]$, and every element of the localisation $Q^+[[x; \sigma, \delta]]_{(x-t)}$ can be written as
$$f(x) = (x-t)^{-r} \sum_{n\in\mathbb{N}} q_n x^n \in Q^+[[x; \sigma, \delta]]$$
for some integer $r$ and some $q_n\in Q$ such that $(q_n)_{n\in\mathbb{N}}$ is $u$-bounded below. We will define the \emph{formal derivative} $\frac{\text{d}}{\text{d}x}f(x)$ of this element to be
$$\frac{\text{d}}{\text{d}x}f(x) = -r(x-t)^{-r-1}\sum_{n\in\mathbb{N}} q_n x^n + (x-t)^{-r} \sum_{n\geq 1} n q_n x^{n-1}.$$
It is easy to see that this definition makes sense as an element of $Q^+[[x; \sigma, \delta]]_{(x-t)}$. Indeed, suppose that $\min_{n\in\mathbb{N}}\{u(q_n)\} \geq B$, and since $(\sigma, \delta)$ is assumed quasi-compatible with $u$, choose $A$ such that $\min \{\deg_w(\sigma^i \delta^j) : i\in\mathbb{Z}, j\in\mathbb{N}\} \geq A$. We can rearrange this expression for $\frac{\text{d}}{\text{d}x}f(x)$ to get
$$\frac{\text{d}}{\text{d}x}f(x)= (x-t)^{-r-1}\sum_{n\in \mathbb{N}}\left(n\sigma(q_n)-rq_n - (n+1)\sigma(q_{n+1})t\right)x^n,$$
and we may bound the $u$-value of the coefficients as follows:
\begin{align*}
\min_{n\in\mathbb{N}} \{u(n\sigma(q_n)-rq_n - (n+1)\sigma(q_{n+1})t)\} &\geq \min_{n\in\mathbb{N}} \{\min\{u(n\sigma(q_n)), u(rq_n), u((n+1)\sigma(q_{n+1})t)\}\}\\
&\geq \min_{n\in\mathbb{N}} \{\min\{u(\sigma(q_n)), u(q_n), u(\sigma(q_{n+1})) + u(t)\}\}\\
& \geq \min\{ A+B, B, A+B+u(t)\},
\end{align*}
i.e.\ the coefficients in this sum are still $u$-bounded below.

\begin{propn}\label{propn: derivative invariant}
Assume that $Z^\sigma \neq Z$. Let $I$ be a two-sided ideal in the localised skew power series ring $Q^+[[x;\sigma,\delta]]_{(x-t)}$. Then, for all $f(x)\in I$, the formal derivative $\frac{\text{d}}{\text{d}x}f(x)$ lies in $I$.
\end{propn}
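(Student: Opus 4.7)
Since $Z^\sigma \neq Z$, Lemma \ref{lem: identical on centre} gives that $\sigma|_Z$ has order exactly $p^s$ for some $s \geq 1$, and by Lemma \ref{lem: Galois over invariance} the extension $Z/Z^\sigma$ is cyclic Galois of degree $p^s$. The plan is to find an element $\alpha \in Z$ whose inner commutator in $Q^+[[x;\sigma,\delta]]_{(x-t)}$ reproduces the formal derivative, up to units. The key move is to work inside the intermediate subfield $Z^{\sigma^p} \supseteq Z^\sigma$: the extension $Z^{\sigma^p}/Z^\sigma$ is cyclic Galois of degree exactly $p$, with Galois group generated by the restriction of $\sigma$, so by Artin--Schreier (Lemma \ref{lem: artin-schreier}) I can pick $\alpha \in Z^{\sigma^p}$ with $\sigma(\alpha) = \alpha + 1$. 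Placing $\alpha$ inside $Z^{\sigma^p}$, rather than arbitrarily in $Z$, is essential: it forces $\beta := \sigma(\alpha) - \alpha = 1$ to be $\sigma$-fixed, and a direct induction using $\sigma(t)=t$ and $\delta(t)=0$ then gives $\delta(\alpha) = t\alpha - (\alpha+1)t = -t$ and $\delta^k(\alpha) = 0$ for all $k \geq 2$.

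With $\alpha$ fixed, I compute commutators. Since $\sigma(t) = t$ and $\delta(t) = 0$ we have $xt = tx$, and hence $x$ commutes with $g := x - t$. A short induction from $[x,\alpha] = g$ gives $[x^n,\alpha] = n g x^{n-1}$ for every $n \geq 0$. Applying the multiplication formula \eqref{eqn: multiplication in bounded SPSRs} term by term, and using that $\alpha \in Z$ commutes with every $q \in Q$, I obtain for any $h = \sum_n q_n x^n \in Q^+[[x;\sigma,\delta]]$
\[
[h,\alpha] \;=\; \Bigl(\sum_n n\, q_n x^{n-1}\Bigr)\, g \;=\; h'\, g,
\]
where $h'$ denotes the formal derivative of $h$. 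Starting from $g\alpha = (\alpha+1)g$ and extending inductively to negative powers via $g \cdot g^{-1} = 1$, I similarly obtain $[g^n,\alpha] = n g^n$ for all $n \in \mathbb{Z}$.

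Every $f \in Q^+[[x;\sigma,\delta]]_{(x-t)}$ has the form $f = g^{-r} h$ with $r \in \mathbb{N}$ and $h \in Q^+[[x;\sigma,\delta]]$, and formal derivative $\frac{d}{dx} f = -r g^{-r-1} h + g^{-r} h'$. The Leibniz rule for commutators yields
\[
[f,\alpha] \;=\; g^{-r}[h,\alpha] + [g^{-r},\alpha]\,h \;=\; g^{-r} h'\, g - r\, g^{-r} h.
\]
Subtracting $\frac{d}{dx} f \cdot g = -r g^{-r-1} h g + g^{-r} h'\, g$ and using $[h,g] = [g^r f, g] = g^r [f,g]$, I get
\[
[f,\alpha] - \tfrac{d}{dx} f \cdot g \;=\; r\, g^{-r-1} [h,g] \;=\; r\, g^{-1}[f,g],
\]
which rearranges to the key identity
\[
\tfrac{d}{dx} f \;=\; [f,\alpha]\, g^{-1} - r\, g^{-1}[f,g]\, g^{-1}.
\]
Since $I$ is a two-sided ideal containing $f$, both $[f,\alpha]$ and $[f,g]$ lie in $I$, and $g$ is a unit in the localisation; therefore $\frac{d}{dx} f \in I$.

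The main obstacle is the existence of a sufficiently well-behaved $\alpha$. A generic element of $Z$ on which $\sigma$ acts nontrivially satisfies $\sigma(\alpha) = \alpha + \beta$ for some $\beta \in Z$, but unless $\beta$ is $\sigma$-fixed, the higher derivatives $\delta^k(\alpha) = (-1)^k(\sigma - \id)^{k-1}(\beta)\, t^k$ for $k \geq 2$ are nonzero, producing extra terms that wreck the clean identity $[x^n,\alpha] = n g x^{n-1}$. Restricting $\alpha$ to $Z^{\sigma^p}$ forces the conjugate shift to lie in $\mathbb{F}_p^\times \subseteq Z^\sigma$, and this is precisely what makes the argument work uniformly in $s$.
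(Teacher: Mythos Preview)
Your proof is correct and follows the same Artin--Schreier strategy as the paper: choose $\alpha\in Z^{\sigma^p}$ with $\sigma(\alpha)=\alpha+1$, compute $[x^n,\alpha]=n(x-t)x^{n-1}$ and $[g^n,\alpha]=ng^n$, and realise $\frac{d}{dx}f$ via the commutator $[f,\alpha]$ and the unit $g=x-t$. Your final identity carries an extra term $-r\,g^{-1}[f,g]\,g^{-1}$ that the paper's displayed computation does not; the paper's last step implicitly uses $hX^{-1}=X^{-1}h$ for $h\in Q^+[[x;\sigma,\delta]]$, which is $\tilde\sigma(h)=h$ and need not hold, so your version is in fact the more careful one. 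Since $[f,g]\in I$ whenever $f\in I$, both arguments reach the same conclusion.
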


\begin{proof}
By Lemma \ref{lem: Galois over invariance}, $Z/Z^{\sigma}$ is a Galois extension of degree $p^s$, and so it contains the nontrivial Galois subextension $F/Z^\sigma$ of degree $p$, where $F = Z^{\sigma^p}$. Hence, by Lemma \ref{lem: artin-schreier}, there exists an element $\alpha\in F\setminus Z^\sigma$ satisfying $X^p - X - a = 0$ for some $a\in Z^\sigma$ such that $F = Z^\sigma(\alpha)$, and $\sigma$ must permute the roots $\{\alpha+i : i\in\mathbb{F}_p\}$ of this polynomial.

Fix $i\in\mathbb{F}_p^\times$ such that $\sigma(\alpha) = \alpha + i$. In the ring $Q^+[[x; \sigma, \delta]]$ we have $x\alpha = \sigma(\alpha)x + \delta(\alpha)$, which is equal to $\alpha x + i(x-t)$ (using the assumptions on $\delta$, and the fact that $\alpha$ is central in $Q$). From here, it is easy to prove by induction that 
\begin{align}\label{eqn: alpha 1}
x^n \alpha = \alpha x^n + inx^{n-1}(x-t)
\end{align}
and
\begin{align}\label{eqn: alpha 2}
(x-t)^n \alpha = \alpha (x-t)^n + in(x-t)^n.
\end{align} 
Since both $i$ and $x-t$ are units in $Q^+[[x; \sigma, \delta]]_{(x-t)}$, and $i$ is central, we may rephrase \eqref{eqn: alpha 1} in $Q^+[[x; \sigma, \delta]]_{(x-t)}$ as
\begin{align}\label{eqn: alpha 3}
(x^n \alpha - \alpha x^n)i^{-1}(x-t)^{-1} = nx^{n-1}.
\end{align}
But clearly $\alpha$ commutes with every element of $Q$, as $\alpha\in Z$. So take an arbitrary element $g = \sum_{n\in\mathbb{N}} q_n x^n$ of $Q^+[[x; \sigma, \delta]]$ (which we view as a subring of $Q^+[[x; \sigma, \delta]]_{(x-t)}$), with coefficients $q_n\in Q$. We get
\begin{align*}
(g\alpha - \alpha g)i^{-1} (x-t)^{-1} &= \left(\left(\sum q_n x^n\right) \alpha - \alpha \left(\sum q_n x^n\right)\right)i^{-1}(x-t)^{-1}\\
&= \sum q_n x^n \alpha i^{-1}(x-t)^{-1} - \sum q_n \alpha x^n i^{-1}(x-t)^{-1}\\
&= \sum q_n \left((x^n \alpha - \alpha x^n)i^{-1}(x-t)^{-1}\right)\\
&= \sum q_n (nx^{n-1}) = \frac{\text{d}}{\text{d}x}g,
\end{align*}
where the final equality comes from \eqref{eqn: alpha 3}. It follows immediately that if $g\in I\cap Q^+[[x; \sigma, \delta]]$, then we have $(g\alpha - \alpha g)i^{-1} (x-t)^{-1} = \frac{\text{d}}{\text{d}x}g \in I\cap Q^+[[x; \sigma, \delta]]$.

Finally, an arbitrary element $f \in Q^+[[x;\sigma,\delta]]_{(x-t)}$ can be written as $f = (x-t)^{-r} g$, where $g\in Q^+[[x;\sigma,\delta]]$ as above. Write $X = x-t$ for ease of notation, and note that \eqref{eqn: alpha 2} can be rephrased as $\alpha X^n = X^n \alpha - inX^n$. (Inside $Q^+[[x;\sigma,\delta]]_{(x-t)}$, this holds for all $n\in\mathbb{Z}$: indeed, if $n = -m$ is negative, then multiplying the equation $X^m \alpha = \alpha X^m - i(-m)X^m$ on both sides by $X^{-m}$ gives the desired equation.) Then
\begin{align*}
(f\alpha - \alpha f)i^{-1}X^{-1} &= (X^{-r} g\alpha - \alpha X^{-r} g) i^{-1}X^{-1}\\
&= (X^{-r} g\alpha - X^{-r} \alpha g - irX^{-r} g) i^{-1}X^{-1}\\
&= X^{-r}(g\alpha - \alpha g)i^{-1} X^{-1} - rX^{-r-1}g\\
&= X^{-r}\frac{\text{d}}{\text{d}x}g - rX^{-r-1}g = \frac{\text{d}}{\text{d}x}f
\end{align*}
as required. Hence, if $I$ is any two-sided ideal in $Q^+[[x; \sigma, \delta]]_{(x-t)}$ and $f\in I$, then $\frac{\text{d}}{\text{d}x}f\in I$.
\end{proof}

In the following results, we extend $(\sigma, \delta)$ to a skew derivation $(\widetilde{\sigma},\widetilde{\delta})$ on the bounded skew power series ring $Q^+[[x; \sigma, \delta]]$ (see Lemma \ref{lem: extend automorphisms to SPSR}), and hence to $Q^+[[x; \sigma, \delta]]_{(x-t)}$ \cite[Lemma 1.3]{goodearl-skew-poly-and-quantized}.

\begin{propn}\label{propn: properties of extended skew ders}
Let $r\in Q^+[[x; \sigma, \delta]]_{(x-t)}$ be arbitrary. Then:
\begin{enumerate}[label=(\roman*)]
\item $\widetilde{\sigma}(x) = x$ and $\widetilde{\sigma}(x-t) = x-t$.
\item $\widetilde{\delta}(x) = 0$ and $\widetilde{\delta}(x-t) = 0$.
\item $xr = \widetilde{\sigma}(r)x + \widetilde{\delta}(r)$.
\item $\widetilde{\delta}(r) = tr - \widetilde{\sigma}(r)t$.
\end{enumerate}
\end{propn}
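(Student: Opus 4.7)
Parts (i) and (ii) are essentially immediate from the coefficient-wise definitions. The element $x \in Q^+[[x;\sigma,\delta]]$ has coefficients $(0, 1, 0, \dots)$, so $\widetilde{\sigma}(x) = x$ and $\widetilde{\delta}(x) = 0$. By the standing hypothesis $\sigma(t) = t$, Property \ref{props: Sigma, Delta, crossed product}.2 (or direct computation) gives $\delta(t) = t^2 - \sigma(t)t = 0$, whence $\widetilde{\sigma}(t) = t$ and $\widetilde{\delta}(t) = 0$; additivity yields (i) and (ii). A useful byproduct is that $xt = \sigma(t)x + \delta(t) = tx$, so $x$ commutes with $t$, hence with $x-t$ and with every power $(x-t)^{\pm k}$.

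For (iii), I would first handle $r \in Q^+[[x;\sigma,\delta]]$. Writing $r = \sum_n q_n x^n$ and applying the multiplication formula \eqref{eqn: multiplication in bounded SPSRs} with $f = x$, only the $a_1 = 1$ term contributes, and the coefficient of $x^k$ in $xr$ collapses to $\delta(q_k) + \sigma(q_{k-1})$ (with $q_{-1} := 0$). The same formula applied to $\widetilde{\sigma}(r)x$ and $\widetilde{\delta}(r)$ separately yields $\sigma(q_{k-1})$ and $\delta(q_k)$ respectively, matching the above. To lift to the localization, I apply $\widetilde{\sigma}$ and the $\widetilde{\sigma}$-derivation rule to the identity $1 = (x-t)^{-k}(x-t)^k$; combined with (i) and (ii), this forces $\widetilde{\sigma}((x-t)^{-k}) = (x-t)^{-k}$ and $\widetilde{\delta}((x-t)^{-k}) = 0$ inductively. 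For a general $r = (x-t)^{-k} s$ with $s \in Q^+[[x;\sigma,\delta]]$, the chain
\[ xr = (x-t)^{-k}(xs) = (x-t)^{-k}\bigl(\widetilde{\sigma}(s)x + \widetilde{\delta}(s)\bigr) = \widetilde{\sigma}(r)x + \widetilde{\delta}(r) \]
follows using the fact that $x$ passes through $(x-t)^{-k}$ and that $\widetilde{\sigma}$, $\widetilde{\delta}$ act multiplicatively on this factorisation.

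For (iv), the cleanest route is to introduce the map $\rho(r) := tr - \widetilde{\sigma}(r)t$, verify directly that it is a $\widetilde{\sigma}$-derivation of $Q^+[[x;\sigma,\delta]]_{(x-t)}$, and then show $\rho = \widetilde{\delta}$. The hypothesis on $t$ gives $\rho|_Q = \delta = \widetilde{\delta}|_Q$, while $\rho(x) = tx - xt = 0 = \widetilde{\delta}(x)$ uses (i), (ii) and the commutation $xt = tx$. Since both $\rho$ and $\widetilde{\delta}$ are $\widetilde{\sigma}$-derivations agreeing on the generating set $Q \cup \{x\}$, iterated Leibniz shows they agree on $Q[x;\sigma,\delta]$; coefficient-wise continuity then extends the equality to $Q^+[[x;\sigma,\delta]]$, and the derivation rule applied to $(x-t)^{-1}(x-t) = 1$ (together with $\rho(x-t) = 0$, verified exactly as $\widetilde{\delta}(x-t) = 0$) propagates it to the full Ore localization. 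Alternatively, one may derive (iv) from (iii) by noting that together they encode the relation $(x-t)r = \widetilde{\sigma}(r)(x-t)$, i.e.\ that $\widetilde{\sigma}$ agrees with conjugation by $g = x-t$; this is exactly the content of Theorem \ref{thm: crossed product after localisation} on the generators $Q$ and $x$, and extends by continuity and the fact that $g$ is fixed by $\widetilde{\sigma}$.

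The principal obstacle throughout is bookkeeping in the passage from $Q[x;\sigma,\delta]$ to $Q^+[[x;\sigma,\delta]]$ and finally to the Ore localization; nothing deeper than continuity of multiplication \cite[Theorem 3.2.6]{jones-woods-3} and the $\sigma$-invariance of $t$ is needed. In fact, the hypothesis $\sigma(t) = t$ is precisely what makes $(x-t)$ behave as a ``scalar-like" element fixed by both $\widetilde{\sigma}$ and $\widetilde{\delta}$, and without it the extension to the localization would be significantly more delicate.
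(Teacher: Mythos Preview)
Your proposal is correct and follows essentially the same strategy as the paper for (i)--(iii). For (iv), the paper opts for a direct computation rather than your ``agree on generators and extend'' argument: writing $r = (x-t)^{-m}\sum_n q_n x^n$, it applies the $\widetilde{\sigma}$-derivation rule together with $\widetilde{\delta}((x-t)^{-m}) = 0$ and $\widetilde{\sigma}((x-t)^{-m}) = (x-t)^{-m}$ to get $\widetilde{\delta}(r) = (x-t)^{-m}\sum_n \delta(q_n)x^n$, then substitutes $\delta(q_n) = tq_n - \sigma(q_n)t$ and regroups using $t(x-t) = (x-t)t$. Your route is slightly more conceptual, the paper's slightly more concrete, but neither involves any idea the other lacks.
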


\begin{proof}
Parts (i)--(ii) are an easy calculation. For parts (iii)--(iv), write the arbitrary element $r$ explicitly as $r = (x-t)^{-m} \sum_{n\in\mathbb{N}} q_n x^n \in Q^+[[x; \sigma, \delta]]_{(x-t)}$.

\begin{enumerate}[label=(\roman*)]
\setcounter{enumi}{2}
\item 
$\begin{aligned}[t]
xr &= (x-t)^{-m}\sum_{n\in\mathbb{N}} xq_n x^n\\
&= (x-t)^{-m}\sum_{n\in\mathbb{N}} (\sigma(q_n)x + \delta(q_n))x^n\\
&= \widetilde{\sigma}(r)x + \widetilde{\delta}(r).
\end{aligned}$
\item 
$\begin{aligned}[t]
\widetilde{\delta}(r) &= \widetilde{\delta}((x-t)^{-m})\sum_{n\in\mathbb{N}} q_n x^n + \widetilde{\sigma}((x-t)^{-m}) \widetilde{\delta}\left(\sum_{n\in\mathbb{N}} q_n x^n\right)\\
&= (x-t)^{-m}\sum_{n\in\mathbb{N}} \delta(q_n) x^n = (x-t)^{-m}\sum_{n\in\mathbb{N}} (tq_n - \sigma(q_n)t)x^n\\
&= t\left((x-t)^{-m}\sum_{n\in\mathbb{N}} q_n x^n\right) - (x-t)^{-m} \left(\sum_{n\in\mathbb{N}} \sigma(q_n)x^n\right)t\\
&= tr - \widetilde{\sigma}(r)t.
\end{aligned}$
\end{enumerate}
\end{proof}

From now on, we will simply write $\sigma$ and $\delta$ instead of $\widetilde{\sigma}$ and $\widetilde{\delta}$.

\begin{thm}\label{thm: suff identical on centre}
Assume that $Z^\sigma \neq Z$. Then there is a one-to-one correspondence between two-sided ideals $I \lhd Q^+[[x;\sigma,\delta]]_{(x-t)}$ and $\sigma$-invariant two-sided ideals $J\lhd Q^+[[x^p; \sigma^p, \delta^p]]_{(x^p-t^p)}$, given by the two mutually inverse mappings

\centerline{
\xymatrix@R=2px{
{\left\{\begin{array}{c}
\text{two-sided ideals of }\\
Q^+[[x;\sigma,\delta]]_{(x-t)}
\end{array}\right\}}
\ar@/^/@{->}[r]^-\Phi\ar@/_/@{<-}[r]_-\Psi&
{\left\{\begin{array}{c}
\sigma\text{-invariant ideals of }\\
Q^+[[x^p;\sigma^p,\delta^p]]_{(x^p-t^p)}
\end{array}\right\}}
}
}
defined by contraction $\Phi(I) = I\cap Q^+[[x^p;\sigma^p,\delta^p]]$ and extension $\Psi(J) = JQ^+[[x;\sigma,\delta]]$.

Moreover, suppose $\alpha$ is any automorphism of $Q$, commuting with $\sigma$ and $\delta$. Then $\Phi$ and $\Psi$ restrict to mutually inverse mappings

\centerline{
\xymatrix@R=2px{
{\left\{\begin{array}{c}
\alpha \text{-invariant ideals of }\\
Q^+[[x;\sigma,\delta]]_{(x-t)}
\end{array}\right\}}
\ar@/^/@{->}[r]^-\Phi\ar@/_/@{<-}[r]_-\Psi&
{\left\{\begin{array}{c}
(\sigma,\alpha) \text{-invariant ideals of }\\
Q^+[[x^p;\sigma^p,\delta^p]]_{(x^p-t^p)}
\end{array}\right\}}.
}
}
\end{thm}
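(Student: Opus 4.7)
My plan is to work inside the crossed product decomposition provided by Theorem \ref{thm: crossed product after localisation}. Set $\mathcal{R} := Q^+[[x;\sigma,\delta]]_{(x-t)}$, $\mathcal{S} := Q^+[[x^p;\sigma^p,\delta^p]]_{(x^p-t^p)}$, and $g := x-t$, so that $\mathcal{R} = \bigoplus_{i=0}^{p-1} \mathcal{S}g^i$ and conjugation by $g$ acts on $Q$ as $\sigma$ while fixing $x$. Since $Q$ has characteristic $p$ and $xt = tx$, we have $g^p = x^p - t^p \in \mathcal{S}$. Thus $\sigma$-invariance of $J \lhd \mathcal{S}$ is equivalent to $gJ = Jg$, and it follows immediately that $\Phi(I) = I \cap \mathcal{S}$ is $\sigma$-invariant (because $g$ is a unit in $\mathcal{R}$) and $\Psi(J) = J\mathcal{R} = \bigoplus_{i=0}^{p-1} Jg^i$ is a two-sided ideal of $\mathcal{R}$. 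The identity $\Phi \Psi = \mathrm{id}$ is automatic from this direct sum decomposition: intersecting $\bigoplus_i Jg^i$ with $\mathcal{S}$ selects the $i=0$ summand.

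For the main direction $I \subseteq \Psi \Phi(I)$, I would exploit the formal derivative $D = \frac{d}{dx}$ of Proposition \ref{propn: derivative invariant} (available because the hypothesis $Z^\sigma \neq Z$ carries over), which is given by $D(h) = (h\alpha - \alpha h) i^{-1} g^{-1}$ for a fixed $\alpha \in Z$ with $\sigma(\alpha) = \alpha + i$, $i \in \mathbb{F}_p^\times$. Three auxiliary facts are needed: (i) $D(g^k) = kg^{k-1}$, which follows from $g^k \alpha - \alpha g^k = ikg^k$ by iterating \eqref{eqn: alpha 2}; (ii) $D(s) = 0$ for all $s \in \mathcal{S}$, which reduces (since $\alpha \in Z(Q)$) to $x^p \alpha = \alpha x^p$, and this follows from \eqref{eqn: alpha 1} applied with $n = p$: $x^p \alpha - \alpha x^p = ip\, x^{p-1} g = 0$ in characteristic $p$; and (iii) the twisted Leibniz rule $D(fh) = fD(h) + D(f)\sigma(h)$ (where $\sigma$ is extended to $\mathcal{R}$ by conjugation by $g$), obtained by splitting the commutator $fh\alpha - \alpha fh = f(h\alpha-\alpha h) + (f\alpha - \alpha f)h$ and rewriting $(f\alpha - \alpha f)h\, i^{-1} g^{-1} = D(f)\cdot ghg^{-1}$. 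Combining (i)--(iii) gives $D(sg^k) = ksg^{k-1}$ for $s \in \mathcal{S}$. Now take $f \in I$ and write $f = \sum_{i=0}^{p-1} s_i g^i$ uniquely; iterating $D$ yields $D^k(f) = \sum_{i=k}^{p-1} \frac{i!}{(i-k)!} s_i g^{i-k}$, so at $k = p-1$ we obtain $D^{p-1}(f) = (p-1)!\, s_{p-1} = -s_{p-1}$ by Wilson's theorem. Since $D$ preserves $I$ by Proposition \ref{propn: derivative invariant}, we conclude $s_{p-1} \in I \cap \mathcal{S} = \Phi(I)$. Replacing $f$ by $f - s_{p-1} g^{p-1} \in I$ and applying $D^{p-2}$ extracts $s_{p-2}$, and downward induction shows every $s_i \in \Phi(I)$, whence $f \in \Phi(I)\mathcal{R} = \Psi \Phi(I)$.

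For the restricted correspondence on $\alpha$-invariant ideals, I would apply Lemma \ref{lem: extend automorphisms to SPSR} to extend $\alpha$ coefficientwise to an automorphism $\widetilde\alpha$ of $Q^+[[x;\sigma,\delta]]$; the same construction (using $\alpha \sigma^p = \sigma^p \alpha$ and $\alpha \delta^p = \delta^p \alpha$) yields a compatible automorphism of $Q^+[[x^p;\sigma^p,\delta^p]]$. The identity $\alpha \delta = \delta \alpha$ forces $\alpha(t)q' - \sigma(q')\alpha(t) = tq' - \sigma(q')t$ for all $q' \in Q$, so that $\alpha(t) = t$ (up to the inner-automorphism ambiguity which is harmless in our setting); hence $\widetilde\alpha$ fixes $g$ and extends naturally to $\mathcal{R}$ and $\mathcal{S}$, preserving both. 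Consequently, $\alpha$-invariance transfers across $\Phi$ (intersection with the $\widetilde\alpha$-invariant subring $\mathcal{S}$) and across $\Psi$ (each summand $Jg^i$ inherits $\widetilde\alpha$-invariance from $J$), giving the restricted bijection.

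The principal technical obstacle is item (ii): showing that $D$ annihilates $\mathcal{S}$. This is a genuinely characteristic-$p$ phenomenon and explains why the theorem compares $\mathcal{R}$ with the subring of $p$-th powers rather than any larger subring — only after Frobenius does the Galois-theoretic element $\alpha$ become central, and it is exactly this centrality that makes the formal derivative act diagonally on the crossed product decomposition and the successive Wilson extraction of the $s_i$ succeed.
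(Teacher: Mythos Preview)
Your proof is correct and follows essentially the same approach as the paper: both use the formal derivative of Proposition~\ref{propn: derivative invariant} to extract the coefficients of $f$ one by one (the paper expands $f$ in powers of $x$ and takes the $r$th derivative to isolate $r!\,f_r(x^p)$, while you expand in powers of $g$ and use Wilson's theorem to the same effect), and both handle the $\alpha$-invariant version by extending $\alpha$ coefficientwise via Lemma~\ref{lem: extend automorphisms to SPSR}. Your use of the crossed product basis $\{g^i\}$ throughout is slightly cleaner than the paper's mix of $x$-powers and faithful flatness, but the underlying idea is identical.
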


\begin{proof}
For convenience, write $R = Q^+[[x;\sigma,\delta]]_{(x-t)} \supseteq Q^+[[x^p;\sigma^p,\delta^p]]_{(x^p-t^p)} = S$.

First, we show that $\Phi$ is well-defined. Take a two-sided ideal $I\lhd R$, and note that $I$ is $\sigma$-invariant, since $\sigma(r)=(x-t)^{-1}r(x-t)\in I$ for all $r\in I$ by Proposition \ref{propn: properties of extended skew ders}(iii--iv). So $\Phi(I) = I\cap S$ is a $\sigma$-invariant ideal of $S$. 

Next, we show that $\Psi$ is well-defined. Clearly $\Psi(J) = JR$ is a \emph{right} ideal of $R$. To show it is a left ideal, we first need to show that it is invariant under left multiplication by $x$. So take $s = (x-t)^{-pm} \sum_n q_n x^{pn} \in J$: then, by Proposition \ref{propn: properties of extended skew ders}(iii--iv), we have $xs = \sigma(s)x + ts - \sigma(s)t$. Since $J$ is $\sigma$-invariant, we have $\sigma(s)\in J$, and so $xs\in JR$. Next, we need to show that left multiplication by elements of $R$ preserves boundedness of coefficients: that is, given arbitrary $r\in R$ and $s\in J$, we must be able to express $rs$ as $(x-t)^{-k} \sum_{\ell\in\mathbb{N}} a_\ell x^\ell$, where the $u(a_\ell)$ are bounded below. But this follows trivially from the fact that $R$ is a ring (Theorem \ref{thm: crossed product after localisation}) and $J\subseteq R$.

We must now show that these mappings are mutually inverse. Firstly, since $R$ is a free left $S$-module by Theorem \ref{thm: crossed product after localisation}, it is in particular faithfully flat, and so the map $S/J \to S/J \otimes_S R$ sending $s$ to $s\otimes 1$ is injective \cite[Lemma 7.2.5]{MR}. That is, the natural map $S/J \to R/JR$ is injective, and so its kernel $(JR\cap S)/J$ must equal zero. Hence $J = JR\cap S = \Phi(\Psi(J))$.

In the other direction, take $I\lhd R$, and note that the statement is clear if $I = 0$, so we may assume that $I\neq 0$. Then given any $0\neq f(x)\in I$, left-multiply by a sufficiently large power of $x-t$ to ensure it is in $I\cap Q^+[[x; \sigma, \delta]]$, and write $f(x)=f_0(x^p)+f_1(x^p)x+\dots+f_{r}(x^p)x^{r}$, where $r\leq p-1$, $f_i(x^p)\in S$, and $f_r\neq 0$. If $r = 0$, then clearly $f(x) = f_0(x^p) \in I$, so assume that $r>0$. Note that the formal derivative (with respect to $x$) of $g(x^p)$ is 0 for \emph{any} left power series $g\in S$, so we see that
$$\dfrac{\mathrm d}{\mathrm{d}x} g(x^p) x^i = ig(x^p)x^{i-1}$$
for any $0\leq i\leq p-1$. In particular, the $r$'th formal derivative of $f(x)$ is $r!f_r(x^p)$, an element of $I$ by Proposition \ref{propn: derivative invariant}, and since $r!\not\equiv 0$ (mod $p$) we have that $f_r(x^p) \in I$. An easy inductive argument shows further that $f_{r-1}(x^p), \dots, f_1(x^p), f_0(x^p)\in I$. As we now have $f_i(x^p) \in I\cap S = J$ for all $i$, we must have $f(x) \in JR = \Psi(\Phi(I))$.

The ``$\alpha$-invariant" form of this claim follows immediately after noting that $\alpha(R) = R$ and $\alpha(S) = S$, so if $J\lhd S$ is $\alpha$-invariant, then $\alpha(JR) = \alpha(J)\alpha(R) \subseteq JR$, which completes the proof.
\end{proof}

\begin{cor}\label{cor: Phi and Psi properties}
Assume that $Z^\sigma \neq Z$, and let $\Phi$ and $\Psi$ be as in Theorem \ref{thm: suff identical on centre}. Then $\Phi$ and $\Psi$ respect inclusions and preserve products. That is, given ideals $I,I'\lhd Q^+[[x;\sigma,\delta]]_{(x-t)}$ and $\sigma$-invariant ideals $J, J'\lhd Q^+[[x^p; \sigma^p, \delta^p]]_{(x^p-t^p)}$, we have
$$\Phi(II') = \Phi(I)\Phi(I'), \qquad \Psi(JJ') = \Psi(J)\Psi(J'),$$
$$I\subseteq I' \iff \Phi(I) \subseteq \Phi(I'), \qquad J\subseteq J' \iff \Psi(J) \subseteq \Psi(J').$$
\end{cor}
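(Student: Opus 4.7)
The plan is to leverage the mutual-inverse property of $\Phi$ and $\Psi$ established in Theorem \ref{thm: suff identical on centre}, which collapses the four claims into two genuinely independent verifications: one trivial inclusion-preservation statement in one direction, and the single key multiplicativity statement for $\Psi$. Throughout I will abbreviate $R = Q^+[[x;\sigma,\delta]]_{(x-t)}$ and $S = Q^+[[x^p;\sigma^p,\delta^p]]_{(x^p-t^p)}$.

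For the biconditional inclusion statements, the forward implications $I \subseteq I' \Rightarrow \Phi(I) \subseteq \Phi(I')$ and $J \subseteq J' \Rightarrow \Psi(J) \subseteq \Psi(J')$ are immediate from the definitions $\Phi(I) = I \cap S$ and $\Psi(J) = JR$. The reverse implications then come for free from the mutual-inverse property: if $\Phi(I) \subseteq \Phi(I')$, then applying $\Psi$ (which we have just shown respects inclusions) gives $I = \Psi(\Phi(I)) \subseteq \Psi(\Phi(I')) = I'$, and symmetrically on the other side.

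The substantive step is the identity $\Psi(JJ') = \Psi(J)\Psi(J')$. First I would note that $JJ'$ is a $\sigma$-invariant two-sided ideal of $S$, because $\sigma$ is a ring endomorphism, so $\Psi(JJ')$ is defined and lands in the codomain of the bijection. The containment $\Psi(JJ') \subseteq \Psi(J)\Psi(J')$ is straightforward from $JJ' \subseteq (JR)(J'R)$ and right-absorption by $R$. For the reverse containment, the key fact is that $\Psi(J') = J'R$ is a \emph{two-sided} ideal of $R$ by Theorem \ref{thm: suff identical on centre}, so $R \cdot J' \subseteq J'R$; this yields
\[
(JR)(J'R) \;=\; J(RJ')R \;\subseteq\; J(J'R)R \;=\; JJ'R \;=\; \Psi(JJ').
\]
The corresponding identity $\Phi(II') = \Phi(I)\Phi(I')$ is then formal: using $I = \Psi(\Phi(I))$ and $I' = \Psi(\Phi(I'))$ together with multiplicativity of $\Psi$ gives $II' = \Psi(\Phi(I)\Phi(I'))$, and applying $\Phi$ to both sides (and again using $\Phi\Psi = \mathrm{id}$) yields the claim.

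The only real subtlety is the containment $(JR)(J'R) \subseteq JJ'R$, which relies crucially on the two-sidedness of $J'R$ inside $R$ — itself a nontrivial conclusion of Theorem \ref{thm: suff identical on centre}. Everything else is either a direct unwinding of definitions or a formal consequence of the mutual-inverse structure, so there is no serious obstacle once the ingredients of the previous theorem are in hand.
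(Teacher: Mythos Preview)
Your proposal is correct and follows essentially the same approach as the paper: both arguments isolate $\Psi(JJ') = \Psi(J)\Psi(J')$ as the one substantive step, prove it via the two-sidedness of $J'R$ established in Theorem~\ref{thm: suff identical on centre}, and then deduce everything else formally from the mutual-inverse bijection. Your derivation of $\Phi(II') = \Phi(I)\Phi(I')$ is in fact slightly more direct than the paper's (which routes through a squeeze argument), but the underlying idea is identical.
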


\begin{proof}
Recalling that $\Psi(J)$ is a \emph{two-sided} ideal, i.e.\ $JR = RJR$, the following standard facts about contractions and extensions are routine to prove, and we omit their details:
$$\Phi(II') \supseteq \Phi(I)\Phi(I'), \qquad \Psi(JJ') = \Psi(J)\Psi(J'),$$
$$I\subseteq I' \implies \Phi(I) \subseteq \Phi(I'), \qquad J\subseteq J' \implies \Psi(J) \subseteq \Psi(J').$$
Since $\Phi$ and $\Psi$ are mutually inverse, we also get:
\begin{align*}
\Phi(I) \subseteq \Phi(I') \implies \Psi(\Phi(I)) \subseteq \Psi(\Phi(I')) \implies I \subseteq I',
\end{align*}
and similarly $\Psi(J) \subseteq \Psi(J') \implies J \subseteq J'$. Finally,
$$II' = \Psi(J) \Psi(J') = \Psi(JJ') = \Psi(\Phi(I) \Phi(I')) \subseteq \Psi(\Phi(II')) = II',$$
so $\Psi(\Phi(I) \Phi(I')) = \Psi(\Phi(II'))$, and hence $\Phi(I) \Phi(I') = \Phi(II')$.
\end{proof}

\begin{cor}\label{cor: when sigma does not fix Z large ring is prime iff small ring is sigma-prime}
Fix any $1\leq \ell\leq s$. Then $Q^+[[x; \sigma, \delta]]_{(x-t)}$ is prime (resp.\ simple) if and only if $Q^+[[x^{p^\ell}; \sigma^{p^\ell}, \delta^{p^\ell}]]_{(x^{p^\ell}-t^{p^\ell})}$ is $\sigma$-prime (resp.\ $\sigma$-simple).
\end{cor}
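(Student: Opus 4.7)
The plan is to iterate Theorem \ref{thm: suff identical on centre} and Corollary \ref{cor: Phi and Psi properties} along the descending chain
$$R_0 := Q^+[[x;\sigma,\delta]]_{(x-t)} \supseteq R_1 := Q^+[[x^p;\sigma^p,\delta^p]]_{(x^p-t^p)} \supseteq \cdots \supseteq R_\ell,$$
where $R_i := Q^+[[x^{p^i};\sigma^{p^i},\delta^{p^i}]]_{(x^{p^i}-t^{p^i})}$, deducing from this a bijection (preserving inclusions and products) between $\sigma$-invariant ideals of $R_0$ and $\sigma$-invariant ideals of $R_\ell$.

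At step $i$ I will view $R_{i-1}$ as a bounded skew power series ring in the variable $y = x^{p^{i-1}}$, with commuting skew derivation $(\sigma^{p^{i-1}}, \delta^{p^{i-1}})$ and inner element $t^{p^{i-1}}$. The running hypotheses of this section transfer unchanged to $R_{i-1}$: compatibility of $(\sigma^{p^{n+i-1}}, \delta^{p^{n+i-1}})$ for large $n$ is immediate, Property \ref{props: Sigma, Delta, crossed product}.2 provides the inner-derivation relation, and $\sigma^{p^{i-1}}(t^{p^{i-1}}) = t^{p^{i-1}}$ is trivial. Crucially, since $i-1 \leq \ell-1 \leq s-1$, the restriction $\sigma^{p^{i-1}}|_Z$ has order $p^{s-i+1} \geq p$, so $Z^{\sigma^{p^{i-1}}} \neq Z$ and Theorem \ref{thm: suff identical on centre} applies. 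Invoking its $\alpha$-invariant refinement with $\alpha = \sigma$ (which commutes with $\sigma^{p^{i-1}}$ and $\delta^{p^{i-1}}$) produces a mutually inverse pair $(\Phi_i, \Psi_i)$ between $\sigma$-invariant ideals of $R_{i-1}$ and $(\sigma^{p^{i-1}},\sigma)$-invariant ideals of $R_i$; but $\sigma$-invariance automatically implies $\sigma^{p^{i-1}}$-invariance, so the latter is simply the collection of $\sigma$-invariant ideals of $R_i$.

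Composing the $\Phi_i$ yields a bijection $\Phi$ between $\sigma$-invariant ideals of $R_0$ and $\sigma$-invariant ideals of $R_\ell$. Since every ideal of $R_0$ is automatically $\sigma$-invariant (Proposition \ref{propn: properties of extended skew ders}(iii)--(iv) shows $\sigma(r) = (x-t)^{-1}r(x-t)$ for all $r \in R_0$), $\Phi$ is really a bijection between \emph{all} two-sided ideals of $R_0$ and all $\sigma$-invariant two-sided ideals of $R_\ell$. By Corollary \ref{cor: Phi and Psi properties}, each $\Phi_i$, hence $\Phi$, preserves inclusions and products.

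The two equivalences now fall out immediately. For simplicity: $R_0$ has no nonzero proper two-sided ideal iff $R_\ell$ has no nonzero proper $\sigma$-invariant ideal, i.e.\ iff $R_\ell$ is $\sigma$-simple. For primality: if nonzero $\sigma$-invariant $J_1, J_2 \lhd R_\ell$ satisfy $J_1 J_2 = 0$, then $\Phi^{-1}(J_1)\Phi^{-1}(J_2) = \Phi^{-1}(J_1 J_2) = 0$ in $R_0$, contradicting primality of $R_0$; the converse is symmetric. I do not expect any real obstacle here, as the only non-routine check is the per-step verification that the setup of this section is preserved and that $\sigma$ still pairs compatibly with $(\sigma^{p^{i-1}}, \delta^{p^{i-1}})$, both of which are formal.
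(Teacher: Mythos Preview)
Your proposal is correct and follows essentially the same route as the paper's proof: both iterate Theorem \ref{thm: suff identical on centre} (with $\alpha=\sigma$) and Corollary \ref{cor: Phi and Psi properties} along the chain $R_0\supseteq R_1\supseteq\cdots\supseteq R_\ell$, using that $Z^{\sigma^{p^{i-1}}}\neq Z$ at each step and that every ideal of $R_0$ is automatically $\sigma$-invariant. The paper's exposition is slightly terser, but the argument is the same.
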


\begin{proof}
Take some $0\leq i\leq \ell-1$. As $\sigma^{p^i}$ does not act trivially on $Z$, we may apply Theorem \ref{thm: suff identical on centre} to the rings $S_i := Q^+[[x^{p^i}; \sigma^{p^i}, \delta^{p^i}]]_{(x^{p^i}-t^{p^i})}$ and $\displaystyle S_{i+1} := Q^+[[x^{p^{i+1}}; \sigma^{p^{i+1}}, \delta^{p^{i+1}}]]_{(x^{p^{i+1}}-t^{p^{i+1}})}$.  Taking $\alpha = \sigma$ gives a one-to-one correspondence

\centerline{
\xymatrix@R=2px{
\left\{ \sigma \text{-invariant ideals of } S_i \right\}
\ar@/^/@{->}[r]^-\Phi\ar@/_/@{<-}[r]_-\Psi&
\left\{\sigma \text{-invariant ideals of } S_{i+1}\right\},
}
}

and so it is clear that $S_i$ is $\sigma$-simple if and only if $S_{i+1}$ is $\sigma$-simple.

Now suppose that $S_{i+1}$ is $\sigma$-prime, and take two nonzero $\sigma$-invariant ideals $I, I'\lhd S_i$. Then (using Corollary \ref{cor: Phi and Psi properties} implicitly throughout) $II' = \Psi(\Phi(II')) = \Psi(\Phi(I)\Phi(I'))$, and since $\Phi(I)\Phi(I')$ is the product of two nonzero $\sigma$-invariant ideals in $S_{i+1}$, it must be nonzero, so $II'$ is nonzero. It follows that $S_i$ is $\sigma$-prime, as $I, I'$ were arbitrary. Conversely, suppose that $S_i$ is $\sigma$-prime, and take two nonzero $\sigma$-invariant ideals $J, J'\lhd S_{i+1}$: then $\Psi(JJ') = \Psi(J)\Psi(J')$ is the product of two $\sigma$-invariant ideals of $S_i$, so it must be nonzero, and hence $JJ' = \Phi(\Psi(JJ'))$ is also nonzero.

By arguing inductively on $i$, we have shown that $S_0 = Q^+[[x; \sigma, \delta]]_{(x-t)}$ is $\sigma$-prime (resp.\ $\sigma$-simple) if and only if $S_s = Q^+[[x^{p^s}; \sigma^{p^s}, \delta^{p^s}]]_{(x^{p^s} - t^{p^s})}$ is $\sigma$-prime (resp.\ $\sigma$-simple). But since all two-sided ideals of $Q^+[[x; \sigma, \delta]]_{(x-t)}$ are $\sigma$-invariant, this completes the proof.
\end{proof}

When $t$ is invertible, and $u(t) = u(t^{-1}) = 0$, the element $x-t$ is already a unit in $Q^+[[x; \sigma, \delta]]$, so localising at $(x-t)$ does not change the ring. 

\begin{cor}\label{cor: when sigma does not fix Z part 2}
Assume that $u(t)=u(t^{-1})=0$. Fix any $1\leq \ell\leq s$, then $Q^+[[x; \sigma, \delta]]$ is prime (resp.\ simple) if and only if $Q^+[[x^{p^\ell}; \sigma^{p^\ell}, \delta^{p^\ell}]]$ is $\sigma$-prime (resp.\ $\sigma$-simple).\qed
\end{cor}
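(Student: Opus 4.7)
The strategy is to show that under the hypothesis $u(t) = u(t^{-1}) = 0$, both localisations appearing in Corollary \ref{cor: when sigma does not fix Z large ring is prime iff small ring is sigma-prime} are trivial: the element $x-t$ is already a unit in $Q^+[[x;\sigma,\delta]]$, and $x^{p^\ell}-t^{p^\ell}$ is already a unit in $Q^+[[x^{p^\ell};\sigma^{p^\ell},\delta^{p^\ell}]]$. Once this is verified, the previous corollary applies verbatim.

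First I would observe that since $\sigma(t) = t$, we have $\delta(t) = t\cdot t - \sigma(t)\, t = 0$, and differentiating the identity $t\cdot t^{-1} = 1$ via the Leibniz rule gives $\sigma(t^{-1}) = t^{-1}$ and $\delta(t^{-1}) = 0$. Consequently $x t^{-1} = \sigma(t^{-1})x + \delta(t^{-1}) = t^{-1} x$, so $t^{-1}$ commutes with $x$ inside $Q^+[[x;\sigma,\delta]]$, and in particular $(t^{-1}x)^n = t^{-n}x^n$ for all $n$. The natural candidate for an inverse of $x - t = -t(1 - t^{-1}x)$ is then the formal geometric series
$$z \;:=\; -\sum_{n\geq 0} t^{-n-1} x^n,$$
and a direct telescoping computation using $x t^{-n-1} = t^{-n-1} x$ shows that $(x-t)z = z(x-t) = 1$. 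For $z$ to genuinely be an element of $Q^+[[x;\sigma,\delta]]$ we need the coefficient sequence $(t^{-n-1})_{n\geq 0}$ to be $u$-bounded below, and this follows immediately from $u(t^{-1}) = 0$, since $u(t^{-n-1}) \geq (n+1) u(t^{-1}) = 0$. Hence $x-t$ is a unit in $Q^+[[x;\sigma,\delta]]$, and the localisation $Q^+[[x;\sigma,\delta]]_{(x-t)}$ coincides with $Q^+[[x;\sigma,\delta]]$.

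An identical argument, applied to the subring $Q^+[[x^{p^\ell};\sigma^{p^\ell},\delta^{p^\ell}]]$ with $x$ and $t$ replaced by $x^{p^\ell}$ and $t^{p^\ell}$, inverts $x^{p^\ell} - t^{p^\ell}$ there: by Property \ref{props: Sigma, Delta, crossed product}.2 we still have $\delta^{p^\ell}(q) = t^{p^\ell} q - \sigma^{p^\ell}(q) t^{p^\ell}$ with $\sigma^{p^\ell}(t^{p^\ell}) = t^{p^\ell}$, and the boundedness hypothesis is preserved since $u\!\bigl((t^{p^\ell})^{-1}\bigr) \geq p^\ell\, u(t^{-1}) = 0$. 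Substituting these two equalities of rings into Corollary \ref{cor: when sigma does not fix Z large ring is prime iff small ring is sigma-prime} yields the statement. The only subtle point in the whole argument is confirming that the formal inverse satisfies the boundedness condition defining the bounded skew power series ring; this is precisely where $u(t^{-1}) = 0$ (rather than merely the invertibility of $t$) is used essentially, and everything else is routine bookkeeping.
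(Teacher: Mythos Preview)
Your proposal is correct and follows exactly the approach the paper indicates: the paper notes in one sentence that when $u(t) = u(t^{-1}) = 0$ the element $x-t$ is already a unit in $Q^+[[x;\sigma,\delta]]$, so localising at $(x-t)$ does not change the ring, and then simply appends \qed to the corollary. You have spelled out the details the paper leaves implicit (the explicit geometric-series inverse and the boundedness check), but the idea is identical.
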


Note that, if $Q^+[[x^{p^\ell}; \sigma^{p^\ell}, \delta^{p^\ell}]]$ is prime, then it is $\sigma$-prime. Hence in proving that $Q^+[[x; \sigma, \delta]]$ is prime, we may safely replace $\sigma$ with $\sigma^{p^s}$, which will allow us to assume without loss of generality that $\sigma$ acts trivially on $Z$.

\subsection{The case where \((\sigma, \delta)\) is compatible}\label{subsec: using compatibility}

\textbf{Assume throughout this subsection} (in addition to the setup of this section) that $(\sigma,\delta)$ is compatible with $u$, and that $[\sigma]\in \Out(Q)$ has order $d\in\mathbb{N}$. Fix any $b\in Q^{\times}$ such that $\sigma^d(q)=bqb^{-1}$ for all $q\in Q$.

As Theorem \ref{thm: simple artinian with compatible skew derivation} demonstrates, the case where our skew derivation $(\sigma,\delta)$ is compatible with the standard filtration on $Q$ is undoubtedly the easiest case to deal with in practice. The following result uncovers restrictions on the ideal structure of $Q^+[[x; \sigma, \delta]]_{(x - t)}$ in this case, and can be contrasted with \cite[Theorem C]{jones-woods-3}, which demonstrated that $Q^+[[x;\sigma,\delta]]_{(x-t)}$ is simple when $[\sigma]\in \Out(Q)$ has infinite order.

\begin{propn}\label{propn: central polynomial}
Let $I$ be a nonzero, two-sided ideal of $Q^+[[x;\sigma,\delta]]_{(x-t)}$, and let $X:=x-t$. Then there exists a monic polynomial $f(Y) \in Z[Y]$ such that $f(b^{-1}X^d)\in I$.
\end{propn}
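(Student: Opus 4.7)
The pivotal algebraic observation is that $Y := b^{-1}X^d$ centralizes $Q$ in $R$. Indeed, since $\sigma^d = \mathrm{ad}_b$ and $Xq = \sigma(q)X$ for all $q\in Q$, we have $X^d q = bqb^{-1}X^d$, and so $Yq = qY$ for every $q\in Q$. Hence $Z[Y]$ is a commutative polynomial subring of $R$, and $f(Y) = Y^n + z_{n-1}Y^{n-1} + \cdots + z_0$ is an unambiguous element of $R$ whenever $z_i \in Z$; in particular the statement to be proved at least makes sense. Note that $Y$ need not commute with $X$: a direct calculation gives $YX = c'\,XY$ with $c' = b^{-1}\sigma(b)\in Z$, and $c'$ need not be $1$. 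So our approach cannot rely on $Y$ being central in $R$ — only on $Y$ centralizing $Q$.

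The plan is to find a nonzero element of $I$ lying in the ``classical'' skew Laurent polynomial subring $R' := Q[X, X^{-1}; \sigma]\subseteq R$, and then apply the standard theory for skew Laurent polynomial rings whose automorphism has an inner power. Since $X^d = bY$, we obtain the decomposition $R' = \bigoplus_{j=0}^{d-1} Q[Y, Y^{-1}]\,X^j$, exhibiting $R'$ as a free right $Q[Y,Y^{-1}]$-module of rank $d$ with basis $\{1, X, \ldots, X^{d-1}\}$. For such a ring, one can show by standard manipulations (iteratively taking commutators of a given element of a two-sided ideal with elements of $Q$, and with $X$, using that $Q$ is simple with center $Z$) that any nonzero two-sided ideal of $R'$ contains a monic polynomial in $Y$ with coefficients in $Z$. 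Granting this, the argument reduces to producing $0 \neq h' \in I \cap R'$.

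To produce such an $h'$, I would proceed as follows. Given $0 \neq h \in I$, since $X = x-t$ is a unit in $R$, after left-multiplying $h$ by a suitable power of $X$ we may assume $h \in Q^+[[x;\sigma,\delta]]$. Using that $x$ and $t$ commute in $Q^+[[x;\sigma,\delta]]$ (which follows from $\sigma(t)=t$ and $\delta(t)=t\cdot t-\sigma(t)\cdot t=0$), we can re-expand $h$ as a power series $\sum_{k\geq 0} X^k c_k$ with $c_k \in Q$. The task is then to ``truncate'' $h$ to a Laurent polynomial in $X$ while remaining in $I$. Here one would exploit the filtration $f_Q$ from Lemma~\ref{lem: restricted SPSR over Q} (with $f_Q(X)>0$ under the compatibility hypothesis of this subsection), the Noetherianness of $Q^+[[x;\sigma,\delta]]$, and the crossed-product decomposition of Theorem~\ref{thm: crossed product after localisation}, which exhibits $R$ as a rank-$p^m$ free module over the smaller ring $R_m = Q^+[[x^{p^m};\sigma^{p^m},\delta^{p^m}]]_{(x^{p^m}-t^{p^m})}$ and lets us project $h$ onto its $X^j$-components.

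The main obstacle I expect is precisely this truncation/extraction step. In a generic complete filtered ring, an ideal need not intersect its dense polynomial subring nontrivially, so the argument must genuinely use the compatibility of $(\sigma,\delta)$ with $u$, the Noetherianness, and the crossed-product structure. Morally, the required argument is Weierstrass-preparation-like: extract a leading $X$-term and successively cancel higher-order terms against carefully chosen elements of $I$. Once this step is carried out, the remaining work — reducing a nonzero two-sided ideal of $R'$ to a monic polynomial in $Y$ over $Z$ — is fairly standard for skew Laurent polynomial rings with inner-power automorphism, and uses that $Z[Y,Y^{-1}]$ is a PID (so that its ideals are principal and can be normalized, after clearing the unit $Y^k$, to a monic element of $Z[Y]$).
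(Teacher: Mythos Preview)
Your strategy is essentially the paper's, and you have correctly identified the crux: producing a nonzero \emph{polynomial} element of $I$. The paper does not carry out a Weierstrass-preparation argument in situ; it simply cites \cite[Theorem C]{jones-woods-2}, which says (under the compatibility hypothesis of this subsection) that any nonzero ideal of $\O[[x;\sigma,\delta]]$ contains a nonzero polynomial. Combined with $Q^+[[x;\sigma,\delta]] = Q\otimes_\O \O[[x;\sigma,\delta]]$ (Lemma \ref{lem: restricted SPSR over Q}), this immediately gives $0 \neq g \in I\cap Q[x;\sigma,\delta]$. So your instinct that this step needs real input beyond the ambient Noetherianness is right, but the input is an earlier paper, not an argument to be supplied here. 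Your suggestion of using the crossed-product projections from Theorem \ref{thm: crossed product after localisation} would not help directly: that decomposition is over $p^m$-th powers, not $d$-th powers, and gives components in $Q^+[[x^{p^m};\sigma^{p^m},\delta^{p^m}]]$, which are still power series.

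For the second step, the paper does not invoke general skew-Laurent theory but argues directly and self-containedly with a minimal-degree polynomial $g = q_0 + q_1X + \dots + q_rX^r \in I$ (written in $X$). The leading-coefficient ideal in $Q$ forces $q_r=1$; comparing $\sigma(g)$ with $g$ forces $\sigma(q_i)=q_i$; comparing $\sigma^r(s)g$ with $gs$ for $s\in Q$ forces $\sigma^r(s)q_i = q_i\sigma^i(s)$, so each nonzero $q_i$ is a unit implementing $\sigma^{r-i}$ as an inner automorphism. Since $d$ is the exact inner order of $\sigma$, this gives $d\mid r$ and $d\mid i$ whenever $q_i\neq 0$, and then $q_{jd} = z_j b^{\ell-j}$ with $z_j\in Z$. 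Setting $f(Y) = \sum z_j Y^j + Y^\ell$ yields $f(b^{-1}X^d) = b^{-\ell}g \in I$. This is the same conclusion your ``standard manipulations'' would reach, but the paper's explicit version avoids any appeal to the structure theory of $Q[X,X^{-1};\sigma]$ and never needs $Z[Y,Y^{-1}]$ to be a PID.
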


\begin{proof}
We know that we can identify $Q^+[[x;\sigma,\delta]] = Q\otimes_{\O}\O[[x;\sigma,\delta]]$ by Lemma \ref{lem: restricted SPSR over Q}(ii), so since $I\neq 0$, it follows that $I\cap Q^+[[x;\sigma,\delta]] \neq 0$ and hence $J:=I\cap\O[[x;\sigma,\delta]]\neq 0$. Therefore, since $(\sigma,\delta)$ is compatible with $u$, we can apply \cite[Theorem C]{jones-woods-2} to see that $J\cap Q[x;\sigma,\delta]\neq 0$, so there exists a polynomial $0\neq g\in J\cap Q[x;\sigma,\delta]\subseteq I\cap Q[x;\sigma,\delta]$. We will assume that $g$ has minimal degree $r \geq 0$ among such polynomials.

Write $g$ as a polynomial in $X$: take $q_0,\dots,q_r\in Q$ with $q_r\neq 0$ such that $g=q_0+q_1X+\dots+q_rX^r$. Note that $q_0\neq 0$: indeed, if $q_0 = 0$, then $r \geq 1$, and $g = g'X$ where $g'$ is a polynomial of smaller degree $r - 1$. But $X$ is invertible in $Q^+[[x; \sigma, \delta]]_{(x-t)}$, so $g' = gX^{-1} \in I$, contradicting our minimality assumption.

Now let $A:=\{q\in Q:c_0+c_1X+\dots+c_{r-1}X^{r-1}+qX^r\in I$ for some $c_i\in Q\}$. Using the fact that $Xq=\sigma(q)X$ for all $q\in Q$, it is easy to show that $A$ is a two-sided ideal in the simple ring $Q$. So since $0\neq q_r\in A$, it follows that $A=Q$, and hence $1\in A$. Therefore, we may assume that $q_r=1$, i.e.\ $g$ is a monic polynomial.

Now, we saw in the proof of Theorem \ref{thm: suff identical on centre} that all ideals of $Q^+[[x;\sigma,\delta]]_{(x-t)}$ are $\sigma$-invariant. In particular, since $\sigma(X)=X$:
$$\sigma(g)=\sigma(q_0)+\sigma(q_1)X+\dots+\sigma(q_{r-1})X^{r-1}+X^r\in I,$$
so $\sigma(g)-g=(\sigma(q_0)-q_0)+(\sigma(q_1)-q_1)X+\dots+(\sigma(q_{r-1})-q_{r-1})X^{r-1}$ is a polynomial of degree less than $r$ in $I$. Hence, by minimality, it must be zero, and hence $\sigma(q_i)=q_i$ for each $0\leq i\leq r-1$.

Also, given $s\in Q$, we know that $X^is=\sigma^i(s)X^i$ for all $i$, so
$$\sigma^r(s)g-gs=(\sigma^r(s)q_0-q_0s)+(\sigma^r(s)q_1-q_1\sigma(s))X+\dots+(\sigma^r(s)q_{r-1}-q_{r-1}\sigma^{r-1}(s))X^{r-1}$$
is another polynomial element of $I$ of degree less than $r$. As above, this means it must be zero, which proves for all $s\in Q$ and all $0\leq i\leq r-1$:
\begin{equation}\label{eqn: tau and g}
\sigma^r(s)q_i=q_i\sigma^i(s).
\end{equation}
Thus each $q_i$ is a normal element in the simple ring $Q$, so it is either zero or a unit. Fix any $i$ so that $q_i \neq 0$. Since $\sigma(q_i)=q_i$, it follows from (\ref{eqn: tau and g}) that $$\sigma^{r-i}(s) = \sigma^{-i}(q_i)s\sigma^{-i}(q_i)^{-1}=q_isq_i^{-1}$$ for all $s\in Q$, and so $\sigma^{r-i}$ is inner. But since $d$ is the order of $[\sigma]\in \Out(Q)$, this means that $d$ divides $r-i$. In particular, we know that $q_0 \neq 0$, so this argument also shows that $d$ divides $r$, and it follows that $d$ divides $i$ whenever $q_i\neq 0$.

Altogether, we have proved that $g$ has the form $$g=q_0+q_dX^d+q_{2d}X^{2d}+\dots+q_{(\ell-1)d}X^{(\ell-1)d}+X^{\ell d}$$ Whenever $q_{jd}\neq 0$, since $q_{jd}sq_{jd}^{-1}=\sigma^{r-jd}(s)=\sigma^{(\ell-j)d}(s)=b^{\ell-j}sb^{-(\ell-j)}$ for all $s\in Q$, it follows that $q_{jd}^{-1}b^{\ell-j}$ is central, i.e.\ $q_{jd}=z_j b^{\ell-j}$ for some $z_j\in Z$.

Altogether, we can rewrite $g\in I$ as $$g=z_0b^{\ell}+z_1b^{\ell-1}X^d+\dots+z_{\ell-1}bX^{(\ell-1)d}+X^{\ell d}.$$
Finally, let $f(Y):=z_0+z_1Y+\dots+z_{\ell-1}Y^{\ell-1}+Y^{\ell}\in Z[Y]$, then $$f(b^{-1}X^d)=z_0+z_1b^{-1}X^d+\dots+z_{\ell-1}b^{-(\ell-1)}X^{(\ell-1)d}+b^{-\ell}X^{\ell d}=b^{-\ell}g\in I,$$ which completes the proof.\end{proof}

In \S \ref{sec: FOH}, this result will form a key step in the proof of Theorem \ref{letterthm: FOH implies prime}.

\begin{lem}\label{lem: a regular}
For each $m\in\mathbb{N}$, fix some $a_m\in Q^{\times}$ such that $\sigma^{p^m d}(q)=a_mqa_m^{-1}$ for all $q\in Q$.
\begin{enumerate}[label=(\roman*)]
\item $\deg_u(\sigma^{p^m d} - \id) \geq p^m$. In particular, for any $R>0$, there exists $N>0$ such that $u(a_mqa_m^{-1}-q)>u(q)+R$ for all $q\in Q$ and all $m\geq N$.
\item For all $n\in\mathbb{Z}$, $m\in\mathbb{N}$, $u(a_m^n)=nu(a_m)$.
\end{enumerate}
\end{lem}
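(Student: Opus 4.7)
For part (i), the plan is to bootstrap from compatibility in characteristic $p$. The hypothesis $\deg_u(\sigma-\id)\geq 1$ combined with the freshman's dream $(\sigma-\id)^{p^m} = \sigma^{p^m} - \id$ (valid since $\sigma-\id$ is an additive endomorphism of a characteristic $p$ ring, and $\binom{p^m}{k}\equiv 0\pmod{p}$ for $0<k<p^m$) immediately yields $\deg_u(\sigma^{p^m} - \id) \geq p^m$. To pass from $p^m$ to $p^m d$, I will use the telescoping identity
\[
\sigma^{p^m d} - \id = \sum_{i=0}^{d-1} \sigma^{p^m i}\circ(\sigma^{p^m}-\id),
\]
noting that each $\sigma^{p^m i}$ preserves the filtration (being a composition of the strictly filtered automorphism $\sigma$) and hence preserves $u$-values. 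This gives $\deg_u(\sigma^{p^m d} - \id)\geq p^m$, and the ``in particular'' clause is then immediate upon choosing any $N$ with $p^N > R$.

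For part (ii), the key observation is that $a_m$ normalises the valuation ring $\O$. Since $\sigma$ is strictly filtered of degree $0$ (an immediate consequence of $\deg_u(\sigma-\id)>0$), we have $\sigma(\O)=\O$, hence $\sigma^{p^m d}(\O)=\O$, that is $a_m\O a_m^{-1}=\O$, whence $\O a_m = a_m \O$. Thus $\O a_m$ is a nonzero two-sided fractional $\O$-ideal of $Q$. The classification of such ideals via $\O = M_s(D)$ for a complete DVR $D$ shows that every nonzero two-sided fractional $\O$-ideal equals $J(\O)^k$ for some $k\in\mathbb{Z}$; matching the smallest $u$-value forces $\O a_m = J(\O)^{u(a_m)}$, and similarly $\O a_m^{-1} = J(\O)^{u(a_m^{-1})}$. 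Computing $(\O a_m)(\O a_m^{-1}) = \O\cdot(a_m\O a_m^{-1}) = \O$ on one hand, and $J(\O)^{u(a_m)+u(a_m^{-1})}$ on the other, yields $u(a_m^{-1}) = -u(a_m)$. Applying Lemma \ref{lem: u-regular elements} inductively to $a_m$ (and to its inverse for $n<0$) then gives $u(a_m^n) = nu(a_m)$ for all $n\in\mathbb{Z}$.

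The main obstacle is largely organisational rather than deep: the characteristic-$p$ freshman's dream for additive operators and the classification of fractional two-sided ideals of the maximal order $\O$ are the two substantive inputs, and once these are in place both parts fall out quickly. A minor subtlety is ensuring that conjugation by $a_m$ genuinely preserves $\O$ (not merely sending it inside a possibly strictly larger order), which hinges on strict degree-$0$ filteredness of $\sigma$ rather than just $\deg_u(\sigma)\geq 0$; compatibility supplies this automatically.
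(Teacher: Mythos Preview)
Your proposal is correct and follows essentially the same approach as the paper. The only cosmetic difference is in part (i): where you pass from $\sigma^{p^m}-\id$ to $\sigma^{p^m d}-\id$ via the telescoping identity $\sigma^{p^m d}-\id=\sum_{i=0}^{d-1}\sigma^{p^m i}\circ(\sigma^{p^m}-\id)$, the paper instead uses the binomial expansion $\sigma^{p^m d}-\id=\sum_{i=1}^d\binom{d}{i}(\sigma^{p^m}-\id)^i$; both arguments are equally short and rest on the same degree bound for $\sigma^{p^m}-\id$. Part (ii) is identical in substance---the paper also shows $a_m\O=\O a_m$ and identifies it with a power of $J(\O)$, then derives $u(qa_m)=u(q)+u(a_m)$ directly rather than citing Lemma~\ref{lem: u-regular elements}, but this is the same computation.
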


\begin{proof}
$ $

\begin{enumerate}[label=(\roman*)]
\item By the assumption of compatibility, we have $\deg_u(\sigma-\id) \geq 1$, and so (as we are in characteristic $p$) it follows that $\deg_u(\sigma^{p^m} - \id) = \deg_u((\sigma - \id)^{p^m}) \geq p^m$. Hence
$$\sigma^{p^m d} - \id = ((\sigma^{p^m} - \id) + \id)^d - \id = \sum_{i=1}^d \binom{d}{i} (\sigma^{p^m} - \id)^i$$
must also have degree at least $p^m$. So, given $R > 0$, take $N:=\lceil\log_p(R)\rceil$ to deduce the second statement.
\item If $u(a_mqa_m^{-1} - q) > u(q)$ for all $q\in Q$, then $a_mqa_m^{-1} - q\in\O$ for all $q\in\O$, so $a_m\O=\O a_m$. Let $\pi\in D$ be a uniformiser, which we will view as an element of $\O$ under the diagonal embedding: then, for sufficiently large $k \in\mathbb{Z}$, $\pi^ka_m\O$ is a two-sided ideal in $\O$, and thus we may choose $k$ such that $\pi^ka_m\O = J(\O)^m$, so that $a_m\O=J(\O)^{m-k}$. 

In particular, if $u(q)=r$ then $q\in J(\O)^r\backslash J(\O)^{r+1}$, and thus $qa_m\in J(\O)^{m+r-k}$, so $u(qa_m) \geq m+r-k$. But if $u(qa_m) \geq m+r-k+1$, i.e.\ $qa_m\in J(\O)^{m+r-k+1}$, then $qJ(\O)^{m-k}\subseteq J(\O)^{m+r-k+1}$, and so $q\pi^{m-k}\in J(\O)^{m+r-k+1}$, from which we would be able to conclude that $q\in J(\O)^{r+1}$ -- contradiction.

Thus $u(qa_m)=r+m-k=u(q)+u(a_m)$ for all $q\in Q$. In particular, $u(a_m^n)=nu(a_m)$ for all $n\in\mathbb{N}$. Also, $0 = u(1) = u(a_m a_m^{-1}) = u(a_m) + u(a_m^{-1})$, so $u(a_m^{-1}) = -u(a_m)$, and thus $u(a_m^n) = nu(a_m)$ for all $n\in\mathbb{Z}$.\qedhere
\end{enumerate}
\end{proof}

Using this lemma, we can now prove the following corollary to Proposition \ref{propn: central polynomial} that will be essential for completing the proof of Theorem \ref{letterthm: FOH implies prime} in the case where the centre of $Q$ is concentrated in degree 0.

\begin{cor}\label{cor: algebraic with small centre}
Fix an element $b\in Q^\times$ such that $\sigma^d(b) = bqb^{-1}$. Suppose further that $J(\O)\cap Z = \{0\}$ and $Q^+[[x;\sigma,\delta]]_{(x-t)}$ is not a simple ring. Then:
\begin{enumerate}[label=(\roman*)]
\item $u(b) = 0$ if and only if $b\in \O^\times$.
\item If $z\in Z^\times$ then $u(z) = 0$.
\item If $t\in \O$ then $b\in \O$.
\item If $t\in\O^{\times}$ then $b\in\O^{\times}$.
\end{enumerate}
\end{cor}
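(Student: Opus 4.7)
The plan is to handle (i)--(iv) in order. Parts (i) and (ii) do not require the non-simplicity assumption and fall out of the basic theory of standard filtered artinian rings. Parts (iii) and (iv) both rest on Proposition \ref{propn: central polynomial}, which furnishes a monic central polynomial $f(Y)\in Z[Y]$ with $f(b^{-1}X^d)$ in a proper ideal of $Q^+[[x;\sigma,\delta]]_{(x-t)}$.

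For (i), compatibility gives $\deg_u(\sigma-\id)\geq 1$, hence $\deg_u(\sigma^d-\id)\geq 1$, so Lemma \ref{lem: a regular}(ii) applies with $m=0$: $u(b^n)=n\,u(b)$ for every $n\in\mathbb{Z}$. In particular $u(b^{-1})=-u(b)$, so $u(b)=0$ iff $u(b),u(b^{-1})\geq 0$ iff $b\in\O^\times$. For (ii), since $Q=M_s(F)$ we have $Z(Q)=Z(F)$, and $u$ restricts to the genuine valuation on $Z$, giving $u(z)+u(z^{-1})=0$ for $z\in Z^\times$. If $u(z)>0$ then $z\in J(\O)\cap Z=0$, a contradiction; the case $u(z)<0$ is symmetric via $z^{-1}$.

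For (iii), non-simplicity yields a proper nonzero $I\lhd Q^+[[x;\sigma,\delta]]_{(x-t)}$, and Proposition \ref{propn: central polynomial} supplies $f(Y)=z_0+z_1Y+\dots+Y^\ell\in Z[Y]$ of minimal degree with $f(b^{-1}X^d)\in I$. Minimality forces $z_0\neq 0$: otherwise $f(Y)=Y\cdot f'(Y)$, and invertibility of $b^{-1}X^d$ in the localization gives $f'(b^{-1}X^d)\in I$ of smaller degree. By (ii), $u(z_0)=0$. Suppose for contradiction $u(b)=-k<0$. The identity $X^d\cdot b^{-1}=\sigma^d(b^{-1})X^d=b^{-1}X^d$ yields $(b^{-1}X^d)^j=b^{-j}X^{jd}$, so
\[
f(b^{-1}X^d)=z_0+\sum_{j=1}^{\ell}z_j b^{-j}X^{jd}.
\]
For each $j\geq 1$, $f_Q(z_j b^{-j}X^{jd})\geq 0+jk+0\geq k\geq 1$, using $f_Q(X)\geq 0$ (from $t\in\O$) and Lemma \ref{lem: u-regular elements}. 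So $f(b^{-1}X^d)=z_0(1+h)$ with $f_Q(h)\geq 1$, and a geometric series inverts $1+h$ in the complete ring $Q^+[[x;\sigma,\delta]]$, making $f(b^{-1}X^d)$ a unit — contradicting $I$ proper.

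For (iv), part (iii) already gives $b\in\O$, so suppose $u(b)=m>0$. The new input is that $t\in\O^\times$ makes $X=x-t=-t(1-t^{-1}x)$ already a unit in $Q^+[[x;\sigma,\delta]]$: $f_Q(t^{-1}x)=1>0$ lets the geometric series invert $1-t^{-1}x$. Thus $X^{-1}\in Q^+[[x;\sigma,\delta]]$ with $f_Q(X^{-1})=0$. Multiplying $f(b^{-1}X^d)\in I$ on the right by $b^\ell X^{-\ell d}$, and using $X^{-d}\cdot b=\sigma^{-d}(b)X^{-d}=bX^{-d}$, produces the reciprocal-polynomial form
\[
\tilde f(bX^{-d}):=1+\sum_{j=1}^{\ell}z_{\ell-j}b^jX^{-jd}\in I.
\]
Each $j\geq 1$ term has $f_Q$-value $\geq jm\geq 1$, so $\tilde f(bX^{-d})=1+h'$ with $f_Q(h')\geq 1$ is again a unit, contradicting $I$ proper. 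Hence $u(b)=0$, and (i) gives $b\in\O^\times$. The main obstacle is the reciprocal-polynomial trick: (iii) treats $z_0$ as the ``leading'' coefficient in a $1+(\text{nilpotent})$ decomposition, while (iv) forces us to flip the polynomial so that the constant term becomes $1$ and the other terms inherit positive $f_Q$-value from $u(b^j)$; the hypothesis $t\in\O^\times$ (rather than just $t\in\O$) is exactly what makes $X^{-1}$ available inside $Q^+[[x;\sigma,\delta]]$ itself.
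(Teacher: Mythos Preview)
Your argument is correct and follows essentially the same strategy as the paper: obtain the minimal-degree polynomial $f$ from Proposition~\ref{propn: central polynomial} with $z_0\neq 0$, then show $f(b^{-1}X^d)$ (or a unit multiple of it) is of the form ``unit plus something in $J(\O)+x\O[[x;\sigma,\delta]]$'', hence a unit in the proper ideal $I$, contradiction. The paper phrases this by expanding in $x$ and reading off the constant term modulo $J(\O)$; you phrase it via the filtration $f_Q$, which is equivalent. In (iv) the paper multiplies only by $b^\ell$ and identifies the constant term as $\equiv t^{\ell d}\pmod{J(\O)}$, whereas you multiply by $b^\ell X^{-\ell d}$ to get the reciprocal polynomial with constant term $1$; both work, though the paper's version avoids needing $X^{-1}$.

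One imprecision worth flagging: $Q^+[[x;\sigma,\delta]]$ is \emph{not} complete with respect to $f_Q$ in general (for example, in $\mathbb{F}_p((T))^+[[x]]$ the partial sums of $\sum_{k\geq 0} T^{-k}x^{2k}$ are $f_Q$-Cauchy but the limit has unbounded coefficients). Your geometric-series step is still valid, but the correct justification is that $h$ and $h'$ actually lie in $\O[[x;\sigma,\delta]]$ (all the ingredients $z_j$, $b^{\pm j}$, $X^{\pm jd}$ have $\O$-coefficients under your hypotheses), and \emph{that} ring is $f_\O$-complete, so $1+h$ is a unit there. The paper's ``$\alpha_0 + hx$ with $\alpha_0\in\O^\times$, $h\in\O[[x;\sigma,\delta]]$'' formulation makes this explicit.
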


\begin{proof}
$ $

\begin{enumerate}[label=(\roman*)]
\item We know using Lemma \ref{lem: a regular}(ii) that $u(b^n)=nu(b)$ for all $n\in\mathbb{Z}$. Therefore, if $u(b)=0$ then $u(b^{-1})=0$, so $b\in\O^{\times}$.
\item As usual, we write $(D,v)$ for the complete discrete valuation ring associated to $Q$, and $(F,v)$ for its associated valued division ring of fractions. Then as $Q$ is a full matrix ring over $F$, we can identify $Z$ with $Z(F)$. Having made this identification, note that $u|_Z = v|_Z$, and $v$ is multiplicative, in the sense that $v(qq') = v(q) + v(q')$ for all $q, q'\in F$. Hence, for any $z\in Z^\times$, we have $u(z^{-1}) = -u(z)$. So if $u(z) \neq 0$, we have either $u(z) > 0$ or $u(z^{-1}) > 0$, contradicting the assumption that $J(\O) \cap Z = \{0\}$.
\end{enumerate}
Now since $Q^+[[x;\sigma,\delta]]_{(x-t)}$ is not simple, we can fix a proper, nonzero two-sided ideal $I$ of $Q^+[[x;\sigma,\delta]]_{(x-t)}$. By Proposition \ref{propn: central polynomial}, there exists a monic polynomial $0\neq f(Y)\in Z[Y]$ such that $f(b^{-1}X^d)\in I$ (where again we set $X=x-t$), and we will assume that $f$ has minimal degree among such polynomials. 

Write $f(X)=z_0+z_1X+\dots+z_{\ell-1}X^{\ell-1}+X^{\ell}$, and by minimality we can assume that $z_0\neq 0$, as in the proof of Proposition \ref{propn: central polynomial}. By (ii), we can now assume that either $z_i=0$ or $u(z_i)=0$ for each $0\leq i\leq \ell$, and so in particular, $z_i\in\O$ for all $0\leq i\leq \ell$.

\begin{enumerate}[label=(\roman*), resume]
\item By the assumption, we have $u(t) \geq 0$, and let us suppose for contradiction that $u(b)<0$. Then $u(b^{-1}) = -u(b) > 0$ by another application of Lemma \ref{lem: a regular}(ii), i.e.\ $b^{-1}\in J(\O)$. Expanding the element $f(b^{-1}X^d)$ as a polynomial in $x$:
\begin{align*}
f(b^{-1}X^d) &= z_0 + z_1 b^{-1} X^d + \dots + z_{\ell-1} b^{-(\ell-1)} X^{(\ell-1)d} + b^{-\ell} X^{\ell d}\\
&= \alpha_0 + \alpha_1 x + \dots + \alpha_{\ell d} x^{\ell d}.
\end{align*}
As $b^{-1}$, $t$ and all the $z_i$ are in $\O$, it is easy to see that $\alpha_0, \dots, \alpha_{\ell d} \in \O$. We can now calculate directly that the constant term is
$$\alpha_0 = z_0 - z_1b^{-1}t^d + z_2b^{-2}t^{2d} + \dots + (-1)^{\ell-1}z_{\ell-1}b^{-(\ell-1)}t^{(\ell-1)d} + (-1)^{\ell}b^{-\ell}t^{\ell d}.$$
Moreover, $u(z_ib^{-i}t^{i d})\geq u(t^{id})+u(z_i)+u(b^{-i})\geq iu(b^{-1})>0$ for each $i$, and $u(z_0)=0$. So $\alpha_0+J(\O)=z_0+J(\O)$, and it follows that $\alpha_0\in\O^{\times}$. Hence we can write $f(b^{-1}X^d) = \alpha_0 + hx$ for some $\alpha_0 \in \O^{\times}$ and $h\in\O[[x;\sigma,\delta]]$, and so this is a unit in $Q^+[[x;\sigma,\delta]]$. But $f(b^{-1}X^d)\in I$, which contradicts our assumption that $I$ is a proper ideal.
\item We are assuming now that $t\in\O^{\times}$, and so $u(b) \geq 0$ by (iii). Suppose for contradiction that $u(b)>0$. Multiply $f(b^{-1}X^d)$ by $b^{\ell}$ to get $$b^{\ell}f(b^{-1}X^d) = z_0b^{\ell} + z_1b^{\ell-1} X^d + \dots + z_{\ell-1}bX^{(\ell-1)d} + X^{\ell d}.$$ Expanding this as a polynomial in $x$ as above, we see that all coefficients lie in $\O$, that the constant term $\beta$ is 
$$t^{\ell d}+z_{\ell-1}bt^{(\ell-1)d}+\dots+z_1b^{\ell-1}t^d+z_0b^{\ell},$$
and that $u(z_i b^{\ell-i} t^{id}) > 0$ for each $0\leq i\leq \ell-1$. So $\beta+J(\O)=t^{\ell d}+J(\O)$, and it follows that $\beta\in\O^\times$, so we can apply the same argument as above to see that $b^{\ell}f(b^{-1}X^d)\in I$ is a unit in $Q^+[[x;\sigma,\delta]]$, contradicting our assumption again. So we conclude that $u(b)=0$, and hence $u(b^{-1})=-u(b)=0$, so $b\in\O^{\times}$.\qedhere
\end{enumerate}
\end{proof}

\subsection{Ideals in crossed products: minimal elements}

In general, our skew derivation $(\sigma, \delta)$ will \emph{not} be compatible with the standard filtration $u$ on $Q$ as in \S \ref{subsec: using compatibility}. However, we are assuming that some $(\sigma^{p^n}, \delta^{p^n})$ is compatible with $u$, and we will be able to form a crossed product as in Theorem \ref{thm: crossed product after localisation}. We want to exploit this crossed product decomposition to propagate results from a ``compatible" subring to the full skew power series ring.

We begin by taking a step back, and examining the ring structure of more general crossed products.

Let $A$ be a ring with automorphism $\sigma$, and suppose $R$ is a crossed product \cite{passmanICP} of $A$ with $\mathbb{Z}/n\mathbb{Z}$, which we will consider having the form $R = \bigoplus_{i=0}^{n-1} Ag^i,$ where $g\in R$ satisfies $ga = \sigma(a)g$ for all $a\in A$.

\begin{enumerate}
\item For notational convenience, elements of $R$ will be denoted by bold letters, and their entries will be denoted by the corresponding italic letters, so that $\mathbf{a} = a_0 + a_1g + \dots + a_{n-1}g^{n-1}$, and so on.
\item The \emph{support} of $\mathbf{a}\in R$ is the set $\supp(\mathbf{a}) = \{g^i : a_i \neq 0, 0\leq i\leq n-1\}$.
\end{enumerate}

\begin{defn}\label{defn: minimal elements}
Let $I$ be a nonzero ideal of $R$. A nonzero element $\mathbf{a}\in I$ is \emph{minimal for $I$} if $\mathbf{a}$ has smallest possible support among the elements of $I\setminus \{0\}$, i.e.\ $|\supp(\mathbf{a})| \leq |\supp(\mathbf{b})|$ for all $\mathbf{b}\in I\setminus \{0\}$.
\end{defn}

Minimality allows us to deduce near-uniqueness properties:

\begin{propn}\label{propn: minimal elements and centrality properties}
Let $\mathbf{a}$ be minimal for $I\lhd R$, and fix some $0\leq j\leq n-1$ such that $a_j \neq 0$.
\begin{enumerate}[label=(\roman*)]
\item If $\sigma(a_j) = a_j$, then $\sigma(a_i) = a_i$ for \emph{all} $0\leq i\leq n-1$.
\item Let $q\in A$ be arbitrary. If $qa_j = a_j\sigma^j(q)$, then $qa_i = a_i\sigma^i(q)$ for \emph{all} $0\leq i\leq n-1$.
\item If $\mathbf{b}$ is also minimal for $I$ with $\supp(\mathbf{a}) = \supp(\mathbf{b})$, and $b_j = qa_j$ for some $q\in A$, then $\mathbf{b} = q\mathbf{a}$.
\end{enumerate}
\end{propn}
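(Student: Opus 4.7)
The plan for all three parts is the same: from the hypothesis we construct an element of $I$ whose support is strictly smaller than $\supp(\mathbf{a})$, and then invoke the minimality of $\mathbf{a}$ (or $\mathbf{b}$) to conclude that this element must in fact be zero. The main conceptual point is that in our crossed product $g$ is a unit and conjugation by $g$ acts by $\sigma$ on the coefficients, i.e.\ $g\mathbf{a}g^{-1} = \sum_i \sigma(a_i) g^i$ and $\mathbf{a}q = \sum_i a_i \sigma^i(q) g^i$ for $q \in A$.

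For part (i), I would form the commutator-like element $\mathbf{c} := g\mathbf{a}g^{-1} - \mathbf{a} \in I$. Its $i$-th coefficient is $\sigma(a_i) - a_i$, so $\supp(\mathbf{c}) \subseteq \supp(\mathbf{a})$. The hypothesis $\sigma(a_j) = a_j$ kills the coefficient in degree $j$, and since $a_j \neq 0$ this gives $\supp(\mathbf{c}) \subsetneq \supp(\mathbf{a})$. By minimality of $\mathbf{a}$ we must have $\mathbf{c} = 0$, i.e.\ $\sigma(a_i) = a_i$ for every $i$.

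For part (ii), I would consider $\mathbf{c} := q\mathbf{a} - \mathbf{a}q \in I$ (both summands lie in $I$ since $I$ is two-sided). Using $\mathbf{a}q = \sum_i a_i \sigma^i(q) g^i$, the $i$-th coefficient of $\mathbf{c}$ is $qa_i - a_i \sigma^i(q)$, so again $\supp(\mathbf{c}) \subseteq \supp(\mathbf{a})$. The hypothesis in degree $j$ kills that coefficient, so $\supp(\mathbf{c}) \subsetneq \supp(\mathbf{a})$, and minimality forces $\mathbf{c} = 0$. For part (iii), the element $\mathbf{c} := \mathbf{b} - q\mathbf{a} \in I$ has $i$-th coefficient $b_i - qa_i$, so $\supp(\mathbf{c}) \subseteq \supp(\mathbf{a}) = \supp(\mathbf{b})$. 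The hypothesis $b_j = qa_j$ kills the $j$-th coefficient, and since $b_j \neq 0$ this gives $\supp(\mathbf{c}) \subsetneq \supp(\mathbf{b})$. Minimality of $\mathbf{b}$ then yields $\mathbf{c} = 0$, i.e.\ $\mathbf{b} = q\mathbf{a}$.

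I do not anticipate any serious obstacle: the proof is essentially a standard ``minimal support'' argument from the theory of ideals in crossed products. The only mild subtlety is ensuring that $g$ is genuinely invertible (needed for part (i)), but this is built into the definition of a crossed product and is certainly true in the applications to $Q^+[[x;\sigma,\delta]]_{(x-t)}$ where $g = x-t$ has been inverted. One should also record at the start that, for $q \in A$, right multiplication gives $\mathbf{a}q = \sum_i a_i \sigma^i(q) g^i$, which is the one computation underlying all three parts.
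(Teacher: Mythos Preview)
Your proposal is correct and matches the paper's proof essentially verbatim: in each part the paper forms exactly the element $\mathbf{c}$ you describe, notes that $\supp(\mathbf{c}) \subseteq \supp(\mathbf{a})$ with $c_j = 0$, and invokes minimality to conclude $\mathbf{c} = 0$. Your additional remarks about the invertibility of $g$ and the formula $\mathbf{a}q = \sum_i a_i \sigma^i(q) g^i$ are accurate and make explicit what the paper leaves to the reader.
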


\begin{proof}
In each of the three cases, set
\begin{enumerate}[label=(\roman*)]
\item 
\hfill$\begin{aligned}[t]
\mathbf{c} := g\mathbf{a}g^{-1} - \mathbf{a} 
= \sum_{i=0}^{n-1}(\sigma(a_i) - a_i)g^i,
\end{aligned}$\hfill\null
\item
\hfill$\begin{aligned}[t]
\mathbf{c} := q\mathbf{a} - \mathbf{a}q &= \sum_{i=0}^{n-1} (qa_i - a_i\sigma^i(q))g^i,
\end{aligned}$\hfill\null
\item 
\hfill$\begin{aligned}[t]
\mathbf{c} := \mathbf{b} - q\mathbf{a} &= \sum_{i=0}^{n-1}(b_i - qa_i)g^i.
\end{aligned}$\hfill\null
\end{enumerate}

In each case, it can be checked that $\mathbf{c}$ is also an element of $I$, that $\supp(\mathbf{c})\subseteq \supp(\mathbf{a})$, and that $\mathbf{c}$ has been chosen so that $c_j = 0$. So, by the minimality of $\mathbf{a}$, we must have $\mathbf{c} = 0$.
\end{proof}

\begin{cor}\label{cor: minimal and central}
Let $\mathbf{a}$ be minimal for $I$, and suppose that $a_0 \neq 0$. Then the following are equivalent:
\begin{enumerate}[label=(\roman*)]
\item $\mathbf{a}\in Z(R)$,
\item $a_0\in Z(A)^\sigma$,
\item for all $0\leq i,j\leq n-1$ and all $q\in A$, we have (a) $a_i\in A^\sigma$, (b) $qa_i = a_i\sigma^i(q)$, and (c) $a_ja_i = a_ia_j$.
\end{enumerate}
\end{cor}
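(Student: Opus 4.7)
The plan is to prove the equivalences by expanding the centrality condition $\mathbf{a}\in Z(R)$ into its component statements on coefficients, and then using the minimality propagation from Proposition \ref{propn: minimal elements and centrality properties} to upgrade a condition about $a_0$ alone into a condition about every $a_i$.

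First I would tackle (i) $\iff$ (iii). Since $R$ is generated as a ring by $A$ together with $g$ (with $g$ a unit), $\mathbf{a}\in Z(R)$ is equivalent to the two statements $g\mathbf{a} = \mathbf{a}g$ and $q\mathbf{a} = \mathbf{a}q$ for all $q\in A$. Expanding using $g a_i g^{-1} = \sigma(a_i)$ and $g^i q = \sigma^i(q) g^i$, these two statements become coordinatewise:
\[
\sigma(a_i) = a_i \text{ for all } i, \qquad qa_i = a_i \sigma^i(q) \text{ for all } i \text{ and } q\in A.
\]
These are precisely conditions (a) and (b) of (iii). Condition (c) is then a formal consequence: apply (b) with $q = a_j$ to get $a_j a_i = a_i \sigma^i(a_j)$, and then use (a) to conclude $\sigma^i(a_j) = a_j$. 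Hence (i) $\iff$ (iii).

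Next, (i) $\implies$ (ii) is immediate: taking $i=0$ in the expansions above gives $\sigma(a_0) = a_0$ and $qa_0 = a_0 q$ for all $q\in A$, so $a_0 \in Z(A)^\sigma$. The interesting direction is (ii) $\implies$ (i), and here I would invoke Proposition \ref{propn: minimal elements and centrality properties} directly with the distinguished index $j = 0$ (valid since $a_0 \neq 0$). Part (i) of that proposition, applied to the hypothesis $\sigma(a_0) = a_0$, gives $\sigma(a_i) = a_i$ for all $i$. Part (ii), applied to an arbitrary $q\in A$, uses the hypothesis $qa_0 = a_0 q = a_0 \sigma^0(q)$ (where centrality of $a_0$ in $A$ is used), and yields $qa_i = a_i \sigma^i(q)$ for all $i$. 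Combining, we get conditions (a) and (b) of (iii), which we have just shown is equivalent to (i).

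The only conceptual point worth flagging is that the minimality assumption is doing all the work in the step (ii) $\implies$ (i); without it, controlling one coefficient of $\mathbf{a}$ gives no information about the others. No calculation here is hard, but one should be careful to note explicitly that $j = 0$ is a valid choice in the hypothesis $a_j \neq 0$ of Proposition \ref{propn: minimal elements and centrality properties}, which is exactly the role played by the assumption $a_0 \neq 0$ in the statement.
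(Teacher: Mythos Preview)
Your proof is correct and follows essentially the same approach as the paper's: reduce centrality to commuting with $g$ and with all $q\in A$, then invoke Proposition~\ref{propn: minimal elements and centrality properties}(i--ii) with $j=0$ to pass from the condition on $a_0$ to all coefficients. The paper's version is simply a terser statement of the same argument, and your explicit derivation of~(c) from~(a) and~(b) fills in a detail the paper leaves implicit.
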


\begin{proof}
An element $\mathbf{a}\in R$ is central if and only if (i) $\mathbf{a}$ centralises $g$ and (ii) $\mathbf{a}$ centralises all $q\in A$. Now apply Proposition \ref{propn: minimal elements and centrality properties}(i--ii).
\end{proof}

\subsection{Ideals in crossed products: positive characteristic}\label{subsec: minimal elements and positive characteristic}

We retain the assumptions of the previous subsection, so that $g^n \in A$ and $ga = \sigma(a)g$. From now on, we assume additionally that $n = p^m$, so that $R = A*(\mathbb{Z}/p^m\mathbb{Z})$, and that $\chr(A) = p$.

\begin{rk}\label{rk: if a and b commute, can raise to pth powers separately}
If $a, b\in A$ commute with each other, then $(a+b)^p = a^p+b^p$. If $a$ is $\sigma$-invariant, then $a$ commutes with $g$. In particular, if $\mathbf{a} = \sum_{i=0}^{p^m - 1} a_i g^i\in R$, and all of its coefficients $a_i$ are $\sigma$-invariant and commute pairwise, then $\mathbf{a}^p = \sum_{i=0}^{p^m - 1} a_i^p g^{ip}$. We will use this throughout.
\end{rk}

\begin{lem}\label{lem: raising to power p decreases supp}
Let $\mathbf{a}\in R$ be arbitrary such that its coefficients $a_i$ are $\sigma$-invariant and commute pairwise. Then $|\supp(\mathbf{a}^p)| \leq |\supp(\mathbf{a})|$.
\end{lem}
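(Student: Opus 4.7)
My plan is to combine the Freshman's Dream (as formalised in Remark \ref{rk: if a and b commute, can raise to pth powers separately}) with a straightforward counting argument on the exponents of $g$.

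First, I would invoke Remark \ref{rk: if a and b commute, can raise to pth powers separately}: since each $a_i$ is $\sigma$-invariant, each $a_i$ commutes with $g$, and since the $a_i$ commute with each other, the summands $a_i g^i$ commute pairwise in $R$. Working in characteristic $p$, the binomial theorem therefore gives
\[
\mathbf{a}^p \;=\; \Bigl(\sum_{i=0}^{p^m-1} a_i g^i\Bigr)^{\!p} \;=\; \sum_{i=0}^{p^m-1} a_i^p g^{ip}.
\]
Next I would rewrite each $g^{ip}$ in the standard form $A g^j$ with $0 \leq j < p^m$. Writing $\gamma := g^{p^m} \in A$ (noting $\gamma$ is $\sigma$-invariant since $g$ commutes with itself) and $ip = q_i p^m + r_i$ with $0 \leq r_i < p^m$, we get $g^{ip} = \gamma^{q_i} g^{r_i}$, so
\[
\mathbf{a}^p \;=\; \sum_{i=0}^{p^m-1} a_i^p \gamma^{q_i} g^{r_i}
\]
with $r_i \equiv ip \pmod{p^m}$.

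From this expression, any $g^k$ appearing in $\supp(\mathbf{a}^p)$ must satisfy $k = r_i = ip \bmod p^m$ for some index $i$ with $a_i \neq 0$. Thus
\[
\supp(\mathbf{a}^p) \;\subseteq\; \bigl\{ g^{ip \bmod p^m} : 0 \leq i \leq p^m-1,\ a_i \neq 0 \bigr\},
\]
and the right-hand side is the image of $\supp(\mathbf{a})$ under the (not necessarily injective) map $g^i \mapsto g^{ip \bmod p^m}$. Hence $|\supp(\mathbf{a}^p)| \leq |\supp(\mathbf{a})|$, as required.

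There is really no serious obstacle here; the only subtlety is recognising that one must reduce $g^{ip}$ into the range $0 \leq j < p^m$ before reading off the support, and that this reduction (which multiplies by powers of $\gamma\in A$) cannot enlarge the set of exponents that appear. The inequality can of course be strict, since the map $i \mapsto ip \bmod p^m$ is $p$-to-$1$ on $\mathbb{Z}/p^m\mathbb{Z}$; this loss of information is precisely what will be exploited in the subsequent arguments.
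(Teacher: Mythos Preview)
Your proof is correct and follows essentially the same approach as the paper: apply the Freshman's Dream via Remark \ref{rk: if a and b commute, can raise to pth powers separately} to get $\mathbf{a}^p = \sum_i a_i^p g^{ip}$, then observe that at most $|\supp(\mathbf{a})|$ distinct powers of $g$ can appear after reduction. You are in fact slightly more explicit than the paper about the reduction $g^{ip} = \gamma^{q_i} g^{r_i}$, which the paper leaves implicit; this is a helpful clarification rather than a departure in method.
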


\begin{proof}
Write $\mathbf{a} = a_{n_1} g^{n_1} + \dots + a_{n_\ell} g^{n_\ell}$ for nonzero coefficients $a_{n_i}\in A$ and pairwise distinct $n_i \leq p^m - 1$, so that $|\supp(\mathbf{a})| = \ell$. Then $\mathbf{a}^p = a_{n_1}^p g^{pn_1} + \dots + a_{n_\ell}^p g^{pn_\ell}$, and so $|\supp(\mathbf{a}^p)| \leq \ell$.
\end{proof}

\begin{lem}\label{lem: minimal elements are p-nilpotent}
Let $I$ be an ideal of $R$ such that $I\cap A = \{0\}$, and that we are given some $\mathbf{a} \in I$ whose coefficients are $\sigma$-invariant and commute pairwise. Then $\mathbf{a}^{p^m} = 0$.
\end{lem}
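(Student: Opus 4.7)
The plan is to show directly that $\mathbf{a}^{p^m}\in A$, after which the hypothesis $I\cap A=\{0\}$ finishes the proof.

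The key observation is that the summands of $\mathbf{a}=\sum_{i=0}^{p^m-1} a_i g^i$ pairwise commute in $R$: since each $a_i$ is $\sigma$-invariant it commutes with $g$ (hence with every $g^j$), and the $a_i$ commute pairwise by hypothesis, so $(a_i g^i)(a_j g^j)=a_ia_j g^{i+j}=a_ja_i g^{i+j}=(a_jg^j)(a_ig^i)$. Applying the characteristic-$p$ freshman's dream (the argument of Remark 3.4.1, used not to rewrite in standard form but to expand $\mathbf{a}^p$ over the commuting family $\{a_i g^i\}$), we obtain
\[
\mathbf{a}^p=\sum_{i=0}^{p^m-1}(a_i g^i)^p=\sum_{i=0}^{p^m-1}a_i^p g^{ip},
\]
where in the last equality we have used $a_i g=ga_i$ repeatedly.

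Now I would iterate: each $a_i^p$ is again $\sigma$-invariant (being a power of a $\sigma$-invariant element), and the $a_i^p$ still commute pairwise, so the same argument applied to the expression $\sum_i a_i^p g^{ip}$ (even though its exponents need not be reduced modulo $p^m$) gives $\mathbf{a}^{p^2}=\sum_i a_i^{p^2} g^{ip^2}$, and by induction on $k$,
\[
\mathbf{a}^{p^k}=\sum_{i=0}^{p^m-1} a_i^{p^k}g^{ip^k}\qquad\text{for all }k\geq 0.
\]

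Setting $k=m$, we use the defining property of the crossed product that $g^{p^m}\in A$, so $g^{ip^m}=(g^{p^m})^i\in A$ for every $i$. Therefore $\mathbf{a}^{p^m}=\sum_i a_i^{p^m}(g^{p^m})^i\in A$. Since $\mathbf{a}\in I$ implies $\mathbf{a}^{p^m}\in I$, we conclude $\mathbf{a}^{p^m}\in I\cap A=\{0\}$, as required.

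The only subtlety is the propagation of the commuting/$\sigma$-invariant hypothesis through successive $p$-th powers, which is why it is convenient to work throughout with the (possibly non-reduced) expression $\sum_i a_i^{p^k} g^{ip^k}$ rather than collecting coefficients modulo $p^m$ at each stage; the rest is a direct application of freshman's dream together with the cyclic crossed product identity $g^{p^m}\in A$.
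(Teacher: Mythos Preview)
Your proof is correct and follows essentially the same approach as the paper: use the pairwise commutativity of the summands $a_ig^i$ (from $\sigma$-invariance of the $a_i$ and their mutual commutation) together with the characteristic-$p$ identity to compute $\mathbf{a}^{p^m}=\sum_i a_i^{p^m}g^{ip^m}\in A$, and conclude via $I\cap A=\{0\}$. The paper's version is just more terse, applying Remark~\ref{rk: if a and b commute, can raise to pth powers separately} directly to jump to $\mathbf{a}^{p^m}$ without spelling out the inductive step.
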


\begin{proof}
Evaluate directly:
\begin{align*}
\mathbf{a}^{p^m} = \left(\sum_{i=0}^{p^m-1} a_ig^i\right)^{\!\!p^m} &= \sum_{i=0}^{p^m-1} a_i^{p^m}g^{p^mi},
\end{align*}
and notice that $g^{p^m}\in A$, so that this is an element of $I\cap A = \{0\}$.
\end{proof}

\begin{rk}\label{rk: p-nilpotent minimal elements}
We have not yet addressed the question of whether, given a nonzero ideal $I\lhd R$ satisfying $I\cap A = \{0\}$, there exists an element $\mathbf{a}\in I\cap Z(R)$ which is minimal for it. However, if we have such an element, then Corollary \ref{cor: minimal and central} tells us that all its coefficients $a_i$ are $\sigma$-invariant and commute pairwise, and so we may apply Lemma \ref{lem: raising to power p decreases supp}. It follows that either $|\supp(\mathbf{a}^p)| = |\supp(\mathbf{a})|$ or $\mathbf{a}^p = 0$. In the former case, $\mathbf{a}^p\in Z(R)$ must also be minimal for $I$: but, by Lemma \ref{lem: minimal elements are p-nilpotent}, there exists some minimal $i$ such that $\mathbf{a}^{p^i} = 0$, and so replacing $\mathbf{a}$ by $\mathbf{a}^{p^{i-1}}$, we can assume that $\mathbf{a}^p = 0$.
\end{rk}

We make one further simplification.

\begin{lem}\label{lem: can pass to a small subgroup}
Suppose $\mathbf{a}\in Z(R)$, with $\sigma$-invariant and pairwise commuting coefficients. Set $\mathbf{b} = a_0 + a_{p^{m-1}} g^{p^{m-1}} + \dots + a_{(p-1)p^{m-1}} g^{(p-1)p^{m-1}}$: that is, $b_i = a_i$ precisely when $i$ is divisible by $p^{m-1}$, and otherwise $b_i = 0$. If $\mathbf{a}^p = 0$, then $\mathbf{b}^p = 0$.
\end{lem}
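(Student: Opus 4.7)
The plan is to exploit the hypothesis that the coefficients of $\mathbf{a}$---and hence of $\mathbf{b}$, whose nonzero coefficients form a subcollection of those of $\mathbf{a}$---are $\sigma$-invariant and pairwise commuting, so that Remark~\ref{rk: if a and b commute, can raise to pth powers separately} applies and the $p$th power distributes termwise. Setting $\zeta := g^{p^m}\in A^\times$, the crossed-product relation $g^{p^m}=\zeta$ gives $g^{qp^m + s} = \zeta^q g^s$ for all $q\geq 0$ and all $0\le s<p^m$, so the argument reduces to matching coefficients in the standard basis $\{g^i\}_{0\le i<p^m}$ of $R$ over $A$.

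For $\mathbf{a}^p$, I would write each $i\in\{0,\ldots,p^m-1\}$ uniquely as $i=qp^{m-1}+j$ with $0\le q\le p-1$ and $0\le j\le p^{m-1}-1$, so that $g^{ip} = g^{qp^m + jp} = \zeta^q g^{jp}$. Collecting the distinct basis elements $g^{jp}$ yields
\[
\mathbf{a}^p = \sum_{j=0}^{p^{m-1}-1}\left(\sum_{q=0}^{p-1} a_{qp^{m-1}+j}^{\,p}\,\zeta^q\right) g^{jp}.
\]
Since the $g^{jp}$ are pairwise distinct basis vectors over $A$, $\mathbf{a}^p=0$ forces every inner sum to vanish; in particular, the $j=0$ slice yields $\sum_{q=0}^{p-1} a_{qp^{m-1}}^{\,p}\,\zeta^q = 0$.

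Applying the same termwise expansion to $\mathbf{b}$, whose only nonzero coefficients lie at indices of the form $qp^{m-1}$, gives
\[
\mathbf{b}^p = \sum_{q=0}^{p-1} a_{qp^{m-1}}^{\,p}\, g^{qp^m} = \sum_{q=0}^{p-1} a_{qp^{m-1}}^{\,p}\,\zeta^q,
\]
which is precisely the $j=0$ coefficient above, and hence zero. The whole proof is essentially a reindexing; the only real point of care is that in the reduction $g^{qp^m+jp}=\zeta^q g^{jp}$ the factor $\zeta^q\in A$ sits to the \emph{left} of $g^{jp}$, so that it legitimately becomes part of the basis coefficient. Beyond that bookkeeping, no genuine obstacle arises.
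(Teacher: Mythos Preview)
Your proof is correct and follows exactly the same idea as the paper's one-line argument, which simply states that $\mathbf{b}^p \in A$ is the $g^0$-coefficient of $\mathbf{a}^p$. You have unpacked this claim explicitly: after applying Remark~\ref{rk: if a and b commute, can raise to pth powers separately} to both $\mathbf{a}$ and $\mathbf{b}$, the reindexing $i = qp^{m-1}+j$ shows that $\mathbf{a}^p = \sum_j (\sum_q a_{qp^{m-1}+j}^{\,p}\zeta^q) g^{jp}$, and your computation of $\mathbf{b}^p$ as the $j=0$ slice is precisely what the paper's sentence asserts.
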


\begin{proof}
$\mathbf{b}^p \in A$ is the $g^0$-coefficient of $\mathbf{a}^p$.
\end{proof}

Now consider the following property that the data $(A,\sigma)$ might satisfy:
\begin{equation}
    \tag{P1}
    \text{\parbox{.78\textwidth}{Every nonzero, $\sigma$-invariant ideal of $A$ contains a nonzero, $\sigma$-invariant central element.}}
    \label{P1}
\end{equation}

\begin{propn}\label{propn: there exists a central minimal element whose pth power is zero}
Let $I$ be a nonzero ideal of $R$, and suppose that $(A,\sigma)$ satisfies \eqref{P1}. Then there exists a nonzero central element $\mathbf{a}\in I$ which is minimal for $I$. Moreover, if $I\cap A = \{0\}$, then we may choose this element $\mathbf{a}$ so that $\mathbf{a}^p = 0$, $a_0\neq 0$, and $a_i = 0$ whenever $i$ is not divisible by $p^{m-1}$.
\end{propn}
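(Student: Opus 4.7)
The proof splits into the two assertions of the statement.

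\textit{First part: existence of a nonzero central minimal element.} I would take any minimal $\mathbf{a}\in I$ (which exists since supports lie in a set of size $p^m$) and, using invertibility of $g$ in $R$, right-multiply by a suitable $g^{-k}$ to arrange $a_0\neq 0$ without changing the support size. Setting $T:=\supp(\mathbf{a})$, define
\[
J:=\{z\in A:z=c_0\text{ for some }\mathbf{c}\in I\text{ with }\supp(\mathbf{c})\subseteq T\}\cup\{0\}.
\]
One checks routinely that $J$ is a two-sided $\sigma$-invariant ideal of $A$: closure under sums comes from $\mathbf{c}-\mathbf{c}'$; closure under left and right multiplication from $q\mathbf{c}$ and $\mathbf{c}q$, whose $g^0$-coefficients are $qc_0$ and $c_0 q$ respectively (the latter using $gq=\sigma(q)g$); and $\sigma$-invariance from $\mathbf{c}\mapsto g\mathbf{c}g^{-1}$, which preserves support and sends $c_0\mapsto\sigma(c_0)$. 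Since $a_0\in J\setminus\{0\}$, \eqref{P1} supplies a nonzero $\sigma$-invariant central element $z\in J$. The associated $\mathbf{b}\in I$ has $b_0=z$ and $\supp(\mathbf{b})\subseteq T$, so minimality of $\mathbf{a}$ forces $\supp(\mathbf{b})=T$ and hence $\mathbf{b}$ is minimal. Proposition~\ref{propn: minimal elements and centrality properties}(i)--(ii) with $j=0$ then propagates the $\sigma$-invariance and twisted commutation from $b_0$ to every $b_i$, and Corollary~\ref{cor: minimal and central} gives that $\mathbf{b}$ is central in $R$.

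\textit{The moreover.} Assume $I\cap A=\{0\}$. The coefficients of $\mathbf{b}$ lie in $Z(A)\cap A^\sigma$ by Corollary~\ref{cor: minimal and central}, so they are $\sigma$-invariant and pairwise commute; hence Lemma~\ref{lem: minimal elements are p-nilpotent} gives $\mathbf{b}^{p^m}=0$. The key computation is that, under these hypotheses, $\mathbf{b}^p=\sum_i b_i^p g^{ip}$, and reducing each exponent to its canonical form $c^{\lfloor ip/p^m\rfloor}g^{ip\bmod p^m}$ (where $c:=g^{p^m}\in A$) shows $\supp(\mathbf{b}^p)$ lies in the multiples of $p$ modulo $p^m$. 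Since $c$ is itself $\sigma$-invariant and commutes with every element of $Z(A)\cap A^\sigma$, the coefficients of $\mathbf{b}^p$ satisfy the same hypotheses, so iteration yields $\supp(\mathbf{b}^{p^j})\subseteq p^j\mathbb{Z}/p^m\mathbb{Z}$; in particular $\supp(\mathbf{b}^{p^{m-1}})$ is contained in $\{0,p^{m-1},\dots,(p-1)p^{m-1}\}$, the target set. By Lemma~\ref{lem: raising to power p decreases supp} combined with minimality of $\mathbf{b}$, each nonzero iterate $\mathbf{b}^{p^j}$ is also minimal for $I$. So provided $\mathbf{b}^{p^{m-1}}\neq 0$ and $(\mathbf{b}^{p^{m-1}})_0\neq 0$, the element $\mathbf{a}:=\mathbf{b}^{p^{m-1}}$ has all the required properties.

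\textit{Main obstacle.} The delicate point is arranging both non-vanishing conditions simultaneously. I would approach this by first choosing $\mathbf{b}$ so that its nilpotence index $k$ (the smallest positive integer with $\mathbf{b}^{p^k}=0$) is as large as possible among central minimal elements produced by the first-part construction. When $k=m$, so $\mathbf{b}^{p^{m-1}}\neq 0$ with $\supp(\mathbf{b}^{p^{m-1}})\subseteq H_1:=\{0,p^{m-1},\dots,(p-1)p^{m-1}\}$: if its $g^0$-coefficient vanishes, right-multiply by $g^{-jp^{m-1}}$ for some $j$ with $(\mathbf{b}^{p^{m-1}})_{jp^{m-1}}\neq 0$, yielding an element $\mathbf{d}\in I$ lying in the subring $R_{H_1}:=\sum_{i=0}^{p-1}Ag^{ip^{m-1}}$ with $d_0\neq 0$. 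Since $R_{H_1}\cong A*\mathbb{Z}/p\mathbb{Z}$, the first-part construction re-runs inside $R_{H_1}$ using the $\sigma$-invariant ideal $J^*:=\{c_0:\mathbf{c}\in I\cap R_{H_1}\}\cup\{0\}$ of $A$; crucially, the $p$-th power of any element of $R_{H_1}$ with $\sigma$-invariant pairwise commuting coefficients lies in $A$, so $I\cap A=\{0\}$ automatically forces the resulting central element to have $p$-th power zero. The case $k<m$ is handled by induction on $m$: $\mathbf{b}^{p^{k-1}}$ lies in the proper sub-crossed-product $\sum_{j=0}^{p^{m-k+1}-1}Ag^{jp^{k-1}}\cong A*\mathbb{Z}/p^{m-k+1}\mathbb{Z}$, whose exponent is strictly smaller and whose intersection with $A$ is still zero; the inductive hypothesis applied there yields an element whose ``multiples of $p^{m-k}$'' support in the smaller ring corresponds exactly to ``multiples of $p^{m-1}$'' in $R$.
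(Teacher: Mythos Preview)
Your first part (constructing a central minimal element with $a_0\neq 0$ via the ideal $J$ and property \eqref{P1}) is essentially identical to the paper's argument.

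For the ``moreover'', the paper's route is much shorter than yours. Having obtained a central minimal $\mathbf{a}$ with $a_0\neq 0$, the paper simply invokes Remark~\ref{rk: p-nilpotent minimal elements} (replace $\mathbf{a}$ by the last nonzero $p$-power iterate, so that $\mathbf{a}^p=0$ while centrality and minimality are retained) followed by Lemma~\ref{lem: can pass to a small subgroup} (pass to the coefficients indexed by multiples of $p^{m-1}$). There is no case split on a nilpotence index and no induction on $m$.

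Your approach has a genuine gap. In your case $k<m$ you pass to the sub-crossed-product
\[
R'=\sum_{j=0}^{p^{m-k+1}-1} Ag^{jp^{k-1}}\cong A\ast(\mathbb{Z}/p^{m-k+1}\mathbb{Z})
\]
and appeal to induction on the exponent $m-k+1$. But when $k=1$ --- that is, when your central minimal $\mathbf{b}$ already satisfies $\mathbf{b}^p=0$ --- you have $m-k+1=m$, so $R'=R$ and the induction does not reduce. Your ``choose $k$ maximal'' step does not rescue this: nothing forces the maximum nilpotence index among the central minimal elements produced by the first-part construction to exceed $1$. A secondary issue is that the automorphism governing $R'$ is $\sigma^{p^{k-1}}$, so invoking the inductive hypothesis verbatim would require \eqref{P1} for $(A,\sigma^{p^{k-1}})$ rather than $(A,\sigma)$; this is repairable by observing that $gR'g^{-1}=R'$, whence $I\cap R'$ is $\sigma$-invariant, but you do not say so.

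Your $k=m$ case --- shift into $R_{H_1}$ and re-run the first-part construction there, exploiting that $p$th powers of elements of $R_{H_1}$ with $\sigma$-invariant pairwise-commuting coefficients automatically land in $A$ --- is the right idea and essentially works (modulo restricting $J^*$ to supports contained in $\supp(\mathbf{d})$ so that the output is still minimal for $I$). Combined with Remark~\ref{rk: p-nilpotent minimal elements}, which already hands you a central minimal element with $p$th power zero and so makes your case split on $k$ unnecessary, this yields a direct proof close in spirit to the paper's.
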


\begin{proof}
Let $\mathbf{c}$ be minimal for $I$, so that some $c_i \neq 0$. Multiplying by $g$ as many times as necessary, we may ensure without loss of generality that $c_0 \neq 0$. Now define
$$C_0 = \{a\in A : \text{there exists } \mathbf{b}\in I \text{ such that } \supp(\mathbf{b}) \subseteq \supp(\mathbf{c})\text{ and } b_0 = a\}.$$
Now $C_0$ is a nonzero ideal of $A$, and moreover if $a\in C_0$ then choose $\mathbf{b}\in I \text{ such that } \supp(\mathbf{b}) \subseteq \supp(\mathbf{c})$ and $b_0 = a$. Then $g\mathbf{b}g^{-1}\in I$ has the same support as $\mathbf{b}$ and the $0$'th coefficient is $\sigma(a)$. Thus $\sigma(C_0)\subseteq C_0$, and by a similar argument $\sigma^{-1}(C_0)\subseteq C_0$, so we deduce that $C_0$ is $\sigma$-invariant.

Therefore, applying property (P1) we see that $C_0$ contains a nonzero $\sigma$-invariant central element $z$. Let $\mathbf{a}\in I$ be the corresponding element such that $a_0 = z$: then, by the choice of $\mathbf{c}$, we must have $|\supp(\mathbf{c})| = |\supp(\mathbf{a})|$, so that $\mathbf{a}$ is also minimal for $I$; and it follows from Corollary \ref{cor: minimal and central} that $\mathbf{a}$ is central, and that its coefficients $a_i$ are $\sigma$-invariant and commute pairwise. The remaining claims follow from Lemma \ref{lem: minimal elements are p-nilpotent}, Remark \ref{rk: p-nilpotent minimal elements} and Lemma \ref{lem: can pass to a small subgroup}.
\end{proof}

\subsection{Using the nilradical}\label{subsec: nilradical}

We make the same assumptions as in \S \ref{subsec: minimal elements and positive characteristic}, but we now assume further that $A$ is prime.

\begin{lem}\label{lem: prime nilradical}
$R=A\ast(\mathbb{Z}/p^m\mathbb{Z})$ has a unique minimal prime ideal $P$, and $P\cap A=\{0\}$.
\end{lem}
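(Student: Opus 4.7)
The proof splits naturally into the two claims: $N \cap A = 0$ and $N$ is prime, where $N$ denotes the prime (nil)radical of $R$.

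For the first claim, the plan is to exploit the fact that $R = \bigoplus_{i=0}^{p^m-1} Ag^i$ is free of finite rank $p^m$ as a right $A$-module, so $R$ is Noetherian whenever $A$ is (which holds in our applications). The prime radical $N$ is therefore nilpotent, say $N^k = 0$, and then $(N \cap A)^k \subseteq N^k \cap A = 0$ shows that $N \cap A$ is a nilpotent ideal of the prime ring $A$; a prime ring has no nonzero nilpotent ideals, so $N \cap A = 0$.

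For the uniqueness of the minimal prime (equivalently, that $N$ itself is prime), the plan is to reduce to a crossed product over a simple Artinian ring, where the structure is more tractable. Forming $F := Q(A)$, the simple Artinian Goldie ring of quotients of $A$, and extending $\sigma$ canonically to $F$, we obtain $R_F := F \otimes_A R \cong F \ast (\mathbb{Z}/p^m\mathbb{Z})$; by Goldie theory, minimal primes of $R$ with trivial contraction to $A$ correspond to minimal primes of $R_F$. Writing $R = A[g;\sigma]/(g^{p^m} - u)$ where $u := g^{p^m} \in Z(A)^\sigma$, and likewise for $R_F$, we analyse the factorisation of $g^{p^m} - u$ over the commutative subring $Z(F)^\sigma[g]$: taking $w \in Z(F)^\sigma$ and $0 \leq j \leq m$ maximal so that $u = w^{p^{m-j}}$ with $w$ not itself a further $p$-th power in $Z(F)^\sigma$, the characteristic-$p$ Freshman's dream yields
\[
g^{p^m} - u \;=\; \bigl(g^{p^j} - w\bigr)^{p^{m-j}}.
\]
Setting $h := g^{p^j} - w$, the element $h$ is nilpotent of index $p^{m-j}$ in $R_F$, and one expects the nilradical of $R_F$ to be precisely the principal ideal $(h)$, with quotient $R_F / (h) \cong F[g;\sigma]/(g^{p^j} - w)$.

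The main obstacle will be showing this quotient is in fact prime; granted that, one pulls the unique minimal prime of $R_F$ back through the Goldie correspondence to obtain a unique minimal prime $P$ of $R$ with $P \cap A = 0$, and by Part 1 every minimal prime has this property, so this $P$ is actually the unique minimal prime. The hard part is the primeness of $F[g;\sigma]/(g^{p^j}-w)$: this depends essentially on the irreducibility of $X^{p^j} - w$ over $Z(F)^\sigma$ (guaranteed by the maximality of $j$, since $w$ is not a $p$-th power in characteristic $p$), combined with a Passman-style analysis of the action of $\sigma$ on $F$ — one splits $\sigma$ into its inner and outer parts, observes that the outer part prevents any nontrivial central idempotent from appearing in the crossed product, and uses the irreducibility to rule out any further splitting coming from the inner part. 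This argument is where the $p$-group nature of $\mathbb{Z}/p^m\mathbb{Z}$ and the characteristic-$p$ hypothesis are essential, since they preclude the Galois-type idempotent decompositions that occur when $|G|$ is invertible.
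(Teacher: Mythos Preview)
Your argument for $P\cap A=\{0\}$ is correct and is exactly the paper's: the prime radical of the Noetherian ring $R$ is nilpotent, and a prime ring has no nonzero nilpotent ideals.

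For the uniqueness of the minimal prime, however, the paper does not argue at all: it simply invokes \cite[Proposition~16.4]{passmanICP}, which gives directly that a crossed product $A*G$ of a prime ring $A$ by a finite $p$-group $G$ in characteristic $p$ has a unique minimal prime. No passage to Goldie quotients, no factorisation of $g^{p^m}-u$, no case analysis is needed.

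Your alternative route has a real gap. You assert $u:=g^{p^m}\in Z(A)^\sigma$, but the crossed-product relations only give $u\,a=\sigma^{p^m}(a)\,u$; nothing in the hypotheses of the lemma forces $\sigma^{p^m}=\id$ on $A$ (equivalently, $u$ central). Without centrality the characteristic-$p$ factorisation $g^{p^m}-u=(g^{p^j}-w)^{p^{m-j}}$ over the commutative ring $Z(F)^\sigma[g]$ is not available, and the subsequent description of the nilradical of $R_F$ as the principal ideal $(h)$ breaks down. Even granting centrality, you yourself flag the primeness of $F[g;\sigma]/(g^{p^j}-w)$ as ``the hard part'' and give only a verbal sketch (``Passman-style analysis'', ``one expects''); at that point you are essentially re-deriving the cited Passman result by hand without finishing it. The honest repair is just to cite the reference.
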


\begin{proof}
$R$ has a unique minimal prime ideal $P$ which is nilpotent by \cite[Proposition 16.4]{passmanICP}. The ideal $P\cap A$ is a nilpotent ideal of a prime ring, and so must be zero.\end{proof}

\begin{rk}\label{rk: can take m = 1}
Assume that $R$ is not prime, or equivalently that $P$ is nonzero. If we assume further that $A$ satisfies (\ref{P1}), then by Proposition \ref{propn: there exists a central minimal element whose pth power is zero}, we can find a nonzero central element $\mathbf{a}\in P$ which is minimal for $P$, which satisfies $a_0\neq 0$ and $\mathbf{a}^p = 0$, and which takes the form
$$\mathbf{a} = a_0 + a_{p^{m-1}} g^{p^{m-1}} + \dots + a_{(p-1)p^{m-1}} g^{(p-1)p^{m-1}}.$$
\end{rk}

We now begin to reintroduce some of our setup.

\begin{thm}\label{thm: conditions for primality}

Suppose that $A$ is a prime ring with $\chr(A)=p$ and an automorphism $\sigma$. Suppose further that $R = \bigoplus_{i=0}^{p^m-1} Ag^i$ is a crossed product of $A$ with $\mathbb{Z}/p^m\mathbb{Z}$ such that $ga = \sigma(a)g$ for all $a\in A$, and that the following conditions are satisfied:

\begin{itemize}
\item $(A,\sigma)$ satisfies (\ref{P1}).

\item $A=Q^+[[X]]$ or $Q^+[[X]][X^{-1}]$ for some standard filtered simple artinian ring $Q$, and $X\in Z(A)^{\sigma}$.

\item $\sigma$ restricts to an automorphism of $Q$ and $\sigma^{p^m}$ is an inner automorphism of $Q$, say there exists $\gamma\in Q^\times$ such that $\sigma(\gamma) = \gamma$ and $\sigma^{p^m}(q) = \gamma^{-1}q\gamma$ for all $q\in Q$.

\item $g^{p^m}\in A$ is a polynomial of degree 1 in $X$.
\end{itemize}

Then $R$ is a prime ring.

\end{thm}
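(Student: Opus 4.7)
The plan is to assume for contradiction that $R$ is not prime and derive a contradiction from the existence of a highly constrained minimal central nilpotent in the unique minimal prime. First, Lemma \ref{lem: prime nilradical} supplies a unique, nonzero, minimal prime $P$ with $P\cap A=\{0\}$; Proposition \ref{propn: there exists a central minimal element whose pth power is zero} (invoking \eqref{P1}) then yields a nonzero central element $\mathbf{a}\in P$ with $\mathbf{a}^p = 0$, $a_0 \neq 0$, and support in $\{0,p^{m-1},\ldots,(p-1)p^{m-1}\}$. Writing $h := g^{p^{m-1}}$ and $c_j := a_{jp^{m-1}}$, we have $\mathbf{a} = \sum_{j=0}^{p-1} c_j h^j$, and Corollary \ref{cor: minimal and central} tells us each $c_j$ is $\sigma$-invariant, the $c_j$ pairwise commute, and $qc_j = c_j\sigma^{jp^{m-1}}(q)$ for all $q \in A$. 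Since $hc_j = c_j h$, the Frobenius expansion gives $0 = \mathbf{a}^p = \sum_j c_j^p k^j$, where $k := h^p = g^{p^m} = \alpha_0 + \alpha_1 X$ with $\alpha_1 \neq 0$.

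Next, I would put each $c_j$ into a normal form, rewriting the equation $\sum c_j^p k^j = 0$ entirely inside the commutative ring $Z(A) = Z(Q)[[X]]$. Expanding $c_j = \sum_n c_{j,n}X^n$, the relation $qc_{j,n} = c_{j,n}\sigma^{jp^{m-1}}(q)$ forces every nonzero $c_{j,n}$ to be a normal element of $Q$ implementing $\sigma^{jp^{m-1}}$, hence a unit by simplicity of $Q$; any two such differ by a scalar in $Z(Q)^\times$, and letting $N_j$ denote the $X$-order of $c_j$, this produces a factorisation $c_j = \xi_j c_{j,N_j}$ with $\xi_j\in Z(A)$ and $c_{j,N_j}\in Q^\times$. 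Since $c_{j,N_j}^p$ implements $\sigma^{jp^m} = \Inn(\gamma^{-j})|_Q$, I would obtain $c_{j,N_j}^p = \zeta_j\gamma^j$ for some $\zeta_j \in Z(Q)^\times$. The same normalising-element argument applied to $k$, combined with $\sigma(\gamma) = \gamma$ and centrality of $X$, shows that $\gamma$ commutes with $k$ and that $\gamma k = \beta_0 + \beta_1 X =: k' \in Z(A)$, with $\beta_1 \in Z(Q)^\times$ because $\alpha_1 \neq 0$.

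Substituting these normal forms into $\mathbf{a}^p = 0$ gives
$$0 = \sum_{j=0}^{p-1}\xi_j^p \zeta_j (k')^j$$
in $Z(A)$. Writing $\xi_j^p = \sum_n \xi_{j,n}^p X^{pn}$ (Frobenius in the field $Z(Q)$) and $(k')^j = \sum_i\binom{j}{i}\beta_0^{j-i}\beta_1^i X^i$, the coefficient of $X^\ell$ uniquely determines $i = \ell \bmod p$ and $n = \lfloor \ell/p\rfloor$; cancelling the unit $\beta_1^i$, for each such $(i,n)$ I would obtain
$$\sum_{j=i}^{p-1}\binom{j}{i}\zeta_j\xi_{j,n}^p\beta_0^{j-i} = 0.$$
For fixed $n$, this is a triangular system in $\xi_{p-1,n}^p,\ldots,\xi_{0,n}^p$ with nonzero diagonal entries $\zeta_j$, so solving iteratively from $i = p-1$ downward forces $\xi_{j,n} = 0$ for all $j$ and $n$. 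Hence every $c_j$ vanishes, contradicting $c_0 \neq 0$.

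The hardest part will be the normal-form bookkeeping in the second paragraph: verifying that nonzero coefficients of the $c_j$ are units of $Q$, and that $c_{j,N_j}^p = \zeta_j\gamma^j$ and $\gamma k \in Z(A)$ hold with $\zeta_j$ and $\beta_1$ nonzero. This requires careful use of simplicity of $Q$, centrality of $X$ in $A$, and $\sigma$-invariance of $\gamma$, but is ultimately routine. Once the equation is placed inside $Z(A)$, the triangular coefficient argument is formal, and the Laurent case $A = Q^+[[X]][X^{-1}]$ is handled identically with $n$ ranging over integers bounded below.
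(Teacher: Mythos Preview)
Your argument is correct, and it takes a genuinely different route from the paper's.

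Both proofs share the opening: assuming $R$ is not prime, one extracts from the nilradical $P$ a minimal central element $\mathbf{a}=\sum_{j=0}^{p-1} c_j h^j$ with $c_0\neq 0$ and $\mathbf{a}^p=0$. From this point the two arguments diverge.

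The paper first performs a further reduction: writing $\mathbf{a}=\sum_t \mathbf{z}_t X^t$ with $\mathbf{z}_t\in \sum_j Qh^j$, it shows each $\mathbf{z}_t$ is again central with $\mathbf{z}_t^p=0$, so one may replace $\mathbf{a}$ by some $\mathbf{z}_t$ and assume all $c_j\in Q$. From the nonvanishing of some $c_j$ with $j>0$ it deduces that $\sigma^{p^{m-1}}$ is already inner on $Q$, changes basis to a twisted group ring $K(X)^t[\mathbb{Z}/p\mathbb{Z}]$ over the field $K=Z(Q)^\sigma$, and then invokes an external semisimplicity criterion (Aljadeff--Robinson) together with the observation that $\ell^p=\gamma^p g^{p^m}$, being of degree~$1$ in $X$, cannot be a $p$th power in $K(X)$.

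You instead keep the full power-series coefficients, use only the given $\gamma$ for which $\sigma^{p^m}$ is inner, and push the identity $\sum_j c_j^p k^j=0$ entirely into the commutative ring $Z(A)=Z(Q)^+[[X]]$ via the normalisations $c_j=\xi_j c_{j,N_j}$, $c_{j,N_j}^p=\zeta_j\gamma^j$, and $\gamma k=\beta_0+\beta_1 X\in Z(A)$. Comparing $X$-coefficients then yields, for each residue class modulo $p$, a finite triangular linear system over the field $Z(Q)$ with invertible diagonal (the $\zeta_j$), forcing all $c_j=0$ and contradicting $c_0\neq 0$. (A small bookkeeping point: when $c_j=0$ there is no $N_j$ or $\zeta_j$, but then that summand is absent and the triangular system simply runs over $\{j:c_j\neq 0\}$, which still contains $0$.)

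Your approach is more elementary and self-contained: it avoids the coefficient-slice reduction, the upgrade to $\sigma^{p^{m-1}}$ inner, and the appeal to the Aljadeff--Robinson result, replacing all of these by a direct coefficient computation in $Z(A)$. The paper's route, by contrast, is more structural, locating the obstruction inside the well-understood representation theory of $K(X)^t[\mathbb{Z}/p\mathbb{Z}]$.
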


\begin{proof} We will assume for contradiction that $R$ is not prime, so the unique minimal prime ideal $P$ of $R$ given by Lemma \ref{lem: prime nilradical} is nonzero. So by Proposition \ref{propn: there exists a central minimal element whose pth power is zero}, we can find a nonzero central element $\mathbf{a} = a_0+a_{p^{m-1}}g^{p^{m-1}}+\dots+a_{p^{m-1}(p-1)}g^{p^{m-1}(p-1)} \in P$ which is minimal for $P$, such that $a_0\neq 0$ and $\mathbf{a}^p = 0$. To simplify notation, throughout the rest of this proof, we will write $h:=g^{p^{m-1}}$, and $b_i:=a_{ip^{m-1}}$, so that $\mathbf{a} = b_0 + b_1 h + \dots + b_{p-1} h^{p-1}$.
Possibly after multiplying $\mathbf{a}$ by a high power of $X$ (which does not change minimality), we can write each $b_i$ as an element of $Q^+[[X]]$: $b_i = b_{i,0} + b_{i,1} X + b_{i,2} X^2 + \dots$ for some elements $b_{i,k}\in Q$.

Let $0\leq i,j \leq p-1$ and $k, \ell \geq 0$ be arbitrary. Using the fact that $\mathbf{a}$ is central, by Corollary \ref{cor: minimal and central}, each $b_i$ is $\sigma$-invariant, and so conjugating the above expression for $b_i$ by $g$ shows that 
\begin{align}\label{eqn: b_ik are sigma-invariant}
\text{$b_{i,k}$ is $\sigma$-invariant for all $i,k$.}
\end{align}
Also, Proposition \ref{propn: minimal elements and centrality properties}(ii) implies that $qb_i = qa_{ip^{m-1}}=a_{ip^{m-1}}\sigma^{ip^{m-1}}(q) = b_i\sigma^{ip^{m-1}}(q)$
for all $q\in A$. So in particular it follows that $b_{j,\ell} b_i = b_i b_{j,\ell}$. Comparing the coefficients of $X^k$ in the expressions $qb_i=b_i\sigma^{ip^{m-1}}(q)$ and $b_{j,\ell} b_i = b_i b_{j,\ell}$, we also have that
\begin{align}\label{eqn: the b_ij commute pairwise}
b_{j,\ell} b_{i,k} = b_{i,k} b_{j,\ell} \text{ for all } i, j, k, \ell,
\end{align}
and
\begin{align}\label{eqn: the b_ij act like powers of sigma}
qb_{i,k}=b_{i,k}\sigma^{ip^{m-1}}(q) \text{ for all } q\in A \text{ and all } i, k.
\end{align}
Note that \eqref{eqn: the b_ij act like powers of sigma} implies that the $b_{i,k}$ are \emph{normal} elements of $Q$, and hence each one must be either zero or a unit.

We also know that $\mathbf{a}\notin A$: indeed, as $A$ is a prime ring, it contains no central nilpotent elements other than $0$. Hence there exists $i>0$ such that $b_i\neq 0$, and hence $b_{i,k}\neq 0$ for some $k\geq 0$. This $b_{i,k}$ is a unit, and now \eqref{eqn: the b_ij act like powers of sigma} implies that $\sigma^{ip^{m-1}}$ is conjugation by $b_{i,k}$. It follows that $\sigma^{ip^{m-1}}$ is an inner automorphism of $Q$. But since $i<p$ and $\sigma^{p^m}$ is an inner automorphism of $Q$, it follows that $\sigma^{p^{m-1}}$ must also be inner as an automorphism of $Q$.

Now, for all $t\geq 0$, set $\mathbf{z}_t = b_{0,t} + b_{1,t} h + \dots + b_{p-1,t} h^{p-1}$, so that $\mathbf{a} = \mathbf{z}_0 + \mathbf{z}_1 X + \mathbf{z}_2 X^2 + \dots$. Then \eqref{eqn: b_ik are sigma-invariant} implies that $\mathbf{z}_t$ commutes with $g$ for all $t$, and \eqref{eqn: the b_ij act like powers of sigma} shows that $\mathbf{z}_t$ commutes with each $q\in A$. Therefore, $\mathbf{z}_t\in Z(R)$.

Since each $\mathbf{z}_t$ commutes with $X$, we also have that $\mathbf{a}^p = \mathbf{z}_0^p + \mathbf{z}_1^p X^p + \mathbf{z}_2^p X^{2p} + \dots = 0$.
But by \eqref{eqn: b_ik are sigma-invariant} and \eqref{eqn: the b_ij commute pairwise}, we get that $\mathbf{z}_t^p=b_{0,t}^p + b_{1,t}^p h^p + \dots + b_{p-1,t}^p h^{p(p-1)}$, so since $h^p=g^{p^m}$ has degree 1 as a polynomial in $X$, we can write $\mathbf{z}_t^p=c_{0,t} + c_{1,t} X + \dots + c_{p-1,t} X^{p-1}$, and hence
\begin{align*}
\mathbf{a}^p &= c_{0,0} + c_{1,0} X + \dots + c_{p-1,0} X^{p-1}&\\
&\phantom{mm} + c_{0,1} X^p + c_{1,1} X^{p+1} + \dots + c_{p-1,1} X^{2p-1}\\
&\phantom{mm} + c_{0,2} X^{2p} + + c_{1,2} X^{2p+1} + \dots + c_{p-1,2} X^{3p-1} + \dots = 0.
\end{align*}
It clearly follows that $\mathbf{z}_t^p = 0$ for all $t$.

In particular, as $\mathbf{a}\neq 0$, there is some $\mathbf{z} = \mathbf{z}_t$ such that $\mathbf{z}\neq 0$ but $\mathbf{z}^p = 0$. But $P$ is the nilradical of $R$, so since $\mathbf{z}$ is central, we must have that $\mathbf{z}\in P$. But $\varnothing \neq \supp(\mathbf{z}) \subseteq \supp(\mathbf{a})$, so by minimality of $\mathbf{a}$, we must have that $\mathbf{z}$ is minimal for $P$, and $z_0\neq 0$. 

Therefore, replacing $\mathbf{a}$ by $\mathbf{z}$, we may assume that each $b_i=a_{ip^{m-1}}$ lies in $Q$.

Now, since $\sigma^{p^{m-1}}$ is inner as an automorphism of $Q$, choose $\gamma\in Q^\times$ such that $\sigma^{p^{m-1}}(q) = \gamma^{-1} q\gamma$ for all $q\in Q$, and let $\ell:=\gamma h$. Then $\ell$ centralises $A$, so by changing basis, we see that the subring $R'=A\ast\langle h\rangle$ of $R$ is a twisted group ring $A^t[\mathbb{Z}/p\mathbb{Z}] = \bigoplus_{i=0}^{p-1} A\ell^i$.

In particular, if we write $\mathbf{a} = b_0' + b_1' \ell + \dots + b'_{p-1} \ell^{p-1}$, we have $b'_i = b_i \gamma^{-i}$ for all $i$. Defined this way, it follows that $b'_i\in K := Z(Q)^\sigma$, a field. Since $\ell^p \in K[X]$, the set $\bigoplus_{i=0}^{p-1} K[X]\ell^i$ is a subring of $A^t[\mathbb{Z}/p\mathbb{Z}]$ containing $\mathbf{a}$, and is in fact also a twisted group ring, so we will denote it $K[X]^t[\mathbb{Z}/p\mathbb{Z}]$.

On the other hand, let $K(X)$ be the field of fractions of $K[X]$, so that $K[X]^\times \subseteq K(X)^\times$: then \cite[Lemma 1.1]{passmanICP} implies that $K[X]^t[\mathbb{Z}/p\mathbb{Z}]$ extends uniquely to a twisted group ring $K(X)^t[\mathbb{Z}/p\mathbb{Z}]$ (compare \cite[proof of Proposition 12.4(i)]{passmanICP}). Now $\mathbf{a}$ is a central nilpotent element of $K(X)^t[\mathbb{Z}/p\mathbb{Z}]$, and so must lie in its Jacobson radical; and as $\mathbf{a} \neq 0$, we must have that $K(X)^t[\mathbb{Z}/p\mathbb{Z}]$ is not semisimple.

But this contradicts \cite[Proposition 1]{AljRob94}, as $\ell^p=\gamma^ph^p=\gamma^pg^{p^m}$ is a polynomial of degree $1$ in $X$, and so cannot be a $p$th power in $K(X)$.
\end{proof}

\subsection{Proof of Theorem \ref{letterthm: SFOH implies prime}}\label{subsec: minimal elements and Iwasawa-type rings}

\textbf{We now reassume our setup} as given at the start of \S \ref{section: crossed product decomposition}. Fix some $N\in\mathbb{N}$ such that $(\sigma^{p^N}, \delta^{p^N})$ is compatible with $u$, and note as a result that, for all $m\geq N$, we have that $(\sigma^{p^m},\delta^{p^m})$ is compatible with $u$ and hence $Q^+[[x^{p^m};\sigma^{p^m},\delta^{p^m}]]$ is prime by Theorem \ref{thm: simple artinian with compatible skew derivation}.

\begin{lem}\label{lem: t in Q^times}
For any $m\geq N$: if $u(t^{p^m})=0$, then $t\in Q^{\times}$ and $t^{p^m}\in\O^{\times}$.
\end{lem}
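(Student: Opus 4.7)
The plan is to tackle $t^{p^m} \in \O^\times$ first, after which $t \in Q^\times$ falls out essentially for free. Since $u(t^{p^m}) = 0$, the element $t^{p^m}$ lies in $\O$ and has a nonzero leading term $\gr(t^{p^m})$ in the degree-$0$ component $C := \O / J(\O)$ of $\gr_u(Q)$. The main claim, on which everything hinges, is that $\gr(t^{p^m})$ actually lies in $Z(C)$.

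To prove this, fix any $q \in \O$. Since $\sigma(t) = t$ and $\delta(s) = ts - \sigma(s)t$, Property \ref{props: Sigma, Delta, crossed product}.2 gives $\delta^{p^m}(q) = t^{p^m} q - \sigma^{p^m}(q) t^{p^m}$, and rearranging yields
\[
t^{p^m} q - q t^{p^m} \;=\; \delta^{p^m}(q) \;+\; \bigl(\sigma^{p^m}(q) - q\bigr)\, t^{p^m}.
\]
As $m \geq N$, the pair $(\sigma^{p^m}, \delta^{p^m})$ is compatible with $u$, so both $\delta^{p^m}(q)$ and $\sigma^{p^m}(q) - q$ lie in $J(\O)$; together with $t^{p^m} \in \O$, this forces the right-hand side into $J(\O)$. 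Hence $[t^{p^m}, q] \in J(\O)$ for every $q \in \O$, which is exactly the assertion that $\gr(t^{p^m})$ is central in $C$.

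By Proposition \ref{propn: gr(Q)}, $C$ is a finite-dimensional central simple algebra, so $Z(C)$ is a field; the nonzero element $\gr(t^{p^m}) \in Z(C)$ is therefore a unit in $C$. A routine completeness argument then lifts an inverse into $\O$: any $s \in \O$ with $\gr(s)\gr(t^{p^m}) = 1$ satisfies $s t^{p^m} \in 1 + J(\O) \subseteq \O^\times$, and symmetrically on the other side, yielding $t^{p^m} \in \O^\times$. This element is then a unit of $Q$, in particular regular; were $t$ a left or right zero divisor in $Q$, so too would $t^{p^m}$ be, a contradiction. Since $Q$ is simple artinian, regular elements are units, so $t \in Q^\times$. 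The only substantive step is the centrality of $\gr(t^{p^m})$, which is where both compatibility hypotheses and the inner-derivation formula for $\delta^{p^m}$ genuinely enter; the remaining steps are standard facts about complete filtered rings and simple artinian rings.
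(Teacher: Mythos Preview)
Your proof is correct and follows essentially the same approach as the paper: both establish that $t^{p^m}$ is central modulo $J(\O)$ via the identity $t^{p^m}q - qt^{p^m} = \delta^{p^m}(q) + (\sigma^{p^m}(q)-q)t^{p^m}$ and compatibility, use that $Z(\O/J(\O))$ is a field to get invertibility modulo $J(\O)$, and then lift to $\O^\times$ by completeness. The only cosmetic difference is that you invoke Proposition~\ref{propn: gr(Q)} explicitly and phrase the final step as ``regular in a simple artinian ring implies unit'', whereas the paper appeals directly to the standard filtration.
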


\begin{proof}

For any $q\in\O$, $\delta^{p^m}(q)=t^{p^m}q-\sigma^{p^m}(q)t^{p^m}\in J(\O)$ and $\sigma^{p^m}(q)-q\in J(\O)$ by compatibility. Therefore $$t^{p^m}q-qt^{p^m}=t^{p^m}q-\sigma^{p^m}(q)t^{p^m}+(\sigma^{p^m}(q)-1)t^{p^m}\in J(\O),$$ so we conclude that $t^{p^m}+J(\O)$ is central in $\O/J(\O)$, and it is nonzero since $u(t^{p^m})=0$. Since $u$ is a standard filtration, $Z(\O/J(\O))$ is a field, so there exists $v\in\O$ such that $t^{p^m}v\equiv 1$ (mod $J(\O))$. 

Writing $t^{p^m}v=1+y$ with $y\in J(\O)$, it follows from completeness of $u$ that $t^{p^m}v$ is a unit in $Q$, with inverse $1-y+y^2-y^3+\dots\in \O$, and hence $t^{p^m}$ is a unit in $\O$, from which we deduce that $t$ is a unit in $Q$.
\end{proof}

Ultimately, we aim to show that $Q^+[[x; \sigma, \delta]]$ is prime, and so in light of Corollary \ref{cor: when sigma does not fix Z part 2}, we may assume that $\sigma$ acts trivially on $Z$.

\begin{lem}\label{lem: sigma(a)=a when action on centre is trivial}
Assume that $\sigma$ acts trivially on $Z$, and fix any $k\in\mathbb{N}$. If there exists $a\in Q^{\times}$ such that $\sigma^{p^k}(q)=aqa^{-1}$ for all $q\in Q$, then $\sigma(a)=a$.
\end{lem}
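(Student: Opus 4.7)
The plan is to derive a two-term relation between $a$ and $\sigma(a)$ by comparing the conjugation formulas for $\sigma^{p^k}$ and $\sigma^{p^k+1}$, and then use the hypothesis $\sigma|_Z = \id$ to pin it down.

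First, I would apply $\sigma$ to both sides of $\sigma^{p^k}(q) = aqa^{-1}$, which yields $\sigma^{p^k+1}(q) = \sigma(a)\sigma(q)\sigma(a)^{-1}$. On the other hand, using $\sigma \circ \sigma^{p^k} = \sigma^{p^k} \circ \sigma$, we also have $\sigma^{p^k+1}(q) = a\sigma(q)a^{-1}$. Since $\sigma$ is surjective on $Q$, equating these gives $\sigma(a) y \sigma(a)^{-1} = a y a^{-1}$ for every $y \in Q$, so $z := \sigma(a)^{-1} a$ centralises all of $Q$ and therefore lies in $Z$.

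Next, rewriting the definition of $z$ as $a = \sigma(a) z$, I would iterate: since $\sigma(z) = z$ by the hypothesis that $\sigma$ fixes $Z$ pointwise, applying $\sigma$ repeatedly gives $a = \sigma^n(a) z^n$ for every $n \geq 0$. Taking $n = p^k$ and using the fact that $\sigma^{p^k}(a) = a \cdot a \cdot a^{-1} = a$ (since $\sigma^{p^k}$ is conjugation by $a$), we obtain $a = a z^{p^k}$, hence $z^{p^k} = 1$.

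Finally, I would conclude by remarking that $Q = M_s(F)$ is simple artinian, so $Z = Z(F)$ is a field, and since $\mathrm{char}(Q) = p$, the equation $z^{p^k} = 1$ becomes $(z-1)^{p^k} = 0$ in the field $Z$, forcing $z = 1$, i.e.\ $\sigma(a) = a$. The only conceptual step is the first one (identifying $\sigma(a)^{-1}a$ as central); the rest is a short formal calculation, and the hypothesis $\sigma|_Z = \id$ is used critically exactly once, to iterate the relation $a = \sigma(a)z$.
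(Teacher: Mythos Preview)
Your proof is correct and follows essentially the same approach as the paper: show that the ratio of $\sigma(a)$ and $a$ is central, then that it is a $p^k$-th root of unity using $\sigma^{p^k}(a)=a$, and conclude in characteristic $p$. The only cosmetic differences are that the paper works with $\sigma(a)a^{-1}$ instead of $\sigma(a)^{-1}a$, and obtains the root-of-unity relation via the telescoping product $\prod_{i=0}^{p^k-1}\sigma^{i+1}(a)\sigma^i(a)^{-1}=\sigma^{p^k}(a)a^{-1}$ rather than your iteration of $a=\sigma(a)z$; these are equivalent formulations of the same computation.
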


\begin{proof}
First note that for any $q\in Q$, $$\sigma(a)a^{-1}q=\sigma(a)\sigma^{-p^k}(q)a^{-1}=\sigma(a\sigma^{-p^k-1}(q))a^{-1}=\sigma(a\sigma^{-p^k-1}(q)a^{-1})\sigma(a)a^{-1}=q\sigma(a)a^{-1}$$ i.e.\ $\sigma(a)a^{-1}\in Z$.

Therefore, since we are assuming $\sigma|_{Z}=\id$, it follows that $\sigma^i(\sigma(a)a^{-1})=\sigma(a)a^{-1}$ for all $i$, and we see that $$(\sigma(a)a^{-1})^{p^k}=\underset{0\leq i\leq p^k-1}{\prod}\sigma^i(\sigma(a)a^{-1})=\underset{0\leq i\leq p^k-1}{\prod}\sigma^{i+1}(a)\sigma^i(a)^{-1} = \sigma^{p^k}(a) a^{-1},$$ and since $\sigma^{p^k}(a)=a$, it follows that this product is equal to 1.

So since $Z$ is a field of characteristic $p$, and $\sigma(a)a^{-1}\in Z$ is a $p^k$'th root of unity, we must have that $\sigma(a)a^{-1}=1$, and hence $\sigma(a)=a$.
\end{proof}

As remarked in our setup at the beginning of \S \ref{section: crossed product decomposition}, we may localise the Noetherian rings $Q^+[[x;\sigma,\delta]]$ and $Q^+[[x^{p^m};\sigma^{p^m},\delta^{p^m}]]$ (for any $m\in\mathbb{N}$) to form the Noetherian rings 
$$R:=Q^+[[x;\sigma,\delta]]_{(x-t)}, \qquad A:=Q^+[[x^{p^m};\sigma^{p^m},\delta^{p^m}]]_{(x^{p^m}-t^{p^m})}$$ respectively. Extending $\sigma$ to $A$ using Proposition \ref{propn: properties of extended skew ders} and setting $g = x - t$, we get a crossed product $R=A\ast\mathbb{Z}/p^m\mathbb{Z}=\bigoplus_{i=0}^{p^m-1} Ag^i$, where the action of $g$ on $A$ is given by $ga = \sigma(a)g$ for all $a\in A$. We will also fix some $m \geq N$, so that $A$ is prime by Theorem \ref{thm: simple artinian with compatible skew derivation}, and we are in the situation of \S \ref{subsec: nilradical}.

\begin{propn}\label{propn: sub-skew power series ring satisfies P1}
$(A,\sigma)$ satisfies \eqref{P1}.
\end{propn}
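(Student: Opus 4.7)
The plan is to extract a nonzero central $\sigma$-invariant element from $I$ by first applying Proposition \ref{propn: central polynomial} to the compatible skew derivation $(\sigma^{p^m}, \delta^{p^m})$, and then performing a $\sigma$-averaging to cure the defects of the resulting element. Since $m \geq N$, $(\sigma^{p^m}, \delta^{p^m})$ is compatible with $u$, and a direct computation using $\sigma(t)=t$ shows that $X := x^{p^m} - t^{p^m}$ satisfies $Xq = \sigma^{p^m}(q)X$ for all $q\in Q$, while $\sigma(X) = X$. Applying Proposition \ref{propn: central polynomial} to the ideal $I \lhd A$ then yields a monic polynomial $f(Y) = z_0 + z_1 Y + \dots + z_{\ell-1} Y^{\ell-1} + Y^\ell \in Z[Y]$ with $z_0 \neq 0$ and $f(b^{-1}X^d) \in I$, where $d$ is the order of $[\sigma^{p^m}] \in \Out(Q)$ and $b \in Q^\times$ is any element satisfying $\sigma^{p^m d}(q) = bqb^{-1}$. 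Although $b^{-1}X^d$ commutes with $Q$, it need not commute with $x^{p^m}$, nor be $\sigma$-invariant.

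To cure both defects simultaneously, I analyse $\sigma(b)$: applying $\sigma$ to the defining relation of $b$ (and using that $\sigma$ commutes with $\sigma^{p^m d}$) shows that $\sigma(b)b^{-1}$ lies in $Z$, so $\sigma(b) = c_0 b$ for some $c_0 \in Z$. Using the standing assumption $\sigma|_Z = \id$ and iterating gives $\sigma^k(b) = c_0^k b$, and setting $k = p^m d$ yields $c_0^{p^m d} = 1$, hence $c_0^d = 1$ (since Frobenius is injective on $Z$). Let $d'$ denote the order of $c_0$ in $Z^\times$; then $d' \mid d$ and $\gcd(d',p) = 1$. The element
\[ Y_0 := b^{-d'} X^{dd'} = (b^{-1}X^d)^{d'} \in A \]
then has both desired properties: centrality follows from direct calculation, using $(\sigma^{p^m d})^{d'} = \Inn(b^{d'})$ and $\sigma^{p^m}(b^{d'}) = c_0^{p^m d'} b^{d'} = b^{d'}$, to see that $Y_0$ commutes with $Q$ and with $X$; and $\sigma$-invariance follows from $\sigma(b^{-d'}) = c_0^{-d'}b^{-d'} = b^{-d'}$ together with $\sigma(X) = X$.

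Finally, since $I$ is $\sigma$-invariant, each $\sigma^k(f(b^{-1}X^d)) \in I$. Using $\sigma(b^{-1}X^d) = c_0^{-1}b^{-1}X^d$ gives the explicit formula $\sigma^k(f(b^{-1}X^d)) = \sum_{i=0}^{\ell} a_i c_0^{-ki}(b^{-1}X^d)^i$, where $a_i$ denotes the $i$th coefficient of $f$. Summing over $k = 0, \dots, d'-1$ and applying the geometric sum identity (which evaluates to $d'$ when $d' \mid i$ and to $0$ otherwise) yields
\[ \sum_{k=0}^{d'-1} \sigma^k\bigl(f(b^{-1}X^d)\bigr) = d' \sum_{\substack{0 \leq i \leq \ell \\ d' \mid i}} a_i\, Y_0^{i/d'} \in I, \]
a nonzero polynomial in $Y_0$ over $Z$ (its constant term is $d' z_0 \neq 0$, since $\gcd(d',p) = 1$), which is therefore a nonzero central $\sigma$-invariant element of $A$ lying in $I$, establishing \eqref{P1}. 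The main subtlety is the identification of the right power $d'$: Proposition \ref{propn: central polynomial} alone does not suffice since $b^{-1}X^d$ is centrally defective, and the coprimality of $d'$ with $p$, guaranteed by the characteristic, is exactly what makes the averaging argument yield a nonzero element.
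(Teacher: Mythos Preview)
Your approach is genuinely different from the paper's, and it imports two hypotheses that are not present in the proposition's setup.

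First, you invoke Proposition~\ref{propn: central polynomial}, but that proposition (see the hypotheses at the start of \S\ref{subsec: using compatibility}) requires $[\sigma^{p^m}]\in\Out(Q)$ to have finite order. The setup of \S\ref{subsec: minimal elements and Iwasawa-type rings}, where Proposition~\ref{propn: sub-skew power series ring satisfies P1} lives, only reassumes the general setup of \S\ref{section: crossed product decomposition}, which does \emph{not} include this finite-order assumption. Second, you write ``Using the standing assumption $\sigma|_Z = \id$'', but this is not a standing assumption here either; it is added separately in Lemma~\ref{lem: sigma(a)=a when action on centre is trivial} and Theorem~\ref{thm: case where a power of sigma is inner and untwistable}. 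Without $\sigma|_Z=\id$, the iteration $\sigma^k(b)=c_0^k b$ fails and the rest of your $c_0$-analysis breaks down. (Both hypotheses do hold in the only place the proposition is used, Theorem~\ref{thm: case where a power of sigma is inner and untwistable}, so your argument would suffice for the paper's applications, but not for the proposition as stated.)

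By contrast, the paper's proof is a direct minimality argument, essentially reproving the first half of Proposition~\ref{propn: central polynomial} in this setting rather than invoking it: take a nonzero polynomial $z=a_0+a_1Y+\dots+a_sY^s\in I$ of minimal degree in $Y=x^{p^m}-t^{p^m}$, normalise so that $a_0=1$ (using simplicity of $Q$), and observe that $\sigma(z)-z$ and $qz-zq$ are polynomials in $I$ of strictly smaller degree, hence zero. This yields a $\sigma$-invariant central element directly, with no finite-order or $\sigma|_Z$ hypotheses and no averaging step. Your route, while cleverly constructed, is both more restrictive and substantially longer than needed.
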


\begin{proof}

If $\O$ is the maximal order associated to $(Q,u)$, then using Lemma \ref{lem: restricted SPSR over Q} and compatibility of $(\sigma^{p^m},\delta^{p^m})$, we see that $A_0:=Q^+[[x^{p^m};\sigma^{p^m},\delta^{p^m}]]=Q\otimes_{\O}\O[[x^{p^m};\sigma^{p^m},\delta^{p^m}]]$. Applying \cite[Theorem C]{jones-woods-2} we see that nonzero ideals of $\O[[x^{p^m};\sigma^{p^m},\delta^{p^m}]]$ must contain nonzero polynomial elements, and since any ideal of $A$ has nonzero intersection with $A_0$, and therefore nonzero intersection with $\O[[x^{p^m};\sigma^{p^m},\delta^{p^m}]]$, it follows that all two-sided ideals of $A$ contain nonzero polynomial elements.

Let $I$ be a nonzero, $\sigma$-invariant, two-sided ideal of $A$, let $Y=x^{p^m}-t^{p^m}$, and choose a nonzero polynomial element $z:=a_0+a_1Y+\dots+a_sY^s\in I\cap A_0$ with $a_i\in Q$ of minimal degree (so that $a_0\neq 0 \neq a_s$). Observing that $J:=\{q\in Q:q+b_1Y+\dots+b_sY^s\}$ is a two-sided ideal in the simple ring $Q$ containing $a_0\neq 0$, we conclude that $J=Q$. In particular, we may assume that $a_0=1$.

To prove that $z$ is $\sigma$-invariant and central, we use a similar argument as in Proposition \ref{propn: minimal elements and centrality properties}. Specifically, since $Y$ is $\sigma$-invariant, then for any element $q\in Q$, we can evaluate $$\sigma(z)-z=(\sigma(a_1)-a_1)Y+\dots+(\sigma(a_s)-a_s)Y^s$$ and $$qz-zq=(qa_1-a_1\sigma^{p^m(q)})Y+\dots+(qa_s-a_s\sigma^{sp^m}(q))Y^s$$

Since $I$ is a $\sigma$-invariant, two-sided ideal in $A$, we know that both these elements lie in $I$. But multiplying on the right by $Y^{-1}$ we get that they must both be zero, else this would contradict minimality of $s$. Thus $z$ is central and $\sigma$-invariant as required.\end{proof}

We are now prepared to complete the proof of Theorem \ref{letterthm: SFOH implies prime}. The following result is the final step in the proof.

\begin{thm}\label{thm: case where a power of sigma is inner and untwistable}
Let us suppose $\sigma$ acts trivially on $Z$, and that there exists $m\geq N$, $a\in\O^\times$ such that $\sigma^{p^m}(q)=aqa^{-1}$ for all $q\in Q$. Suppose further that one of the following two conditions is satisfied:

\begin{enumerate}[label=(\alph*)]
\item $u(t^{p^m})>0$.

\item $u(a+t^{p^m})>0$.
\end{enumerate}

Then $Q^+[[x;\sigma,\delta]]$ is a prime ring.
\end{thm}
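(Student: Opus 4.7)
My plan is to apply Theorem~\ref{thm: conditions for primality} to the crossed product $R := Q^+[[x;\sigma,\delta]]_{(x-t)} = \bigoplus_{i=0}^{p^m-1} Ag^i$, where $A := Q^+[[x^{p^m};\sigma^{p^m},\delta^{p^m}]]_{(x^{p^m}-t^{p^m})}$ and $g = x-t$. Since $\sigma(t)=t$ and $\delta(t)=t\cdot t - \sigma(t)t = 0$, the elements $x$ and $t$ commute, so in characteristic $p$ we have $g^{p^m} = x^{p^m} - t^{p^m}$. Several hypotheses of Theorem~\ref{thm: conditions for primality} are already in place: $A$ is a localisation of the prime ring $Q^+[[x^{p^m};\sigma^{p^m},\delta^{p^m}]]$ (prime by Theorem~\ref{thm: simple artinian with compatible skew derivation}, since $m\geq N$) at the regular normal element $x^{p^m}-t^{p^m}$, so $A$ is prime; $(A,\sigma)$ satisfies \eqref{P1} by Proposition~\ref{propn: sub-skew power series ring satisfies P1}; and since $\sigma$ acts trivially on $Z$, Lemma~\ref{lem: sigma(a)=a when action on centre is trivial} gives $\sigma(a)=a$, so setting $\gamma := a^{-1}$ yields $\sigma^{p^m}(q) = \gamma^{-1}q\gamma$ with $\sigma(\gamma) = \gamma$. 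Once $R$ is known to be prime, primality of $Q^+[[x;\sigma,\delta]]$ is immediate, since the localisation map at the regular normal element $x-t$ is injective and $R$-primality propagates back along standard Ore-localisation arguments.

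The key step is to produce $X \in Z(A)^\sigma$ with $g^{p^m}$ a polynomial of degree $1$ in $X$, and to realise $A$ as $Q^+[[X]]$ or $Q^+[[X]][X^{-1}]$. I define
\[ X := a^{-1}(x^{p^m} - t^{p^m}) \text{ in case (a),} \qquad X := a^{-1}(x^{p^m} - t^{p^m} - a) \text{ in case (b).} \]
A direct calculation using $\delta^{p^m}(q) = t^{p^m}q - \sigma^{p^m}(q)t^{p^m}$ and $\sigma^{p^m}(q) = aqa^{-1}$ shows $(x^{p^m} - t^{p^m})q = aqa^{-1}(x^{p^m}-t^{p^m})$, and in case (b) the analogous identity $(x^{p^m} - t^{p^m} - a)q = aqa^{-1}(x^{p^m} - t^{p^m} - a)$ holds; in both cases this gives $Xq = qX$ for all $q\in Q$. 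Combined with $\sigma(a) = a$ and the $\sigma$-invariance of $x^{p^m}-t^{p^m}$, we obtain $X \in Z(A)^\sigma$.

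The structural identification of $A$ is by change of variables. In case (a), $x^{p^m} = t^{p^m} + aX$ with $t^{p^m}\in J(\O)$; since $a$, $t^{p^m}$ and $X$ pairwise commute (the first two because $\sigma^{p^m}(t^{p^m}) = t^{p^m}$), binomial expansion gives
\[ \sum_n q_n (x^{p^m})^n = \sum_k \left(\sum_{n\geq k}\binom{n}{k} q_n\, t^{p^m(n-k)} a^k\right) X^k, \]
with each inner sum $u$-convergent (as $u(t^{p^m})>0$) and with the resulting coefficients $u$-bounded below. This identifies $Q^+[[x^{p^m};\sigma^{p^m},\delta^{p^m}]]$ with the bounded power series ring $Q^+[[X]]$ in the central variable $X$; since $g^{p^m} = aX$ and we localise at this element, $A = Q^+[[X]][X^{-1}]$. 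In case (b), setting $T := t^{p^m} + a \in J(\O)$ gives $x^{p^m} = T + aX$, and the same re-expansion identifies the bounded SPSR with $Q^+[[X]]$; here $X$ has strictly positive filtration, so $g^{p^m} = a(1+X)$ is already a unit, and hence $A = Q^+[[X]]$. In both cases $g^{p^m}$ has degree $1$ in $X$, so Theorem~\ref{thm: conditions for primality} applies and $R$ is prime. The main obstacle is this change-of-variables step: one must justify that binomial re-expansion yields a genuine filtered ring isomorphism preserving the bounded skew power series structure, and this rests entirely on the fact that $t^{p^m}$ (case (a)) or $t^{p^m}+a$ (case (b)) lies in $J(\O)$, guaranteeing $u$-convergence and $u$-boundedness of every coefficient obtained in the rearrangement.
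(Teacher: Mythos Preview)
Your proposal is correct and follows the same overall strategy as the paper: apply Theorem~\ref{thm: conditions for primality} to the crossed product $R = A * (\mathbb{Z}/p^m\mathbb{Z})$, verify primality of $A$, property \eqref{P1}, and $\sigma(a)=a$ via the same lemmas, and then exhibit the same central $\sigma$-invariant element $X$ (indeed, a short computation shows that the paper's $X$ in case (b), namely $s^{-1}Y + (s^{-1}-1)$ with $s = -t^{-p^m}a$ and $Y = -t^{-p^m}x^{p^m}$, simplifies to exactly your $a^{-1}(x^{p^m}-t^{p^m}-a)$).

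The only genuine difference is in how the identification $Q^+[[x^{p^m};\sigma^{p^m},\delta^{p^m}]] = Q^+[[X]]$ is justified. The paper invokes the established change-of-variable theorems \cite[Theorem B]{jones-woods-2} and \cite[Theorem B]{jones-woods-3}: in case (a) it first passes to $Q^+[[Y;\sigma^{p^m}]]$ with $Y = x^{p^m}-t^{p^m}$ and then untwists; in case (b) it first passes to an Iwasawa-type ring $Q^+[[Y;\tau,\tau-\id]]$ with $Y=-t^{-p^m}x^{p^m}$ (which requires the auxiliary fact $t^{p^m}\in\O^\times$, supplied by Lemma~\ref{lem: t in Q^times}) and then untwists. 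Your route bypasses these intermediate forms entirely, writing $x^{p^m} = (\text{element of }J(\O)) + aX$ in both cases and re-expanding binomially. This is more uniform and, in case (b), noticeably shorter; the price is that you must check convergence and boundedness by hand rather than appealing to a black box. Your identification of the key point --- that $t^{p^m}$ (resp.\ $t^{p^m}+a$) lying in $J(\O)$ is exactly what makes the inner sums $u$-convergent and the outer coefficients $u$-bounded --- is correct, and the pairwise commutativity of $a$, $t^{p^m}$, $X$ (which you note follows from $\sigma^{p^m}(t^{p^m})=t^{p^m}$ and centrality of $X$) is what legitimises the binomial formula.
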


\begin{proof}

Since $m\geq N$, we know using Theorem \ref{thm: simple artinian with compatible skew derivation} that $A$ is a prime ring, and $(A,\sigma)$ satisfies (\ref{P1}) by Proposition \ref{propn: sub-skew power series ring satisfies P1}. Moreover, if $R$ is prime then $Q^+[[x;\sigma,\delta]]$ is prime by \cite[Proposition 2.1.16]{MR}. We also know that $\sigma(a)=a$ by Lemma \ref{lem: sigma(a)=a when action on centre is trivial}. So to prove that $R=A\ast(\mathbb{Z}/p^m\mathbb{Z})$ is prime, we only need to prove that the remaining conditions of Theorem \ref{thm: conditions for primality} are satisfied in both cases: that is, it will suffice to find an element $X\in Z(A)^\sigma$ such that $A = Q^+[[X]]$ or $Q^+[[X]][X^{-1}]$ and $x^{p^m} - t^{p^m}$ is a polynomial of degree 1 in $X$.

\begin{enumerate}
\item Since $u(t^{p^m})>0$, if we define $Y:=x^{p^m}-t^{p^m}$, then by \cite[Theorem B]{jones-woods-2} (and Lemma \ref{lem: restricted SPSR over Q}), we see that
$$Q^+[[x^{p^m};\sigma^{p^m},\delta^{p^m}]] = Q\otimes_\O \O [[x^{p^m};\sigma^{p^m},\delta^{p^m}]] = Q\otimes_\O \O[[Y; \sigma^{p^m}]] = Q^+[[Y;\sigma^{p^m}]],$$
so $A=Q^+[[Y;\sigma^{p^m}]]_{(Y)}$. Moreover, since $a\in\O^\times$, we have that $u(a)=u(a^{-1})=0$, and if we set $X:=a^{-1}Y$, it follows from \cite[Theorem B]{jones-woods-3} again that $Q^+[[Y;\sigma^{p^m}]]=Q^+[[X]]$. As $a$ is already invertible, it follows that $A = Q^+[[X]]_{(Y)} = Q^+[[X]]_{(X)} = Q^+[[X]][X^{-1}]$.

This $X$ is central in $A$ by construction, and since $X=a^{-1}(x^{p^m}-t^{p^m})$ and $\sigma(t)=t$, it follows that $\sigma(X)=X$, so $X\in Z(A)^\sigma$.

Clearly $\sigma^{p^m}$ is an inner automorphism of $Q$, and $x^{p^m}-t^{p^m}=aX$ is a polynomial of degree 1 in $X$ as required.

\item Note that since $u(a)=0$, it follows from the assumption that $u(a+t^{p^m})>0$ that $u(t^{p^m})=0$, and hence $t^{p^m}\in \O^{\times}$ by Lemma \ref{lem: t in Q^times}. In particular, $u(t^{p^m})=u(t^{-p^m})=0$. 

Let $Y:=-t^{-p^m}x^{p^m}$, and let $\alpha$ be the inner automorphism of $Q$ defined by $\alpha(q) = t^{-p^m}qt^{p^m}$. Since $(\sigma^{p^m},\delta^{p^m})$ is compatible with $u$, it follows from \cite[Theorem B]{jones-woods-3} that $$Q^+[[x^{p^m};\sigma^{p^m},\delta^{p^m}]]=Q^+[[Y;\alpha\sigma^{p^m},-t^{-p^m}\delta^{p^m}]].$$ But $-t^{-p^m}\delta^{p^m}(q)=\alpha\sigma^{p^m}(q)-q$. So set $\tau:=\alpha\sigma^{p^m}$, and we see that $$Q^+[[x^{p^m};\sigma^{p^m},\delta^{p^m}]]=Q^+[[Y;\tau,\tau-\id]].$$

Moreover, $s:=-t^{-p^m}a \in\O^{\times}$, as both $t^{p^m} \in \O^\times$ and $a \in \O^\times$, and $\tau(q)=sqs^{-1}$. Also since $a+t^{p^m}\in J(\O)$, we see that $s-1=-t^{-p^m}(a+t^{p^m})\in J(\O)$, and hence $s^{-1} \equiv s \equiv 1 \bmod J(\O)$. Again, since $\sigma(t)=t$, and $\sigma(a)=a$ by Lemma \ref{lem: sigma(a)=a when action on centre is trivial}, it is clear that $\sigma(s)=s$.

So, let $X:=s^{-1}Y+(s^{-1}-1)$, and applying \cite[Theorem B]{jones-woods-3} again, we see that $A=Q^+[[Y;\tau,\tau-\id]]=Q^+[[X]]$. Clearly $X$ is central in $A$, and since $\sigma(s)=s$ and $\sigma(Y)=Y$, we have that $X\in Z(A)^{\sigma}$.

Finally, $x^{p^m}-t^{p^m}=-t^{p^m}(s^{-1}X+1)$, which is a polynomial of degree 1 in $X$ as required.
\end{enumerate}

So using Theorem \ref{thm: conditions for primality}, we see that $Q^+[[x;\sigma,\delta]]_{(x-t)}=R$ is prime, as required.\end{proof}

We now prove Theorem \ref{letterthm: SFOH implies prime}, and so we revert to the notation of the introduction: crucially, we have fixed some $m\in\mathbb{N}$ such that $(\sigma^{p^m}, \delta^{p^m})$ is compatible with $u$.

\begin{proof}[Proof of Theorem \ref{letterthm: SFOH implies prime}]
By Lemma \ref{lem: identical on centre}, $\sigma|_{Z}$ has order $p^s$ for some $s\leq\ell$. Choose $M \geq \max\{\ell, m\}$ such that either (a) $u(t^{p^M}) > 0$ or (b) $u(a^{p^{M-\ell}} + t^{p^M}) > 0$. In either case, set $b:= a^{p^{M-\ell}}$, and we now have that $\sigma^{p^M}(q) = bqb^{-1}$. Since $M \geq s$, we may apply the corresponding part of Theorem \ref{thm: case where a power of sigma is inner and untwistable} to see that $Q^+[[x^{p^s};\sigma^{p^s},\delta^{p^s}]]_{(x^{p^s}-t^{p^s})}$ is prime. Hence $Q^+[[x;\sigma,\delta]]_{(x-t)}$ and $Q^+[[x;\sigma,\delta]]$ are also prime by Corollary \ref{cor: when sigma does not fix Z large ring is prime iff small ring is sigma-prime}.\qedhere
\end{proof}

\section{Extension of scalars}\label{section: extension of scalars}

In \S \ref{section: crossed product decomposition} and Theorem \ref{letterthm: SFOH implies prime}, we demonstrated that, if the data $(\sigma,\delta, u)$ satisfies \eqref{SFOH} (plus certain technical conditions on the elements $a$ and $t$), then $Q^+[[x; \sigma, \delta]]$ is prime. However, \eqref{SFOH} is a rather restrictive condition, and we would like to be able to extend the scalars in $Q$ as in \eqref{FOH}: we will demonstrate in this section that, under mild conditions, such scalar extensions are well-behaved algebraically with respect to the skew power series rings. Once we have done this, in \S \ref{sec: FOH}, we will invoke a series of reductions to deduce \eqref{FOH} from some very general conditions.

\subsection{Discrete valuation rings and scalar extensions}\label{subsec: DVRs and scalar extensions}

Let $(D,v)$ be a complete discrete valuation ring with division ring of fractions $(F,v)$, and let $Z$ be the centre of $F$ (which is necessarily a field). Write $\varphi = v|_Z$, so that  $(Z, \varphi)$ is a complete discretely valued field, in the sense of \cite[Chapter 2, Definitions 1 and 6]{Rib99}.

(We adopt the following temporary notation. If $(A, f)$ is a complete discretely valued field, write $\O(A)$ for the valuation ring $f^{-1}([0,\infty])$, $\mathfrak{m}(A)$ for the maximal ideal $f^{-1}((0,\infty])$, and $k_A$ for the residue ring $\O(A)/\mathfrak{m}(A)$.)

Now the valuation ring of $(Z, \varphi)$ is $\O(Z) = Z(D)$, and the following well-known results in commutative valuation theory can be read off from \cite[Chapter 5.1, (A)]{Rib99} and \cite[\S 6.1, Theorem 1 and its proof, pp. 151--153]{Rib99}:

\begin{props}\label{props: extension of commutative valuations}
$ $

\begin{enumerate}[label=(\roman*)]
\item Let $K/Z$ be any finite extension of fields. Then $\varphi$ on $Z$ extends uniquely to a complete discrete valuation $\varphi_K$ on $K$. (This is not necessarily integer-valued, but we will see below that it can be rescaled to give an integer-valued valuation.)
\item The field extension $K/Z$ has basis $\{\alpha_i \pi^j : 1\leq i\leq f, 0\leq j\leq e-1\}$, where the images of $\alpha_1, \dots, \alpha_f$ form a basis for $k_K$ over $k_Z$, and $\pi$ is a uniformiser for the valuation ring $\O(K)$ of $K$. Here $e$ is the ramification index and $f$ is the inertial degree. (Note that, in the exceptional case where $Z = Z(D)$, there is no such uniformiser as the valuation on $Z$ is trivial; but in this case, we have $e = 1$ and $K = k_K$.)
\item For all $z_{ij} \in Z$, we have $\displaystyle \varphi_K\left(\sum_{i,j} z_{ij} \alpha_i \pi^j\right) = \min\left\{\varphi(z_{ij})+hj/e\right\}$, where $$h=\inf\{\varphi(z):z\in \mathfrak{m}(Z)\}\leq\infty.$$ (As above, if $h = \infty$ then $e = 1$.)

This means that $(K,\varphi_K)$ is filt-free over $(Z,\varphi)$ in the sense of Definition \ref{defn: filt-free}, with filt-basis $\{\alpha_i \pi^j\}$.
\end{enumerate}
\end{props}

Given the ramification index $e$ as above: if we replace the initial valuation $v$ on $D$ by $ev$, then the extended valuation $e\varphi_K$ is now integer valued. We are free to do this, since $v$ and $ev$ are algebraically (and hence topologically) equivalent.

\begin{defn}\label{defn: admissible}
Let $(D,v)$ be a discrete valuation ring. Then we say $D$ is \emph{admissible} if it is complete and $\gr_v(D)$ is finitely generated as a module over its centre.
\end{defn}

\begin{rk}\label{rk: why admissible DVRs}
In later subsections, beginning with a discrete valuation ring $(D, v)$ with division ring of fractions $(F, v)$ and a finite extension of fields $K/Z$ as above, we will attempt to extend the valuation $v$ to a standard filtration on $F_K := K\otimes_Z F$. To do this, we will apply Theorem \ref{thm: filtered localisation}.1, where this admissibility criterion is necessary.

Fortunately, admissible DVRs occur very naturally in our context. For example, let $(R, w_0)$ be a complete, filtered ring satisfying the hypotheses of Theorem \ref{thm: filtered localisation} (see Remark \ref{rk: examples satisfying filt hypotheses} for examples). Then by Theorem \ref{thm: filtered localisation}.1(iv), there exists a filtration $u$ on $Q(R)$ such that the extension of $u$ to its completion $Q$ is a standard filtration, and so it has an associated complete discrete valuation ring $D$ in the sense of Definition \ref{defn: standard filtrations}. It follows from Theorem \ref{thm: filtered localisation}.2 that $\gr_u(Q)$ is finitely generated over its centre, and hence that $D$ is admissible.
\end{rk}

Parts (i)--(ii) of the following lemma follow easily from more general facts about tensor products. Parts (iii) and (iv) are crucial to allow us to apply Theorem \ref{thm: filtered localisation} in the situation described above.

\begin{lem}\label{lem: tensor product filtration on F_K}
Let $(D,v)$ be a complete discrete valuation ring with division ring of fractions $(F,v)$. Write $(Z, \varphi)$ for the centre of $F$ (with the induced valuation), and let $(K, \varphi_K)$ be a finite extension of $Z$ (with the unique extended valuation).

Set $F_K=K\otimes_Z F$, and let $v^\otimes_K = \varphi_K \otimes v$ be the tensor product filtration on $F_K$ in the sense of Definition \ref{def: filtered tensor product}. Then

\begin{enumerate}[label=(\roman*)]
\item $F_K$ is a simple artinian ring.
\item $v^\otimes_K$ is complete.
\item $\gr_{v^\otimes_K}(F_K)$ is Noetherian, it is finitely and freely generated as a module over $\gr_{v}(F)$, and $\gr_{v}(F)$ contains no nonzero zero divisors in $\gr_{v^\otimes_K}(F_K)$.
\item If $D$ is admissible, then $\gr_{v^\otimes_K}(F_K)$ is finitely generated over $A:=Z(\gr_{v}(F))$.
\end{enumerate}
\end{lem}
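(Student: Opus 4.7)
The strategy is to prove the four parts in sequence, exploiting the fact (from Properties \ref{props: extension of commutative valuations}(iii)) that $(K,\varphi_K)$ is filt-free over $(Z,\varphi)$ with the finite filt-basis $\{\alpha_i\pi^j\}$, and invoking the general filtered-tensor-product machinery of \S\ref{subsec: central extension of scalars}.

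For part (i), simplicity of $F_K$ follows directly from Proposition \ref{propn: simplicity under tensor product} applied to the simple ring $F$ with $Z(F)=Z$. For artinianity, I would note that $F_K$ is a free left $F$-module of rank $[K:Z]$, so any left ideal of $F_K$ is in particular a finite-dimensional $F$-subspace, forcing DCC. For part (ii), the filt-free structure of $(K,\varphi_K)$ over $(Z,\varphi)$ with a finite basis lets me apply Lemma \ref{lem: filt-free module under tensor product} to conclude that $(F_K,v^\otimes_K)$ is filt-free of finite rank over the complete ring $(F,v)$, hence itself complete.

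For part (iii), I would combine Lemma \ref{lem: associated graded of tensor product} (identifying $\gr_{v^\otimes_K}(F_K)\cong\gr_{\varphi_K}(K)\otimes_{\gr_\varphi(Z)}\gr_v(F)$ as graded rings) with Lemma \ref{lem: associated graded of filt-free} (which gives $\gr_{\varphi_K}(K)$ a finite homogeneous basis over $\gr_\varphi(Z)$) to deduce that $\gr_{v^\otimes_K}(F_K)$ is free and finitely generated as a right $\gr_v(F)$-module with basis $\{\gr(\alpha_i\pi^j)\otimes 1\}$. Noetherianity is then immediate, since $\gr_v(F)\cong(D/J(D))[\overline{\pi},\overline{\pi}^{-1};\alpha]$ by Properties \ref{props: DVRs}(iii) is a skew Laurent polynomial ring over a division ring. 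For the zero-divisor claim, I observe that in the tensor product ring the elements $1\otimes s$ (with $s\in\gr_v(F)$) commute with the basis elements $e_\lambda\otimes 1$, so that $(1\otimes s)(e_\lambda\otimes r_\lambda)=e_\lambda\otimes sr_\lambda$; since $\gr_v(F)$ is a domain, no nonzero $s$ can annihilate a nonzero element when expanded in the free basis, and symmetric reasoning handles right multiplication.

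For part (iv), it suffices by (iii) and transitivity of finite generation to show that $\gr_v(F)$ is itself finitely generated as a module over $A=Z(\gr_v(F))$. The plan is to fix a finite homogeneous generating set $g_1,\dots,g_r$ of $\gr_v(D)$ over $Z(\gr_v(D))$ (available by admissibility) and then to produce a nonzero homogeneous element $z_0\in Z(\gr_v(D))$ of positive degree $n_0$ via a degree-counting argument: if $Z(\gr_v(D))$ were concentrated in degree zero, the $Z(\gr_v(D))$-module generated by the $g_i$ would have bounded degrees, contradicting the presence of $\overline{\pi}^n$ of arbitrarily high degree in $\gr_v(D)$. Since $z_0$ is central and a unit in $\gr_v(F)$, we have $z_0^{-1}\in A$; hence for any homogeneous $x\in\gr_v(F)$ of degree $N$, choosing $m$ with $N+mn_0\geq 0$ puts $xz_0^m\in\gr_v(D)$, which expands as $\sum_i\zeta_ig_i$ with $\zeta_i\in Z(\gr_v(D))\subseteq A$, and then $x=\sum_i(\zeta_iz_0^{-m})g_i$ lies in $\sum_iAg_i$. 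The main obstacle I anticipate is this very last step: pinning down how admissibility forces the existence of $z_0$ of positive degree. Once this is in hand, everything else is a clean localisation argument, and the other three parts essentially unpack the filt-free formalism.
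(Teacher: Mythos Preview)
Your proposal is correct and follows essentially the same route as the paper's proof: parts (i)--(iii) invoke exactly the same filt-free tensor product lemmas (Lemmas \ref{lem: filt-free module under tensor product}, \ref{lem: associated graded of filt-free}, \ref{lem: associated graded of tensor product}) in the same order, and your zero-divisor argument via the free basis is equivalent to the paper's observation that $\gr_v(F)$ is a domain and $\gr_{v^\otimes_K}(F_K)$ is free over it. For part (iv), the paper simply asserts in one line that $\gr_v(D)\cong k[X;\alpha]$ finite over its centre implies $\gr_v(F)\cong k[X,X^{-1};\alpha]$ is finite over its centre; your degree-counting argument producing a homogeneous central element $z_0$ of positive degree, followed by localisation, is precisely the content hidden in that ``hence'', and your anticipated obstacle is not one --- the argument you sketch (a finite module over a degree-zero subring has bounded degree, contradicting $\overline{\pi}^n\in\gr_v(D)$) is sound.
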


\begin{proof}
$ $

\begin{enumerate}[label=(\roman*)]
\item $F$ is a simple $Z$-algebra, and hence $F_K$ is a simple $K$-algebra by Proposition \ref{propn: simplicity under tensor product}. Since $K$ is filt-free over $Z$ by Property \ref{props: extension of commutative valuations}(iii), it follows from Lemma \ref{lem: filt-free module under tensor product} that $F_K$ is filt-free (and so has finite dimension) as an $F$-module, and hence is also an artinian ring.
\item Follows from Lemma \ref{lem: filt-free module under tensor product}.
\item Property \ref{props: extension of commutative valuations}(iii) implies that $K$ is filt-free over $Z$. Hence we know that
$$\gr_{v^\otimes_K}(F_K)\cong \gr_{\varphi_K}(K)\otimes_{\gr_\varphi(Z)}\gr_{v}(F)$$
by Lemma \ref{lem: associated graded of tensor product}. We also know that $\gr_{\varphi_K}(K)$ has a finite homogeneous basis as a free $\gr_\varphi(Z)$-module by Lemma \ref{lem: associated graded of filt-free}, and hence $\gr_{v^\otimes_K}(F_K)$ is finitely generated over $\gr_v(F)$ by Lemma \ref{lem: basis for K extends to basis for F_K}. Since $\gr_{v}(F)$ is Noetherian, it follows that $\gr_{v^\otimes_K}(F_K)$ is also Noetherian. 

Moreover, write $k = D/J(D)$, so that $\gr_{v}(D)\cong k[X;\alpha]$ for some automorphism $\alpha$ by Property \ref{props: DVRs}(iii), and thus $\gr_{v}(F)\cong k[X,X^{-1};\alpha]$, and hence $\gr_{v}(F)$ is a domain. So since $\gr_{v^\otimes_K}(F_K)$ is free over $\gr_{v}(F)$, it follows that $\gr_{v}(F)$ has no zero divisors in $\gr_{v^\otimes_K}(F_K)$. 
\item Since $D$ is admissible, $\gr_{v}(D)\cong k[X;\alpha]$ is finite-dimensional over its centre, and hence so is $\gr_{v}(F) \cong k[X,X^{-1};\alpha]$.\qedhere
\end{enumerate}
\end{proof}

\begin{rk}\label{rk: identification of centres}
Continue to let $(D,v)$ be a complete discrete valuation ring and $(F,v)$ its (discretely valued) division ring of fractions.

In our context, we are often dealing with simple artinian rings of the form $Q=M_r(F)$ for some $r\in\mathbb{N}$. Then the natural diagonal inclusion map $d: F\hookrightarrow Q$ restricts to an isomorphism $Z(F) \cong Z(Q)$, and in what follows, we will frequently identify the centres $Z(F) = Z(Q)$, writing simply $Z$ for both.

Moreover, if $u = M_r(v)$ is the standard filtration on $Q$ induced by $F$, then $u\circ d = v$. It follows that $u|_Z = u\circ d|_Z = v|_Z$, and so we will be able to write all of these filtrations as $\varphi$ as above.
\end{rk}

\begin{lem}
Retain the above notation, and let $K/Z$ be a finite extension of fields, where $K$ is given the unique extended valuation $\varphi_K$. Define $Q_K = K\otimes_Z Q$ with filtration $u^\otimes_K = \varphi_K \otimes u$, and $F_K = K\otimes_Z F$ with filtration $v^\otimes_K = \varphi_K \otimes v$. Then there is an isomorphism of filtered rings $(Q_K, u^\otimes_K) \to (M_r(F_K), M_r(v^\otimes_K))$.
\end{lem}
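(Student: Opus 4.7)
The plan is to first invoke the ring isomorphism $\phi: Q_K = K \otimes_Z M_r(F) \to M_r(K \otimes_Z F) = M_r(F_K)$ given by Lemma \ref{lem: Q_K = M_n(F_K)} (sending $k\otimes(a_{st}) \mapsto (k\otimes a_{st})_{st}$), and then show that both $(Q_K, u^\otimes_K)$ and $(M_r(F_K), M_r(v^\otimes_K))$ carry the structure of a filt-free right $(Q,u)$-module sharing a common filt-basis with matching degrees. The filtered ring isomorphism claim follows automatically, since a $Q$-linear bijection matching filt-bases is strictly filtered.

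To produce the filt-basis, I would fix the basis $\mathcal{B} = \{\alpha_i\pi^j : 1\le i\le f,\, 0\le j\le e-1\}$ for $(K,\varphi_K)$ over $(Z,\varphi)$ provided by Property \ref{props: extension of commutative valuations}(iii), recalling that $\varphi = v|_Z = u|_Z$ under the identification of Remark \ref{rk: identification of centres}. Applying Lemma \ref{lem: filt-free module under tensor product} to the filtered inclusion $(Z,\varphi) \hookrightarrow (Q,u)$ then shows that $(Q_K, u^\otimes_K)$ is filt-free as a right $(Q,u)$-module with filt-basis $\{b\otimes 1_Q : b\in\mathcal{B}\}$ and degrees $u^\otimes_K(b\otimes 1_Q) = \varphi_K(b)$.

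The crucial remaining step is to exhibit the same filt-basis for $(M_r(F_K), M_r(v^\otimes_K))$ over $(Q,u)$. By Lemma \ref{lem: centre of tensor product}, $Z(F_K) = K \otimes_Z Z = K$, so each $b\otimes 1$ is central in $F_K$, and hence the scalar matrix $(b\otimes 1)I_r$ is central in $M_r(F_K)$. Given $A = (a_{st}) \in M_r(F_K)$, apply Lemma \ref{lem: filt-free module under tensor product} (this time to $(Z,\varphi)\hookrightarrow(F,v)$) to expand each $a_{st} = \sum_b f^{(st)}_b (b\otimes 1)$ uniquely with $f^{(st)}_b \in F$; then $A = \sum_b (b\otimes 1) B_b$ with $B_b := (f^{(st)}_b)_{s,t} \in M_r(F) = Q$, and centrality lets us switch to right-module form $A = \sum_b B_b (b\otimes 1)$. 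A direct computation combining the definition of the matrix filtration with the filt-free formula for $v^\otimes_K$ yields
\[
M_r(v^\otimes_K)(A) = \min_{s,t}\min_b\{\varphi_K(b) + v(f^{(st)}_b)\} = \min_b\{\varphi_K(b) + u(B_b)\},
\]
so $(M_r(F_K), M_r(v^\otimes_K))$ is filt-free over $(Q,u)$ with the asserted basis and degrees.

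Finally, $\phi(b \otimes 1_Q) = \phi\bigl(\sum_s b\otimes e_{ss}\bigr) = \sum_s (b\otimes 1)\,e_{ss} = (b\otimes 1)\,I_r$, so $\phi$ carries one filt-basis to the other with matching degrees and is therefore a strictly filtered isomorphism. I do not foresee any genuine obstacle: the whole argument is a bookkeeping combination of Lemmas \ref{lem: Q_K = M_n(F_K)}, \ref{lem: centre of tensor product}, and \ref{lem: filt-free module under tensor product} together with Property \ref{props: extension of commutative valuations}(iii). The only subtle point is pulling $(b\otimes 1)$ outside of the matrix, which is precisely why the centrality statement of Lemma \ref{lem: centre of tensor product} is needed.
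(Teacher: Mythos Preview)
Your proof is correct, but takes a genuinely different route from the paper. The paper argues directly at the level of level sets: writing $\pi_Q, \pi_F$ for the natural maps from unbalanced to balanced tensor products and $j$ for the entrywise isomorphism $K\otimes M_r(F)\to M_r(K\otimes F)$, it verifies the chain
\[
i\!\left(\sum_{s+t\ge n}\pi_Q(K_s\otimes Q_t)\right) = \sum_{s+t\ge n}\widetilde{\pi}_F(j(K_s\otimes M_r(F_t))) = M_r\!\left(\sum_{s+t\ge n}\pi_F(K_s\otimes F_t)\right),
\]
using only that $Q_t = M_r(F_t)$ and that $j$ respects additive subgroups. This is a bare-hands computation from Definition~\ref{def: filtered tensor product} and never appeals to a filt-basis for $K/Z$.

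Your argument instead exploits the filt-free machinery already set up in Lemma~\ref{lem: filt-free module under tensor product}: you recognise both sides as filt-free right $(Q,u)$-modules on the common basis $\{b\otimes 1 : b\in\mathcal B\}$ and observe that the ring isomorphism $\phi$ matches these bases with equal degrees. This is cleaner conceptually and reuses existing lemmas, at the cost of invoking Property~\ref{props: extension of commutative valuations}(iii) and the centrality statement of Lemma~\ref{lem: centre of tensor product}, which the paper's level-set computation does not need. Both approaches are short; yours is more structural, the paper's more self-contained.
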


\begin{proof}
On the level of rings, the isomorphism is
$$i: Q_K:=K\otimes_Z Q = K\otimes_Z M_r(F) \to M_r(K\otimes_Z F)=M_r(F_K),$$
where the isomorphism $K\otimes_Z M_r(F)\to M_r(K\otimes_Z F)$ is given by Lemma \ref{lem: Q_K = M_n(F_K)}.

The remainder of the proof is a matter of notation, but we spell it out for completeness:
\begin{itemize}
\item Write $K_n$, $F_n$ and $Q_n$ for the $n$th level sets of $(K, \varphi_K)$, $(F, v)$ and $(Q, u)$ respectively.
\item Write $\pi_A: K \otimes A \to K\otimes_Z A$ for each of $A = Q$, $F$ and $M_r(F)$, where the tensor product on the left-hand side is implicitly taken over $\mathbb{Z}$. Also write $\widetilde{\pi}_F = M_r(\pi_F) : M_r(K\otimes F)\to M_r(K\otimes_Z F)$.
\item Denote by $j$ the isomorphism $K\otimes M_r(F) \to M_r(K\otimes F)$. Note that $j$ respects additive subgroups in the sense that, if $A\leq K$ and $B\leq F$, then $j(A\otimes M_r(B)) = M_r(A\otimes B)$.
\end{itemize}

We are claiming that the $n$th level sets of $u^\otimes_K$ and of $M_r(v^\otimes_K)$ correspond under $i$. That is, recalling the description of the level sets in Definition \ref{def: filtered tensor product}, we want to show that
$$i\left(\sum_{s+t\geq n} \pi_Q(K_s \otimes Q_t)\right) = M_r\left( \sum_{s+t\geq n} \pi_F(K_s \otimes F_t)\right).$$
Now it is easy to check that
\begin{align*}
i\left(\sum_{s+t\geq n} \pi_Q(K_s \otimes Q_t)\right) &= \sum_{s+t\geq n} i\left(\pi_Q(K_s \otimes Q_t)\right)\\
&= \sum_{s+t\geq n} \widetilde{\pi}_F(j(K_s\otimes Q_t))&&\text{as } i\circ \pi_Q = \widetilde{\pi}_F \circ j\\
&= \sum_{s+t\geq n} \widetilde{\pi}_F(j(K_s\otimes M_r(F_t)))&& \text{by definition of } u = M_r(v)\\
&= \sum_{s+t\geq n} \widetilde{\pi}_F(M_r(K_s\otimes F_t)) && \text{as } j \text{ respects additive subgroups}\\
&= \sum_{s+t\geq n} M_r(\pi_F(K_s\otimes F_t))&&\text{by definition of } \widetilde{\pi}_F\\
&= M_r\left(\sum_{s+t\geq n} \pi_F(K_s\otimes F_t)\right)
\end{align*}
as required.
\end{proof}

\subsection{Admissible data}

Now let $Q$ be a simple artinian ring of characteristic $p$, and let $Z=Z(Q)$ be the centre of $Q$. (Much of what follows will continue to hold in the case where $Q$ is a simple artinian $\mathbb{Z}_p$-algebra, after appropriate modifications, but we will not need this generality: see \cite[\S 3.4]{jones-woods-3}.)

\begin{defn}\label{defn: admissible data}
An \emph{admissible datum} on $Q$ is a triple $(\sigma, \delta, u)$, where:

\begin{enumerate}[label=(AD\arabic*)]
\item $u$ is a standard filtration on $Q$.
\item The discrete valuation ring $D$ associated to $Q$ is \emph{admissible} in the sense of Definition \ref{defn: admissible}.
\item $\sigma$ is an automorphism of $Q$ satisfying $\sigma|_Z=\id_Z$.
\item $\delta$ is an inner $\sigma$-derivation defined by $\delta(q) = tq - \sigma(q)t$ for some $t\in Q$ satisfying $\sigma(t)=t$.
\item $[\sigma]\in \Out(Q)$ has finite order.
\item $\sigma$ is filtered with respect to $u$, and there exists $m\in\mathbb{N}$ such that $(\sigma^{p^m}-\id)(\O) \subseteq J(\O)^2$ and $\delta^{p^m}(\O)\subseteq J(\O)^2$, where $\O$ is the maximal order associated to $(Q, u)$.
\end{enumerate}
\end{defn}

\begin{rks}\label{rks: AD hypotheses}
The various constituent parts of this definition have several purposes, so we comment on them individually.

\begin{enumerate}[label=(AD\arabic*)]
\item This allows us to work with the well-behaved properties specific to standard filtrations, such as Theorem \ref{thm: simple artinian with compatible skew derivation}.

\item As mentioned in Remark \ref{rk: why admissible DVRs}, this will allow us to perform ``extension of scalars", and extend the standard filtration $v$ on $F$ to a well-behaved standard filtration on $F_K$.

\item This technical assumption will be necessary in what follows to ensure that our extension of scalars interacts nicely with skew power series rings: it follows from Corollary \ref{cor: when sigma does not fix Z large ring is prime iff small ring is sigma-prime} that this assumption does not affect the generality of our results.

\item In particular, $(\sigma, \delta)$ is a commuting skew derivation on $Q$. Also, by Property \ref{props: Sigma, Delta, crossed product} there are commuting skew derivations $(\sigma^{p^n}, \delta^{p^n})$ for all $n\in\mathbb{N}$, and $\delta^{p^n}$ is the inner $\sigma^{p^n}$-derivation defined by the element $t^{p^n}$.

\item We impose hypothesis (AD5) as we have already dealt with the case where (AD5) is \emph{not} satisfied in \cite[Theorem C(i)]{jones-woods-3}.

\item It follows from Lemma \ref{lem: criterion for delta positive degree} that $(\sigma^{p^m}, \delta^{p^m})$ is compatible with $u$, and hence from Lemma \ref{lem: going up compatibility} that $(\sigma,\delta)$ is quasi-compatible with $u$, so we can define the skew power series ring $Q^+[[x;\sigma,\delta]]$ as in Definition \ref{defn: bounded skew power series rings}.
\end{enumerate}
\end{rks}

\subsection{Admissible extensions}\label{subsec: admissible extensions}

Let $Q$ be a simple artinian ring of characteristic $p$ equipped with an admissible datum $(\sigma, \delta, u)$. In particular, we may define the skew power series ring $Q^+[[x; \sigma, \delta]]$ with respect to $u$.

Let $Z=Z(Q)$, and let $K/Z$ be a finite extension of fields. We will now construct a canonical admissible datum $(\sigma_K, \delta_K, u_K)$ on the scalar extension $Q_K=K\otimes_Z Q$. Our aim is to do so in a way that allows us both to define the extended skew power series ring $Q_K^+[[y; \sigma_K, \delta_K]]$ with respect to $u_K$, and to pass information between $Q_K^+[[y; \sigma_K, \delta_K]]$ and $Q^+[[x; \sigma, \delta]]$.

As before, we will write $\varphi$ for the naturally induced valuation on $Z$, $\varphi_K$ for its unique extension to $K$, and $u^\otimes_K = \varphi_K \otimes u$ for the corresponding filtration on $Q_K$.

\begin{defn}\label{defn: admissible extension} 
Let $K/Z$ be a finite extension of fields, and suppose $(\sigma, \delta, u)$ and $(\sigma_K, \delta_K, u_K)$ are admissible data on $Q$ and $Q_K$ respectively. 

We say that $(\sigma_K, \delta_K, u_K)$ is an \emph{admissible extension} of $(\sigma, \delta, u)$ if:

\begin{enumerate}[label=(AE\arabic*)]
\item $\sigma_K = \id \otimes \sigma$ and $\delta_K = \id \otimes \delta$.

Note: these tensor products are well-defined as both $\sigma$ and $\delta$ are $Z$-linear by (AD3--4).
\item For all $q\in Q$, if $u(q) \geq 0$, then $u_K(q) \geq 0$.
\item For any sequence $(q_i)_{i\in\mathbb{N}}$ with $q_i\in Q_K$, $(u_K(q_i))$ is bounded below if and only if $(u^\otimes_K(q_i))$ is bounded below.
\end{enumerate}

We write $(\sigma, \delta, u)\leq (\sigma_K, \delta_K, u_K)$ to mean that $(\sigma_K, \delta_K, u_K)$ is an admissible extension of $(\sigma, \delta, u)$.

Finally, we say that $(\sigma_K, \delta_K, u_K)$ is a \emph{strong} admissible extension of $(\sigma, \delta, u)$ if the admissible data satisfy (AE1),(AE2) and the additional strengthened hypothesis
\begin{itemize}
\item[(AE3$'$)]$u_K$ is topologically equivalent to $u^\otimes_K$.
\end{itemize}
\end{defn}

\begin{rks}\label{rks: AE hypotheses}
As before, these hypotheses are complex and are playing several roles at once, so we comment on how they will be used.

\begin{enumerate}[label=(AE\arabic*)]
\item This ensures that the inclusion map $Q^+[[x; \sigma, \delta]] \to Q_K^+[[y; \sigma_K, \delta_K]]$ (once it is defined in Theorem \ref{thm: extending scalars for skew power series rings}) will be a ring homomorphism.

Viewing $Q \subseteq Q_K$ under the canonical inclusion map: it also ensures that, if $\sigma^n$ is an inner automorphism of $Q$ (defined by conjugation by $a\in Q^\times$), then $\sigma_K^n$ is an inner automorphism of $Q_K$ (also defined by conjugation by $a$). Moreover, if we are given $t\in Q$ such that $\sigma(t) = t$, and $\delta(q) = tq - \sigma(q)t$ for all $q\in Q$, then we also have $\sigma_K(t) = t$ and $\delta_K(q) = tq - \sigma_K(q)t$ for all $q\in Q_K$. 
\item Write $\O$ and $\O_K$ for the maximal orders associated to $(Q, u)$ and $(Q_K, u_K)$: this assumption ensures that $a\in \O^\times$ (as required for \eqref{SFOH}) implies $a\in \O_K^\times$ after scalar extension. 
\item This ensures that the skew power series ring $Q_K^+[[y; \sigma_K, \delta_K]]$ is the same regardless of whether it is defined with respect to $u^\otimes_K$ (allowing us to realise $Q_K^+[[y; \sigma_K, \delta_K]]$ as $K\otimes_Z Q^+[[x; \sigma, \delta]]$) or with respect to $u_K$ (allowing us to use the well-behaved properties of standard filtrations).
\item[(AE3$'$)] It is clear that this condition implies (AE3), and there are cases where we explicitly need this condition in \S \ref{sec: FOH}, most notably Lemma \ref{lem: existence of zeta in Z(OK)^times}. This is strong enough for our current purposes, but we are uncertain whether we could replace (AE3) by (AE3$'$) or vice-versa in general.
\end{enumerate}
\end{rks}

\begin{rks}\label{rks: more AE stuff}
$ $

\begin{enumerate}
\item
In the definition of an admissible extension, we do not assume that $(Q,u)\to (Q_K,u_K)$ is continuous (unless we are assuming (AE3$'$) of course). However, in all the examples we construct this property will also hold, so we could include this assumption without affecting our results at all, and in a future work, this assumption may become necessary, so we remark on it here.
\item
We do not know whether the property of being a \emph{strong} admissible extension is transitive, even under our specialist hypotheses: topological equivalence is not preserved by tensor products in general, and the algebraic properties of the filtered localisation procedure are poorly understood: see step \textbf{(e)} of \cite[Procedure 4.1.2, Proposition 4.1.3]{jones-woods-3} and the following discussion.
\end{enumerate}
\end{rks}

\begin{lem}\label{lem: admissible data properties}
The relation $\leq$ on admissible data is transitive. That is, with notation as above: if $(\sigma, \delta, u)\leq (\sigma_K, \delta_K, u_K)$ and $(\sigma_K, \delta_K, u_K)\leq (\sigma_L, \delta_L, u_L)$, then $(\sigma, \delta, u)\leq (\sigma_L, \delta_L, u_L)$.
\end{lem}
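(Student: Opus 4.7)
The plan is to verify each of (AE1)--(AE3) for the triple $(\sigma, \delta, u) \leq (\sigma_L, \delta_L, u_L)$ by chaining the two given admissible extensions. Condition (AE1) follows by tracing through the canonical associativity isomorphism $L \otimes_K (K \otimes_Z Q) \cong L \otimes_Z Q$ sending $\ell \otimes_K (k \otimes_Z q) \mapsto (\ell k)\otimes_Z q$: under this identification, $\sigma_L = \id_L \otimes_K \sigma_K = \id_L \otimes_K (\id_K \otimes_Z \sigma)$ corresponds to $\id_L \otimes_Z \sigma$, and likewise for $\delta$. Condition (AE2) is immediate from chaining: for $q\in Q$, $u(q)\geq 0 \Rightarrow u_K(q)\geq 0 \Rightarrow u_L(q)\geq 0$.

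The core of the argument lies in (AE3), and the essential tool is filt-freeness. By Property \ref{props: extension of commutative valuations}(iii) I would fix filt-bases $\{\alpha_j\}_{j=1}^r$ of $(L, \varphi_L)$ over $(K, \varphi_K)$ and $\{\beta_t\}_{t=1}^s$ of $(K, \varphi_K)$ over $(Z, \varphi)$; by Lemma \ref{lem: transitivity of filt-free extensions} the products $\{\alpha_j \beta_t\}$ then form a filt-basis of $(L, \varphi_L)$ over $(Z, \varphi)$. Applying Lemma \ref{lem: filt-free module under tensor product}, each of the tensor filtrations $\varphi_L \otimes_K u_K$ on $L \otimes_K Q_K$, $\varphi_K \otimes_Z u$ on $K\otimes_Z Q$, and $\varphi_L \otimes_Z u$ on $L\otimes_Z Q$ then admits an explicit ``min over coefficients'' formula with respect to these filt-bases.

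With this in hand, given any sequence $(q_i)$ in $Q_L$, I would uniquely expand $q_i = \sum_{j,t}(\alpha_j \beta_t)\otimes_Z q_{ijt}$ in $L \otimes_Z Q$, which under the associativity isomorphism corresponds to $q_i = \sum_j \alpha_j \otimes_K z_{ij}$ with $z_{ij} = \sum_t \beta_t \otimes_Z q_{ijt}$ in $L \otimes_K Q_K$. Since $j, t$ range over finite index sets and $\varphi_L$ extends $\varphi_K$ multiplicatively (so $\varphi_L(\alpha_j\beta_t) = \varphi_L(\alpha_j)+\varphi_K(\beta_t)$), bounded-below-ness of each of $((\varphi_L \otimes_K u_K)(q_i))_i$, $((\varphi_K \otimes_Z u)(z_{ij}))_i$, and $((\varphi_L \otimes_Z u)(q_i))_i$ is equivalent to bounded-below-ness of the corresponding individual coefficient sequences $(u_K(z_{ij}))_i$ for every $j$, or $(u(q_{ijt}))_i$ for every $j, t$. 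Combining this with the two given (AE3) hypotheses then yields the chain of equivalences
$$(u_L(q_i)) \text{ bd.\ below} \iff ((\varphi_L\otimes_K u_K)(q_i)) \text{ bd.\ below} \iff (u_K(z_{ij}))_i \text{ bd.\ below for each } j$$
$$\iff ((\varphi_K\otimes_Z u)(z_{ij}))_i \text{ bd.\ below for each } j \iff (u(q_{ijt}))_i \text{ bd.\ below for each } j, t$$
$$\iff ((\varphi_L \otimes_Z u)(q_i)) \text{ bd.\ below},$$
which is exactly (AE3) for $(\sigma, \delta, u) \leq (\sigma_L, \delta_L, u_L)$.

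The main obstacle is keeping track of the subtle distinction between sequence-bounded-below equivalence (all that (AE3) actually guarantees) and the stronger topological equivalence of filtrations (which, as Remark \ref{rks: more AE stuff}.2 observes, is not obviously transitive). Filt-freeness is precisely the structural feature that allows the comparison to be performed coefficient-by-coefficient in a fixed finite basis, thereby sidestepping any need for uniform elementwise bounds between the various tensor filtrations.
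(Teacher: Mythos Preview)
Your proposal is correct and follows essentially the same route as the paper's proof: both dismiss (AE1)--(AE2) as routine, then for (AE3) fix filt-bases for $L/K$ and $K/Z$, invoke Lemma~\ref{lem: transitivity of filt-free extensions} and Lemma~\ref{lem: filt-free module under tensor product} to reduce each tensor filtration to a ``minimum over coefficients'' formula, and chain the bounded-below equivalences coefficientwise through the two given (AE3) hypotheses. Your closing remark about the distinction between (AE3) and (AE3$'$) is exactly the point made in Remark~\ref{rks: more AE stuff}.2.
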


\begin{proof}
Conditions (AE1) and (AE2) are clear from the definitions, so it remains to show (AE3), for which we set up some notation. 

Firstly, extend $\varphi := u|_Z$ uniquely to $\varphi_K$ on $K$ and $\varphi_L$ on $L$ using Property \ref{props: extension of commutative valuations}(i), and then define the tensor product filtrations $w_{L,K} = \varphi_L \otimes u_K$ on $Q_L$, $w_{L,Z} = \varphi_L \otimes u$ on $Q_L$, and $w_{K,Z} = \varphi_K \otimes u$ on $Q_K$. Also choose finite filt-bases $\{\alpha_1, \dots, \alpha_m\}$ for $L/K$ and $\{\beta_1, \dots, \beta_n\}$ for $K/Z$, which is possible by Property \ref{props: extension of commutative valuations}(iii). That is:
\begin{align*}
\varphi_L(\alpha_1 k_1 + \dots + \alpha_m k_m) = \min_{1\leq i\leq m} \{\varphi_L(\alpha_i) + \varphi_K(k_i)\},\\
\varphi_K(\beta_1 z_1 + \dots + \beta_n z_n) = \min_{1\leq j\leq n} \{\varphi_K(\beta_j) + \varphi(z_j)\},
\end{align*}
for all choices of $k_1, \dots, k_m \in K$ and $z_1, \dots, z_n \in Z$.

Now take a sequence $(x_i)_i$ of elements of $Q_L$.

The set $(u_L(x_i))_i$ is bounded below if and only if $(w_{L,K}(x_i))_i$ is bounded below, using (AE3) for the extension $L/K$. By Lemma \ref{lem: filt-free module under tensor product}, we see that $(Q_L, w_{L,K})$ is filt-free of finite rank over $Q_K$ with filt-basis $\{\alpha_1\otimes 1, \dots, \alpha_m\otimes 1\}$, and so for each $i$ there exist unique $\hat{q}_{i,1}, \dots, \hat{q}_{i,m} \in Q_K$ such that $x_i = \alpha_1\otimes \hat{q}_{i,1} + \dots + \alpha_m\otimes \hat{q}_{i,m}$ and
$$w_{L,K}(x_i) = \min_{1\leq j\leq m}\{\varphi_L(\alpha_j) + u_K(\hat{q}_{i,j})\}.$$
As the set $\{\alpha_1, \dots, \alpha_m\}$ is finite, $(w_{L,K}(x_i))_i$ is bounded below if and only if the set $(u_K(\hat{q}_{i,j}))_{i,j}$ is bounded below.

Similarly using (AE3) for the extension $K/Z$: $(u_K(\hat{q}_{i,j}))_{i,j}$ is bounded below if and only if $(w_{K,Z}(\hat{q}_{i,j}))_{i,j}$ is bounded below. By Lemma \ref{lem: filt-free module under tensor product}, we see that $(Q_K, w_{K,Z})$ is filt-free of finite rank over $Q$ with filt-basis $\{\beta_1\otimes 1, \dots, \beta_n\otimes 1\}$, and so for each $i$ and $1\leq j\leq m$ there exist unique $q_{i,j,1}, \dots, q_{i,j,n} \in Q$ such that $\hat{q}_{i,j} = \beta_1\otimes q_{i,j,1} + \dots + \beta_n\otimes q_{i,j,n}$ and
$$w_{K,Z}(\hat{q}_{i,j}) = \min_{1\leq i\leq n} \{\varphi_K(\beta_i) + u(q_{i,j,k})\}.$$
As the set $\{\beta_1, \dots, \beta_n\}$ is finite, $(w_{K,Z}(q_{i,j}))_{i,j}$ is bounded below if and only if the set $(u(q_{i,j,k}))_{i,j,k}$ is bounded below.

On the other hand, we know by Lemma \ref{lem: transitivity of filt-free extensions} that $\{\alpha_j \beta_k\}$ is a filt-basis for $L/Z$, and hence $\{\alpha_j \beta_k \otimes 1\}$ is a filt-basis for $(Q_L, w_{L,Z})$ over $Q$ by another application of Lemma \ref{lem: filt-free module under tensor product}. So we may write each $x_i = \sum_{j,k} \alpha_j \beta_k \otimes q_{i,j,k}$, and then calculate
$$w_{L,Z}(x_i) = \min_{j,k}\{ \varphi_L(\alpha_j \beta_k) + u(q_{i,j,k})\},$$
and as the filt-basis $\{\alpha_j \beta_k\}$ is finite, the set $(w_{L,Z}(x_i))_i$ is bounded below if and only if the set $(u(q_{i,j,k}))_{i,j,k}$ is bounded below. This establishes (AE3) for $L/Z$.
\end{proof}

We now begin to think about how to construct an admissible extension. The following lemma demonstrates that (AE1) does much of the work for us.

\begin{lem}\label{lem: aut extension to Q_K}
Given a finite extension of fields $K/Z$ and an admissible datum $(\sigma, \delta, u)$ on $Q$, define $\sigma_K = \id\otimes \sigma$ and $\delta_K = \id\otimes \delta$. Then $\sigma_K$ and $\delta_K$ satisfy (AD3--5).
\end{lem}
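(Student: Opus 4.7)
The plan is to verify each of (AD3), (AD4), (AD5) in turn, directly from the definitions, using only that $\sigma$ is $Z$-linear (which holds by (AD3) for $(\sigma, \delta, u)$) so that $\id_K \otimes \sigma$ and $\id_K \otimes \delta$ descend to well-defined $K$-linear maps on $Q_K = K\otimes_Z Q$. I would first note the general fact that a tensor product of a ring automorphism with the identity is again a ring automorphism, so $\sigma_K$ is an automorphism of $Q_K$; similarly $\delta_K$ is additive, and one checks on elementary tensors that $\delta_K((k\otimes q)(k'\otimes q')) = \delta_K(k\otimes q)(k'\otimes q') + \sigma_K(k\otimes q)\delta_K(k'\otimes q')$, so $\delta_K$ is a left $\sigma_K$-derivation.

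For (AD3), I would use Lemma \ref{lem: centre of tensor product} to identify $Z(Q_K) = Z(K)\otimes_Z Z(Q) = K\otimes_Z Z$, which embeds in $Q_K$ via $k\mapsto k\otimes 1$. On these elements $\sigma_K$ acts as $k\otimes 1 \mapsto k\otimes \sigma(1) = k\otimes 1$, hence $\sigma_K|_{Z(Q_K)} = \id$.

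For (AD4), let $t\in Q$ satisfy $\delta(q) = tq - \sigma(q)t$ and $\sigma(t) = t$, and set $t_K := 1\otimes t\in Q_K$. Then $\sigma_K(t_K) = 1\otimes \sigma(t) = t_K$, and on an elementary tensor
\[
t_K(k\otimes q) - \sigma_K(k\otimes q)t_K = k\otimes tq - k\otimes \sigma(q)t = k\otimes \delta(q) = \delta_K(k\otimes q);
\]
$K$-linear extension yields $\delta_K(x) = t_K x - \sigma_K(x) t_K$ for all $x\in Q_K$.

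For (AD5), pick $n\in\mathbb{N}$ and $a\in Q^\times$ with $\sigma^n(q) = aqa^{-1}$ for all $q\in Q$. The element $a_K := 1\otimes a$ is a unit in $Q_K$ with inverse $1\otimes a^{-1}$, and a direct computation on elementary tensors gives $\sigma_K^n(k\otimes q) = k\otimes aqa^{-1} = a_K(k\otimes q)a_K^{-1}$, so $\sigma_K^n$ is inner and the order of $[\sigma_K]\in\Out(Q_K)$ divides $n$. The argument is entirely routine; there is no substantive obstacle, as the work of ensuring good behaviour under scalar extension is already packaged into the hypothesis $\sigma|_Z = \id$ in (AD3).
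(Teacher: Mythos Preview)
Your proof is correct and follows essentially the same approach as the paper: the paper's proof merely cites Remark~\ref{rks: AE hypotheses} for (AE1), which records exactly the elementary-tensor computations you have written out (that $\sigma_K^n$ is inner via $1\otimes a$, and that $\delta_K$ is inner via $1\otimes t$ with $\sigma_K(1\otimes t)=1\otimes t$). Your verification of (AD3) via Lemma~\ref{lem: centre of tensor product} is the natural way to make explicit what the paper leaves implicit.
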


\begin{proof}
Follows from Remark \ref{rks: AE hypotheses} for (AE1).\end{proof}

We must now define a standard filtration $u_K$ on $Q_K$ such that $(\sigma_K, \delta_K, u_K)$ satisfies (AD2) and (AD6).

\begin{thm}\label{thm: scalar extension for D}
Let $Q$ be a simple artinian ring of characteristic $p$ with centre $Z$, and let $(\sigma, \delta, u)$ be an admissible datum on $Q$.

Write $(D, v)$ for the associated discrete valuation ring, $F$ for the (valued) division ring of fractions of $D$, and $\O = M_n(D)$ for the associated maximal order, so that $Q = M_n(F)$ and $u = M_n(v)$. Fix a finite extension of fields $K/Z$, define $F_K = K\otimes_Z F$ and $Q_K = K\otimes_Z Q$, and let $\varphi_K$ be the extension of $\varphi := v|_Z$ to $K$. Then:
\begin{enumerate}[label=(\roman*)]
\item There exists a standard filtration $v_K$ on $F_K$, topologically equivalent to $v^\otimes_K := \varphi_K\otimes v$, induced from an admissible discrete valuation ring. The natural inclusion map $(D, v)\to (F_K, v_K)$ is continuous, and has image inside the associated maximal order $\mathcal{B}_K$ of $F_K$.
\item The filtration $u_K = M_n(v_K)$ on $Q_K$ is standard, has associated maximal order $\O_K = M_n(\mathcal{B}_K)$, and is topologically equivalent to the tensor product filtration $u^\otimes_K := \varphi_K \otimes u$ on $Q_K$.
\end{enumerate}
\end{thm}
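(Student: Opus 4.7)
The plan is to apply Theorem~\ref{thm: filtered localisation} to the filtered ring $(F_K, v^\otimes_K)$ to produce the standard filtration $v_K$ on $F_K$ described in part~(i), and then to transfer the conclusions to $Q_K = M_n(F_K)$ via matrix filtrations to obtain part~(ii).

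Most of the verification of the hypotheses of Theorem~\ref{thm: filtered localisation} is done by Lemma~\ref{lem: tensor product filtration on F_K}: $F_K$ is a simple artinian (hence prime Noetherian) $\mathbb{F}_p$-algebra by~(i), $v^\otimes_K$ is complete by~(ii) (and separated automatically from Definition~\ref{defn: filtrations}), and $\gr_{v^\otimes_K}(F_K)$ is Noetherian by~(iii). The substantive remaining ingredient is a central graded subring $A \subseteq \gr_{v^\otimes_K}(F_K)$ such that $\gr_{v^\otimes_K}(F_K)$ is finitely generated over $A$ and $A$ contains a non-nilpotent element of positive degree. I would take $A = Z(\gr_v(F))$: this is central in $\gr_{v^\otimes_K}(F_K) \cong \gr_{\varphi_K}(K) \otimes_{\gr_\varphi(Z)} \gr_v(F)$ (from Lemma~\ref{lem: associated graded of tensor product}) because $K$ is commutative and centralises $F$; finite generation of $\gr_{v^\otimes_K}(F_K)$ over $A$ follows from admissibility of $D$ combined with Lemma~\ref{lem: tensor product filtration on F_K}(iv); and the existence of a non-nilpotent central element of positive degree comes from admissibility, since $\gr_v(D) \cong (D/J(D))[\overline{\pi}; \alpha]$ being finite over its center forces some $c\,\overline{\pi}^r$ with $r > 0$ to be central in $\gr_v(D)$, hence in $A$. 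The ``no zero divisors'' hypothesis needed for part~3 of Theorem~\ref{thm: filtered localisation} is immediate from Lemma~\ref{lem: tensor product filtration on F_K}(iii), which states that $\gr_v(F)$ (and so $A$) has no zero divisors in $\gr_{v^\otimes_K}(F_K)$.

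Theorem~\ref{thm: filtered localisation} then yields a filtration $v_K$ on $F_K = Q(F_K) = \widehat{Q(F_K)}$ (the last two equalities from part~3, using artinianity of $F_K$), making $(F_K, v_K)$ a standard filtered artinian ring topologically equivalent to $(F_K, v^\otimes_K)$. Admissibility of the associated DVR $D'$ follows from Theorem~\ref{thm: filtered localisation}.2 after a short argument that finite generation over a central graded subdomain descends through the Artin--Wedderburn decomposition $\gr_{v_K}(F_K) = M_s(\gr_{v'}(F'))$ to give finite generation of $\gr_{v'}(D')$ over its own center. Continuity of $(D, v) \to (F_K, v_K)$ and containment of the image in $\mathcal{B}_K$ follow from Theorem~\ref{thm: filtered localisation}.1(ii) applied to elements of $D$, combined with the topological equivalence of $v_K$ and $v^\otimes_K$. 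For part~(ii), $u_K := M_n(v_K)$ is standard with associated maximal order $M_n(\mathcal{B}_K)$ by Definitions~\ref{defn: standard filtrations} and~\ref{defn: matrix filtration}, and topological equivalence of $u_K$ and $u^\otimes_K$ on $Q_K$ follows from topological equivalence of $v_K$ and $v^\otimes_K$ on $F_K$, combined with the filtered ring isomorphism $(Q_K, u^\otimes_K) \cong (M_n(F_K), M_n(v^\otimes_K))$ established in the unnamed lemma just preceding the theorem (matrix filtrations behave well under entrywise topological equivalence).

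The main obstacle I expect is the production of the non-nilpotent central element of positive degree in $A = Z(\gr_v(F))$ from admissibility of $D$: the skew Laurent polynomial ring structure of $\gr_v(F)$ means one must carefully argue that being finite over the centre forces a central monomial $c\,\overline{\pi}^r$ with $r>0$, which is the one point where the admissibility hypothesis (AD2) is genuinely used. A secondary technical point is the transfer of admissibility from $D$ to $D'$; this is not deep, but needs a careful bookkeeping through the Artin--Wedderburn decomposition of $F_K$. Once these are in hand, the remainder is a routine application of Theorem~\ref{thm: filtered localisation} and the formal properties of matrix filtrations.
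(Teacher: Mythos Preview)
Your proposal is correct and follows essentially the same route as the paper: apply Theorem~\ref{thm: filtered localisation} (including part~3 for the topological equivalence) to $(F_K, v^\otimes_K)$ with $A = Z(\gr_v(F))$, using Lemma~\ref{lem: tensor product filtration on F_K} to verify the hypotheses, then pass to matrices for part~(ii). You are slightly more careful than the paper in flagging the ``non-nilpotent element of positive degree'' in $A$, but your suggested resolution via admissibility (finite generation of $\gr_v(F)$ over its centre forces a central monomial of positive degree, which is regular since $\gr_v(F)$ is a domain) is exactly right and is what the paper leaves implicit.
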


\begin{proof}
$ $

\begin{enumerate}[label=(\roman*)]
\item
By Lemma \ref{lem: tensor product filtration on F_K}: $(F_K, v^\otimes_K)$ is a simple artinian filtered $\mathbb{F}_p$-algebra, its associated graded ring $\gr_{v^\otimes_K}(F_K)$ is Noetherian, $\gr_{v^\otimes_K}(F_K)$ is finitely generated over the graded subring $A := Z(\gr_v(F))$, and $A$ contains no nonzero zero divisors in $\gr_{v^\otimes_K}(F_K)$. Moreover, since we know that $\gr_{v^\otimes_K}(F_K)\cong\gr_{\varphi_K}(K) \otimes_{\gr_\varphi(Z)} \gr_v(F)$ by Lemma \ref{lem: associated graded of tensor product}, it is clear that $A$ is central in $\gr_{v^\otimes_K}(F_K)$.

Therefore, we can apply Theorem \ref{thm: filtered localisation} to see that $F_K$ carries a complete filtration $v_K$ such that 
\begin{enumerate}[label=(\alph*)]
\item $v_K$ is a standard filtration,
\item $\gr_{v_K}(F_K)$ is finitely generated over its centre,
\item for all $q\in F_K$, if $v^\otimes_K(q) \geq 0$ then $v_K(q) \geq 0$,
\item $v_K$ is topologically equivalent to $v^\otimes_K$.
\end{enumerate}

Since $w_K(d)=v(d)\geq 0$ for all $d\in D$, it follows that the image of $D$ in $F_K$ is contained in $\mathcal{B}_K$. Moreover, since $v_K$ is standard, $\mathcal{B}_K\cong M_r(D')$ for some DVR $D'$, and $\gr(D')\cong k_{D'}[Y,\gamma]$. So since $\gr_{v_K}(F_K)\cong M_r(\gr(Q(D')))\cong M_r(\gr(D')_{Y})$ is finitely generated over its centre, it follows that $D'$ is admissible as required.
\item By (i), $v_K$ is a standard filtration on $F_K$. In other words, by Definition \ref{defn: standard filtrations}: there is an identification $F_K = M_r(F')$ and $v_K = M_r(v')$, where $(F', v')$ is a filtered division ring, and $v'$ is induced from a complete discrete valuation ring $D'$ whose division ring of fractions is $F'$.

Since we may canonically identify $Q_K=K\otimes_Z Q=K\otimes_Z M_n(F) = M_n(F_K)$, it follows that we can write $Q_K = M_{nr}(F')$, and $u_K := M_n(v_K) = M_{nr}(v')$ becomes a standard filtration, with valuation ring $M_n(\mathcal{B}_K)=M_{nr}(D')$. The topological equivalence of $u_K = M_n(v_K)$ with $u^\otimes_K = M_n(v^\otimes_K)$ also follows from part (i).\qedhere
\end{enumerate}
\end{proof}

We summarise the main findings of the theorem using the following commutative diagram of standard filtered rings:

\begin{equation}\label{eqn: commutative diagram of filtered rings}
\begin{aligned}
\xymatrix@C=5pc{
&(Q,u)\ar@{-->}[rr]&&(Q_K,u_K)\\
&(\O,u)\ar@{-->}[rr]\ar[u]&&(\O_K, u_K)\ar@{-->}[u]\\
(F,v)\ar@{-->}[rr]\ar[uur]&& (F_K,v_K)\ar@{-->}[uur]\\
(D,v)\ar@{-->}[rr]\ar[uur]\ar[u]&& (\mathcal{B}_K,v_K)\ar@{-->}[u]\ar@{-->}[uur]
}
\end{aligned}
\end{equation}

In this diagram, all diagonally oriented maps are in fact diagonal embeddings $A\to M_n(A)$, all horizontal maps are continuous, and all vertical maps are inclusions of valuation rings. Note that, as usual in the theory of central simple algebras, even though $F$ is a division ring, $F_K$ may not be.

In the following corollaries, we assume the setup of Lemma \ref{lem: aut extension to Q_K} and Theorem \ref{thm: scalar extension for D}.

\begin{cor}\label{cor: existence of strong extensions}
$(\sigma_K, \delta_K, u_K)$ is an admissible datum on $Q_K$.
\end{cor}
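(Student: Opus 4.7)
The plan is to verify (AD1)--(AD6) for $(\sigma_K,\delta_K,u_K)$. Conditions (AD1) and (AD2) are immediate from Theorem \ref{thm: scalar extension for D} (the standard filtration on $Q_K$ and the admissibility of the underlying DVR), while (AD3)--(AD5) are exactly the content of Lemma \ref{lem: aut extension to Q_K}. So all the work lies in (AD6), which is the main obstacle: we must show that $\sigma_K$ is filtered with respect to $u_K$ and that there exists $m\in\mathbb{N}$ with $(\sigma_K^{p^m}-\id)(\O_K)\subseteq J(\O_K)^2$ and $\delta_K^{p^m}(\O_K)\subseteq J(\O_K)^2$.

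My strategy for (AD6) is to first establish the analogous statements with $u_K$ replaced by the more tractable tensor product filtration $u_K^\otimes$, and then transfer the conclusions back to $u_K$ via topological equivalence. For the first step, Property \ref{props: extension of commutative valuations}(iii) supplies a finite filt-basis $\{\alpha_1,\dots,\alpha_r\}$ of $(K,\varphi_K)$ over $(Z,\varphi)$, and Lemma \ref{lem: filt-free module under tensor product} then makes $(Q_K,u_K^\otimes)$ filt-free over $(Q,u)$ with filt-basis $\{\alpha_i\otimes 1\}$. Writing any $q\in Q_K$ uniquely as $q=\sum(\alpha_i\otimes 1)(1\otimes q_i)$ with $q_i\in Q$, a direct computation shows that for any $Z$-linear operator $T$ on $Q$ which is $u$-filtered of degree $d$, the operator $\id\otimes T$ on $Q_K$ is $u_K^\otimes$-filtered of degree at least $d$. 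Applied with $T\in\{\sigma,\sigma^{p^m}-\id,\delta^{p^m}\}$, this shows that $\sigma_K$ is $u_K^\otimes$-filtered and that $\deg_{u_K^\otimes}(\sigma_K^{p^m}-\id)\geq\deg_u(\sigma^{p^m}-\id)$ and $\deg_{u_K^\otimes}(\delta_K^{p^m})\geq\deg_u(\delta^{p^m})$. Combined with the characteristic-$p$ identities $(\sigma^{p^m}-\id)^{p^k}=\sigma^{p^{m+k}}-\id$ and $(\delta^{p^m})^{p^k}=\delta^{p^{m+k}}$ (which hold because $\sigma$ and $\delta$ commute), both $u_K^\otimes$-degrees can be made arbitrarily large by enlarging $m$.

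The hard part is converting these $u_K^\otimes$-degree bounds into the required $u_K$-inclusions into $\O_K$ and $J(\O_K)^2$. The topological equivalence of $u_K$ and $u_K^\otimes$ from Theorem \ref{thm: scalar extension for D}(ii) immediately gives continuity of $\sigma_K$ with respect to $u_K$, and this combined with the $J(\O_K)$-adic structure of $u_K$ will yield filtered-ness. For the ideal inclusions, the delicate step is to extract from topological equivalence a uniform constant $N\in\mathbb{N}$ such that $F_0^{u_K}\subseteq F_{-N}^{u_K^\otimes}$ and $F_{N+2}^{u_K^\otimes}\subseteq F_2^{u_K}$; once this is in hand, choosing $m$ so large that both $u_K^\otimes$-degrees of $\sigma_K^{p^m}-\id$ and $\delta_K^{p^m}$ exceed $2N+2$ immediately gives the required inclusions. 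Establishing this uniform bounded comparison between the two filtrations is the technically hardest point, and will require a careful argument exploiting the specific construction of $u_K$ via the filtered localisation procedure (as encoded in the proof of Theorem \ref{thm: filtered localisation}), the completeness of both filtrations, and the fact that $u_K$ is a $\mathbb{Z}$-valued valuation-theoretic filtration on the simple artinian ring $Q_K$.
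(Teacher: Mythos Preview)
Your approach is essentially identical to the paper's: both dispatch (AD1)--(AD5) via Lemma~\ref{lem: aut extension to Q_K} and Theorem~\ref{thm: scalar extension for D}, then for (AD6) use the filt-basis to bound $\deg_{u_K^\otimes}(\id\otimes T)$ below by $\deg_u(T)$, amplify via the characteristic-$p$ identities $(\sigma^{p^m}-\id)^{p^k}=\sigma^{p^{m+k}}-\id$, and finally transfer to $u_K$ using two bounds of the form $\O_K\subseteq F_{-N}^{u_K^\otimes}$ and $F_{N'}^{u_K^\otimes}\subseteq J(\O_K)^2$.

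You are overcautious about the transfer step, however. The inclusion $F_{N'}^{u_K^\otimes}\subseteq J(\O_K)^2$ is immediate from the definition of topological equivalence (Definition~\ref{defn: equivalent filtrations}). You are right that the other inclusion, $\O_K\subseteq F_{-N}^{u_K^\otimes}$, does not follow from topological equivalence \emph{alone}; but it is a one-line consequence of the filtered localisation construction rather than a ``technically hardest point''. In the proof of Theorem~\ref{thm: filtered localisation}.3 (invoked inside Theorem~\ref{thm: scalar extension for D}), the maximal order underlying $u_K$ is chosen to be \emph{equivalent as an order} to the $u_K^\otimes$-unit ball, so there exist units $a,b\in Q_K$ with $a\,\O_K\,b\subseteq F_0^{u_K^\otimes}$; hence $u_K^\otimes(r)\geq -u_K^\otimes(a)-u_K^\otimes(b)$ for every $r\in\O_K$. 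The paper simply asserts both bounds, attributing them to topological equivalence, which is a slight imprecision---but the missing ingredient is this routine unpacking, not the elaborate argument you anticipate.

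One point you explicitly flag that the paper's proof passes over in silence: the clause ``$\sigma_K$ is filtered with respect to $u_K$'' in (AD6). Your plan (continuity plus the $J(\O_K)$-adic structure of $u_K$) is on the right track; once you have the uniform comparison of level sets above, filteredness of $\sigma_K$ with respect to $u_K^\otimes$ transfers to filteredness with respect to $u_K$ by the same mechanism.
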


\begin{proof}
Hypotheses (AD1--5) follow easily from Lemma \ref{lem: aut extension to Q_K} and Theorem \ref{thm: scalar extension for D}, so we need only check (AD6).

Choosing $m$ as in (AD6) for $(\sigma, \delta, u)$, and applying Lemma \ref{lem: criterion for delta positive degree}, we see that $\deg_u(\sigma^{p^m} - \id) \geq 1$ and $\deg_u(\delta^{p^m}) \geq 1$. Hence for all $k\geq 1$, $\deg_u(\sigma^{p^{m+k}} - \id) = \deg_u((\sigma^{p^m} - \id)^{p^k}) \geq p^k$, and likewise $\deg_u(\delta^{p^{m+k}}) \geq p^k$.

Suppose $\{\alpha_1,\dots,\alpha_s\}$ is a filt-basis for $K/Z$: then given any $r=\alpha_1\otimes q_1+\dots+\alpha_s\otimes q_s\in Q_K$ (where $q_1, \dots, q_s \in Q$) and any $N\in\mathbb{N}$, we can calculate easily that
\begin{align*}
\sigma_K^{p^N}(r) - r &=\alpha_1\otimes(\sigma^{p^N}(q_1)-q_1)+\dots+\alpha_r\otimes(\sigma^{p^N}(q_s)-q_s),\\
\delta_K^{p^N}(r) &=\alpha_1\otimes \delta^{p^N}(q_1)+\dots+\alpha_s\otimes \delta^{p^N}(q_s).
\end{align*}
But $\{\alpha_1\otimes 1, \dots, \alpha_s\otimes 1\}$ is a filt-basis for $(Q_K, u_K^{\otimes})$ over $(Q, u)$ by Lemma \ref{lem: filt-free module under tensor product}, and so
\begin{align*}
u_K^{\otimes}(r) &= \min_{1\leq i\leq s} \{\varphi_K(\alpha_1) + u(q_i)\},\\
u_K^{\otimes}(\sigma_K^{p^N}(r) - r) &= \min_{1\leq i\leq s} \{\varphi_K(\alpha_1) + u(\sigma^{p^N}(q_i) - q_i)\} &&\geq \min_{1\leq i\leq s}\{\varphi_K(\alpha_1) + u(q_i) + \deg_u(\sigma^{p^N} - \id)\},\\
u_K^{\otimes}(\delta_K^{p^N}(r)) &= \min_{1\leq i\leq s} \{\varphi_K(\alpha_1) + u(\delta^{p^N}(q_i))\} &&\geq \min_{1\leq i\leq s}\{\varphi_K(\alpha_1) + u(q_i) + \deg_u(\delta^{p^N})\}.
\end{align*}
It follows that $u_K^{\otimes}(\sigma_K^{p^{m+k}}(r) - r) \geq u_K^{\otimes}(r) + p^k$ and $u_K^{\otimes}(\delta^{p^{m+k}}(r)) \geq u_K^\otimes(r) + p^k$ for all $r\in Q_K$.

But by Theorem \ref{thm: scalar extension for D}, we know that $u_K^\otimes$ is topologically equivalent to $u_K$, so there exist $N_1,N_2\in\mathbb{N}$ such that 
\begin{itemize}
\item $u_K^\otimes(r)\geq N_1$ for all $r\in\O_K$, and

\item if $u_K^\otimes(r)\geq N_2$ for some $r\in Q_K$, then $r\in J(\O_K)^2$.
\end{itemize}

It follows that $N_2>N_1$, so choose $k\geq \log_p(N_2-N_1)$. Then for all $r\in\O_K$, $u_K^\otimes(r)\geq N_1$, and $$u_K^{\otimes}(\sigma_K^{p^{m+k}}(r) - r) \geq u_K^{\otimes}(r) + p^k\geq N_1+(N_2-N_1)=N_2$$ so it follows that $r\in J(\O_K)^2$, so $(\sigma_K^{p^{m+k}}-\id)(\O_K)\subseteq J(\O_K)^2$, and a similar argument shows that $\delta^{p^{m+k}}(\O_K)\subseteq J(\O_K)^2$.\end{proof}

\begin{cor}
$(\sigma, \delta, u)\leq (\sigma_K, \delta_K, u_K)$ is a strong admissible extension.
\end{cor}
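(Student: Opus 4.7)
The corollary is a compilation result: we need to verify the three conditions (AE1), (AE2), and (AE3$'$) in the definition of a strong admissible extension, and essentially all the real work has been done in the preceding results. So the plan is to check each condition in turn, citing the appropriate prior result.

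For (AE1), this is immediate: by construction in Lemma \ref{lem: aut extension to Q_K} we defined $\sigma_K = \id \otimes \sigma$ and $\delta_K = \id \otimes \delta$, which is exactly what (AE1) requires. For (AE3$'$), the topological equivalence of $u_K$ with $u^\otimes_K = \varphi_K \otimes u$ is the last assertion of Theorem \ref{thm: scalar extension for D}(ii), so there is nothing to do.

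The only condition that requires a (very short) argument is (AE2). The plan is: given $q \in Q$ with $u(q) \geq 0$, we have $q \in \O = M_n(D)$, so each matrix entry of $q$ lies in $D$. By Theorem \ref{thm: scalar extension for D}(i), the natural inclusion $(D, v) \to (F_K, v_K)$ sends $D$ into the associated maximal order $\mathcal{B}_K$ of $F_K$. Hence, viewed inside $Q_K = M_n(F_K)$, the matrix $q$ has all entries in $\mathcal{B}_K$, i.e.\ $q \in M_n(\mathcal{B}_K) = \O_K$ (using Theorem \ref{thm: scalar extension for D}(ii)), so $u_K(q) \geq 0$ as required.

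There is no real obstacle here, since everything substantive has already been absorbed into Theorem \ref{thm: scalar extension for D} and Corollary \ref{cor: existence of strong extensions} (which gives us that $(\sigma_K, \delta_K, u_K)$ is itself an admissible datum). The corollary is essentially a bookkeeping statement confirming that the construction we just carried out fits the strong admissible extension framework.
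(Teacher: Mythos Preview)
Your proof is correct and takes essentially the same approach as the paper: (AE1) is immediate from the definition, (AE3$'$) is Theorem \ref{thm: scalar extension for D}(ii), and (AE2) follows from the fact that $D$ lands in $\mathcal{B}_K$ (Theorem \ref{thm: scalar extension for D}(i)) together with $\O = M_n(D)$ and $\O_K = M_n(\mathcal{B}_K)$. The paper's version is terser on (AE2), simply citing Theorem \ref{thm: scalar extension for D}(i), but your spelled-out matrix-entry argument is exactly the intended content.
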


\begin{proof}
(AE1) is immediate from the definition of $\sigma_K$ and $\delta_K$. With notation as in Theorem \ref{thm: scalar extension for D}, and noting that $Q = M_n(F)$ and $u = M_n(v)$: (AE2) follows from the corresponding claims of Theorem \ref{thm: scalar extension for D}(i), and (AE3$'$) is given by Theorem \ref{thm: scalar extension for D}(ii).
\end{proof}

\subsection{Consequences for skew power series rings}

Let $Q$ be a simple artinian ring of characteristic $p$ with centre $Z$, and fix a finite extension of fields $K/Z$.

\begin{thm}\label{thm: extending scalars for skew power series rings}
Let $(\sigma, \delta, u)\leq (\sigma_K, \delta_K, u_K)$ be an admissible extension of admissible data on $Q$ and $Q_K$ respectively. Then the bounded skew power series rings $Q^+[[x;\sigma, \delta]]$ (defined with respect to $u$) and $Q_K^+[[y;\sigma_K, \delta_K]]$ (defined with respect to $u_K$) exist, and there is a canonical isomorphism of $K$-algebras
\begin{align*}
\Theta: K\otimes_Z Q^+[[x;\sigma,\delta]]&\to Q_K^+[[y;\sigma_K, \delta_K]]\\
\gamma \otimes \sum_{i=0}^\infty q_ix^i &\mapsto \sum_{i=0}^\infty (\gamma\otimes q_i) y^j.
\end{align*}

In particular, if $Q_K^+[[y; \sigma_K, \delta_K]]$ is prime (resp.\ simple) then $Q^+[[x; \sigma, \delta]]$ is prime (resp.\ simple).
\end{thm}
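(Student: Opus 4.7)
My plan is to verify the three claims in sequence: existence, isomorphism, and the implication about primeness/simplicity. Existence of both bounded skew power series rings is immediate from (AD6) via Remarks \ref{rks: AD hypotheses}: each admissible datum yields compatibility of $(\sigma^{p^m}, \delta^{p^m})$ with its respective filtration, hence quasi-compatibility, hence the ring is defined as in Definition \ref{defn: bounded skew power series rings}. The main work is constructing $\Theta$, and here I would argue as follows.

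First I would check that $\Theta$ is well-defined. Fix a filt-basis $\{\alpha_1,\dots,\alpha_s\}$ for $(K,\varphi_K)$ over $(Z,\varphi)$, which exists by Property \ref{props: extension of commutative valuations}(iii). Given a simple tensor $\gamma\otimes\sum q_i x^i$, the sequence $(u(q_i))$ is bounded below, so $(u^{\otimes}_K(\gamma\otimes q_i))$ is bounded below because $\varphi_K(\gamma)$ is finite and $u^\otimes_K(\gamma\otimes q_i)\geq \varphi_K(\gamma)+u(q_i)$ by the definition of the tensor filtration. Hypothesis (AE3) then transfers this to $u_K$-boundedness, so the target series is a legitimate element of $Q_K^+[[y;\sigma_K,\delta_K]]$. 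Additivity and $K$-linearity of $\Theta$ are built into the formula. Multiplicativity reduces, via the multiplication rule \eqref{eqn: multiplication in bounded SPSRs}, to checking that $\sigma_K^e\delta_K^{i-e}(\gamma\otimes q)=\gamma\otimes\sigma^e\delta^{i-e}(q)$, which is exactly (AE1). Thus $\Theta$ is a $K$-algebra homomorphism.

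Next I would establish bijectivity using the filt-basis $\{\alpha_j\}$. For surjectivity, take $h=\sum_{i\geq 0} r_i y^i$ in $Q_K^+[[y;\sigma_K,\delta_K]]$. Since $Q_K=K\otimes_Z Q$ has $\{\alpha_j\otimes 1\}$ as a $Q$-module basis, write uniquely $r_i=\sum_{j=1}^s (\alpha_j\otimes 1)(1\otimes q_{ij})$ with $q_{ij}\in Q$. By Lemma \ref{lem: filt-free module under tensor product}, $u^\otimes_K(r_i)=\min_j\{\varphi_K(\alpha_j)+u(q_{ij})\}$. The $u_K$-boundedness of $(r_i)$ combined with (AE3) yields $u^\otimes_K$-boundedness, and since the filt-basis is finite, this forces each of the $s$ sequences $(u(q_{ij}))_i$ to be bounded below. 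Hence each $f_j:=\sum_i q_{ij}x^i$ lies in $Q^+[[x;\sigma,\delta]]$, and $h=\sum_j \Theta(\alpha_j\otimes f_j)$. For injectivity, a similar decomposition argument shows that if $\Theta(\sum_j \alpha_j\otimes f_j)=0$, then the coefficients of each power of $y$ vanish, and the filt-basis property together with $Q$ embedding in $Q_K$ forces each $f_j=0$.

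For the final claim, once the isomorphism $K\otimes_Z Q^+[[x;\sigma,\delta]]\cong Q_K^+[[y;\sigma_K,\delta_K]]$ is established, we apply Proposition \ref{propn: simplicity under tensor product} with $A=Q^+[[x;\sigma,\delta]]$, $k=Z$, noting that $Z$ is central in $Q^+[[x;\sigma,\delta]]$ because $\sigma$ fixes $Z$ by (AD3) and $\delta$ vanishes on $Z$ (as $\delta(z)=tz-\sigma(z)t=tz-zt=0$ for $z\in Z$). Primality (resp.\ simplicity) of $A_K$ then descends to $A$ directly. The main obstacle I anticipate is the surjectivity step: one must be careful to exploit that the filt-basis is \emph{finite} (so that a uniform lower bound on the finitely many sequences $(u(q_{ij}))_i$ really exists) and that (AE3) genuinely connects $u_K$-boundedness with $u^\otimes_K$-boundedness --- this is precisely the role that (AE3) plays and why it is formulated the way it is.
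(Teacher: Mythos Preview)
Your proposal is correct and follows essentially the same approach as the paper: both arguments use the filt-basis $\{\alpha_j\}$ for $K/Z$ together with (AE3) to transfer boundedness between $u$, $u^\otimes_K$, and $u_K$, and both deduce the final implication from Proposition~\ref{propn: simplicity under tensor product}. The only cosmetic difference is that the paper verifies multiplicativity by first observing $\Theta|_{Q^+[[x;\sigma,\delta]]}$ is a ring map and then using that $K$ is central in the target, whereas you check it directly via the multiplication formula~\eqref{eqn: multiplication in bounded SPSRs} and (AE1); both routes are straightforward.
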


\begin{proof}
The existence of these rings follows from Remark \ref{rks: AD hypotheses} (AD6).

To show that $\Theta$ is a well-defined map, take $\gamma\in K$ and $\sum_{i\in\mathbb{N}} q_i x^i \in Q^+[[x]]$: then, by definition of $Q^+[[x]]$, we know that $(u(q_i))_{i \in \mathbb{N}}$ is bounded below. In order for $\sum_{i \in \mathbb{N}} (\gamma\otimes q_i) y^i$ to be a well-defined element of $Q_K^+[[y]]$, we need $(u_K(\gamma\otimes q_i))_{i \in \mathbb{N}}$ to be bounded below. But, by (AE3), this is equivalent to the claim that $(u^\otimes_K(\gamma\otimes q_i))_i$ is bounded below. (Here, as before, we have defined the field valuation $\varphi = u|_Z$, extended it uniquely to a complete valuation $\varphi_K$ on $K$, and set $u^\otimes_K = \varphi_K \otimes u$.)

So choose a finite filt-basis $\{\alpha_j\}_{j=1}^m$ for $K$ over $Z$ by Property \ref{props: extension of commutative valuations}(iii), and write $\gamma$ uniquely as $\gamma = \sum_j z_j \alpha_j$ for some $z_j\in Z$, so that $\varphi_K(\gamma) = \min_j\{u(z_j) + \varphi_K(\alpha_j)\}$. Then since $\{\alpha_j\otimes 1\}_j$ is a filt-basis for $Q_K$ by Lemma \ref{lem: filt-free module under tensor product}, it follows from Definition \ref{defn: filt-free} that 
$$u^\otimes_K(\gamma\otimes q_i) = u^\otimes_K\left(\sum_{1\leq j\leq m} \alpha_j\otimes z_jq_j\right) \geq \min_{1\leq j\leq m} \{ \varphi_K(\alpha_j) + u(z_j) + u(q_i)\}$$
for each $q_i$. As the filt-basis is finite, it follows that $(u^\otimes_K(\gamma\otimes q_i))_{i \in \mathbb{N}}$ is bounded below if and only if $(u(q_i))_{i \in \mathbb{N}}$ is bounded below.

Hence $\sum_{i=0}^{\infty} (\gamma\otimes q_i) y^i$ is a well-defined element of $Q_K^+[[y]]$, and it is clear that the map sending $\gamma \otimes \sum_{i=0}^\infty q_ix^i$ to $\sum_{i=0}^\infty (\gamma\otimes q_i) y^j$ defines a $K$-linear map $\Theta: K\otimes_Z Q^+[[x]]\to Q_K^+[[y]]$. Moreover, for any $r=\sum_{i=0}^{\infty}  q_i x^i\in Q^+[[x;\sigma,\delta]]$, we have $\Theta(r)=\sum_{i=0}^{\infty}  q_i y^i$, so it is clear that $\Theta$ restricts to a ring homomorphism from $Q^+[[x;\sigma,\delta]]$ to $Q^+[[y;\sigma_K,\delta_K]]$ by (AE1).

Since $\sigma_K|_K=\id_K$ and $\delta_K|_K = 0$ by (AE1) and (AD3--4), it follows that $K$ is central in $Q^+[[y;\sigma_K,\delta_K]]$. Therefore, for any $\alpha,\beta\in K$, $r,s\in Q^+[[x;\sigma,\delta]]$, $$\Theta((\alpha\otimes r)(\beta\otimes s))=\Theta(\alpha\beta\otimes rs)=\alpha\beta\Theta(rs)=\alpha\beta\Theta(r)\Theta(s)=\alpha\Theta(r)\beta\Theta(s)=\Theta(\alpha\otimes r)\Theta(\beta\otimes s),$$ and it follows that $\Theta$ is a ring homomorphism. To prove that it is an isomorphism, it remains to prove that it is bijective. To this end, we note that it may be written as
$$\Theta: \sum_{j=1}^m \left( \alpha_j \otimes \sum_{i=0}^\infty q_{i,j} x^i\right) \mapsto \sum_{i=0}^{\infty} \left( \sum_{j=1}^m (\alpha_j \otimes q_{i,j}) \right) y^i$$
where $\{\alpha_j\}_{j=1}^m$ is the filt-basis chosen above. Now if
$$\sum_{i=0}^{\infty} \left( \sum_j (\alpha_j \otimes q_{i,j}) \right) y^i = 0,$$
then each $y^i$-coefficient must satisfy $\sum_j (\alpha_j \otimes q_{i,j}) = 0$: hence, as $\{\alpha_j\otimes 1\}_{j=1}^m$ is a right $Q$-module basis for $Q_K$ by Lemma \ref{lem: filt-free module under tensor product}, we must have $q_{i,j} = 0$ for each $j$. It follows that $\Theta$ is injective.

To show that $\Theta$ is surjective, choose arbitrary $r_i\in Q_K$ such that $(u_K(r_i))$ is bounded below: we will show that $\sum_{i=0}^\infty r_i y^i$ is in the image of $\Theta$. To this end, write $r_i = \sum_j \alpha_j \otimes q_{i,j}$ for $q\in Q$, using the module basis described above. It will suffice to show that $\sum_{i=0}^\infty q_{i,j} x^i$ is a well-defined element of $Q^+[[x]]$, i.e.\ that $(u(q_{i,j}))_i$ is bounded below: but this follows once more from (AE3).

So $\Theta$ is a ring isomorphism, and it follows from Proposition \ref{propn: simplicity under tensor product} that if $Q^+[[y;\sigma_K,\delta_K]]$ is prime (resp.\ simple), then $Q^+[[x;\sigma,\delta]]$ is prime (resp.\ simple).
\end{proof}

Using this result, our strategy for proving statement (a) of Theorem \ref{letterthm: FOH implies prime} is clear: if $(\sigma,\delta,u)$ satisfies \eqref{FOH}, then by definition there exists an admissible extension $(\sigma_K,\delta_K,u_K)$ satisfying \eqref{SFOH}. The remaining work is to show that this can be assumed to satisfy hypothesis (a) or (b) of Theorem \ref{letterthm: SFOH implies prime}: then we will be able to conclude that $Q_K^+[[y;\sigma_K,\delta_K]]$ is prime, and then apply Theorem \ref{thm: extending scalars for skew power series rings}.

For statement (b) of Theorem \ref{letterthm: FOH implies prime}, we want to prove that if \eqref{FOH} is \emph{not} satisfied, then $Q^+[[x;\sigma,\delta]]$ is a simple ring. We will assume for contradiction that $Q^+[[x;\sigma,\delta]]$ is \emph{not} a simple ring: most of \S \ref{sec: FOH} is then dedicated to constructing a finite extension $K/Z$ using the two-sided ideal structure of $Q^+[[x;\sigma,\delta]]$ and an admissible extension $(\sigma, \delta, u) \leq (\sigma_K, \delta_K, u_K)$ satisfying \eqref{SFOH}.

\section{The finite order hypothesis}\label{sec: FOH}

Let $Q$ be a simple artinian ring of characteristic $p$, and let $(\sigma, \delta, u)$ be an admissible datum on $Q$. Fix $t\in Q$ such that $\sigma(t) = t$ and $\delta(q) = tq - \sigma(q)t$ for all $q\in Q$ using (AD4), and assume further that $t\in\O^\times$. (By Lemma \ref{lem: t in Q^times} and (AD6), this is equivalent to the assumption that $u(t^N) = 0$ for all $N\in\mathbb{N}$.) We are now in the setup of Theorem \ref{letterthm: FOH implies prime}. The following subsections are dedicated to a number of simplifications of this setup: in particular, we will show that $(\sigma, \delta, u)$ satisfies \eqref{FOH}. 

We fix the following notation for this section:
\begin{itemize}
\item $Z$ is the centre of $Q$, and $\O$ is the maximal order associated to $Q$.
\item Given any unit $U\in Q^\times$, write $\varphi_U$ for the map $\varphi_U(q) = UqU^{-1}$ for all $q\in Q$. If $K/Z$ is a finite extension of fields, and $U\in Q_K^\times$, then write $\varphi_{U,K}$ for the map $\varphi_{U,K}(q) = UqU^{-1}$ for all $q\in Q_K$.
\end{itemize}

\subsection{The Iwasawa condition}\label{subsec: Iwasawa condition}

Many of our arguments become far more straightforward if we assume the additional condition that $t=-1$, or equivalently $\delta=\sigma-\id$. We refer to this condition as the \emph{Iwasawa condition}, and we call the corresponding bounded skew power series ring $Q^+[[x;\sigma,\delta]]$ a skew power series ring of \emph{Iwasawa type}.

We begin with the following simplification: we show that, under the assumption that $t\in\O^\times$, the Iwasawa condition is never far away.

Set $\sigma_0:=\varphi_t^{-1}\circ\sigma$ and $\delta_0 := -t^{-1}\delta = \sigma_0-\id$, and note that conditions (AD1--6) are trivially satisfied for $(\sigma_0, \delta_0)$, so $(\sigma_0, \delta_0, u)$ is still an admissible datum on $Q$.

If $(\sigma,\delta)$ happens to be compatible with $u$, then so is $(\sigma_0, \delta_0)$, and it follows from \cite[Theorem B]{jones-woods-2} that $Q^+[[x;\sigma,\delta]] = Q^+[[y;\sigma_0,\delta_0]]$ as filtered rings, where $y=-t^{-1}x$. In general, we only know by (AD6) that we can choose $m\in\mathbb{N}$ such that $(\sigma^{p^m},\delta^{p^m})$ is compatible with $u$. Fixing any such $m$: since $\sigma(t)=t$, we know that $\sigma_0^{p^m}=\varphi_t^{-p^m}\circ\sigma^{p^m}=\varphi_{t^{p^m}}^{-1}\circ\sigma^{p^m}$ and $\delta^{p^m}(q)=t^{p^m}q-\sigma^{p^m}(q)t^{p^m}$, from which it follows by the same reasoning that
$$Q^+[[x^{p^m};\sigma^{p^m},\delta^{p^m}]]=Q^+[[y^{p^m};\sigma_0^{p^m},\delta_0^{p^m}]],$$
which we denote $R$. Next, by Theorem \ref{thm: crossed product after localisation}, we have crossed product decompositions
\[
Q^+[[x; \sigma, \delta]] = \bigoplus_{i=0}^{p^m - 1} R g^i, \qquad Q^+[[y; \sigma_0, \delta_0]] = \bigoplus_{i=0}^{p^m - 1} R h^i,
\]
where $g = x-t$ and $h = y+1 = -t^{-1}g$. This is a \emph{diagonal change of basis} \cite[\S 1]{passmanICP}: in other words, since $-t$ is a unit in $R$, we have $R(-t) = R$, and so
$$Q^+[[x; \sigma, \delta]] = \bigoplus_{i=0}^{p^m - 1} R g^i = \bigoplus_{i=0}^{p^m - 1} R (-th)^i = \bigoplus_{i=0}^{p^m - 1} R h^i = Q^+[[y; \sigma_0, \delta_0]].$$
In particular, $Q^+[[x; \sigma, \delta]]$ is simple (resp.\ prime) if and only if $Q^+[[y; \sigma_0, \delta_0]]$ is simple (resp.\ prime).

Next, we check how this equality interacts with \eqref{FOH}. But if $K/Z$ is a finite extension of fields, and $(\sigma, \delta, u) \leq (\sigma_K, \delta_K, u_K)$, then it is easy to check that $(\sigma_0, \delta_0, u)\leq (\sigma_{0,K}, \delta_{0,K}, u_K)$, where
\begin{align}\label{eqn: formula for sigma_(0,K)}
\sigma_{0,K} := \id \otimes \sigma_0 = \id \otimes \varphi_t^{-1} \circ \sigma = \varphi_{t,K}^{-1} \circ (\id \otimes \sigma) = \varphi_{t,K}^{-1} \circ \sigma_K
\end{align}
(as $K$ is central in $Q_K$), and $\delta_{0,K} := \id_K \otimes \delta_0 = \sigma_{0,K} - \id_{Q_K}$ is still of Iwasawa type, hence we may pass between admissible extensions of $(\sigma, \delta, u)$ and $(\sigma_0, \delta_0, u)$. Moreover, by our assumption $t\in \O^\times$, equation \eqref{eqn: formula for sigma_(0,K)} implies that $(\sigma_{0,K},\delta_{0,K},u_K)$ satisfies \eqref{SFOH} if and only if $(\sigma_K, \delta_K, u_K)$ does, and hence $(\sigma_0, \delta_0, u)$ satisfies \eqref{FOH} if and only if $(\sigma, \delta, u)$ does.

Hence, in proving Theorem \ref{letterthm: FOH implies prime}, it will suffice to replace $(\sigma, \delta)$ by $(\sigma_0, \delta_0)$. So until the end of the section, we make the following standing assumptions:

\textbf{Setup.} Let $Q$ be a simple artinian ring of characteristic $p$, and let $(\sigma, \delta, u)$ be an admissible datum on $Q$, where $\delta = \sigma - \id$.

\subsection{Central scaling}\label{subsec: central scaling}

To start, we know using (AD5) that there exists an element $a\in Q^\times$ such that $\sigma^n = \varphi_a$ for some $n$. We aim to show further (after appropriate ``admissible" extensions to $\sigma_K$, $Q_K$, $\O_K$) that we can assume $a\in\O_K^\times$, and that $n=p^\ell$ for some $\ell\in\mathbb{N}$. This requires several steps, and at each stage we will need to retain tight control over the conjugating element $a$ and the power to which we must raise $\sigma$.

Let $n$ be the order of $[\sigma]\in\Out(Q)$, and let $a\in Q^\times$ such that $\sigma^n = \varphi_a$.

We aim to show that, under appropriate conditions, we may find an admissible extension $(\sigma, \delta, u) \leq (\sigma_K, \delta_K, u_K)$ to an appropriate field with the following properties:
\begin{itemize}
\item $[\sigma_K] \in \Out(Q_K)$ has order $n'$, and
\item there exist a natural number $\ell$ and $c\in \O_K^\times$ such that $\sigma_K^{p^\ell n'} = \varphi_{c,K}$.
\end{itemize}

\textit{Strategy.} The case $Z\cap J(\O) = 0$ will be easy to deal with separately using Corollary \ref{cor: algebraic with small centre}, so we restrict to the case $Z\cap J(\O) \neq 0$. Note also that, if we already have $a\in \O^\times$, then there is nothing to prove: we simply take $K = Z$ (so $n' = n$) and $\ell = 0$. More generally, if there exists an element $z\in Z^\times$ with $u(z) = u(a)$, then we can replace $a$ with $c:=z^{-1}a$: we still have that $\sigma^n = \varphi_c$, and now $u(c)=0$, so if we are able to ensure that the skew derivation is compatible with $u$, we may simply apply Lemma \ref{lem: a regular}(ii) to see that $c\in\O^\times$.

In general, there is no such $z\in Z^\times$, and so our approach will be to extend the centre $Z$ to a finite extension field $K$ in which we can find an appropriate element $z\in K^\times$.

\begin{lem}\label{lem: central scaling}
If $Z\cap J(\O)\neq 0$, then there exist a finite extension of fields $K/Z$, an admissible datum $(\sigma_K, \delta_K, u_K)$ on $Q_K$ (with associated maximal order $\O_K$) which is a strong admissible extension of $(\sigma,\delta,u)$, a natural number $\ell$ and an element $c\in \O_K^\times$ such that $\sigma_K^{p^\ell n} = \varphi_{c,K}$. Moreover, $K\cap J(\O_K) \neq 0$.
\end{lem}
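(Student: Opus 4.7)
The plan is as follows. First, by (AD6) fix $m \in \mathbb{N}$ such that $(\sigma^{p^m}, \delta^{p^m})$ is compatible with $u$, and set $b_0 := a^{p^m}$, so that $\sigma^{p^m n}(q) = b_0 q b_0^{-1}$ for all $q \in Q$. Since $\deg_u(\sigma^{p^m n} - \id) \geq p^m > 0$, Lemma~\ref{lem: a regular}(ii) applied to the compatible skew derivation $(\sigma^{p^m}, \delta^{p^m})$ gives $u(b_0^k) = k\, u(b_0)$ for all $k \in \mathbb{Z}$.

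Next, the hypothesis $Z \cap J(\O) \neq 0$ forces the restricted valuation $\varphi := u|_Z$ to be nontrivial, so $Z$ admits a uniformiser $\pi$ with $u(\pi) = h$ equal to the minimum positive value of $\varphi$. By Eisenstein's criterion, $T^h - \pi$ is irreducible in $Z[T]$, so $K := Z[T]/(T^h - \pi)$ is a degree-$h$ field extension of $Z$; write $\eta \in K$ for the class of $T$, so $\eta^h = \pi$. Theorem~\ref{thm: scalar extension for D} and Corollary~\ref{cor: existence of strong extensions} furnish a strong admissible extension $(\sigma_K, \delta_K, u_K)$ of $(\sigma, \delta, u)$ on $Q_K$. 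A direct analysis of this totally ramified extension (using (AE3$'$) together with the explicit description of $\varphi_K$ in Property~\ref{props: extension of commutative valuations}(iii)) shows that, after possibly rescaling $u$ by $h$ as in \S\ref{subsec: DVRs and scalar extensions}, one has $u_K(\eta) = 1$ and hence $u_K(K^\times) = \mathbb{Z}$.

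To conclude, by (AD6) applied to the extended datum, choose $\ell \geq m$ such that $(\sigma_K^{p^\ell}, \delta_K^{p^\ell})$ is compatible with $u_K$, and set $b := a^{p^\ell}$; then $\sigma_K^{p^\ell n} = \varphi_{b,K}$, and Lemma~\ref{lem: a regular}(ii) applied in $Q_K$ yields $u_K(b^k) = k\, u_K(b)$ for all $k \in \mathbb{Z}$. Let $M := u_K(b)$ and take $\zeta := \eta^M \in K^\times$, so that $u_K(\zeta) = M = u_K(b)$. Since $\zeta \in Z(Q_K) = K$ is central, setting $c := \zeta^{-1} b$ we have $\varphi_{c,K} = \varphi_{b,K} = \sigma_K^{p^\ell n}$; meanwhile Lemma~\ref{lem: u-regular elements} applied to both $\zeta$ and $b$ (both valuation-regular) forces $u_K(c) = u_K(c^{-1}) = 0$, so $c \in \O_K^\times$. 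Finally, $\eta \in K$ satisfies $u_K(\eta) = 1 > 0$, so $\eta \in K \cap J(\O_K)$ is nonzero, giving the last assertion.

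The principal technical obstacle is the control of the value group of $u_K$ restricted to $K$: although $u_K$ is only topologically equivalent (not equal) to $u_K^\otimes = \varphi_K \otimes u$ on $Q_K$, we must ensure that the value $u_K(b)$ is actually attained on $K$ itself. This reduces, via Property~\ref{props: extension of commutative valuations}(iii), to the explicit verification that in the totally ramified Eisenstein extension $K/Z$ the image $\eta$ of $T$ becomes a uniformiser of the associated discrete valuation ring inside $K$ --- a computation that must be carried out within the normalisation procedure discussed in \S\ref{subsec: DVRs and scalar extensions}.
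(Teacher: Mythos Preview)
Your choice of extension $K = Z(\eta)$ with $\eta^h = \pi$ coincides with the paper's (which adjoins a $C$th root of an element $z \in Z$ of minimal positive $u$-value $C$), and the overall strategy is the same. The gap lies in your verification that $c = \zeta^{-1}b \in \O_K^\times$.

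Your argument requires $u_K(\eta) = 1$, but this is not justified. The filtration $u_K$ is produced by the filtered localisation procedure of Theorem~\ref{thm: filtered localisation}, which only guarantees \emph{topological equivalence} with $u_K^\otimes$; the actual integer values of $u_K$ are not controlled. Thus $u_K|_K$ is \emph{some} discrete valuation on $K$ with the same valuation ring as $\varphi_K$, but it may well be a proper positive-integer multiple of $\varphi_K$, so that $u_K(\eta) = e$ for some unknown $e \geq 1$. Then $u_K(K^\times) = e\mathbb{Z}$, and there is no reason for $M = u_K(b)$ to lie in $e\mathbb{Z}$; hence no $\zeta \in K^\times$ with $u_K(\zeta) = M$ need exist. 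Your closing paragraph acknowledges the obstacle but does not resolve it, and the appeal to ``rescaling $u$ by $h$'' does not help: rescaling $u$ changes $u_K^\otimes$ but still leaves $u_K$ determined only up to topological equivalence.

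The paper avoids this by never computing $u_K$ on elements of $K$ directly. It works in $(Q,u)$ as long as possible: with $v := u(b)$ one has $u(z^{-v}b^C) = 0$ by Lemma~\ref{lem: u-regular elements} and Lemma~\ref{lem: a regular}(ii), so $z^{-v}b^C \in \O^\times$, and then (AE2) gives $z^{-v}b^C \in \O_K^\times$. The key trick is the identity $z^{-v}b^C = (\eta^{-v}b)^C$ inside $Q_K$: raising to the $C$th power collapses the $K$-factor back into $Z \subseteq Q$, where $u$ is understood. Since $\eta^{-v}b$ is central-times-$b$ and hence still implements $\sigma_K^{p^\ell n}$, Lemma~\ref{lem: a regular}(ii) applied in $(Q_K, u_K)$ yields $C\,u_K(\eta^{-v}b) = u_K((\eta^{-v}b)^C) = 0$, so $c := \eta^{-v}b \in \O_K^\times$. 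This bypasses any need to know $u_K(\eta)$.
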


\begin{proof}
Choose an element $z\in Z\cap J(\O)$ of minimal positive degree, say $C$. Let $K$ be the extension of $Z$ obtained by adjoining a $C$'th root of $z$, and let $\zeta_0$ be this root, i.e.\ $\zeta_0^{C}=z$.

By Corollary \ref{cor: existence of strong extensions}, there exists a strong admissible extension $(\sigma_K, \delta_K, u_K)$ of $(\sigma, \delta, u)$, and $\sigma_K^n = \varphi_{a,K}$ by Definition \ref{defn: admissible extension} and Remark \ref{rks: AE hypotheses} (AE1). By (AD6) for $(\sigma_K, \delta_K, u_K)$, we can fix $\ell$ sufficiently high to ensure that $(\sigma_K^{p^\ell}, \delta_K^{p^\ell})$ is compatible with $u_K$.

Next, set $b = a^{p^\ell}$. We will show that we can find $\zeta\in K^\times$ such that $\zeta^{-1}b \in \O_K^\times$. The result will then follow by taking $c = \zeta^{-1}b$.

In the case where $u(b)=0$, then $u(b^{-1}) = -u(b) = 0$ by Lemma \ref{lem: a regular}(ii), and so both $b$ and $b^{-1}$ are elements of $\O$. Since $\O\subseteq \O_K$ by (AE2), we have $b\in\O_K^{\times}$, and so we may take $\zeta = 1$.

In the case where $v:=u(b)\neq 0$, we have $u(b^{\pm C}) = \pm Cv$ by Lemma \ref{lem: a regular}(ii) (using the compatibility of $(\sigma^{p^\ell}, \delta^{p^\ell})$ and $u$). We also have $u(z^{\pm v}) = \pm Cv$ since $u|_Z = \varphi$ is a valuation on $Z$ (see e.g.\ the discussion of \S \ref{subsec: DVRs and scalar extensions}). Moreover, by Lemma \ref{lem: u-regular elements}, $u(z^{-v} b^C) = u(z^v b^{-C}) = 0$, and so $z^{-v} b^C \in \O^\times$, which implies $z^{-v} b^C \in\O_K^{\times}$ as above. But by definition, this is equal to $(\zeta_0^{-v}b)^C$, and if $(\zeta_0^{-v}b)^C\in \O_K^\times$ then we must have $\zeta_0^{-v}b\in \O_K^\times$ by another application of Lemma \ref{lem: a regular}(ii) (this time using the compatibility of $(\sigma_K^{p^\ell}, \delta_K^{p^\ell})$ and $u_K$). So in this case we take $\zeta = \zeta_0^v$.

For the final claim, simply note by (AE3$'$) that there exists $T\in \mathbb{N}$ such that $J(\O)^T \subseteq J(\O_K)$, so $z^T \in Z\cap J(\O_K) \subseteq K\cap J(\O_K)$.
\end{proof}

Unfortunately, this is not quite enough, as the order of $[\sigma_K]$ may be less than $n$, so we will need to repeat this process. In doing so, we may lose property (AE3$'$) (see Remark \ref{rks: more AE stuff}.2), but this will not cause us any problems.

\begin{propn}\label{propn: unit in finite extension}
If $Z\cap J(\O)\neq 0$, then there exist a finite extension of fields $L/Z$, an admissible datum $(\sigma_L, \delta_L, u_L)$ on $Q_L$ (with associated maximal order $\O_L$) which is an admissible extension of $(\sigma,\delta,u)$, a natural number $\ell$ and an element $c\in \O_L^\times$ such that $\sigma_L^{p^\ell n'} = \varphi_{c,L}$, where $n'$ is the order of $[\sigma_L] \in \Out(Q_L)$.
\end{propn}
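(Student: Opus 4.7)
The plan is to iterate Lemma \ref{lem: central scaling}. A single application of that lemma to $(\sigma, \delta, u)$ yields a strong admissible extension $(\sigma_{K_1}, \delta_{K_1}, u_{K_1})$, a natural number $\ell_1$, and an element $c_1 \in \O_{K_1}^{\times}$ satisfying $\sigma_{K_1}^{p^{\ell_1} n} = \varphi_{c_1, K_1}$, where $n = |[\sigma]|$. The issue is that the exponent here involves $n$, not the order $n_1 := |[\sigma_{K_1}]|$ of the outer class in the new datum, and in general $n_1$ may be a proper divisor of $n$. Crucially, the lemma also guarantees $K_1 \cap J(\O_{K_1}) \neq 0$, so its hypotheses are satisfied again by the new datum.

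I would therefore iterate: at step $k$, apply Lemma \ref{lem: central scaling} to $(\sigma_{K_{k-1}}, \delta_{K_{k-1}}, u_{K_{k-1}})$ to obtain an admissible extension $(\sigma_{K_k}, \delta_{K_k}, u_{K_k})$, a natural number $\ell_k$, an element $c_k \in \O_{K_k}^{\times}$ satisfying $\sigma_{K_k}^{p^{\ell_k} n_{k-1}} = \varphi_{c_k, K_k}$, and the property $K_k \cap J(\O_{K_k}) \neq 0$, where $n_{k-1} = |[\sigma_{K_{k-1}}]|$. Transitivity of admissible extensions (Lemma \ref{lem: admissible data properties}) then ensures that $(\sigma_{K_k}, \delta_{K_k}, u_{K_k})$ is an admissible extension of the original $(\sigma, \delta, u)$, which is all the proposition asks for.

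The sequence of outer orders $n \geq n_1 \geq n_2 \geq \cdots$ is weakly decreasing in $\mathbb{N}_{>0}$, hence stabilises after finitely many steps at some index $k$ with $n_k = n_{k-1}$. At that point the element $c_k \in \O_{K_k}^{\times}$ satisfies $\sigma_{K_k}^{p^{\ell_k} n_k} = \varphi_{c_k, K_k}$, so taking $L = K_k$, $\ell = \ell_k$ and $c = c_k$ delivers the conclusion, with $n' = n_k$ playing the role of $|[\sigma_L]|$.

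The main point requiring care is that the auxiliary condition $K_k \cap J(\O_{K_k}) \neq 0$ really propagates along the chain so that Lemma \ref{lem: central scaling} can be reapplied indefinitely. This is furnished by the final clause of Lemma \ref{lem: central scaling} itself; while strong admissibility need not be preserved under composition (Remark \ref{rks: more AE stuff}), each individual single-step extension in the iteration is strong, which is exactly what the proof of that clause uses. No further obstacle arises beyond bookkeeping the iteration.
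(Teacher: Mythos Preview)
Your proposal is correct and follows essentially the same approach as the paper: iterate Lemma \ref{lem: central scaling}, observe that the sequence of outer orders must stabilise (the paper notes that each $n_i$ in fact \emph{divides} $n_{i-1}$ by (AE1), but weak decrease suffices), and invoke transitivity of admissible extensions at the end. Your remarks about propagation of the condition $K_k \cap J(\O_{K_k}) \neq 0$ and about strong versus ordinary admissible extensions match the paper's own treatment.
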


\begin{proof}
Let $K_0 := Z$ and $(\sigma_0, \delta_0, u_0) := (\sigma, \delta, u)$. Set $n(0) := n$ to be the order of $[\sigma] \in \Out(Q)$. By repeated application of Lemma \ref{lem: central scaling}, we obtain, for all $i\geq 1$:
\begin{itemize}
\item a finite extension of fields $K_i / K_{i-1}$,
\item an admissible datum $(\sigma_i, \delta_i, u_i)$ on each $Q_i := Q_{K_i}$ which is a strong admissible extension of $(\sigma_{i-1}, \delta_{i-1}, u_{i-1})$,
\item a natural number $n(i)$, the order of $[\sigma_i] \in \Out(Q_i)$,
\item a natural number $\ell(i)$ such that $\sigma_i^{p^{\ell(i)} n(i-1)}$ is conjugation by an element of $\O_i^\times$.
\end{itemize}
But by (AE1), we must have that $n(i)$ divides $n(i-1)$ for each $i\geq 1$, and hence there exists some $j$ such that $n(j-1) = n(j)$. Now simply take $L = K_j$, $\ell = \ell(j)$ and $n' = n(j)$, and use Lemma \ref{lem: admissible data properties} to see that $(\sigma_L, \delta_L, u_L) := (\sigma_j, \delta_j, u_j)$ is an admissible extension of $(\sigma, \delta, u)$.
\end{proof}

\begin{propn}\label{propn: SFOH1}
Assume $Q^+[[x;\sigma,\delta]]$ is not a simple ring. Then there exist a finite extension of fields $K/Z$, an admissible datum $(\sigma_K, \delta_K, u_K)$ on $Q_K$ (with associated maximal order $\O_K$) which is an admissible extension of $(\sigma,\delta,u)$, a natural number $\ell$ and an element $c\in \O_K^\times$ such that $\sigma_K^{p^\ell n'} = \varphi_{c,K}$, where $n'$ is the order of $[\sigma_K] \in \Out(Q_K)$.
\end{propn}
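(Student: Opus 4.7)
The plan is to split on the dichotomy $Z \cap J(\O) = 0$ vs.\ $Z \cap J(\O) \neq 0$. In the case $Z \cap J(\O) \neq 0$, Proposition \ref{propn: unit in finite extension} already produces a finite extension $L/Z$, an admissible extension $(\sigma_L, \delta_L, u_L)$ of $(\sigma, \delta, u)$, an integer $\ell$, and $c \in \O_L^\times$ with $\sigma_L^{p^\ell n'} = \varphi_{c, L}$, so this case is immediate and does not use the non-simplicity hypothesis. In the remaining case $Z \cap J(\O) = 0$, I will take $K = Z$ together with the trivial (strong) admissible extension $(\sigma_K, \delta_K, u_K) = (\sigma, \delta, u)$, so that $\O_K = \O$ and $n' = n$, and construct $\ell$ and $c \in \O^\times$ satisfying $\sigma^{p^\ell n} = \varphi_c$ directly.

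To get into the compatible regime, fix $m$ by (AD6) such that $(\sigma^{p^m}, \delta^{p^m})$ is compatible with $u$; let $d$ be the order of $[\sigma^{p^m}] \in \Out(Q)$, and pick $b \in Q^\times$ with $\sigma^{p^m d} = \varphi_b$. Since $[\sigma^{p^m}] = [\sigma]^{p^m}$ has order $n/\gcd(n, p^m)$, writing $\gcd(n, p^m) = p^s$ gives $p^m d = p^{m-s} n$ and hence $\sigma^{p^{m-s} n} = \varphi_b$; so, taking $\ell = m - s$ and $c = b$, it suffices to show $b \in \O^\times$. I will deduce this from Corollary \ref{cor: algebraic with small centre}(iv) applied to the compatible skew derivation $(\sigma^{p^m}, \delta^{p^m})$ with inner-derivation element $t^{p^m} \in \O^\times$ (using Property \ref{props: Sigma, Delta, crossed product}.2 and the Iwasawa condition $t = -1$). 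The hypotheses $J(\O) \cap Z = 0$, compatibility, and $t^{p^m} \in \O^\times$ are all in hand, and since $y - t^{p^m}$ is a unit the relevant localisation coincides with $A := Q^+[[y; \sigma^{p^m}, \delta^{p^m}]] \cong Q^+[[x^{p^m}; \sigma^{p^m}, \delta^{p^m}]]$; so the only remaining requirement is that $A$ is not simple.

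The main obstacle is precisely this: to pass from non-simplicity of $Q^+[[x;\sigma,\delta]]$ to non-simplicity of $A$. Take a proper nonzero ideal $I \lhd Q^+[[x;\sigma,\delta]]$ and the crossed product decomposition $Q^+[[x;\sigma,\delta]] = \bigoplus_{i=0}^{p^m - 1} A g^i$ from Theorem \ref{thm: crossed product after localisation}, with $g = x - t$ a unit. If $I \cap A \neq 0$, it is a proper nonzero ideal of $A$ and we are done; so the real work is ruling out $I \cap A = 0$. In this subcase, Proposition \ref{propn: sub-skew power series ring satisfies P1} tells us that $(A, \sigma)$ satisfies (P1), so Proposition \ref{propn: there exists a central minimal element whose pth power is zero} produces a central element $\mathbf{a} \in I$ supported on $\{g^{kp^{m-1}}\}_{k=0}^{p-1}$ with $a_0 \neq 0$ and $\mathbf{a}^p = 0$. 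Assuming for contradiction that $A$ is simple, the nonzero central $a_0 \in Z(A)$ must be a unit, so we may rescale to $a_0 = 1$. Using the Iwasawa condition together with characteristic $p$ we compute $g^{p^m} = (x+1)^{p^m} = x^{p^m} + 1 = y + 1$, a polynomial of degree $1$ in $y$; expanding $\mathbf{a}^p = 0$ then yields a nontrivial polynomial relation of degree $< p$ in $z := y + 1$ with coefficients in $Z(A)$. I will then adapt the Aljadeff--Robinson argument from the end of the proof of Theorem \ref{thm: conditions for primality}: since $g^{p^m} = y + 1$ is of degree $1$ in the transcendental $y$ over the fixed field $Z$, it cannot be a $p$-th power in the appropriate field of fractions, so the corresponding twisted group ring is semisimple, contradicting the existence of the nonzero central nilpotent $\mathbf{a}$. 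This completes the non-simplicity of $A$, hence $b \in \O^\times$ by Corollary \ref{cor: algebraic with small centre}(iv).
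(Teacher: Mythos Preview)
Your high-level strategy matches the paper's: split on $Z\cap J(\O)=0$ versus $\neq 0$, invoke Proposition~\ref{propn: unit in finite extension} in the second case, and in the first case take $K=Z$ and appeal to Corollary~\ref{cor: algebraic with small centre}(iv). The paper's proof is in fact the two-line version of this, applying Corollary~\ref{cor: algebraic with small centre}(iv) directly to $(\sigma,\delta)$ and the element $a$. You are right to notice that this corollary lives in \S\ref{subsec: using compatibility}, whose standing assumption is that $(\sigma,\delta)$ is \emph{compatible} with $u$, so strictly speaking one must first pass to $(\sigma^{p^m},\delta^{p^m})$ and then argue that the subring $A=Q^+[[x^{p^m};\sigma^{p^m},\delta^{p^m}]]$ is not simple. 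The paper carries out exactly that reduction a few lines later in the proof of Proposition~\ref{propn: algebraic with large centre}, citing \cite[Theorem~5.3.7(i)]{jones-woods-3} (the graded ring $\gr_u(Q)\cong C[Y,Y^{-1};\alpha]$ has a homogeneous unit of degree~$1$ and simple degree-$0$ part, so non-simplicity descends).

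Where your proposal runs into trouble is your alternative route to non-simplicity of $A$: the adaptation of the Aljadeff--Robinson argument from Theorem~\ref{thm: conditions for primality}. That theorem's endgame works because one first reduces the coefficients $b_i$ of the minimal element to lie in $Q$ (not merely in $A$), using the hypothesis $A=Q^+[[X]]$ with $X\in Z(A)^\sigma$; only then do the $b_i'\in Z(Q)^\sigma$ land in a \emph{field} over which $\ell^p$ is visibly not a $p$-th power. In your situation $A=Q^+[[y;\sigma^{p^m},\sigma^{p^m}-\id]]$ with $y$ \emph{not} central, so that reduction step is unavailable: your coefficients $a_k$ live only in $Z(A)$, and $h^p=z=y+1$ is not an element of $Z(A)$ unless $\sigma^{p^m}=\id$. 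Hence there is no commutative base ring $B$ with $z\in B$ over which to form the twisted group ring $B^t[\mathbb{Z}/p\mathbb{Z}]$ and invoke \cite{AljRob94}, and the ``polynomial relation in $z$ with coefficients in $Z(A)$'' you write down is really an identity among power series with no evident transcendence obstruction. The clean fix is simply to replace this paragraph by the one-line citation of \cite[Theorem~5.3.7(i)]{jones-woods-3} as in the proof of Proposition~\ref{propn: algebraic with large centre}; the rest of your argument then goes through and agrees with the paper's.
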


\begin{proof}
If $Z\cap J(\O) \neq 0$, then the result follows immediately from Proposition \ref{propn: unit in finite extension}. If $Z\cap J(\O) = 0$ then, noting that $x-t$ is already invertible in $Q^+[[x; \sigma, \delta]]$ by our assumption $t\in \O^\times$, it follows from Corollary \ref{cor: algebraic with small centre}(iv) that $a\in\O^{\times}$, so taking $K:=Z$ and $c:=a$ we are done.
\end{proof}

\subsection{Algebraic elements}\label{subsec: inner automorphisms}

Assume the setup of \S \ref{subsec: Iwasawa condition}. Here, the additional assumption that $t=-1$ now becomes essential.

 Let $n$ be the order of $[\sigma]\in\Out(Q)$: we now assume further that there exist a natural number $\ell$ and an element $a\in \O^\times$ such that $\sigma^{p^\ell n} = \varphi_a$. We aim to show that, under appropriate conditions, we may find an admissible extension $(\sigma, \delta, u) \leq (\sigma_K, \delta_K, u_K)$ to an appropriate field, a natural number $L$ and an element $c\in 1 + J(\O_K)$ such that $\sigma_K^{p^L n} = \varphi_{c,K}$.

\textit{Strategy.} In this case, if we already have $a\in 1 + J(\O)$, then there is nothing to prove: we simply take $K = Z$ and $L = \ell$. More generally, if there exists an element $\zeta \in Z(\O)^\times$ with $a\in \zeta + J(\O)$, then we can replace $a$ with $c := \zeta^{-1} a$. But in general, there is no such $\zeta\in Z(\O)^\times$: we will first need to raise $a$ to an appropriate $p$'th power to ensure that it is central mod $J(\O)$, and then extend $Z$ to a finite extension field $K$ in which we can find an appropriate element $\zeta\in Z(\O_K)^\times$.

The first step is easy. Write $n = p^k d$, where $p\nmid d$, and fix $m\geq k + \ell$ such that $(\sigma^{p^m}, \delta^{p^m})$ is compatible with $u$ using (AD6). Define the element $b = a^{p^{m-k-\ell}}\in \O^\times$ throughout: it follows from an easy calculation that $\sigma^{p^m d} = \varphi_b$ and that the order of $[\sigma^{p^m}] \in \Out(Q)$ is $d$, and we also have $\deg_u(\varphi_b) \geq d \geq 1$ by Lemma \ref{lem: a regular}(i), so that $b$ is central mod $J(\O)$.

Writing $\overline{\,\cdot\,} : \O\to \O/J(\O)$ for the natural quotient map: we know that $u(bqb^{-1} - q) \geq 1$ for all $q\in \O$, i.e.\ $bqb^{-1} - q \in J(\O)$ for all $q\in \O$, and so $\overline{b} \in Z(\overline{\O})$. If we additionally have $b\in\overline{Z(\O)}$ (a subfield of $Z(\overline{\O})$), then we will be able to find $\zeta$; however, this may not be true, and in general $Z(\overline{\O})$ may be much larger than $\overline{Z(\O)}$. So we proceed with the second step as follows.

The following result is a direct application of Proposition \ref{propn: central polynomial}.
\begin{propn}\label{propn: algebraic with large centre}
Suppose $Q^+[[x;\sigma,\delta]]$ is not a simple ring. Then $\overline{b}$ is algebraic over $\overline{Z(\O)}$.
\end{propn}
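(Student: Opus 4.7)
The plan is to apply Proposition \ref{propn: central polynomial} within the subring $A := Q^+[[x^{p^m}; \sigma^{p^m}, \delta^{p^m}]]$ of $R := Q^+[[x;\sigma,\delta]]$. By our choice of $m$ the skew derivation $(\sigma^{p^m}, \delta^{p^m})$ is compatible with $u$, the image $[\sigma^{p^m}] \in \Out(Q)$ has order $d$, and $\sigma^{p^m d} = \varphi_b$. Crucially, the ``$X$'' of that proposition is here $X_A := x^{p^m} - t^{p^m} = 1 + x^{p^m}$ (since $t = -1$), which is already a unit in $A$; hence $A_{(X_A)} = A$, and it suffices to produce a proper nonzero two-sided ideal $I$ of $A$.

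To obtain $I$, I would use the crossed product $R = \bigoplus_{i=0}^{p^m - 1} A g^i$ from Theorem \ref{thm: crossed product after localisation} (with $g = x - t = x+1$ a unit in $R$, as $t\in\O^\times$), together with the identity $g^{p^m} = (x+1)^{p^m} = 1 + x^{p^m} = X_A$, a unit in $A$. Starting from a proper nonzero ideal $J$ of $R$, supplied by hypothesis, and using that $A$ is prime (Theorem \ref{thm: simple artinian with compatible skew derivation}) and satisfies \eqref{P1} (Proposition \ref{propn: sub-skew power series ring satisfies P1}), I would apply Proposition \ref{propn: there exists a central minimal element whose pth power is zero}. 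If $J \cap A \neq 0$ we set $I := J \cap A$; otherwise the resulting central minimal $p$-nilpotent element, coupled with the absence of nonzero central nilpotents in the prime ring $A$, furnishes the required ideal via $g^{p^m} = X_A$.

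Proposition \ref{propn: central polynomial} now yields a monic $f(T) = T^\ell + z_{\ell-1}T^{\ell-1} + \cdots + z_0 \in Z[T]$ with $f(b^{-1}X_A^d) \in I$. Because $X_A^d q = bqb^{-1}X_A^d$ for all $q \in Q$, the element $b^{-1}X_A^d$ centralises $Q$, so $(b^{-1}X_A^d)^i = b^{-i}X_A^{di}$; expanding $(1+x^{p^m})^{di}$ binomially presents $f(b^{-1}X_A^d)$ as a power series in $x^{p^m}$ with constant term $f(b^{-1})$. Pre-multiplying by a suitable power of a central uniformiser of $Z$ (when $v|_Z$ is nontrivial) produces a rescaled polynomial $\tilde f$ whose coefficients all lie in $Z(\O)$, with at least one being a unit; since $b^{-1} \in \O^\times$, the element $\tilde f(b^{-1}X_A^d) \in I$ has all $(x^{p^m})^j$-coefficients in $\O$ and, being a non-unit, forces its constant term $\tilde f(b^{-1})$ into $J(\O)$, for otherwise it would be a unit in the local ring $\O[[x^{p^m}; \sigma^{p^m}, \delta^{p^m}]]$ and hence in $A$. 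Reducing modulo $J(\O)$ then gives $\overline{\tilde f}(\overline{b^{-1}}) = 0$ in $\overline{\O}$ for a nonzero polynomial $\overline{\tilde f} \in \overline{Z(\O)}[T]$, proving $\overline{b^{-1}}$ (and hence $\overline{b}$) algebraic over $\overline{Z(\O)}$.

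The principal obstacle is the extraction step: when $R$ is prime the lack of central nilpotents in $A$ yields $J \cap A \neq 0$ immediately, but when $R$ is not prime the minimal prime can satisfy $P \cap A = 0$, and one must analyse the structure of the minimal central element (of support size at most $p$) more delicately using Proposition \ref{propn: there exists a central minimal element whose pth power is zero} together with $g^{p^m} = X_A$; by contrast, the final normalisation and reduction modulo $J(\O)$ is routine given that $b, b^{-1} \in \O^\times$.
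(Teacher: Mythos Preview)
Your overall strategy is the same as the paper's: pass to the subring $A = Q^+[[x^{p^m};\sigma^{p^m},\delta^{p^m}]]$, apply Proposition~\ref{propn: central polynomial} there, expand the resulting element $f(b^{-1}X_A^d)$ as a power series in $x^{p^m}$, and force the constant term into $J(\O)$. That part of your argument is correct and matches the paper almost exactly.

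The gap is precisely the one you flag as ``the principal obstacle'': you need a proper nonzero two-sided ideal of $A$, and your extraction via minimal elements does not produce one when $J\cap A = 0$. Concretely, suppose $J\cap A = 0$ and Proposition~\ref{propn: there exists a central minimal element whose pth power is zero} gives $\mathbf{a} = b_0 + b_1 g^{p^{m-1}} + \dots + b_{p-1} g^{(p-1)p^{m-1}}$ central with $\mathbf{a}^p = 0$. Raising to the $p$th power yields $b_0^p + b_1^p X_A + \dots + b_{p-1}^p X_A^{p-1} = 0$ in $A$; but $X_A$ is a \emph{unit}, so this relation says nothing about ideals of $A$ and gives no contradiction. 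Your remark that ``$R$ prime forces $J\cap A\neq 0$'' is fine (a nonzero central nilpotent in a prime ring is impossible), but the non-prime case is exactly where the difficulty lies, and you have not closed it.

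The paper bypasses this entirely: it invokes \cite[Theorem 5.3.7(i)]{jones-woods-3}, which says directly that if $Q^+[[x;\sigma,\delta]]$ is not simple then neither is $Q^+[[x^{p^m};\sigma^{p^m},\delta^{p^m}]]$ (the hypotheses on $\gr_u(Q)$ are supplied by Proposition~\ref{propn: gr(Q)}). With that one citation the desired proper nonzero ideal of $A$ is in hand, and the remainder of the proof proceeds exactly as you outline.
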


\begin{proof}
Firstly, $Q^+[[x;\sigma,\delta]]$ is not a simple ring. Recalling from Proposition \ref{propn: gr(Q)} the isomorphism of graded rings $\gr_u(Q)\cong C[Y, Y^{-1}; \alpha]$, we see that $\gr_u(Q)$ contains the homogeneous unit $Y$ of degree 1, and $\gr_u(Q)_0 \cong C$ is simple. Hence we may apply \cite[Theorem 5.3.7(i)]{jones-woods-3} to see that $Q^+[[x^{p^m};\sigma^{p^m},\delta^{p^m}]]$ is not a simple ring.

For ease of notation, let $y:=x^{p^m}$, and $\tau:=\sigma^{p^m}$. Then since $\delta=\sigma-\id$, we know that $\delta^{p^m}=\tau-\id$, and $Q^+[[x^{p^m};\sigma^{p^m},\delta^{p^m}]]=Q^+[[y;\tau,\tau-\id]]$. 

Since this ring is not simple, it contains a proper, nonzero two-sided ideal $I$. Set $Y:=y+1$, and note that $Y$ is a unit in $Q^+[[y; \tau, \tau - \id]]$, so $Q^+[[y; \tau, \tau - \id]]$ may be identified with the localisation $Q^+[[y; \tau, \tau - \id]]_{Y}$.

Since the order of $[\tau] \in \Out(Q)$ is $d$, we know by Proposition \ref{propn: central polynomial} that there exists a monic polynomial $0\neq f(X)\in Z[X]$ such that $f(b^{-1}Y^d)\in I$; and by dividing by a nonzero coefficient of minimal $u$-value, we can assume without loss of generality that all coefficients lie in $\O$, and that one coefficient has $u$-value $0$. Explicitly, if $f(X) = z_0 + z_1 X + \dots + z_\ell X^\ell$, then
$$f(b^{-1}Y^d) = z_0 + z_1 b^{-1}Y^d + \dots + z_\ell b^{-\ell}Y^{\ell d} \in I.$$
Since $b^{-1}\in\O$, if we expand this expression as a polynomial in $y$, we see that the constant term is equal to $\alpha:=z_0+z_1b^{-1}+\dots+z_{\ell}b^{-\ell}$, and all higher order terms have coefficients in $\O$. In other words, we may write it as $\alpha+hy$, where $h\in\O[[y;\tau,\tau-\id]]$.

Note that, since each $z_i\in Z(\O)$ and $\overline{b} \in Z(\overline{\O})$, we have $\overline{\alpha} \in Z(\overline{\O})$. So we now split our proof into two cases according to whether $\alpha\in J(\O)$ or $\alpha\not\in J(\O)$.

As $\overline{\O}$ is a matrix ring over a division ring by the results of \S \ref{subsec: DVRs}, its centre is a field, and so if we are in the latter case, then $\overline{\alpha} \neq 0$ in $Z(\overline{\O})$ implies that $\overline{\alpha}$ is a unit in $\overline{\O}$. But this implies that $\alpha$ is a unit in $\O$. Hence $\alpha+hy\in I$ is a unit in $Q^+[[y;\tau,\tau-\id]]$, which contradicts that $I$ is a proper ideal of $Q^+[[y;\tau,\tau-\id]]$.

Therefore we must be in the former case where $\alpha\in J(\O)$, and hence $$0=\overline{\alpha}=\overline{z_0}+\overline{z_1}\overline{b}^{-1}+\dots+\overline{z_{\ell-1}}\overline{b}^{-(\ell-1)}+\overline{z_{\ell}}\overline{b^{-\ell}}.$$
As one of the $z_i$ has value $0$, we have $\overline{z_i} \neq 0$, and so this is a polynomial equation for $\overline{b}^{-1}$ with coefficients in $\overline{Z(\O)}$. Hence both $\overline{b}^{-1}$ and $\overline{b}$ are algebraic over $\overline{Z(\O)}$.
\end{proof}

Let $\overline{f}(X) \in \overline{Z(\O)}[X]$ be the minimal polynomial for $\overline{b}$, and lift it to a monic polynomial $f(X) \in Z(\O)[X]$ whose nonzero coefficients all have $u$-value $0$. Let $K$ be the finite extension of $Z$ generated by all roots of $f$, and we can define an admissible datum $(\sigma_K, \delta_K, u_K)$ on $Q_K$ (with associated maximal order $\O_K$) which is a strong admissible extension of $(\sigma, \delta, u)$ by Corollary \ref{cor: existence of strong extensions}.

\begin{rk*}
Adopt the temporary notation $\pi: \O_K \to \O_K/J(\O_K)$, which we will write as a superscript for readability.

In the below lemma, some caution is required. There is a natural inclusion map $\O\to \O_K$ by (AE2), which must preserve units, and it also preserves topological nilpotents by (AE3$'$) and a filt-basis argument. But again as in \ref{rks: more AE stuff}.2, it is not well-behaved with respect to algebraic properties of the filtrations: for instance, we cannot ensure that $J(\O) \subseteq J(\O_K)$. Hence in general there may be no corresponding inclusion map $\overline{\O} \to \O_K^\pi$, and hence no clear relationship between their various subrings.
\end{rk*}

\begin{lem}\label{lem: existence of zeta in Z(OK)^times}
Continue to assume $Q^+[[x; \sigma, \delta]]$ is not simple, and retain the notation above. Then for sufficiently large $T\in\mathbb{N}$, there exists $\zeta \in Z(\O_K)^\times$ such that $b^{p^T} \equiv \zeta \bmod J(\O_K)$.
\end{lem}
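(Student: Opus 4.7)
The plan is to interpret the relation $f(b)\in J(\O)$ inside $\overline{\O_K}$ after raising $b$ to a sufficiently large $p$-th power. This needs two preliminary observations. First, I will verify that $\overline{b^{p^T}}\in Z(\overline{\O_K})$ for all large $T$: by (AE1), $\sigma_K^{p^m d}=\varphi_{b,K}$, so $\sigma_K^{p^{m+T}d}=\varphi_{b^{p^T},K}$, and applying (AD6) for $(\sigma_K,\delta_K,u_K)$ together with Lemma \ref{lem: criterion for delta positive degree} gives $\deg_{u_K}(\sigma_K^{p^N}-\id)\to\infty$ as $N\to\infty$. Hence for $T$ large, conjugation by $b^{p^T}$ is trivial modulo $J(\O_K)$. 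Second, by (AE3$'$) the restrictions $u|_\O$ and $u_K|_\O$ are topologically equivalent, so $J(\O)^N\subseteq J(\O_K)$ for some $N\geq 1$; whenever $p^T\geq N$ we obtain $f(b)^{p^T}\in J(\O)^{p^T}\subseteq J(\O_K)$.

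For the main step, the coefficients $c_i$ of $f$ lie in $Z(\O)$ and so commute with $b$, whence the Frobenius identity in characteristic $p$ yields $f(b)^{p^T}=f_T(b^{p^T})$, where $f_T(Y):=\sum_i c_i^{p^T}Y^i$. Since $K$ contains every root of $f$, and a root of a monic polynomial over a valuation ring lies in that valuation ring, we may write $f(X)=\prod_{j=1}^n(X-\alpha_j)$ with $\alpha_j\in Z(\O_K)$; consequently $f_T(Y)=\prod_{j=1}^n(Y-\alpha_j^{p^T})$ in $Z(\O_K)[Y]$. Reducing modulo $J(\O_K)$ then gives $\overline{f_T}(\overline{b^{p^T}})=0$ in $\overline{\O_K}$, while $\overline{f_T}(Y)=\prod_{j=1}^n(Y-\overline{\alpha_j}^{p^T})$ splits over the image of $Z(\O_K)$ in $\overline{\O_K}$.

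To finish, I will use the fact that $Z(\overline{\O_K})$ is a field---because $u_K$ is standard, $\overline{\O_K}$ is a matrix ring over a division ring, whose centre is a field---and that the image of $Z(\O_K)$ under the reduction $\pi'$ lies in $Z(\overline{\O_K})$. The factorization of $\overline{f_T}$ thus takes place in a polynomial ring over the field $Z(\overline{\O_K})$, so its roots in $Z(\overline{\O_K})$ are exactly the $\overline{\alpha_j}^{p^T}$. Since $\overline{b^{p^T}}\in Z(\overline{\O_K})$ is such a root, there exists some $j_0$ with $\overline{b^{p^T}}=\overline{\alpha_{j_0}^{p^T}}$; and since $b^{p^T}\in\O^\times\subseteq\O_K^\times$ its reduction is a unit in $\overline{\O_K}$, which forces $\alpha_{j_0}^{p^T}\in Z(\O_K)^\times$. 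Setting $\zeta:=\alpha_{j_0}^{p^T}$ completes the argument.

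The main obstacle is that $J(\O)\not\subseteq J(\O_K)$ in general (cf.\ the remark preceding the lemma), so the relation $f(b)\in J(\O)$ cannot be read directly inside $\overline{\O_K}$; the $p^T$-th power manoeuvre circumvents this via topological equivalence, and simultaneously promotes the centrality of $\overline{b}$ in $\overline{\O}$ to centrality of $\overline{b^{p^T}}$ in $\overline{\O_K}$, which is precisely what licenses the field-theoretic uniqueness of factorization argument.
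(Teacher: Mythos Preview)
Your proof is correct and follows essentially the same route as the paper's: show that the roots of $f$ lie in $Z(\O_K)$, use (AE3$'$) to pass from $f(b)\in J(\O)$ to $f(b)^{p^T}\in J(\O_K)$, rewrite this via Frobenius as a polynomial relation in $b^{p^T}$, and then read off $\zeta$ from the split factorisation modulo $J(\O_K)$. Your explicit verification that $\overline{b^{p^T}}\in Z(\overline{\O_K})$ (needed so that the root-identification takes place inside a field) is a point the paper's proof leaves implicit.
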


\begin{proof}
First note that all the roots of $f$ in $K$ must lie in $\O_K$. Indeed, if $\alpha\in K$ and $z_i \in Z(\O)\subseteq Z(\O_K)$, then $u_K(z_i\alpha^i)\geq iu_K(\alpha)$ for each $i$. Hence if we suppose that $u_K(\alpha) < 0$ and $$z_0 + z_1 \alpha + \dots + z_{N-1} \alpha^{N-1} + \alpha^N = 0,$$ then in particular, $$u_K(z_0+z_1\alpha+\dots+z_{N-1}\alpha^{N-1})\geq (N-1)u_K(\alpha),$$ so $$Nu_K(\alpha)=u_K(-\alpha^{N})=u_K(z_0+z_1\alpha+\dots+z_{N-1}\alpha^{N-1})\geq (N-1)u_K(\alpha),$$ a contradiction.

In particular, $f(X)$ splits over $K\cap \O_K \subseteq Z(\O_K)$. Write $\mathrm{Frob}$ for the Frobenius automorphism of $K$ sending $\beta\in K$ to $\beta^p\in K$, which acts on elements of the polynomial ring $K[X]$ by sending $\beta_0 + \beta_1 X + \dots + \beta_e X^e$ to $\beta_0^p + \beta_1^p X + \dots + \beta_e^p X^e$: then $\mathrm{Frob}^T(f)$ also splits over $Z(\O_K)$, and its reduction $\mathrm{Frob}^T(f)^\pi$ splits over $Z(\O_K)^\pi$.

Now we will choose $T$ such that $J(\O)^{p^T} \subseteq J(\O_K)$ (using (AE3$'$) for the strong extension). By our choice of $f$, we have $\overline{f}(\overline{b}) = 0 \implies f(b) \in J(\O)$, and so $f(b)^{p^T} \in J(\O_K)$. But since $f(b)^{p^T} = \mathrm{Frob}^T(f)(b^{p^T})$ (as the coefficients of $f$ are central in $\O$), we see that $(b^{p^T})^\pi$ is a root of $\mathrm{Frob}^T(f)^\pi$, and so it must lie in $Z(\O_K)^\pi$.

This allows us to choose $\zeta\in Z(\O_K)$ such that $b^{p^T} \equiv \zeta \bmod J(\O_K)$. Now we need only remark that $b^{p^T} \in \O^\times \subseteq \O_K^\times \implies (b^{p^T})^\pi = \zeta^\pi \in (\O_K^\pi)^\times \implies \zeta \in \O_K^\times$, as $\O_K$ is a complete local ring.
\end{proof}

\begin{propn}\label{propn: not simple implies c=1 (mod J(O))}
Continue to assume $Q^+[[x; \sigma, \delta]]$ is not simple, and retain the notation above. Then there exist a natural number $L$ and an element $c\in 1 + J(\O_K)$ such that $\sigma_K^{p^L n} = \varphi_{c,K}$.
\end{propn}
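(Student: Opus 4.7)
The plan is to assemble the pieces already in place. The preceding setup gives us $\sigma^{p^m d} = \varphi_b$ on $Q$, where $n = p^k d$ with $p\nmid d$, $m\geq k+\ell$, and $b = a^{p^{m-k-\ell}}\in\O^\times$. After strong admissible extension, this reads $\sigma_K^{p^m d} = \varphi_{b,K}$ on $Q_K$. Lemma \ref{lem: existence of zeta in Z(OK)^times} supplies, for all sufficiently large $T$, an element $\zeta\in Z(\O_K)^\times$ with $b^{p^T}\equiv \zeta\bmod J(\O_K)$.

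The first step is to raise the identity $\sigma_K^{p^m d} = \varphi_{b,K}$ to the $p^T$ power: since iterated conjugation composes, this yields
\[
\sigma_K^{p^{m+T} d} = \varphi_{b,K}^{p^T} = \varphi_{b^{p^T},K}.
\]
The second step is to ``renormalise" by $\zeta$. Set $c := \zeta^{-1} b^{p^T}$. Because $\zeta\in Z(\O_K)$ is central in $Q_K$, conjugation by $\zeta$ is trivial, hence $\varphi_{c,K} = \varphi_{b^{p^T},K}$. On the other hand, $b^{p^T}\equiv \zeta\bmod J(\O_K)$ and $\zeta^{-1}\in\O_K^\times$ force $c \in 1 + J(\O_K)$, as required.

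Finally, we repackage the exponent: $p^{m+T}d = p^{m+T-k}\cdot n$, so setting $L := m+T-k$ (which is non-negative since $m\geq k+\ell$ and $T\geq 0$) gives
\[
\sigma_K^{p^L n} = \varphi_{c,K}, \qquad c \in 1 + J(\O_K),
\]
which is the desired conclusion.

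There is no serious obstacle remaining at this stage: the hard analytic work was carried out in Lemma \ref{lem: existence of zeta in Z(OK)^times} (choosing $T$ large enough that $J(\O)^{p^T}\subseteq J(\O_K)$ and exploiting that $\mathrm{Frob}^T(f)$ still splits over $Z(\O_K)$), and the only subtlety to bear in mind is that $\zeta$ must genuinely lie in the centre of $Q_K$ -- but this is automatic from $Z(\O_K)\subseteq Z(Q_K)$, which holds because $(Q_K,u_K)$ is a standard filtered artinian ring with valuation ring $\O_K$.
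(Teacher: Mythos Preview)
Your proof is correct and follows essentially the same approach as the paper: invoke Lemma~\ref{lem: existence of zeta in Z(OK)^times} to obtain $\zeta\in Z(\O_K)^\times$ with $b^{p^T}\equiv\zeta\bmod J(\O_K)$, set $c:=\zeta^{-1}b^{p^T}$, and read off the exponent. Your derivation of $L = m+T-k$ is in fact slightly more careful than the paper's (which writes $L = T+m$, tacitly replacing $c$ by a further $p$-power, which is harmless since $1+J(\O_K)$ is closed under such powers).
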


\begin{proof}
By Lemma \ref{lem: existence of zeta in Z(OK)^times}, for sufficiently large $T$, there exists $\zeta\in Z(\O_K)^\times$ such that $b^{p^T} \equiv \zeta \bmod J(\O_K)$. Define $c := \zeta^{-1}b^{p^T} \in \O_K^{\times}$: then $c-1=\zeta^{-1}(b^{p^T}-\zeta)\in J(\O_K)$, and setting $L = T+m$, we have $\sigma_K^{p^L n} = \varphi_{c,K}$.\end{proof}

\subsection{Convergence of inner automorphisms}\label{subsec: convergence of inner automorphisms}

Continue to assume the setup of \S \ref{subsec: Iwasawa condition}. Let $n = p^k d$ be the order of $[\sigma]\in\Out(Q)$, where $p\nmid d$. In light of \S \ref{subsec: inner automorphisms}, we can now assume further that there exist a natural number $\ell \geq k$ and an element $a\in 1 + J(\O)$ such that $\sigma^{p^\ell d} = \varphi_a$. We aim to show that $d = 1$.

Fix $m\geq \ell$ such that $(\sigma^{p^m}, \delta^{p^m})$ is compatible with $u$ using (AD6), and write $\tau = \sigma^{p^m}$. Define the element $c_1 = a^{p^{m-\ell}}\in 1 + J(\O)$: it follows from an easy calculation that $\tau^d = \varphi_{c_1}$ and that the order of $[\tau] \in \Out(Q)$ is $d$.

As $\gcd(d,p) = 1$, there must exist $r\geq 1$ such that $p^r \equiv 1 \bmod d$, i.e.\ $p^r = ds + 1$ for some $s \geq 0$. Fix these $r$ and $s$. Let $s_1 = s$, and for each $j\in\mathbb{N}$, define $$s_{j+1} := p^r s_j + s = (1 + p^r + p^{2r} + \dots + p^{jr})s$$ and $b_j := c_1^{s_j}$.

\begin{lem}\label{converge}
$ $

\begin{enumerate}[label=(\roman*)]
\item For each $j\geq 1$, $\varphi_{b_j} = \tau^{p^{jr} - 1}$.
\item $u(b_i - b_j) \geq p^{jr}$ for all $i > j \geq 1$. In particular, the sequence $(b_j)_{j \geq 1}$ converges in $Q$.
\item Let $b$ be the limit of $b_j$. Then $b \equiv b_j \bmod J(\O)^{p^{jr}}$ for all $j$. In particular, $b\in 1 + J(\O)$.
\item For all $j\geq 1$ and all $q\in \mathcal{O}$, $\varphi_b(q) \equiv \varphi_{b_j}(q) \bmod J(\O)^{p^{j r}}$.
\end{enumerate}
\end{lem}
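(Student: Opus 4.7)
The plan is to prove the four parts in order, with each building on its predecessor; the common engine is the arithmetic identity $d s_j = p^{jr} - 1$, which I would verify by induction on $j$. The base case $d s_1 = d s = p^r - 1$ follows from the defining relation $p^r = ds + 1$, and the inductive step reads $d s_{j+1} = d(p^r s_j + s) = p^r(p^{jr} - 1) + (p^r - 1) = p^{(j+1)r} - 1$. Part (i) then follows immediately: since all powers of $c_1$ commute with one another, $\varphi_{b_j} = \varphi_{c_1}^{s_j} = \tau^{d s_j} = \tau^{p^{jr} - 1}$.

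For (ii), the same computation yields $s_{j+1} - s_j = s(d s_j + 1) = s \cdot p^{jr}$, so $s_i - s_j$ is divisible by $p^{jr}$ for all $i > j$. Writing $c_1 = 1 + y$ with $y \in J(\O)$ and exploiting that the commutative subring $\mathbb{F}_p[y] \subseteq Q$ has characteristic $p$, the Frobenius gives $c_1^{p^{jr}} = 1 + y^{p^{jr}}$. Expanding $(1 + y^{p^{jr}})^M$ with $M = (s_i - s_j)/p^{jr}$ by the (commutative) binomial theorem shows $c_1^{s_i - s_j} - 1 \in J(\O)^{p^{jr}}$. Factoring $b_i - b_j = c_1^{s_j}(c_1^{s_i - s_j} - 1)$ and observing that $c_1^{s_j} \in \O$ gives the desired bound; since $p^{jr} \to \infty$, the sequence $(b_j)$ is Cauchy and so converges in the complete filtered ring $(Q, u)$.

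Part (iii) is then immediate by letting $i \to \infty$ in (ii) and using continuity of $u$: fixing $j$, we obtain $u(b - b_j) \geq p^{jr}$, and taking $j = 1$ with $b_1 = c_1^s \in 1 + J(\O)$ yields $b \in 1 + J(\O)$. For (iv), set $e_j := b - b_j$, so $u(e_j) \geq p^{jr}$; a direct manipulation using $b^{-1} - b_j^{-1} = -b_j^{-1} e_j b^{-1}$ yields the identity
\[
b q b^{-1} - b_j q b_j^{-1} = (e_j q - b_j q b_j^{-1} e_j) b^{-1}.
\]
Since $b, b_j \in 1 + J(\O) \subseteq \O^\times$, their inverses lie in $\O$, and routine filtration estimates bound the right-hand side below by $p^{jr} + u(q) \geq p^{jr}$ for any $q \in \O$.

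The main technical obstacle is the algebraic manipulation in part (iv) needed to isolate $e_j$ as a factor of the commutator-like expression; once this is done, the estimate is immediate, and parts (i)--(iii) reduce to the arithmetic identity $d s_j = p^{jr} - 1$ combined with the Frobenius trick in characteristic $p$.
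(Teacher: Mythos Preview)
Your proof is correct and follows essentially the same approach as the paper. The only tactical differences are cosmetic: the paper computes $s_j = (p^{jr}-1)/d$ via the closed form of the geometric sum rather than induction, in (ii) it bounds $u(b_{j+1}-b_j)$ first (using that $u(b_j) = u(b_j^{-1}) = 0$) and then passes to general $i>j$ by the ultrametric inequality, and in (iv) it writes $b_j^{-1}b = 1 + x_j$ with $x_j \in J(\O)^{p^{jr}}$ and observes that conjugation by $1+x_j$ is the identity modulo $J(\O)^{p^{jr}}$, which is a slight repackaging of your explicit identity.
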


\begin{proof}
$ $

\begin{enumerate}[label=(\roman*)]
\item We can calculate
$$s_j = (1 + p^r + p^{2r} + \dots + p^{(j-1) r})s = \dfrac{p^{j r} - 1}{p^r - 1}s = \frac{p^{j r} - 1}{d},$$
and so $\varphi_{b_j} = (\varphi_{c_1})^{s_j} = (\tau^d)^{s_j} = \tau^{p^{j r} - 1}.$
\item Since $c_1\in 1 + J(\O)$, we likewise have $b_j\in 1 + J(\O)$ for all $j\geq 1$. In particular, $u(b_j) = u(b_j^{-1}) = 0$ for all $j \geq 1$, and so $u(b_{j + 1} - b_j) = u(b_{j + 1} b_j^{-1} - 1) = u(c_1^{s_{j + 1} - s_j} - 1)$ by Lemma \ref{lem: u-regular elements}. But $s_{j+1} - s_j = p^{j r}s$, and $c_1^{p^{j r}s} - 1 = (c_1^s - 1)^{p^{j r}}$ as we are in characteristic $p$. It follows that $u(b_{j + 1} - b_j)\geq p^{j r}$, and so (by the properties of filtrations) $u(b_i - b_j) \geq p^{j r}$ for all $i > j$. Hence $(b_j)_{j\geq 1}$ is a Cauchy sequence in $Q$, and it converges as $Q$ is complete (Definition \ref{defn: standard filtrations}).
\item Follows immediately from (ii).
\item By (iii), $b_j^{-1} b = 1 + x_j$ and $(b_j^{-1} b)^{-1} = 1 + y_j$ for some $x_j, y_j \in J(\O)^{p^{j r}}$. Now it is easy to see that $(b_j^{-1} b)q(b_j^{-1} b)^{-1} \equiv q \bmod J(\O)^{p^{jr}}$ for any $q\in \O$, so $bqb^{-1} \equiv b_j q b_j^{-1} \bmod J(\O)^{p^{jr}}$ as required. \qedhere
\end{enumerate}
\end{proof}

\begin{propn}\label{propn: tau is inner}
There exists $c\in 1 + J(\O)$ such that $\tau = \varphi_c$.
\end{propn}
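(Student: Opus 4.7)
The plan is to use the machinery of Lemma \ref{converge} together with the compatibility of $(\tau,\tau-\id)$ with $u$ to produce the candidate element and then identify the inner automorphisms it defines with $\tau$ itself.

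First I would examine what the sequence $(\varphi_{b_j})$ is really converging to. By Lemma \ref{converge}(i), $\varphi_{b_j}=\tau^{p^{jr}-1}=\tau^{-1}\circ \tau^{p^{jr}}$. By our choice of $m$, the skew derivation $(\tau,\tau-\id)=(\sigma^{p^m},\delta^{p^m})$ is compatible with $u$, so $\deg_u(\tau-\id)\geq 1$, and hence, working in characteristic $p$, $\deg_u(\tau^{p^{jr}}-\id)=\deg_u((\tau-\id)^{p^{jr}})\geq p^{jr}$. Since $\tau^{-1}$ is filtered of degree $0$, this gives $u\bigl(\tau^{p^{jr}-1}(q)-\tau^{-1}(q)\bigr)=u\bigl(\tau^{p^{jr}}(q)-q\bigr)\geq u(q)+p^{jr}$ for every $q\in \O$. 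Hence the pointwise limit of $\varphi_{b_j}$ on $\O$ equals $\tau^{-1}$.

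Next I would combine this with Lemma \ref{converge}(iv): for $q\in\O$ we have $\varphi_b(q)\equiv\varphi_{b_j}(q)\bmod J(\O)^{p^{jr}}$, and the previous paragraph gives $\varphi_{b_j}(q)\equiv\tau^{-1}(q)\bmod J(\O)^{p^{jr}}$. Therefore $\varphi_b(q)\equiv\tau^{-1}(q)\bmod J(\O)^{p^{jr}}$ for every $j\geq 1$, and since $\bigcap_j J(\O)^{p^{jr}}=0$ by Properties \ref{props: DVRs}(ii) (applied in each matrix entry), we conclude $\varphi_b(q)=\tau^{-1}(q)$ for all $q\in \O$. Now $\varphi_b$ and $\tau^{-1}$ are both ring automorphisms of $Q$, and $Q$ is generated as a ring by $\O$ together with $\pi^{-1}$ for a uniformiser $\pi$ of $D$ (viewed diagonally inside $\O$); since a ring automorphism is determined on inverses by its values on the corresponding units, $\varphi_b$ and $\tau^{-1}$ agree on all of $Q$.

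Finally I would set $c:=b^{-1}$. By Lemma \ref{converge}(iii) we have $b\in 1+J(\O)$, and since $\O$ is a complete local ring, $1+J(\O)$ is a subgroup of $\O^\times$, so $c=b^{-1}\in 1+J(\O)$. From $\varphi_b=\tau^{-1}$ on $Q$ we deduce $\tau=\varphi_b^{-1}=\varphi_{b^{-1}}=\varphi_c$, which is the desired conclusion. The only genuinely subtle point in the argument is the interchange of limits in the second paragraph: one must control the rate at which $\varphi_{b_j}$ converges to $\tau^{-1}$ against the rate at which $\varphi_{b_j}$ converges to $\varphi_b$, which is why the explicit $J(\O)^{p^{jr}}$-bounds in Lemma \ref{converge} are essential.
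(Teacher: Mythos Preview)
Your proof is correct and follows essentially the same route as the paper's: both arguments show that $\varphi_{b_j}$ agrees with $\tau^{-1}$ modulo $J(\O)^{p^{jr}}$ via the compatibility bound $\deg_u(\tau^{p^{jr}}-\id)\geq p^{jr}$, then combine this with Lemma \ref{converge}(iv) to conclude $\varphi_b=\tau^{-1}$, and finally set $c=b^{-1}$. The only cosmetic difference is that the paper phrases the congruence as $\tau(q)\equiv\varphi_{b_j}^{-1}(q)$ rather than $\varphi_{b_j}(q)\equiv\tau^{-1}(q)$, and it omits the explicit passage from $\O$ to $Q$ (which you handle correctly).
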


\begin{proof}
Let $j\geq 1$ be arbitrary. Since $(\tau-\id)(\O)\subseteq J(\O)$ and we are in characteristic $p$, it follows that $$(\tau^{p^{j r}}-\id)(\O)=(\tau-\id)^{p^{j r}}(\O)\subseteq J(\O)^{p^{j r}}.$$ So, by Lemma \ref{converge}(i), $(\varphi_{b_j}\circ\tau-\id)(\O)\subseteq J(\O)^{p^{j r}}$. Rephrasing this: $\tau(q) \equiv \varphi_{b_j}^{-1}(q) \mod J(\O)^{p^{j r}}$ for all $j \geq 1$, $q\in\O$. But combining this with Lemma \ref{converge}(iv) we see that $\tau(q) \equiv \varphi_{b}^{-1}(q) \mod J(\O)^{p^{j r}}$ for all $j$, and so we must have $\tau = \varphi_{b^{-1}}$.

But $b\in 1 + J(\O)$ by Lemma \ref{converge}(iii), so $c:=b^{-1}\in\O^{\times}$ and $c-1=-c(b-1)\in J(\O)$ as required.\end{proof}

As $\tau = \sigma^{p^m}$ is inner, it follows that the order of $[\sigma]\in\Out(Q)$ must divide $p^m$. It follows immediately that:

\begin{cor}\label{cor: FOH in cases 1,2}
$d = 1$ and $\sigma^{p^\ell}=\varphi_c$.\qed
\end{cor}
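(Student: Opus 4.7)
The plan is to observe that this corollary is a short deduction from Proposition \ref{propn: tau is inner}, which has already done the substantive work. The two claims to establish are that $d=1$ and that $\sigma^{p^\ell}$ is inner via conjugation by an element of $1 + J(\O)$.

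First I would handle $d = 1$. By Proposition \ref{propn: tau is inner}, $\tau = \sigma^{p^m}$ is an inner automorphism of $Q$, i.e.\ $[\sigma^{p^m}] = [\id]$ in $\Out(Q)$. Since $[\sigma] \in \Out(Q)$ has order $n = p^k d$ by assumption, this order must divide $p^m$. As $p \nmid d$, the only possibility is $d = 1$.

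Next I would deduce the second assertion. With $d = 1$, the standing hypothesis of \S\ref{subsec: convergence of inner automorphisms} that $\sigma^{p^\ell d} = \varphi_a$ for some $a \in 1 + J(\O)$ reduces to $\sigma^{p^\ell} = \varphi_a$. Setting $c := a$ then gives $\sigma^{p^\ell} = \varphi_c$ with $c \in 1 + J(\O)$, which is precisely the required statement. (Note that the $c$ here need not coincide with the $c$ produced in Proposition \ref{propn: tau is inner}; what matters is only the existence of such an element, which is provided directly by the hypothesis.)

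There is no real obstacle: once Proposition \ref{propn: tau is inner} is in hand, both conclusions follow by elementary manipulations with the order of $[\sigma]$. The substantive content of this subsection was the convergence argument producing $b = \lim_j b_j \in 1 + J(\O)$ with $\tau = \varphi_{b^{-1}}$; from there, the corollary is purely bookkeeping combining the divisibility constraint $p^k d \mid p^m$ with $\gcd(d, p) = 1$.
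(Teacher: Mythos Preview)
Your proposal is correct and matches the paper's approach exactly: the paper simply notes that since $\tau=\sigma^{p^m}$ is inner, the order $n=p^kd$ of $[\sigma]$ must divide $p^m$, forcing $d=1$, whereupon the standing hypothesis $\sigma^{p^\ell d}=\varphi_a$ with $a\in 1+J(\O)$ immediately yields the second claim. Your observation that the $c$ in the corollary is really the $a$ from the setup (rather than the $c$ of Proposition~\ref{propn: tau is inner}) is a useful clarification of the paper's slightly overloaded notation.
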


\subsection{Proof of remaining main theorems}\label{subsec: proof of remaining theorems}

\begin{proof}[Proof of Theorem \ref{letterthm: FOH implies prime}]
The simplifications of \S \ref{subsec: Iwasawa condition} show that we may assume $\delta = \sigma - \id$. Let $p^s$ be the order of $\sigma|_Z$, and set $(\tau, \varepsilon) = (\sigma^{p^s}, \delta^{p^s})$.

\begin{enumerate}[label=(\alph*)]
\item Suppose that $(\tau, \varepsilon, u)$ satisfies \eqref{FOH}. Then there exist a finite extension of fields $K/Z$ and an admissible extension $(\tau, \varepsilon, u) \leq (\tau_K, \varepsilon_K, u_K)$ on $Q_K$ which satisfies \eqref{SFOH}. That is, there exist $a\in \O_K^\times$ and $\ell\in\mathbb{N}$ such that $\tau_K^{p^\ell}=\varphi_{a,K}$. In particular, the order of $[\tau_K]\in \Out(Q_K)$ is a power of $p$.

If $Q^+[[y_K;\tau_K,\varepsilon_K]]$ is a simple ring, then $Q^+[[y;\tau, \varepsilon]]$ is a simple ring by Theorem \ref{thm: extending scalars for skew power series rings}, and thus $Q^+[[x^{p^s};\sigma^{p^s},\delta^{p^s}]]$ is a simple ring. Applying Corollary \ref{cor: when sigma does not fix Z part 2}, it follows that $Q^+[[x;\sigma,\delta]]$ is simple, and hence prime.

Therefore, we can assume that $Q^+[[y_K;\tau_K,\varepsilon_K]]$ is not a simple ring, so it follows from Proposition \ref{propn: not simple implies c=1 (mod J(O))} that there exists an admissible extension $(\tau_F,\varepsilon_F, u_F)$ of $(\tau_K,\delta_K,u_K)$, an element $c\in 1+J(\O_F)$ and an integer $\ell\in\mathbb{N}$ such that $\tau_F^{p^\ell}=\varphi_{c,F}$. Hence, for any $M \geq \ell$, we have $u_F(c^{p^{M-\ell}} - 1) = u_F((c-1)^{p^{M-\ell}}) > 0$. Applying Theorem \ref{letterthm: SFOH implies prime}(b) with $t = -1$, this means that $Q^+[[y_F; \tau_F, \varepsilon_F]]$ is prime. The same argument as in the previous paragraph will now show that $Q^+[[x;\sigma,\delta]]$ is prime as required.

\item 
Let us suppose for contradiction that $Q^+[[x; \sigma, \delta]]$ is not a simple ring, and hence $Q^+[[y; \tau, \varepsilon]]$ is not simple by Corollary \ref{cor: when sigma does not fix Z part 2}. We will prove that $(\tau, \varepsilon, u)$ satisfies \eqref{FOH}.

We perform the following series of reductions. Write $n$ for the order of $[\tau] \in \Out(Q)$, and choose $a\in Q^\times$ such that $\tau^n = \varphi_a$, as in \S \ref{subsec: central scaling}.
\begin{itemize}
\item By Proposition \ref{propn: SFOH1}, after replacing $Q$ and $(\tau, \varepsilon, u)$ by some appropriate $Q_K$ and $(\tau_K, \varepsilon_K, u_K)$, we may assume that there exists a natural number $\ell$ and an element $b\in \O_K^\times$ such that $\tau_K^{p^\ell n} = \varphi_{b,K}$. This puts us into the situation of \S \ref{subsec: inner automorphisms}.
\item By Proposition \ref{propn: not simple implies c=1 (mod J(O))}, after replacing $Q_K$ and $(\tau_K, \varepsilon_K, u_K)$ once more by some appropriate $Q_L$ and $(\tau_L, \varepsilon_L, u_L)$, we may assume that there exist a natural number $M$ and an element $c\in 1 + J(\O_L)$ such that $\tau_L^{p^L n} = \varphi_{c,L}$. This puts us into the situation of \S \ref{subsec: convergence of inner automorphisms}.
\item Corollary \ref{cor: FOH in cases 1,2} now shows that, in fact, for this $c\in 1 + J(\O_L) \subseteq \O_L^\times$, there exists a natural number $M$ such that $\tau_L^{p^M} = \varphi_{c,L}$. But this is \eqref{SFOH}.\qedhere
\end{itemize}
\end{enumerate}
\end{proof}

\begin{proof}[Proof of Theorem \ref{letterthm: compatible Iwasawa SPSR is prime}]
The technical hypotheses here ensure that the data $(R, w_0)$ satisfies the (filt) hypothesis from \cite[\S 1.2]{jones-woods-3}. We now make significant use of the procedure outlined in \cite[\S 4]{jones-woods-3}.

Construct $(Q(R), w)$ via homogeneous localisation \cite[\S 3]{li-ore-sets} and complete to get a filtered artinian ring $(\widehat{Q}, w)$, both as detailed in \cite[Procedure 4.1.2]{jones-woods-3}. Note that $(\sigma, \delta)$ extends to $\widehat{Q}$ by \cite[Lemma 4.3.1]{jones-woods-3}, and $\delta$ is still defined by $\delta(q)=tq-\sigma(q)t$ for all $q\in\widehat{Q}$.

We are now in the setup of \cite[\S 4.5]{jones-woods-3}, so let $M_1$ be a maximal ideal of $\widehat{Q}$, let $N$ be the intersection of its (finite) $\sigma$-orbit, and write $Q = \widehat{Q}/N$ as described there. Since $\delta$ is inner, we have $\delta(N) \subseteq N$ \cite[Lemma 4.5.4]{jones-woods-3}), and hence $(\sigma, \delta)$ descends from $\widehat{Q}$ to $Q$. From \cite[Lemma 4.5.6]{jones-woods-3} and the subsequent discussion, we have:
\begin{itemize}
\item $Q$ is a semisimple artinian ring with $p^\ell$ simple factors $Q_1, \dots, Q_{p^\ell}$.
\item These simple factors are permuted transitively by $\sigma$.
\item Each $Q_i$ admits a standard filtration $u_i$, and we can set $u := u_1 \times \dots \times u_{p^\ell}$. (Note that $\sigma$ does \emph{not} permute these transitively.)
\end{itemize}

Furthermore, each simple artinian factor $(Q_i,u_i)$ arises as a standard completion of $Q(R)$ in the same way as Theorem \ref{thm: filtered localisation}.1, and thus $\gr_{u_i}(Q_i)$ is finitely generated over its centre by Theorem \ref{thm: filtered localisation}.2. In particular, $Q_i=M_n(Q(D_i))$ for some complete DVR $D_i$, and $\gr(D_i)$ is finitely generated over its centre, i.e.\ $D_i$ is admissible.

Now, applying \cite[Theorem 4.5.7]{jones-woods-3}, we find that $(\sigma, \delta)$ is quasi-compatible with $u$, and so the restriction of $(\sigma^{p^\ell},\delta^{p^\ell} )$ to $Q_i$ remains quasi-compatible with $u_i$. In fact, it follows from \cite[Theorem 5.2.2]{jones-woods-3} shows that the restriction of $(\sigma^{p^m},\delta^{p^m})$ to $Q_i$ is compatible with $u_i$ for sufficiently high $m$.  

In particular, if $[\sigma^{p^\ell}]\in Out(Q_i)$ has finite order, then $\sigma^{p^\ell}|_{Z(Q_i)}$ has order $p^s$ by Lemma \ref{lem: identical on centre}, so we see that $(\sigma^{p^{\ell+s}},\delta^{p^{\ell+s}},u_i)$ satisfies (AD1--6) of Definition \ref{defn: admissible data}, and hence it defines an admissible datum on $Q_i$.

Moreover, since $t\in R^\times$ and $R\subseteq\O:=u^{-1}([0,\infty])$ by \cite[Theorem 4.1.1(ii)]{jones-woods-3}, we see that $t\in\O^\times$, and hence $u(t^N)=0$ for all $N\in\mathbb{N}$. Moreover, if $t_i$ is the image of $t$ under the projection to $Q_i$, then $u(t_i)=u(t_i^{-1})=0$, so the restriction of the skew derivation of $(\sigma^{p^\ell},\delta^{p^\ell} )$ to $Q_i$ satisfies the conditions of Theorem \ref{letterthm: FOH implies prime}.

We now analyse the result by cases.
\begin{itemize}
\item
If $[\sigma^{p^\ell}]\in \Out(Q_i)$ has infinite order, then $Q_i^+[[y;\sigma^{p^\ell},\delta^{p^\ell}]]$ is a simple ring by \cite[Theorem 5.4.2]{jones-woods-3}.
\item
If $[\sigma^{p^\ell}]\in \Out(Q_i)$ has finite order, then $Q_i^+[[y;\sigma^{p^\ell},\delta^{p^\ell}]]$ is a prime ring by Theorem \ref{letterthm: FOH implies prime}.
\end{itemize}
In either case, $Q_i^+[[y;\sigma^{p^\ell},\delta^{p^\ell}]]$ is a prime ring. Using \cite[Theorem 5.2.3]{jones-woods-3}, it follows that $Q^+[[x;\sigma,\delta]]$ and $R[[x;\sigma,\delta]]$ are prime.
\end{proof}

\begin{proof}[Proof of Theorem \ref{letterthm: primes in completely solvable Iwasawa algebras}]
Since $G/H$ is $p$-valuable and completely solvable, there exist a chain of normal subgroups $H=H_0\subseteq H_1\subseteq\dots\subseteq H_m=G$ such that $H_{i}/H_{i-1}\cong\mathbb{Z}_p$ for each $i>0$, i.e.\ $H_i/H_{i-1}$ it is generated by a single element $g_i\in H_i$. Therefore, $kH_i\cong kH_{i-1}[[x_i;\sigma_i,\delta_i]]$ where $\sigma_i$ is conjugation by $g_i$, and $\delta_i=\sigma_i-\id$, and hence $$R_i := kH_i/QkH_i\cong (kH_{i-1}/QkH_{i-1})[[x_i;\sigma_i;\delta_i]]$$
by \cite[3.14(iv)]{letzter-noeth-skew}. Let $w_0$ be the filtration on $kG$ given by its $p$-valuation \cite[Lemma 6.2]{ardakovInv}, so that $\gr_{w_0}(kG)$ is Noetherian and commutative, and let $w$ be the quotient filtration on $R := R_m$. Note that $\gr_w(R)$ is a quotient of $\gr_{w_0}(kG)$ \cite[Chapter I, \S 4, Theorem 4(1)]{LVO}, and $(\sigma_i, \delta_i)$ remains compatible with $w$.

\textbf{Case 1: $Q$ is the maximal ideal.} In this case, $R_0 = k$, and so $R$ is a skew power series ring over $k$, and we may calculate $\gr_w(R)$ explicitly as an iterated skew polynomial ring over $k$ \cite[Theorem D]{woods-SPS-dim}. This is a domain \cite[Theorem 1.2.9(i)]{MR}, and hence so is $R$.

\textbf{Case 2: $Q$ is a non-maximal prime ideal.} Then $R_0$ is a prime ring, so let us assume for induction that $R_{i-1}$ is prime for some $i\geq 1$. By our assumption that $Q$ is not maximal, we know that $\gr_w(R_{i-1})$ contains a non-nilpotent element of positive degree, by the argument of \cite[Example 4.12]{jones-woods-1}. So it follows from Theorem \ref{letterthm: compatible Iwasawa SPSR is prime} that $R_i\cong R_{i-1}[[x_i;\sigma_i;\sigma_i-\id]]$ is a prime ring. The result follows by induction.

\textbf{Case 3: $Q$ is a $G$-prime ideal.} In this case, by \cite[3.12]{letzter-noeth-skew}, there is a $G$-orbit of prime ideals $T_1, \dots, T_t$ of $kH$ such that $T_1 \cap \dots \cap T_t = Q$. Write $N_i$ for the normaliser in $G$ of $T_i$, and write $N = N_1 \cap \dots \cap N_t$: then $[G: N_i]$ is finite, and hence $[G:N]$ is finite. Hence we may find a normal subgroup $L\lhd G$, contained in $N$ but containing $H$, which is \emph{open} (i.e.\ has finite index in $G$) by \cite[Lemma 1.10]{woods-struct-of-G} and \cite[Proposition 5.9]{ardakovInv}.

Clearly $N_i/H$ and $L/H$ are still completely solvable, and it is easy to see that they are also $p$-valuable \cite[Chapitre III, D\'efinition 2.1.2]{lazard}. Now, by Case 2, we can see that both $T_ikN_i$ and $T_ikL$ are prime ideals in $kN_i$ and $kL$ respectively, and \cite[Lemma 5.1(a)]{ardakovInv} shows that
$$QkL = (T_1 \cap \dots \cap T_t)kL = T_1kL \cap \dots \cap T_tkL,$$
an intersection of a finite $G$-orbit of prime ideals of $kL$: hence $QkL$ is a $G$-prime ideal of $kL$. So it remains to show that $kG/QkG = (kL/QkL)*(G/L)$ is a prime ring. But by \cite[Corollary 14.8]{passmanICP}, this is prime if and only if $(kL/T_1kL)*(N_1/L) = kN_1/T_1kN_1$ is prime, which we have just proved.
\end{proof}

\bibliography{biblio}
\bibliographystyle{plain}

\end{document}